\newcommand{\esssup}{\mathop{\mathrm{ess}\:\mathrm{sup}}\limits}
\def\C{\mathbb C}
\def\x{\mathbf x}
\def\D{\mathbf D}
\def\1{\mathbf 1}
\def\1{\bold 1}
\def\eps{\varepsilon}
\def\le{\leqslant}
\def\ge{\geqslant}
\theoremstyle{theorem}
\newtheorem{theorem}{Theorem}[section]
\newtheorem{proposition}[theorem]{Proposition}
\newtheorem{lemma}[theorem]{Lemma}
\newtheorem{condition}[theorem]{Condition}
\newtheorem{remark}[theorem]{Remark}
\newtheorem{corollary}[theorem]{Corollary}
\newtheorem{example}[theorem]{Example}
\numberwithin{equation}{section}
\theoremstyle{plain}
\newtoks\thehProclaim
\newtheorem*{Proclaim}{\the\thehProclaim}
\begin{document}

\title[Two-parametric error estimates]{Two-parametric error estimates in~homogenization of~second order elliptic systems in~$\mathbb{R}^d$ including lower order terms}

\author{Yu.~M.~Meshkova and T.~A.~Suslina}

\thanks{Supported by RFBR (grant no.~14-01-00760) and SPbSU (project no.~11.38.263.2014).
The first author is supported by the Chebyshev Laboratory  (Department of Mathematics and Mechanics, St.~Petersburg State University)  under RF Government grant 11.G34.31.0026 and JSC \glqq Gazprom Neft\grqq.}

\keywords{Periodic differential operators, elliptic systems, \break homogenization,
corrector, operator error estimates.
}

\address{Chebyshev Laboratory, St. Petersburg State University, 14th Line, 29b, Saint Petersburg, 199178, Russia}
\email{juliavmeshke@yandex.ru}

 \address{St.~Petersburg State University, 7/9 Universitetskaya nab., St.~Petersburg, 199034, Russia}

\email{t.suslina@spbu.ru,\quad suslina@list.ru}

\subjclass[2000]{Primary 35B27}

\begin{abstract}
In $L_2({\mathbb R}^d;{\mathbb C}^n)$, we consider a selfadjoint operator ${\mathcal B}_\varepsilon$,
\hbox{$0< \varepsilon \le 1$}, given by the differential expression
$b({\mathbf D})^* g({\mathbf x}/\varepsilon)b({\mathbf D}) + \sum_{j=1}^d (a_j({\mathbf x}/\varepsilon) D_j
+D_j a_j({\mathbf x}/\varepsilon)^*) + Q({\mathbf x}/\varepsilon)$, where $b({\mathbf D}) = \sum_{l=1}^d b_l D_l$
is the first order differential operator, and $g, a_j, Q$ are matrix-valued functions in ${\mathbb R}^d$
periodic with respect to some lattice $\Gamma$.
It is assumed that $g$ is bounded and positive definite, while $a_j$ and $Q$ are, in general, unbounded.
 We study the generalized resolvent $({\mathcal B}_\varepsilon - \zeta Q_0({\mathbf x}/\varepsilon))^{-1}$, where
$Q_0$ is a $\Gamma$-periodic, bounded and positive definite matrix-valued function, and $\zeta$ is a complex-valued parameter.
Approximations for the generalized resolvent in the $(L_2 \to L_2)$- and $(L_2 \to H^1)$-norms
with two-parametric error estimates (with respect to the parameters $\varepsilon$ and $\zeta$) are obtained.
\end{abstract}

\maketitle

\tableofcontents

\section*{Introduction}
\setcounter{section}{0}
\setcounter{equation}{0}

The paper concerns homogenization theory of periodic differential operators (DO's). A broad literature is devoted to
homogenization problems. First, we mention the books \cite{BeLP,BaPa,ZhKO}.

\subsection{Statement of the problem} We study matrix elliptic DO's
$\mathcal{B}_\varepsilon$, \hbox{$0<\varepsilon \leqslant 1$}, acting in the space $L_2(\mathbb{R}^d;\mathbb{C}^n)$.
Let $\Gamma$ be a lattice in  $\mathbb{R}^d$, and let $\Omega$ be the cell of  $\Gamma$.
For $\Gamma$-periodic functions in $\mathbb{R}^d$, we use the notation $\psi ^\varepsilon (\mathbf{x}):=\psi (\varepsilon ^{-1}\mathbf{x})$
and $\overline{\psi}:=\vert \Omega \vert ^{-1}\int _\Omega \psi (\mathbf{x})\,d\mathbf{x}$.

The principal part $\mathcal{A}_\varepsilon $ of the operator $\mathcal{B}_\varepsilon$
is given in a factorized form
\begin{equation}
\label{A_eps in Introduction}
\mathcal{A}_\varepsilon = b(\mathbf{D})^*g^\varepsilon (\mathbf{x})b(\mathbf{D}),
\end{equation}
where $b(\mathbf{D})$ is a matrix homogeneous first order DO and $g(\mathbf{x})$ is a bounded and positive definite
$\Gamma$-periodic matrix-valued function in $\mathbb{R}^d$.
(The precise assumptions on $b(\mathbf{D})$ and $g(\mathbf{x})$ are given below in
Subsection~\ref{Subsection Operator A}.)
The simplest example of the operator $\mathcal{A}_\varepsilon$ is the acoustics operator
$-\mathrm{div}\,g^\varepsilon (\mathbf{x})\nabla$; the operator of elasticity theory also can be represented in the required form.
The homogenization problem for the operator $\mathcal{A}_\varepsilon$ was studied in a~series of papers
\cite{BSu,BSu05,BSu06} and also in \cite{Su14,Su15} in detail.
In the present paper we consider a more general selfadjoint DO $\mathcal{B}_\varepsilon$ including lower order terms:
\begin{equation}
\label{B_eps from Introduction}
\mathcal{B}_\varepsilon =b(\mathbf{D})^*g^\varepsilon (\mathbf{x})b(\mathbf{D})+\sum _{j=1}^d \left( a_j^\varepsilon (\mathbf{x})D_j+D_ja_j^\varepsilon (\mathbf{x})^*\right) +Q^\varepsilon (\mathbf{x}).
\end{equation}
Here $a_j(\mathbf{x})$ are $\Gamma$-periodic matrix-valued functions, in general, unbounded.
In general,  the coefficient $Q(\mathbf{x})$ is a distribution generated by a periodic matrix-valued measure.
(The precise assumptions on the coefficients are given below in Subsections \ref{Subsection Operators Y}, \ref{Subsection form q}.)
The precise definition of the operator $\mathcal{B}_\varepsilon$ is given in terms of the corresponding quadratic form.

The coefficients of  the differential expression \eqref{B_eps from Introduction} oscillate rapidly as \hbox{$\varepsilon \rightarrow 0$}.
{\it A typical homogenization problem for the operator $\mathcal{B}_\varepsilon$ is to approximate the resolvent
$(\mathcal{B}_\varepsilon -\zeta I)^{-1}$ or the generalized resolvent \hbox{$(\mathcal{B}_\varepsilon -\zeta Q_0^\varepsilon )^{-1}$} for small $\varepsilon$.} Here $Q_0(\mathbf{x})$ is a positive definite and bounded $\Gamma$-periodic matrix-valued function.

\subsection{A survey of the results on the operator error estimates} The homogenization problem
for the operator $\mathcal{A}_\varepsilon$ was studied in a series of papers \cite{BSu,BSu05,BSu06} by M.~Sh.~Birman and T.~A.~Suslina.
In \cite{BSu}, it was proved that
\begin{equation}
\label{A_eps appr in L2 Introduction}
\Vert (\mathcal{A}_\varepsilon +I)^{-1}-(\mathcal{A}^0 +I)^{-1}\Vert _{L_2(\mathbb{R}^d)\rightarrow L_2(\mathbb{R}^d)}\leqslant C\varepsilon .
\end{equation}
Here $\mathcal{A}^0=b(\mathbf{D})^*g^0b(\mathbf{D})$ is the {\it effective operator} with a constant {\it effective matrix }$g^0$.
The definition of the effective matrix (see Subsection~\ref{Subsection Effective matrix} below) is well known
in homogenization theory. Next, in \cite{BSu06} approximation of the resolvent
$(\mathcal{A}_\varepsilon +I)^{-1}$ in the norm of operators acting from $L_2(\mathbb{R}^d;\mathbb{C}^n)$ to the Sobolev space  $H^1(\mathbb{R}^d;\mathbb{C}^n)$ was obtained:
\begin{equation}
\label{A_eps appr L_2->H^1 Introduction}
\Vert (\mathcal{A}_\varepsilon +I)^{-1}-(\mathcal{A}^0 +I)^{-1}-\varepsilon \mathcal{K}(\varepsilon )\Vert _{L_2(\mathbb{R}^d)\rightarrow H^1(\mathbb{R}^d)}\leqslant C\varepsilon .
\end{equation}
Here $\mathcal{K}(\varepsilon)$ is a {\it corrector}. The corrector has zero order with respect to $\varepsilon$,
but it  involves  rapidly oscillating factors. Therefore,
$\Vert \mathcal{K}(\varepsilon )\Vert _{L_2\rightarrow H^1}=O(\varepsilon ^{-1})$.
Estimates of the form \eqref{A_eps appr in L2 Introduction} and \eqref{A_eps appr L_2->H^1 Introduction}
are called {\it operator error estimates}; they are order-sharp, and the constants in estimates are controlled explicitly
in terms of the problem data. The method of \cite{BSu,BSu05,BSu06} is based on the scaling transformation, the Floquet-Bloch theory,
and the analytic perturbation theory.

Later the method of \cite{BSu,BSu05,BSu06} was developed by T.~A.~Suslina \cite{Su08,SuAA,SuAA14}
for the operator \eqref{B_eps from Introduction}. In \cite{SuAA}, the following analogs of estimates
\eqref{A_eps appr in L2 Introduction} and \eqref{A_eps appr L_2->H^1 Introduction} were obtained:
\begin{align}
\label{B_eps approx in L_2 by Su Introduction}
\Vert & (\mathcal{B}_\varepsilon +\lambda Q_0^\varepsilon )^{-1}-(\mathcal{B}^0+\lambda \overline{Q_0})^{-1}\Vert _{L_2(\mathbb{R}^d)\rightarrow L_2(\mathbb{R}^d)}\leqslant C\varepsilon ,\\
\label{B_eps approx L_2->H^1 by Su Introduction}
\Vert & (\mathcal{B}_\varepsilon +\lambda Q_0^\varepsilon )^{-1}-(\mathcal{B}^0+\lambda \overline{Q_0})^{-1}-\varepsilon K(\varepsilon )\Vert _{L_2(\mathbb{R}^d)\rightarrow H^1(\mathbb{R}^d)}\leqslant C\varepsilon .
\end{align}
Here the real-valued parameter $\lambda$ is such that the operator
 $\mathcal{B}_\varepsilon +\lambda Q_0^\varepsilon$ is positive definite; $\mathcal{B}^0$ is the corresponding effective operator
with constant coefficients. Estimates \eqref{B_eps approx in L_2 by Su Introduction} and \eqref{B_eps approx L_2->H^1 by Su Introduction}
are order-sharp, and the constants in estimates are controlled explicitly in terms of the problem data
and $\vert \lambda \vert $ (however, in the cited papers
the optimal dependence of the constants in estimates on the spectral parameter $\lambda$ was not searched out).

A different approach to operator error estimates in homogenization theory was suggested by V.~V.~Zhikov.
In  \cite{Zh1,Zh2} and \cite{ZhPas}, estimates of the form  \eqref{A_eps appr in L2 Introduction}
and \eqref{A_eps appr L_2->H^1 Introduction} for the acoustics operator and the operator of elasticity theory were obtained.
The method was based on analysis of the first order approximation to the solution and introduction of an additional parameter.
 Besides the problems in $\mathbb{R}^d$, in \cite{Zh1,Zh2,ZhPas} homogenization problems for elliptic equations
in a bounded domain $\mathcal{O}\subset \mathbb{R}^d$ with the Dirichlet or Neumann boundary conditions
were studied.

Also, operator error estimates for the Dirichlet and Neumann problems
for a second order elliptic equation in a bounded domain (without lower order terms)
have been studied by different methods in the papers \cite{Gr1, Gr2}, \cite{KeLiS},
\cite{PSu}, \cite{Su13,Su_SIAM,Su14,Su15}; see a detailed survey in the introduction to \cite{Su_SIAM,Su15}.

In the presence of the lower order terms,  homogenization problem for the operator of the form
\eqref{B_eps from Introduction} in ${\mathbb R}^d$ was studied in the paper \cite{Bo} by D.~I.~Borisov.
In \cite{Bo}, expression for the effective operator $\mathcal{B}^0$ was found and error estimates of the form
\eqref{B_eps approx in L_2 by Su Introduction} and \eqref{B_eps approx L_2->H^1 by Su Introduction} were obtained.
Moreover, it was assumed that the coefficients of the operator depend
not only on the rapid variable, but also on the slow variable.
However, in \cite{Bo} it was assumed that the coefficients are sufficiently smooth.

Up to now, we have discussed the results about approximation of the resolvent at a fixed regular point.
Approximations of the operator \hbox{$(\mathcal{A}_\varepsilon -\zeta I)^{-1}$}
with error estimates depending on $\varepsilon$ and $\zeta =\vert \zeta \vert e^{i\phi}\in \mathbb{C}\setminus\mathbb{R}_+$
were obtained in the recent papers \cite{Su14,Su15}. It was proved that
\begin{align}
\label{A_eps appr 2 parameters in L2 by Su Introduction}
\Vert &(\mathcal{A}_\varepsilon -\zeta I)^{-1}-(\mathcal{A}^0-\zeta I)^{-1}\Vert _{L_2(\mathbb{R}^d)\rightarrow L_2(\mathbb{R}^d)}\leqslant C(\phi)\vert \zeta \vert ^{-1/2}\varepsilon ,\\
\label{A_eps appr 2 parameters in L2->H1 by Su Introduction}
\Vert &(\mathcal{A}_\varepsilon -\zeta I)^{-1}-(\mathcal{A}^0 -\zeta I)^{-1}-\varepsilon \mathcal{K}(\varepsilon ;\zeta )\Vert _{L_2(\mathbb{R}^d)\rightarrow H^1(\mathbb{R}^d)}\leqslant C(\phi)(1+\vert \zeta \vert ^{-1/2})\varepsilon.
\end{align}
The dependence of the constants in estimates
\eqref{A_eps appr 2 parameters in L2 by Su Introduction} and \eqref{A_eps appr 2 parameters in L2->H1 by Su Introduction}
on the angle $\phi$ was traced. Estimates   \eqref{A_eps appr 2 parameters in L2 by Su Introduction}
and \eqref{A_eps appr 2 parameters in L2->H1 by Su Introduction} are two-parametric (with respect to $\varepsilon$ and $\vert \zeta \vert$);
they are uniform with respect to $\phi$ in any sector $\phi \in [\phi _0 ,2\pi -\phi _0]$ with arbitrarily small $\phi _0>0$.
Also, in \cite{Su14,Su15}  the operators  $\mathcal{A}_{D,\varepsilon}$ and $\mathcal{A}_{N,\varepsilon}$
given by the expression \eqref{A_eps in Introduction} in a bounded domain with the Dirichlet or Neumann boundary conditions
were studied. Approximations of the resolvents of these operators with two-parametric error estimates
were found.

Investigation  of the two-parametric estimates of the form
\eqref{A_eps appr 2 parameters in L2 by Su Introduction} and \eqref{A_eps appr 2 parameters in L2->H1 by Su Introduction}
was stimulated by the study of homogenization for the parabolic problems.
This study is based on the following representation for the operator exponential:
\begin{equation*}
e^{-\mathcal{A}_{\dag ,\varepsilon}t}=-(2\pi i)^{-1}\int _\gamma e^{-\zeta t}(\mathcal{A}_{\dag ,\varepsilon}-\zeta I)^{-1}\,d\zeta ,\quad \dag =D,N,
\end{equation*}
where $\gamma \subset \mathbb{C}$ is a contour on the complex plane enclosing the spectrum of the operator
$\mathcal{A}_{\dag ,\varepsilon}$ (in the positive direction). See details in  \cite{MSu1,MSu2}.

\subsection{Main results} Before we formulate the results, it is convenient to turn to the
nonnegative operator $B_\varepsilon =\mathcal{B}_\varepsilon +cQ_0^\varepsilon $, choosing an appropriate
constant $c$. Then the corresponding effective operator is $B^0=\mathcal{B}^0+c\overline{Q_0}$. {\it Our goal} is to approximate
the generalized resolvent $(B_\varepsilon -\zeta Q_0^\varepsilon )^{-1}$ with error estimates depending on $\varepsilon$
and the spectral parameter $\zeta$.

Our main results are the following estimates:
\begin{align}
\label{main result L2 Introduction}
\Vert &(B_\varepsilon -\zeta Q_0^\varepsilon )^{-1}-(B^0-\zeta \overline{Q_0})^{-1}\Vert _{L_2(\mathbb{R}^d)\rightarrow L_2(\mathbb{R}^d)}\leqslant C(\phi)\varepsilon \vert \zeta \vert ^{-1/2},\\
\label{main result L2->H1 Introduction}
\begin{split}
\Vert  &(B_\varepsilon -\zeta Q_0^\varepsilon )^{-1}-(B^0-\zeta \overline{Q_0})^{-1}-\varepsilon K(\varepsilon ;\zeta )\Vert _{L_2(\mathbb{R}^d)\rightarrow H^1(\mathbb{R}^d)}
\leqslant C(\phi) \varepsilon ,
\end{split}
\end{align}
for $\zeta \in \mathbb{C}\setminus\mathbb{R}_+$, $\vert \zeta \vert \geqslant 1$, and $0<\varepsilon\leqslant 1$.
The dependence of the constants in estimates on the angle $\phi=\mathrm{arg}\,\zeta$ is traced.
The two-parametric estimates \eqref{main result L2 Introduction} and \eqref{main result L2->H1 Introduction}
are uniform with respect to $\phi$ in any domain \hbox{$\lbrace \zeta =\vert \zeta\vert e^{i\phi} \in \mathbb{C} : \vert \zeta \vert \geqslant 1 , \phi _0 \leqslant\phi \leqslant 2\pi -\phi _0\rbrace$}
with arbitrarily small $\phi _0>0$.

In the general case, the corrector in  \eqref{main result L2->H1 Introduction} contains a smoothing operator.
We distinguish the cases where a simpler corrector can be used.

Also, we find approximation in the $(L_2\rightarrow L_2)$-norm for
the operator $g^\varepsilon b(\mathbf{D})(B_\varepsilon -\zeta Q_0^\varepsilon )^{-1}$ corresponding to the flux.

Estimates \eqref{main result L2 Introduction} and \eqref{main result L2->H1 Introduction}
are generalizations of estimates \eqref{A_eps appr 2 parameters in L2 by Su Introduction} and
\eqref{A_eps appr 2 parameters in L2->H1 by Su Introduction} from \cite{Su14,Su15}
for the case of the operator $B_\varepsilon$ including the lower order terms.
However, there is a difference:
estimates \eqref{A_eps appr 2 parameters in L2 by Su Introduction} and \eqref{A_eps appr 2 parameters in L2->H1 by Su Introduction}
are valid for all $\zeta \in \mathbb{C}\setminus\mathbb{R} _+$, while
estimates  \eqref{main result L2 Introduction} and \eqref{main result L2->H1 Introduction}
are proved under the additional assumption that $\vert \zeta \vert \geqslant 1$.
This is related to the presence of the lower order terms.

For completeness, we find approximation of the operator \hbox{$(B_\varepsilon -\zeta Q_0^\varepsilon )^{-1}$}
in a wider domain of the parameter $\zeta$; the corresponding error estimates have a different behavior
with respect to $\zeta$. (See Section \ref{Section another approximation} below.)

\subsection{Method}
The method is based on application of the known results at a fixed point, the scaling transformation, and
appropriate identities for generalized resolvents.
We consider  the auxiliary operator family $B_\varepsilon (\vartheta)$ depending on the additional parameter
$0 < \vartheta\leqslant 1$.  The operator $B_\varepsilon (\vartheta)$ is given by
$B_\varepsilon (\vartheta )=b (\mathbf{D})^*g^\varepsilon b(\mathbf{D})+\vartheta\sum _{j=1}^d \left(a_j^\varepsilon D_j +D_j(a_j^\varepsilon)^*
\right)+\vartheta ^2 ({Q}^\varepsilon + c Q_0^\varepsilon ).$
Estimates \eqref{B_eps approx in L_2 by Su Introduction} and \eqref{B_eps approx L_2->H^1 by Su Introduction}
at a fixed point $\lambda$ are valid for  $(B_\varepsilon (\vartheta )+\lambda Q_0^\varepsilon)^{-1}$ with the common constants for all $0< \vartheta\leqslant 1$.

Let us discuss the proof of estimate \eqref{main result L2 Introduction}.
Using \eqref{B_eps approx in L_2 by Su Introduction} for  $B_\varepsilon (\vartheta )$, by the scaling transformation,
we obtain
\begin{equation}
\label{B(eps;theta) approx L2 Introduction}
\begin{split}
\Vert & (B(\varepsilon ;\vartheta)+\lambda \varepsilon ^2 Q_0)^{-1}-(B^0(\varepsilon ;\vartheta )+\lambda\varepsilon ^2 \overline{Q_0})^{-1}\Vert _{L_2(\mathbb{R}^d)\rightarrow L_2(\mathbb{R}^d)}\\
&\leqslant C\varepsilon ^{-1},\quad 0 < \vartheta\leqslant 1, \quad 0<\varepsilon\leqslant \vartheta ^{-1}.
\end{split}
\end{equation}
(For $1<\varepsilon\leqslant\vartheta ^{-1}$ we use rather rough estimates.) Here
\begin{equation*}
\begin{split}
B(\varepsilon ;\vartheta )&=b (\mathbf{D})^*g(\mathbf{x})b(\mathbf{D})+\vartheta\varepsilon\sum _{j=1}^d \left(a_j(\mathbf{x})D_j +D_ja_j(\mathbf{x})^*\right)\\
&+\vartheta ^2\varepsilon ^2 Q(\mathbf{x})+\vartheta ^2\varepsilon ^2 c\, Q_0(\mathbf{x}),
\end{split}
\end{equation*}
and the operator $B^0(\varepsilon ;\vartheta )$ is obtained from the effective operator $B^0$ in a~similar way.
By an appropriate identity for the generalized resolvents, we carry  estimate
\eqref{B(eps;theta) approx L2 Introduction} over to the point $\widehat{\zeta}=e^{i\phi}$, $\phi \in (0,2\pi)$:
\begin{equation*}
\Vert (B(\varepsilon ;\vartheta )-\widehat{\zeta}\varepsilon ^2 Q_0)^{-1}-(B^0(\varepsilon ;\vartheta )-\widehat{\zeta}\varepsilon ^2\overline{Q_0})^{-1}\Vert _{L_2(\mathbb{R}^d)\rightarrow L_2(\mathbb{R}^d)}\leqslant C(\phi)\varepsilon ^{-1}.
\end{equation*}
In this inequality, we put $\varepsilon =\widetilde{\varepsilon}\vert \zeta\vert ^{1/2}$ and $\vartheta =\vert\zeta\vert ^{-1/2}$, where \hbox{$0<\widetilde{\varepsilon}\leqslant 1$}.
The restriction $\vert\zeta\vert\geqslant 1$ ensures that \hbox{$0< \vartheta\leqslant 1$}.
By the inverse scaling transformation, renaming $\widetilde{\varepsilon}=:\varepsilon$, we arrive at \eqref{main result L2 Introduction}.
Estimate \eqref{main result L2->H1 Introduction} is checked by the same method.

The trick based on the scaling transformation and the appropriate resolvent identities
was applied before in \cite{Su14,Su15}.
In the present paper, we use one more trick, namely, introduction of the additional parameter $\vartheta$;
this is related to the presence of the lower order terms.

The authors plan to apply the results of the present paper
to homogenization of elliptic systems in a bounded domain $\mathcal{O}\subset\mathbb{R}^d$;
a separate paper \cite{MSu_prep} is in preparation.

\subsection{Plan of the paper} The paper consists of nine sections.
In Section\ref{Section Preliminaries}, the class of operators is introduced, the effective operator is described, and
the results of the paper \cite{SuAA} are formulated. Section \ref{Section Lemmas} contains the auxiliary material.
In Section \ref{Section Steklov smoothing}, from  \eqref{B_eps approx L_2->H^1 by Su Introduction} we deduce a similar estimate, but with
a different smoothing operator in the corrector (we turn to the Steklov smoothing, since it is more convenient
for further investigation of the problems in a bounded domain). Main results are formulated in Section \ref{Section main results}.
The proof of estimate \eqref{main result L2 Introduction} is given in Section  \ref{Section proof of main result L2}.
Section \ref{Section Proof of main result in H1} contains the proof of estimate \eqref{main result L2->H1 Introduction}
and the proof of approximation for the ,,flux''\,$g^\varepsilon b(\mathbf{D})(B_\varepsilon -\zeta Q_0^\varepsilon )^{-1}$.
In Section \ref{section_special}, we distinguish the cases where the smoothing operator in the corrector can be removed,
and discuss some special cases. Approximations of the generalized resolvent for a wider domain of $\zeta$
are obtained in Section~\ref{Section another approximation}.
In Section~\ref{Section Applications}, some applications of the general results are discussed. The scalar elliptic operator
of the form
$$
{\mathcal B}_\varepsilon = ({\mathbf D}- {\mathbf A}^\varepsilon({\mathbf x}))^* g^\varepsilon({\mathbf x})
({\mathbf D}- {\mathbf A}^\varepsilon({\mathbf x}))
+ \varepsilon^{-1}  v^\varepsilon({\mathbf x}) + {\mathcal V}^\varepsilon({\mathbf x})
$$
is considered; it can be treated as the periodic Schr\"odinger operator with rapidly oscillating metric $g^\varepsilon$,
magnetic potential ${\mathbf A}^\varepsilon$, and electric potential
$\varepsilon^{-1}  v^\varepsilon + {\mathcal V}^\varepsilon$ containing the singular first term.
Also, the periodic Schr\"odinger operator involving a strongly singular potential $\varepsilon^{-2}  \check{v}^\varepsilon$ is studied.

\subsection{Notation} Let $\mathfrak{H}$ and $\mathfrak{H}_*$ be complex separable Hilbert spaces. The symbols
$(\cdot ,\cdot )_\mathfrak{H}$ and $\Vert \cdot \Vert _{\mathfrak{H}}$ stand for the inner product and the norm in $\mathfrak{H}$;
the symbol $\Vert \cdot\Vert _{\mathfrak{H}\rightarrow \mathfrak{H}_*}$ denotes the norm of a continuous linear operator acting
from $\mathfrak{H}$ to $\mathfrak{H}_*$.

The symbols $\langle\cdot ,\cdot\rangle$ and $\vert\cdot\vert$ denote the inner product and the norm in $\mathbb{C}^n$;
$\mathbf{1}_n$ is the identity $(n\times n)$-matrix.
For $z\in\mathbb{C}$, we write  $z^*$ for the complex conjugate number. (This nonstandard
notation is employed because $\overline{g}$ denotes the mean value of a periodic function $g({\mathbf x})$.)
If $a$ is an $(m\times n)$-matrix, then $\vert a\vert$ denotes the norm
of the corresponding operator from $\mathbb{C}^n$ to $\mathbb{C}^m$.
We use the notation $\mathbf{x}=(x_1,\dots,x_d)\in \mathbb{R}^d$, $iD_j=\partial _j=\partial /\partial x_j$, $j=1,\dots ,d$, $\mathbf{D}=-i\nabla=(D_1,\dots ,D_d)$. The $L_p$-classes of  $\mathbb{C}^n$-valued functions in a domain $\mathcal{O}\subset\mathbb{R}^d$ are denoted
by $L_p(\mathcal{O};\mathbb{C}^n)$, $1\leqslant p\leqslant \infty$. The Sobolev classes of $\mathbb{C}^n$-valued functions
in a domain $\mathcal{O}$ are denoted by $H^s(\mathcal{O};\mathbb{C}^n)$.
If $n=1$, we write simply $L_p(\mathcal{O})$, $H^s(\mathcal{O})$, but sometimes we use such abbreviated
notation also for spaces of vector-valued or matrix-valued functions.

We denote $\mathbb{R}_+=[0,\infty)$.
Various constants in estimates are denoted by the symbols
$c$, $\mathfrak c$, $C$, $\mathfrak{C}$ (possibly, with indices and marks).

\section{The class of operators. Approximation of the generalized resolvent $(B_\varepsilon + \lambda_0 Q_0^\varepsilon)^{-1}$}
\label{Section Preliminaries}
\setcounter{section}{1}
\setcounter{equation}{0}
In this section, we introduce the class of operators under consideration, describe the effective operator, and
formulate the results of the paper \cite{SuAA}.

\subsection{Lattices in $\mathbb{R}^d$}\label{Subsection Lattices} Let $\Gamma\subset \mathbb{R}^d$ be a lattice generated by
a basis $\mathbf{a}_1,\dots ,\mathbf{a}_d$:
\begin{equation*}
\Gamma=\left\lbrace \mathbf{a}\in \mathbb{R}^d \,:\,  \mathbf{a}=\sum \limits _{j=1}^d \nu _j \mathbf{a}_j,\; \nu _j\in\mathbb{Z}\right\rbrace .
\end{equation*}
Let $\Omega$ be the elementary cell of the lattice $\Gamma$:
\begin{equation*}
\Omega = \left \lbrace \mathbf{x}\in\mathbb{R}^d \,:\, \mathbf{x}=\sum \limits _{j=1}^d \tau _j\mathbf{a}_j,\; -\frac{1}{2}<\tau _j <\frac{1}{2}\right \rbrace .
\end{equation*}
We denote $\vert \Omega \vert= \textnormal{meas}\,\Omega$ and $2 r_1 = \textnormal{diam}\,\Omega$.

The basis $\mathbf{b}_1,\dots ,\mathbf{b}_d\in \mathbb{R}^d$ dual to the basis $\mathbf{a}_1,\dots ,\mathbf{a}_d$
is defined by the relations $\langle \mathbf{b}_j,\mathbf{a}_i\rangle =2\pi \delta _{ji}$, where $\delta _{ji}$
is the Kronecker delta. The lattice
\begin{equation*}
\widetilde{\Gamma}=\left\lbrace \mathbf{b}\in\mathbb{R}^d\,:\,\mathbf{b}=\sum\limits _{j=1}^d \mu _j\mathbf{b}_j,\;
\mu _j\in \mathbb{Z}\right\rbrace
\end{equation*}
generated by the dual basis is called the lattice dual to $\Gamma$.
We denote by $\widetilde{\Omega}$  the central Brillouin zone of the lattice $\widetilde{\Gamma}$:
\begin{equation*}
\widetilde{\Omega}=\left \lbrace \mathbf{k}\in\mathbb{R}^d \,:\,\vert \mathbf{k}\vert <\vert \mathbf{k}-\mathbf{b}\vert ,\;0\neq \mathbf{b}\in \widetilde{\Gamma}\right\rbrace.
\end{equation*}
Note that $\widetilde{\Omega}$ is a fundamental domain of $\widetilde{\Gamma}$.
Let $r_0$ be the radius of the ball inscribed in $\mathrm{clos}\,\widetilde{\Omega}$, i.~e.,
$2r_0=\min_{0\ne {\mathbf b} \in \widetilde{\Gamma}} |{\mathbf b}|$.

For $\Gamma$-periodic measurable matrix-valued functions, we systematically use the following notation:
$$
\begin{aligned}
&f^\varepsilon (\mathbf{x}):=f(\mathbf{x}/\varepsilon),\ \varepsilon >0;
\cr
&\overline{f}:= |\Omega|^{-1} \int_\Omega f({\mathbf x})\,d{\mathbf x}, \quad
\underline{f}:= \biggl(|\Omega|^{-1} \int_\Omega f({\mathbf x})^{-1}\,d{\mathbf x}\biggr)^{-1}.
\end{aligned}
$$
Here, in the definition of $\overline{f}$ it is assumed that $f \in L_{1,\text{loc}}({\mathbb R}^d)$,
and in the definition of $\underline{f}$ is is assumed that the matrix $f$ is square and nondegenerate, and $f^{-1} \in L_{1,\text{loc}}({\mathbb R}^d)$. Let $[f^\varepsilon ]$ denote the operator of multiplication by the matrix-valued function $f^\varepsilon (\mathbf{x})$.

By $\widetilde{H}^1(\Omega)$ we denote the subspace of all functions in $H^1(\Omega)$
whose $\Gamma$-periodic extension to $\mathbb{R}^d$ belongs to $H^1_{\textnormal{loc}}(\mathbb{R}^d)$.

\subsection{The smoothing operator $\Pi _\varepsilon$}
Let $\Pi _\varepsilon ^{(k)}$, $\varepsilon >0$, be the  pseudodifferential operator with the symbol $\chi _{\widetilde{\Omega}/\varepsilon}(\boldsymbol{\xi})$ acting in $L_2(\mathbb{R}^d;\mathbb{C}^k)$ (where $k \in {\mathbb N}$), i.~e.,
\begin{equation}
\label{Pi_eps}
\begin{split}
(\Pi _\varepsilon ^{(k)} \mathbf{u})(\mathbf{x})=(2\pi )^{-d/2}\int _{\widetilde{\Omega}/\varepsilon}e^{i\langle \mathbf{x},\boldsymbol{\xi}\rangle }\widehat{\mathbf{u}}(\boldsymbol{\xi})\,d\boldsymbol{\xi},\quad \mathbf{u}\in L_2(\mathbb{R}^d;\mathbb{C}^k).
\end{split}
\end{equation}
Here $\widehat{\mathbf{u}}$ is the Fourier-image of the function $\mathbf{u}$.
Obviously, for  $\mathbf{u}\in H^s(\mathbb{R}^d;\mathbb{C}^k)$
we have $\Pi_\varepsilon ^{(k)}\mathbf{D}^\alpha \mathbf{u} =\mathbf{D}^\alpha \Pi _\varepsilon ^{(k)}\mathbf{u}$ for any
multiindex $\alpha$ such that $\vert \alpha \vert \leqslant s$.
In what follows, we drop the index $k$ in the notation $\Pi _\varepsilon ^{(k)}$ and write simply $\Pi_\varepsilon$.

Below we need the following properties of the operator $\Pi _\varepsilon$ proved in \cite[Proposition~1.4]{PSu} and \cite[Subsection~10.2]{BSu06}.

\begin{proposition}
\label{Proposition Pi_eps -I}
For any function $\mathbf{u}\in H^1(\mathbb{R}^d;\mathbb{C}^k)$ and $\varepsilon >0$ we have
\begin{equation*}
\Vert \Pi _\varepsilon \mathbf{u}-\mathbf{u}\Vert _{L_2(\mathbb{R}^d)}\leqslant \varepsilon r_0^{-1}\Vert \mathbf{D}\mathbf{u}\Vert _{L_2(\mathbb{R}^d)}.
\end{equation*}
\end{proposition}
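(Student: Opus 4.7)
The plan is to work on the Fourier side, where $\Pi_\varepsilon$ acts as multiplication by $\chi_{\widetilde{\Omega}/\varepsilon}$, and to exploit the fact that the set where the cutoff fails ($\boldsymbol{\xi} \notin \widetilde{\Omega}/\varepsilon$) is separated from the origin by a distance $r_0/\varepsilon$, which is precisely what allows us to trade a pointwise loss for a factor of $\varepsilon r_0^{-1}$ against the gradient norm.

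First I would apply the Plancherel identity, using definition \eqref{Pi_eps}, to write
\begin{equation*}
\|\Pi_\varepsilon \mathbf{u} - \mathbf{u}\|_{L_2(\mathbb{R}^d)}^2 = \int_{\mathbb{R}^d \setminus (\widetilde{\Omega}/\varepsilon)} |\widehat{\mathbf{u}}(\boldsymbol{\xi})|^2\,d\boldsymbol{\xi}.
\end{equation*}
Next, using that the ball of radius $r_0$ centered at the origin is inscribed in $\mathrm{clos}\,\widetilde{\Omega}$, I would observe that $\widetilde{\Omega}/\varepsilon$ contains the ball of radius $r_0/\varepsilon$, and hence $\boldsymbol{\xi} \notin \widetilde{\Omega}/\varepsilon$ forces $|\boldsymbol{\xi}| \ge r_0/\varepsilon$, i.e.\ $1 \le \varepsilon^2 r_0^{-2} |\boldsymbol{\xi}|^2$ on the domain of integration.

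Inserting this pointwise bound and enlarging the integration domain to all of $\mathbb{R}^d$ yields
\begin{equation*}
\int_{\mathbb{R}^d \setminus (\widetilde{\Omega}/\varepsilon)} |\widehat{\mathbf{u}}(\boldsymbol{\xi})|^2\,d\boldsymbol{\xi} \le \varepsilon^2 r_0^{-2} \int_{\mathbb{R}^d} |\boldsymbol{\xi}|^2 |\widehat{\mathbf{u}}(\boldsymbol{\xi})|^2\,d\boldsymbol{\xi} = \varepsilon^2 r_0^{-2} \|\mathbf{D}\mathbf{u}\|_{L_2(\mathbb{R}^d)}^2,
\end{equation*}
the last equality again by Plancherel together with $\widehat{D_j u}(\boldsymbol{\xi}) = \xi_j \widehat{u}(\boldsymbol{\xi})$. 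Taking square roots gives the claim.

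There is no real obstacle here; the only point worth checking carefully is the geometric fact that the inscribed-ball condition $2r_0 = \min_{0 \neq \mathbf{b} \in \widetilde{\Gamma}} |\mathbf{b}|$, combined with the definition of the central Brillouin zone as the Voronoi cell of $\widetilde{\Gamma}$ at the origin, indeed places the ball $\{|\mathbf{k}| < r_0\}$ inside $\widetilde{\Omega}$ (so that the scaled ball $\{|\boldsymbol{\xi}| < r_0/\varepsilon\}$ sits inside $\widetilde{\Omega}/\varepsilon$). Everything else is a one-line Plancherel computation.
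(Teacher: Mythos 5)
Your proof is correct and is the standard Plancherel argument; since the paper cites this fact from \cite{PSu} and \cite{BSu06} without reproducing the proof, there is no in-text argument to compare against, but this is exactly the argument used in those references. The geometric point you flag is sound: for $|\mathbf{k}|<r_0$ and any $0\neq\mathbf{b}\in\widetilde{\Gamma}$ one has $|\mathbf{k}-\mathbf{b}|\ge|\mathbf{b}|-|\mathbf{k}|>2r_0-r_0=r_0>|\mathbf{k}|$, so the open ball of radius $r_0$ lies in $\widetilde{\Omega}$ and hence $\boldsymbol{\xi}\notin\widetilde{\Omega}/\varepsilon$ forces $|\boldsymbol{\xi}|\ge r_0/\varepsilon$.
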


\begin{proposition}
\label{Proposition Pi_eps L2->L2}
Let $f$ be a $\Gamma$-periodic function in  $\mathbb{R}^d$ such that $f\in L_2(\Omega)$.
Then the operator $[f^\varepsilon ]\Pi_\varepsilon$ is continuous in $L_2(\mathbb{R}^d;\mathbb{C}^k)$,
and
\begin{equation*}
\Vert [f^\varepsilon ]\Pi _\varepsilon \Vert_{L_2(\mathbb{R}^d)\rightarrow L_2(\mathbb{R}^d)}\leqslant \vert \Omega \vert ^{-1/2}\Vert f\Vert _{L_2(\Omega)},\quad \varepsilon >0.
\end{equation*}
\end{proposition}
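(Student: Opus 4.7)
The plan is to prove this by a direct Plancherel calculation, exploiting the fact that $\{\widetilde{\Omega}/\varepsilon + \mathbf{b}/\varepsilon : \mathbf{b}\in\widetilde{\Gamma}\}$ tiles $\mathbb{R}^d$ (up to a null set), together with the Fourier series expansion of the $\Gamma$-periodic factor $f$.

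First, I would fix $\mathbf{u}\in L_2(\mathbb{R}^d;\mathbb{C}^k)$ and set $\mathbf{v}:=\Pi_\varepsilon\mathbf{u}$, so that $\widehat{\mathbf{v}}=\widehat{\mathbf{u}}\,\chi_{\widetilde{\Omega}/\varepsilon}$, and in particular $\|\mathbf{v}\|_{L_2(\mathbb{R}^d)}\leq\|\mathbf{u}\|_{L_2(\mathbb{R}^d)}$. Next, expand the $\Gamma$-periodic function $f$ in the Fourier series over the dual lattice,
\begin{equation*}
f(\mathbf{x}) = \sum_{\mathbf{b}\in\widetilde{\Gamma}} \hat{f}_{\mathbf{b}}\, e^{i\langle \mathbf{b},\mathbf{x}\rangle},
\qquad
\hat{f}_{\mathbf{b}} = |\Omega|^{-1}\int_\Omega f(\mathbf{x})e^{-i\langle \mathbf{b},\mathbf{x}\rangle}\,d\mathbf{x},
\end{equation*}
Parseval's identity then giving $\sum_{\mathbf{b}\in\widetilde{\Gamma}}|\hat{f}_{\mathbf{b}}|^2 = |\Omega|^{-1}\|f\|^2_{L_2(\Omega)}$. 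Rescaling by $\varepsilon$ yields $f^\varepsilon(\mathbf{x}) = \sum_{\mathbf{b}\in\widetilde{\Gamma}} \hat{f}_{\mathbf{b}}\, e^{i\langle \mathbf{b}/\varepsilon,\mathbf{x}\rangle}$, so taking the Fourier transform of $f^\varepsilon\mathbf{v}$ produces
\begin{equation*}
\widehat{f^\varepsilon \mathbf{v}}(\boldsymbol{\xi}) = \sum_{\mathbf{b}\in\widetilde{\Gamma}} \hat{f}_{\mathbf{b}}\,\widehat{\mathbf{v}}(\boldsymbol{\xi}-\mathbf{b}/\varepsilon).
\end{equation*}
(For matrix-valued $f$, $\hat{f}_{\mathbf{b}}$ are matrices and $|\cdot|$ is the operator norm; the argument proceeds identically with the pointwise bound $|\hat{f}_{\mathbf{b}}\,\widehat{\mathbf{v}}(\cdot)|\leq|\hat{f}_{\mathbf{b}}|\,|\widehat{\mathbf{v}}(\cdot)|$.)

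The crucial step is the observation that the $\mathbf{b}$th summand is supported in $\widetilde{\Omega}/\varepsilon+\mathbf{b}/\varepsilon$, and since $\widetilde{\Omega}$ is a fundamental domain of $\widetilde{\Gamma}$ these translates are pairwise disjoint up to measure zero. Therefore at a.e.\ point $\boldsymbol{\xi}$ at most one term of the sum is nonzero, and
\begin{equation*}
|\widehat{f^\varepsilon\mathbf{v}}(\boldsymbol{\xi})|^2 = \sum_{\mathbf{b}\in\widetilde{\Gamma}} |\hat{f}_{\mathbf{b}}|^2\,|\widehat{\mathbf{v}}(\boldsymbol{\xi}-\mathbf{b}/\varepsilon)|^2 \quad \text{a.e.}
\end{equation*}
Integrating in $\boldsymbol{\xi}$, using translation invariance of Lebesgue measure and Plancherel, gives
\begin{equation*}
\|f^\varepsilon\mathbf{v}\|_{L_2(\mathbb{R}^d)}^2 = \Bigl(\sum_{\mathbf{b}\in\widetilde{\Gamma}}|\hat{f}_{\mathbf{b}}|^2\Bigr)\|\widehat{\mathbf{v}}\|_{L_2(\mathbb{R}^d)}^2 = |\Omega|^{-1}\|f\|_{L_2(\Omega)}^2\,\|\mathbf{v}\|_{L_2(\mathbb{R}^d)}^2,
\end{equation*}
and combining with $\|\mathbf{v}\|_{L_2}\leq\|\mathbf{u}\|_{L_2}$ yields the claimed estimate.

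I do not anticipate any real obstacle here; the entire argument is a one-line Plancherel computation once one sees that the smoothing operator $\Pi_\varepsilon$ confines frequencies to the cell whose translates by $\varepsilon^{-1}\widetilde{\Gamma}$ exactly decouple the frequency shifts produced by multiplication by the periodic factor $f^\varepsilon$. The only point requiring a brief justification is the interchange of sum and integral (the disjoint-support property makes this trivial, since at a.e.\ $\boldsymbol{\xi}$ the sum reduces to a single term). For matrix-valued $f$, the identity above becomes an inequality, which is all that is needed.
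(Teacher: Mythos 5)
Your proof takes the same Fourier--Plancherel decoupling route as the cited references (Pakhnin--Suslina, Birman--Suslina): restrict frequencies to $\widetilde\Omega/\varepsilon$, expand the periodic factor in a Fourier series over $\widetilde\Gamma/\varepsilon$, and exploit that the translates $\widetilde\Omega/\varepsilon+\mathbf b/\varepsilon$ are pairwise disjoint. For scalar $f$ the argument is complete and in fact gives the bound as an identity.

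For matrix-valued $f$ --- which is the case the paper actually needs, since the proposition is applied with $f=\Lambda$ and $f=\widetilde\Lambda$ --- there is a small but real gap in the Parseval step. You declare that $|\hat f_{\mathbf b}|$ denotes the operator norm, and then write
$\sum_{\mathbf b}|\hat f_{\mathbf b}|^2=|\Omega|^{-1}\|f\|^2_{L_2(\Omega)}$
as ``Parseval''s identity''; but with operator norms this is not an identity, and it is not even an inequality in the direction you need if $\|f\|_{L_2(\Omega)}$ were also read with pointwise operator norms (a diagonal $f$ with two distinct lattice harmonics gives $\sum_{\mathbf b}|\hat f_{\mathbf b}|^2=2$ while $|\Omega|^{-1}\int_\Omega|f|^2=1$). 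Your later remark that ``the identity becomes an inequality'' for matrix $f$ addresses the pointwise bound $|\hat f_{\mathbf b}\hat{\mathbf v}|\le|\hat f_{\mathbf b}|\,|\hat{\mathbf v}|$, not this Parseval step. The easiest repair, within your scheme, is to use operator norm $\le$ Hilbert--Schmidt norm:
\begin{equation*}
\sum_{\mathbf b\in\widetilde\Gamma}|\hat f_{\mathbf b}|^2\le\sum_{\mathbf b\in\widetilde\Gamma}\|\hat f_{\mathbf b}\|^2_{\mathrm{HS}}=|\Omega|^{-1}\int_\Omega\|f(\mathbf x)\|^2_{\mathrm{HS}}\,d\mathbf x,
\end{equation*}
together with the standing convention in these papers that $\|f\|_{L_2(\Omega)}$ for a matrix-valued $f$ is the $L_2$-norm of the Frobenius (Hilbert--Schmidt) norm. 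Alternatively, a slightly sharper argument avoids throwing away the matrix structure: after shifting variables in each disjointly supported term,
\begin{equation*}
\|f^\varepsilon\mathbf v\|^2_{L_2(\mathbb R^d)}=\int_{\widetilde\Omega/\varepsilon}\Bigl\langle\Bigl(\sum_{\mathbf b\in\widetilde\Gamma}\hat f_{\mathbf b}^*\hat f_{\mathbf b}\Bigr)\widehat{\mathbf v}(\boldsymbol\eta),\widehat{\mathbf v}(\boldsymbol\eta)\Bigr\rangle\,d\boldsymbol\eta,
\end{equation*}
and the convolution formula for Fourier coefficients gives $\sum_{\mathbf b}\hat f_{\mathbf b}^*\hat f_{\mathbf b}=\overline{f^*f}$, whence $\|[f^\varepsilon]\Pi_\varepsilon\|\le|\overline{f^*f}|^{1/2}\le|\Omega|^{-1/2}\|f\|_{L_2(\Omega)}$. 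Everything else in your argument (the frequency-shift formula, the disjoint supports, the a.e.\ single-term reduction) is exactly right.
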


\subsection{The operator $\mathcal{A}$}
\label{Subsection Operator A} In  $L_2(\mathbb{R}^d;\mathbb{C}^n)$,
we consider the operator $\mathcal{A}$ given formally by the differential expression
\begin{equation*}
\mathcal{A} =b(\mathbf{D})^*g(\mathbf{x})b(\mathbf{D}),\quad \mathbf{x}\in\mathbb{R}^d.
\end{equation*}
Here $g$ is a $\Gamma$-periodic Hermitian matrix-valued function of size $m\times m$, in general, with complex entries.
It is assumed that
\begin{equation}
\label{g}
g(\mathbf{x})>0,\quad g,g^{-1}\in L_\infty (\mathbb{R}^d).
\end{equation}
Next, $b(\mathbf{D})$ is the first order DO with constant
coefficients given by
\begin{equation}
\label{b(D)=}
b(\mathbf{D})=\sum \limits _{l=1}^d b_lD_l.
\end{equation}
Here $b_l$, $l=1,\dots ,d$, are constant $(m\times n)$-matrices, in general, with complex entries.
Let $b(\boldsymbol{\xi})=\sum _{l=1}^d b_l\xi _l $ be the symbol
of the operator $b(\mathbf{D})$. We assume that $m\geqslant n$ and
\begin{equation*}
\textnormal{rank}\,b(\boldsymbol{\xi})=n,\quad 0\neq \boldsymbol{\xi}\in \mathbb{R}^d.
\end{equation*}
This condition is equivalent to the existence of positive constants $\alpha _0$ and $\alpha _1$ such that
\begin{equation}
\label{<b^*b<}
\alpha _0\mathbf{1}_n \leqslant b(\boldsymbol{\theta})^*b(\boldsymbol{\theta})
\leqslant \alpha _1\mathbf{1}_n,\quad
\boldsymbol{\theta}\in \mathbb{S}^{d-1},\quad
0<\alpha _0\leqslant \alpha _1<\infty.
\end{equation}
From  \eqref{<b^*b<} it follows that
\begin{equation}
\label{bl<=}
\vert b_l\vert \leqslant\alpha _1^{1/2},\quad l=1,\dots ,d.
\end{equation}

The precise definition is the following:  $\mathcal{A}$ is the selfadjoint operator in
$L_2(\mathbb{R}^d;\mathbb{C}^n)$ generated by the quadratic form
\begin{equation*}
\mathfrak{a} [\mathbf{u},\mathbf{u}]=\int _{\mathbb{R}^d}\langle g (\mathbf{x})b(\mathbf{D})\mathbf{u}(\mathbf{x}),b(\mathbf{D})\mathbf{u}(\mathbf{x})\rangle \,d\mathbf{x},\quad \mathbf{u}\in H^1(\mathbb{R}^d;\mathbb{C}^n).
\end{equation*}
The form $\mathfrak{a}$ is closed and nonnegative because of the estimates
\begin{equation}
\label{<a<}
\begin{split}
\alpha _0 \Vert g^{-1}\Vert _{L_\infty}^{-1}\int _{\mathbb{R}^d}\vert \mathbf{D}\mathbf{u}(\mathbf{x})\vert ^2\,d\mathbf{x}\leqslant \mathfrak{a} [\mathbf{u},\mathbf{u}]\leqslant \alpha _1 \Vert g\Vert _{L_\infty}\int _{\mathbb{R}^d}\vert \mathbf{D}\mathbf{u}(\mathbf{x})\vert ^2\,d\mathbf{x},\\ \mathbf{u}\in H^1(\mathbb{R}^d;\mathbb{C}^n),
\end{split}
\end{equation}
that follow from \eqref{g} and \eqref{<b^*b<}.

\subsection{The operators $\mathcal{Y}$ and $\mathcal{Y}_{2 }$}
\label{Subsection Operators Y}
We consider the closed operator $\mathcal{Y}$ acting from $L_2(\mathbb{R}^d;\mathbb{C}^n)$ to $L_2(\mathbb{R}^d;\mathbb{C}^{dn})$
and defined by
\begin{equation*}
\mathcal{Y}\mathbf{u}=\mathbf{D}\mathbf{u}=\textnormal{col}\,\lbrace D_1\mathbf{u},\dots ,D_d\mathbf{u}\rbrace ,\quad \mathbf{u}\in H^1(\mathbb{R}^d;\mathbb{C}^n).
\end{equation*}
The lower estimate \eqref{<a<} means that
\begin{equation}
\label{Y<=}
\Vert \mathcal{Y}\mathbf{u}\Vert ^2 _{L_2(\mathbb{R}^d)}\leqslant c_1^2\mathfrak{a}[\mathbf{u},\mathbf{u}],\quad c_1=\alpha _0^{-1/2}\Vert g^{-1}\Vert ^{1/2}_{L_\infty},\quad \mathbf{u}\in H^1(\mathbb{R}^d;\mathbb{C}^n).
\end{equation}

Let $a_j(\mathbf{x})$, $j=1,\dots ,d$,
be $\Gamma $-periodic $(n\times n)$-matrix-valued functions in $\mathbb{R}^d$ (in general, with complex entries) such that
\begin{equation}
\label{a_j}
a_j\in L_\rho (\Omega),\quad \rho =2\enskip\text{for}\enskip d=1,\quad
\rho >d\enskip\text{for}\enskip d\geqslant 2;\quad j=1,\dots ,d.
\end{equation}
We consider the operator $\mathcal{Y}_{2}:L_2(\mathbb{R}^d;\mathbb{C}^{n})\rightarrow L_2(\mathbb{R}^d;\mathbb{C}^{dn})$
that acts as multiplication by the $(dn\times n)$-matrix-valued function consisting of the matrices
$a_j (\mathbf{x})^*$, $j=1,\dots ,d$. In other words,
\begin{equation*}
\mathcal{Y}_{2}\mathbf{u}(\mathbf{x})=\textnormal{col}\,\lbrace a_1 (\mathbf{x})^*\mathbf{u}(\mathbf{x}),\dots ,a_d (\mathbf{x})^*\mathbf{u}(\mathbf{x})\rbrace ,\quad \mathbf{u}\in H^1(\mathbb{R}^d;\mathbb{C}^n).
\end{equation*}
Using the H\"older inequality and the Sobolev embedding theorem,
it is easy to check (see \cite[(5.11)--(5.14)]{SuAA}) that for any $\nu>0$ there exist constants $C_j(\nu)>0$ such that
\begin{equation*}
\begin{split}
\Vert a_j^*\mathbf{u}\Vert ^2 _{L_2(\mathbb{R}^d)}\leqslant \nu \Vert \mathbf{D}\mathbf{u}\Vert ^2_{L_2(\mathbb{R}^d)}+C_j(\nu )\Vert \mathbf{u}\Vert ^2 _{L_2(\mathbb{R}^d)},\\ \mathbf{u}\in H^1(\mathbb{R}^d;\mathbb{C}^n),\;j=1,\dots ,d.
\end{split}
\end{equation*}
Summing over $j$ and taking the lower estimate \eqref{<a<} into account, we see that for any $\nu >0$
there exists a constant $C(\nu)>0$ such that
\begin{equation}
\label{Y2<=}
\Vert \mathcal{Y}_2\mathbf{u}\Vert ^2 _{L_2(\mathbb{R}^d)}\leqslant \nu \mathfrak{a}[\mathbf{u},\mathbf{u}]+C(\nu)\Vert \mathbf{u}\Vert ^2_{L_2(\mathbb{R}^d)},\quad \mathbf{u}\in H^1(\mathbb{R}^d;\mathbb{C}^n).
\end{equation}
For $\nu$ fixed, the constant $C(\nu)$ depends only on $d$, $\rho$, $\alpha _0$,  $\Vert g ^{-1}\Vert _{L_\infty}$,
the norms $\Vert a_j\Vert _{L_\rho (\Omega)}$, $j=1,\dots ,d$,  and the parameters of the lattice $\Gamma$.

\subsection{The form $q$}
\label{Subsection form q}
Suppose that $d\mu(\mathbf{x})$ is a $\Gamma$-periodic Borel $\sigma$-finite measure
in  $\mathbb{R}^d$ with values in the class of Hermitian $(n\times n)$-matrices.
In other words, $d\mu (\mathbf{x})=\lbrace d\mu _{jl}(\mathbf{x})\rbrace$, $j,l=1,\dots ,n$, where
 $d\mu _{jl}(\mathbf{x})$ is a complex-valued $\Gamma$-periodic measure in $\mathbb{R}^d$, and $d\mu _{jl}=d\mu _{lj}^*$. Suppose that the measure $d\mu $ is such that the function $\vert u(\mathbf{x})\vert ^2$ is integrable with respect to each measure
$d\mu _{jl}$ for any $u\in H^1(\mathbb{R}^d)$.

In $L_2(\mathbb{R}^d;\mathbb{C}^n)$, we consider the quadratic form
\begin{equation}
\label{q=}
q[\mathbf{u},\mathbf{u}]=\int _{\mathbb{R}^d}\langle d\mu (\mathbf{x})\mathbf{u},\mathbf{u}\rangle =
\sum \limits _{j,l=1}^n \int _{\mathbb{R}^d}u_l (\mathbf{x})u_j(\mathbf{x})^*\,d\mu _{jl}(\mathbf{x}),\quad \mathbf{u}\in H^1(\mathbb{R}^d;\mathbb{C}^n).
\end{equation}

The measure $d\mu$ is subject to the following condition.

\begin{condition}
\label{Condition on measure mu}
There exist constants $c_0\geqslant 0$, $\widetilde{c}_2\geqslant 0$, $c_3\geqslant 0$, and \break$0\leqslant \widetilde{c}<\alpha _0\Vert g^{-1}\Vert ^{-1}_{L_\infty}$ such that
\begin{equation}
\label{condition mu}
\begin{split}
-\widetilde{c}\Vert \mathbf{D}\mathbf{u}\Vert ^2 _{L_2(\Omega)}&-c_0\Vert \mathbf{u}\Vert ^2 _{L_2(\Omega)}\leqslant \int  _{\Omega}\langle d\mu (\mathbf{x})\mathbf{u},\mathbf{u}\rangle \\
&\leqslant \widetilde{c}_2\Vert \mathbf{D}\mathbf{u}\Vert ^2_{L_2(\Omega)} +c_3\Vert \mathbf{u}\Vert ^2 _{L_2(\Omega)},\quad \mathbf{u}\in H^1(\Omega ;\mathbb{C}^n).
\end{split}
\end{equation}
\end{condition}

For  $\mathbf{u} \in H^1(\mathbb{R}^d;\mathbb{C}^n)$, writing inequalities of the form
\eqref{condition mu} over the shifted cells $\Omega+{\mathbf a}$, ${\mathbf a}\in \Gamma$, and summing up, we obtain
similar inequalities in ${\mathbb R}^d$.
Together with \eqref{<a<} this yields
\begin{equation}
\label{<q<}
\begin{split}
-&(1-\kappa )\mathfrak{a}[\mathbf{u},\mathbf{u}] -c_0\Vert \mathbf{u}\Vert ^2_{L_2(\mathbb{R}^d)}\\
&\leqslant q[\mathbf{u},\mathbf{u}]\leqslant c_2 \mathfrak{a}[\mathbf{u},\mathbf{u}]+c_3\Vert \mathbf{u}\Vert ^2_{L_2(\mathbb{R}^d)},\quad \mathbf{u}\in H^1(\mathbb{R}^d;\mathbb{C}^n),
\end{split}
\end{equation}
where
\begin{equation}
\label{c_2 kappa}
c_2=\widetilde{c}_2\alpha _0^{-1}\Vert g^{-1}\Vert _{L_\infty},\quad \kappa =1-\widetilde{c}\alpha _0^{-1}\Vert g^{-1}\Vert _{L_\infty},\quad 0<\kappa \leqslant 1.
\end{equation}

Examples of forms \eqref{q=} are given in \cite[Subsection 5.5]{SuAA}.
Here we give only the main example (see \cite[Example~5.3]{SuAA}).

\begin{example}
\label{Example where Q in L_s}
{\rm
Suppose that the measure $d\mu$ is absolutely continuous with respect to Lebesgue measure: $d\mu (\mathbf{x})=Q(\mathbf{x})\,d\mathbf{x}$,
where $Q(\mathbf{x})$ is a  $\Gamma$-periodic Hermitian $(n\times n)$-matrix-valued function in $\mathbf{x}\in\mathbb{R}^d$ such that
\begin{equation}
\label{Q in L_s example}
Q\in L_s(\Omega),\quad s=1\ \mbox{for}\ d=1, \quad s>d/2\ \mbox{for}\ d\geqslant 2.
\end{equation}
Then $q[\mathbf{u},\mathbf{u}]=(Q\mathbf{u},\mathbf{u})_{L_2(\mathbb{R}^d)}$, $\mathbf{u}\in H^1(\mathbb{R}^d;\mathbb{C}^n)$,
and for any $\nu >0$ there exists a constant $C_Q(\nu)> 0$ such that
\begin{equation*}
\int _\Omega \vert \langle Q(\mathbf{x})\mathbf{u},\mathbf{u}\rangle \vert \,d\mathbf{x}\leqslant \nu \int _\Omega \vert \mathbf{D}\mathbf{u}\vert ^2\,d\mathbf{x}
+C_Q(\nu)\int _\Omega \vert \mathbf{u}\vert ^2\,d\mathbf{x},\quad\mathbf{u}\in H^1(\Omega ;\mathbb{C}^n).
\end{equation*}
If $\nu$ is fixed, the constant $C_Q(\nu)$ is controlled in terms of $d$, $s$, $\Vert Q\Vert _{L_s(\Omega)}$,
and the parameters of the lattice $\Gamma$. Putting $\nu =2^{-1}\alpha _0\Vert g^{-1}\Vert _{L_\infty}^{-1}$,
we see that Condition \ref{Condition on measure mu} is satisfied with $\widetilde{c}=\nu$, $c_0=C_Q(\nu)$, $\widetilde{c}_2=1$, and $c_3=C_Q(1)$.
Then, by \eqref{c_2 kappa}, $c_2 = \alpha_0^{-1} \|g^{-1}\|_{L_\infty}$ and $\kappa = 1/2$.}

\end{example}

\subsection{The operator $\mathcal{B}(\varepsilon) $}\label{Subsection Operator B(eps)}
In $L_2(\mathbb{R}^d;\mathbb{C}^n)$, we consider the family of operators $\mathcal{B}(\varepsilon )$, $0<\varepsilon \leqslant 1$,
formally given by the differential expression
\begin{equation*}
\begin{split}
\mathcal{B}(\varepsilon)&=\mathcal{A}+\varepsilon (\mathcal{Y}_2^*\mathcal{Y}+\mathcal{Y}^*\mathcal{Y}_2)+\varepsilon ^2 Q\\
&=b(\mathbf{D})^*g(\mathbf{x})b(\mathbf{D})+\varepsilon \sum \limits _{j=1}^d \left( a_j(\mathbf{x})D_j +D_j a_j(\mathbf{x})^*\right)+\varepsilon ^2 Q(\mathbf{x}),
\end{split}
\end{equation*}
where $Q(\mathbf{x})$ can be interpreted as the generalized matrix-valued potential
generated by the measure $d\mu (\mathbf{x})$.
Precisely, $\mathcal{B}(\varepsilon)$ is the selfadjoint operator generated by the
quadratic form
\begin{equation}
\label{b(eps)}
\mathfrak{b}(\varepsilon)[\mathbf{u},\mathbf{u}]=\mathfrak{a}[\mathbf{u},\mathbf{u}]+2\varepsilon \textnormal{Re}\,(\mathcal{Y}\mathbf{u},\mathcal{Y}_2\mathbf{u})_{L_2(\mathbb{R}^d)}+\varepsilon ^2 q[\mathbf{u},\mathbf{u}],\quad \mathbf{u}\in H^1(\mathbb{R}^d;\mathbb{C}^n).
\end{equation}
Let us check that the form \eqref{b(eps)} is closed and lower semibounded. By \eqref{Y<=} and \eqref{Y2<=},
\begin{equation}
\label{Su_AA_5.24}
\begin{split}
&2\varepsilon \vert \textnormal{Re}\,(\mathcal{Y}\mathbf{u},\mathcal{Y}_2\mathbf{u})_{L_2(\mathbb{R}^d)}\vert \leqslant \frac{\kappa}{2}\mathfrak{a}[\mathbf{u},\mathbf{u}]+c_4\varepsilon ^2\Vert \mathbf{u}\Vert ^2_{L_2(\mathbb{R}^d)},\\
&\mathbf{u}\in H^1(\mathbb{R}^d;\mathbb{C}^n),\quad
c_4=4\kappa ^{-1}c_1^2C(\nu_0)\ \mbox{for}\ \nu_0 =\kappa ^2(16c_1^2)^{-1}.
\end{split}
\end{equation}
Now, the lower estimate \eqref{<q<}, relation \eqref{Su_AA_5.24}, and the restriction $0<\varepsilon\leqslant 1$
imply the lower estimate for the form \eqref{b(eps)}:
\begin{equation*}
\begin{split}
\mathfrak{b}(\varepsilon)[\mathbf{u},\mathbf{u}]\geqslant \frac{\kappa}{2}\mathfrak{a}[\mathbf{u},\mathbf{u}]-(c_0+c_4)\varepsilon ^2\Vert \mathbf{u}\Vert ^2_{L_2(\mathbb{R}^d)},\quad \mathbf{u}\in H^1(\mathbb{R}^d;\mathbb{C}^n).
\end{split}
\end{equation*}
Together with the lower estimate \eqref{<a<} this yields
\begin{equation}
\label{1.17a}
\begin{split}
&\mathfrak{b}(\varepsilon)[\mathbf{u},\mathbf{u}]\geqslant
c_* \Vert \mathbf{D}\mathbf{u} \Vert ^2_{L_2(\mathbb{R}^d)}
-(c_0+c_4)\varepsilon ^2\Vert \mathbf{u}\Vert ^2_{L_2(\mathbb{R}^d)},
\cr
&\mathbf{u}\in H^1(\mathbb{R}^d;\mathbb{C}^n),
\quad 0 < \varepsilon \leqslant 1;
\quad c_* := \frac{\kappa}{2} \alpha_0 \|g^{-1}\|^{-1}_{L_\infty}.
\end{split}
\end{equation}
Combining \eqref{Y<=}, \eqref{Y2<=} with $\nu =1$, and the upper estimate \eqref{<q<}, we arrive at
\begin{equation*}
\mathfrak{b}(\varepsilon)[\mathbf{u},\mathbf{u}]\leqslant (2+c_1^2+c_2)\mathfrak{a}[\mathbf{u},\mathbf{u}]+(C(1)+c_3)\varepsilon ^2\Vert \mathbf{u}\Vert ^2_{L_2(\mathbb{R}^d)},\quad \mathbf{u}\in H^1(\mathbb{R}^d;\mathbb{C}^n).
\end{equation*}
Together with the upper estimate \eqref{<a<} this leads to
\begin{equation}
\label{1.18a}
\begin{split}
&\mathfrak{b}(\varepsilon)[\mathbf{u},\mathbf{u}]\leqslant
C_* \Vert \mathbf{D}\mathbf{u} \Vert ^2_{L_2(\mathbb{R}^d)}
+(C(1)+c_3)\varepsilon ^2\Vert \mathbf{u}\Vert ^2_{L_2(\mathbb{R}^d)},
\cr
&\mathbf{u}\in H^1(\mathbb{R}^d;\mathbb{C}^n),
\quad 0 < \varepsilon \leqslant 1;
\quad C_* := (2+c_1^2+c_2) \alpha_1 \|g\|_{L_\infty}.
\end{split}
\end{equation}

\subsection{The operator $\mathcal{B}_\varepsilon$}\label{Subsection Operator B_eps}
Let $T_\varepsilon$, $\varepsilon >0$, be the unitary scaling transformation in $L_2(\mathbb{R}^d;\mathbb{C}^n)$
defined by
\begin{equation}
\label{T_eps}
(T_\varepsilon \mathbf{u})(\mathbf{x})=\varepsilon ^{d/2}\mathbf{u}(\varepsilon \mathbf{x}),\quad \mathbf{u}\in L_2(\mathbb{R}^d;\mathbb{C}^n),
\quad \varepsilon >0.
\end{equation}

Let $\mathcal{A}_\varepsilon$ be the selfadjoint operator in $L_2(\mathbb{R}^d;\mathbb{C}^n)$
corresponding to the quadratic form
\begin{equation*}
\mathfrak{a}_\varepsilon [\mathbf{u},\mathbf{u}]:=\varepsilon ^{-2}\mathfrak{a}[T_\varepsilon \mathbf{u},T_\varepsilon \mathbf{u}]=(g^\varepsilon b(\mathbf{D})\mathbf{u},b(\mathbf{D})\mathbf{u})_{L_2(\mathbb{R}^d)},\quad \mathbf{u}\in H^1(\mathbb{R}^d;\mathbb{C}^n).
\end{equation*}
Formally, we have $\mathcal{A}_\varepsilon =b(\mathbf{D})^*g^\varepsilon (\mathbf{x})b(\mathbf{D})$.

We introduce the operator $\mathcal{Y}_{2,\varepsilon}$ defined by
\begin{equation*}
(\mathcal{Y}_{2,\varepsilon}\mathbf{u})(\mathbf{x})=\textnormal{col}\,\lbrace a_1^\varepsilon (\mathbf{x})^*\mathbf{u}(\mathbf{x}),\dots ,a_d^\varepsilon (\mathbf{x})^*\mathbf{u}(\mathbf{x})\rbrace ,\quad \mathbf{u}\in H^1(\mathbb{R}^d;\mathbb{C}^n).
\end{equation*}

Let $d\mu$ be a measure defined in Subsection \ref{Subsection form q}.
We define the measure  $d\mu ^\varepsilon$ as follows.
For any Borel set $\Delta \subset\mathbb{R}^d$, we consider the set 
\hbox{$\varepsilon ^{-1}\Delta : =\lbrace \mathbf{x}\in \mathbb{R}^d : \varepsilon \mathbf{x}\in \Delta \rbrace $} and put
$\mu ^\varepsilon (\Delta ):=\varepsilon ^d \mu (\varepsilon ^{-1}\Delta)$.
Consider the quadratic form $q_\varepsilon$ defined by
\begin{equation*}
q_\varepsilon [\mathbf{u},\mathbf{u}]=\int _{\mathbb{R}^d}\langle d\mu ^\varepsilon (\mathbf{x})\mathbf{u},\mathbf{u}\rangle ,\quad \mathbf{u}\in H^1(\mathbb{R}^d;\mathbb{C}^n).
\end{equation*}

We put $\mathcal{B}_\varepsilon :=\varepsilon ^{-2} T_\varepsilon ^*\mathcal{B}(\varepsilon )T_\varepsilon $,
$0< \varepsilon \leqslant 1$. In other words, $\mathcal{B}_\varepsilon$ is the selfadjoint operator in $L_2(\mathbb{R}^d;\mathbb{C}^n)$
generated by the quadratic form
\begin{equation}
\label{b_eps}
\begin{split}
\mathfrak{b}_\varepsilon [\mathbf{u},\mathbf{u}]&:=\varepsilon ^{-2}\mathfrak{b}(\varepsilon)[T_\varepsilon\mathbf{u},T_\varepsilon \mathbf{u}]\\
&=
\mathfrak{a}_\varepsilon [\mathbf{u},\mathbf{u}]
+2\textnormal{Re}\,(\mathcal{Y}\mathbf{u},\mathcal{Y}_{2,\varepsilon}\mathbf{u})_{L_2(\mathbb{R}^d)}+q_\varepsilon [\mathbf{u},\mathbf{u}],\quad \mathbf{u}\in H^1(\mathbb{R}^d;\mathbb{C}^n).
\end{split}
\end{equation}
Relations  \eqref{b_eps},  \eqref{1.17a}, and \eqref{1.18a}  imply that
\begin{align}
\label{b_eps>=}
&\mathfrak{b}_\varepsilon [\mathbf{u},\mathbf{u}]\geqslant c_* \Vert \mathbf{D}\mathbf{u} \Vert ^2_{L_2(\mathbb{R}^d)}
-(c_0+c_4)\Vert \mathbf{u}\Vert ^2 _{L_2(\mathbb{R}^d)},\quad \mathbf{u}\in H^1(\mathbb{R}^d;\mathbb{C}^n),\\
&\mathfrak{b}_\varepsilon [\mathbf{u},\mathbf{u}]\leqslant
C_* \Vert \mathbf{D}\mathbf{u} \Vert ^2_{L_2(\mathbb{R}^d)}
+(C(1)+c_3)\Vert \mathbf{u}\Vert ^2_{L_2(\mathbb{R}^d)},\quad \mathbf{u}\in H^1(\mathbb{R}^d;\mathbb{C}^n),\nonumber
\end{align}
for $0< \varepsilon \leqslant 1$.
Thus, the form  $\mathfrak{b}_\varepsilon$ is closed and lower semibounded.
Formally, we can write
\begin{equation}
\label{B_eps}
\mathcal{B}_\varepsilon =b(\mathbf{D})^*g^\varepsilon (\mathbf{x})b(\mathbf{D})+\sum \limits _{j=1}^d\left( a_j^\varepsilon (\mathbf{x})D_j +D_ja_j^\varepsilon (\mathbf{x})^*\right) +Q^\varepsilon (\mathbf{x}).
\end{equation}
Here $Q^\varepsilon $ can be interpreted as the generalized matrix potential generated by the measure $d\mu ^\varepsilon $.
We see that the coefficients of the operator $\mathcal{B}_\varepsilon$ oscillate rapidly for small $\varepsilon$.

\subsection{The effective matrix}
\label{Subsection Effective matrix}
The effective operator for $\mathcal{A}_\varepsilon =b(\mathbf{D})^*g^\varepsilon (\mathbf{x})b(\mathbf{D})$
is given by $\mathcal{A}^0=b(\mathbf{D})^*g^0b(\mathbf{D})$. Here $g^0$ is a constant positive $(m\times m)$-matrix called the effective matrix. The matrix  $g^0$ is defined in terms of the solution of the auxiliary problem on $\Omega$.
Suppose that a $\Gamma$-periodic $(n\times m)$-matrix-valued function $\Lambda (\mathbf{x})$
is the solution of the problem
\begin{equation}
\label{Lambda problem}
b(\mathbf{D})^*g(\mathbf{x})(b(\mathbf{D})\Lambda (\mathbf{x})+\mathbf{1}_m)=0,\quad \int _{\Omega }\Lambda (\mathbf{x})\,d\mathbf{x}=0.
\end{equation}
(The equation is understood in the weak sense.) Then the effective matrix is given by
\begin{equation}
\label{g^0}
g^0=\vert \Omega \vert ^{-1}\int _{\Omega} \widetilde{g}(\mathbf{x})\,d\mathbf{x},
\end{equation}
where
\begin{equation}
\label{tilde g}
\widetilde{g}(\mathbf{x}):=g(\mathbf{x})(b(\mathbf{D})\Lambda (\mathbf{x})+\mathbf{1}_m).
\end{equation}
It is easily seen that $g^0$ is positive definite.

We need the following estimates for $\Lambda$ proved in \cite[(6.28) and Subsection~7.3]{BSu05}:
\begin{align}
\label{Lambda <=}
&\Vert \Lambda \Vert _{L_2(\Omega)}\leqslant \vert \Omega \vert ^{1/2} M_1,\quad M_1:= m^{1/2}(2r_0)^{-1}\alpha _0^{-1/2}\Vert g\Vert ^{1/2}_{L_\infty}\Vert g^{-1}\Vert ^{1/2}_{L_\infty},\\
\label{DLambda<=}
&\Vert \mathbf{D}\Lambda \Vert _{L_2(\Omega)}\leqslant \vert \Omega \vert ^{1/2} M_2,\quad M_2:= m^{1/2}\alpha _0^{-1/2}\Vert g\Vert ^{1/2}_{L_\infty}\Vert g^{-1}\Vert ^{1/2}_{L_\infty}.
\end{align}

The effective matrix has the following properties (see \cite[Chapter~3, Theorem~1.5]{BSu}).

\begin{proposition}
\label{Proposition Voigt-Reuss}
Let $g^0$ be the effective matrix \eqref{g^0}. Then
\begin{equation}
\label{Foigt-Reiss}
\underline{g}\leqslant g^0\leqslant \overline{g}.
\end{equation}
If $m=n$, then $g^0=\underline{g}$.
\end{proposition}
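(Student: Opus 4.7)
The plan is to derive both inequalities from the variational characterization of $g^0$, using the identity obtained from the defining equation \eqref{Lambda problem}.

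First, I would establish the quadratic identity that is the cornerstone of the proof. Fix $\xi\in\mathbb{C}^m$ and set $w(\mathbf{x})=b(\mathbf{D})(\Lambda(\mathbf{x})\xi)+\xi$, so that $\widetilde{g}\xi=g w$ and $g^0\xi=\overline{gw}$. Since $\Lambda$ is $\Gamma$-periodic with zero mean, $\overline{w}=\xi$. Using the defining equation $b(\mathbf{D})^*(gw)=0$ with the test function $\Lambda\xi$ (which is legitimate after periodic integration by parts), one gets $\int_\Omega\langle gw,b(\mathbf{D})\Lambda\xi\rangle\,d\mathbf{x}=0$, and hence
\begin{equation*}
\langle g^0\xi,\xi\rangle=|\Omega|^{-1}\int_\Omega\langle gw,\xi\rangle\,d\mathbf{x}=|\Omega|^{-1}\int_\Omega\langle g(\mathbf{x})w,w\rangle\,d\mathbf{x}.
\end{equation*}
Standard reasoning (the Euler equation for the convex functional $v\mapsto |\Omega|^{-1}\int_\Omega\langle g(b(\mathbf{D})v+\xi),b(\mathbf{D})v+\xi\rangle\,d\mathbf{x}$ is exactly \eqref{Lambda problem}) then shows that $\langle g^0\xi,\xi\rangle$ is the minimum of this functional over $v\in\widetilde{H}^1(\Omega;\mathbb{C}^n)$.

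Second, the upper bound $g^0\leq\overline{g}$ is immediate: test the variational principle with $v=0$ to get $\langle g^0\xi,\xi\rangle\leq|\Omega|^{-1}\int_\Omega\langle g\xi,\xi\rangle\,d\mathbf{x}=\langle\overline{g}\xi,\xi\rangle$. For the lower bound $g^0\geq\underline{g}$ I would apply Cauchy--Schwarz in a dual form: for any constant $\eta\in\mathbb{C}^m$, using $\overline{w}=\xi$,
\begin{equation*}
|\langle\eta,\xi\rangle|^2=\bigl||\Omega|^{-1}\textstyle\int_\Omega\langle g^{-1/2}\eta,g^{1/2}w\rangle\,d\mathbf{x}\bigr|^2\leq\langle\overline{g^{-1}}\eta,\eta\rangle\cdot\langle g^0\xi,\xi\rangle=\langle\underline{g}^{-1}\eta,\eta\rangle\langle g^0\xi,\xi\rangle.
\end{equation*}
Choosing $\eta=\underline{g}\xi$ gives $\langle\underline{g}\xi,\xi\rangle^2\leq\langle\underline{g}\xi,\xi\rangle\cdot\langle g^0\xi,\xi\rangle$, and the lower bound follows.

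Finally, for the case $m=n$ I would construct $\Lambda$ explicitly. The rank condition \eqref{<b^*b<} makes $b(\boldsymbol{\theta})$ an invertible $(n\times n)$-matrix for every $\boldsymbol{\theta}\neq 0$, so the equation $b(\mathbf{D})\Lambda=g^{-1}\underline{g}-\mathbf{1}_n$ can be solved in $\Gamma$-periodic functions via Fourier series on $\widetilde{\Gamma}$ (the right-hand side has mean zero by the very definition of $\underline{g}$, so the zero Fourier mode poses no obstruction, and for $\boldsymbol{\xi}\in\widetilde{\Gamma}\setminus\{0\}$ one inverts $b(\boldsymbol{\xi})$). For this $\Lambda$, $g(b(\mathbf{D})\Lambda+\mathbf{1}_n)=\underline{g}$ is constant, hence both satisfies \eqref{Lambda problem} and yields $g^0=\underline{g}$ by \eqref{g^0}--\eqref{tilde g}. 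The main delicate point is the $m=n$ construction: one must verify that the Fourier solution lies in $\widetilde{H}^1(\Omega)$, which reduces to the bound $|b(\boldsymbol{\xi})^{-1}|\lesssim|\boldsymbol{\xi}|^{-1}$ supplied by \eqref{<b^*b<}, together with the $L_2$-summability of $\widehat{g^{-1}\underline{g}-\mathbf{1}_n}$ (i.e. $g^{-1}\in L_2(\Omega)$, which is automatic from $g^{-1}\in L_\infty$).
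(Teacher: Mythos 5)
Your proof is correct. The paper itself does not prove Proposition~\ref{Proposition Voigt-Reuss} but cites it from [BSu, Chapter~3, Theorem~1.5]; your argument --- the energy identity $\langle g^0\xi,\xi\rangle=|\Omega|^{-1}\int_\Omega\langle g w,w\rangle\,d\mathbf{x}$ obtained by testing \eqref{Lambda problem} against $\Lambda\xi$, the variational principle with the trial function $v=0$ for the upper bound, the Cauchy--Schwarz inequality with the dual choice $\eta=\underline{g}\xi$ for the lower bound, and the explicit Fourier solution of $b(\mathbf{D})\Lambda=g^{-1}\underline{g}-\mathbf{1}_n$ when $m=n$ (legitimate since $b(\mathbf{b})$ is square and invertible for $\mathbf{b}\neq 0$ with $|b(\mathbf{b})^{-1}|\leqslant\alpha_0^{-1/2}|\mathbf{b}|^{-1}$, and the right-hand side is in $L_2(\Omega)$ with zero mean) --- is the standard proof of the Voigt--Reuss bracketing and coincides in substance with the cited source.
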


Estimates \eqref{Foigt-Reiss} are known in homogenization theory as the Voigt-Reuss bracketing.
From \eqref{Foigt-Reiss} it follows that
\begin{equation*}
\vert g^0\vert \leqslant \Vert g\Vert _{L_\infty},\quad \vert (g^0)^{-1}\vert \leqslant \Vert g^{-1}\Vert _{L_\infty}.
\end{equation*}

Now we distinguish the cases where one of the inequalities in \eqref{Foigt-Reiss} becomes an identity
(see \cite[Chapter 3, Propositions 1.6 and 1.7]{BSu}).

\begin{proposition}
\label{Proposition g^0=mean value g}
The identity $g^0=\overline{g}$ is equivalent to the relations
\begin{equation}
\label{overline-g}
b(\D)^* {\mathbf g}_k(\x) =0,\ \ k=1,\dots,m,
\end{equation}
where ${\mathbf g}_k(\x)$, $k=1,\dots,m,$ are the columns of the matrix $g(\x)$.
\end{proposition}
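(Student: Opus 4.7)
\smallskip
\noindent\textbf{Proof proposal.} The key observation is that, substituting \eqref{tilde g} into \eqref{g^0}, one can rewrite the definition of the effective matrix as
\begin{equation*}
g^0 = \overline{g} + |\Omega|^{-1} \int_\Omega g(\x)\,b(\D)\Lambda(\x)\,d\x,
\end{equation*}
so that the identity $g^0 = \overline{g}$ is equivalent to the vanishing of the ``corrector flux'' $\int_\Omega g(\x) b(\D)\Lambda(\x)\,d\x$. The implication $\eqref{overline-g}\Rightarrow g^0=\overline{g}$ is immediate: relations \eqref{overline-g} mean exactly that $b(\D)^* g(\x) = 0$ as a matrix-valued distribution, so $b(\D)^* g(\x)\mathbf{1}_m = 0$, and therefore $\Lambda \equiv 0$ satisfies the cell problem \eqref{Lambda problem}; by uniqueness in the class of $\Gamma$-periodic matrix fields with zero mean, this is the only solution, hence $\widetilde g = g$ and $g^0 = \overline g$.

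For the nontrivial direction I would use the variational (energy) interpretation of $g^0$. Denote by $\Lambda_k$ the columns of $\Lambda$ and by $\mathbf e_k$ the standard basis of $\mathbb C^m$. Testing the weak equation $b(\D)^* g(\x)(b(\D)\Lambda_k+\mathbf e_k)=0$ against $\Lambda_k$ itself and combining with the definition of $g^0$ yields the standard energy identity
\begin{equation*}
\langle g^0 \mathbf e_k, \mathbf e_k\rangle = |\Omega|^{-1}\int_\Omega \langle g(\x)(b(\D)\Lambda_k+\mathbf e_k),\, b(\D)\Lambda_k+\mathbf e_k\rangle\,d\x,
\end{equation*}
which characterizes $|\Omega|\langle g^0\mathbf e_k,\mathbf e_k\rangle$ as the minimum of the functional $J_k(\eta)=\int_\Omega\langle g(b(\D)\eta+\mathbf e_k),b(\D)\eta+\mathbf e_k\rangle d\x$ over mean-zero $\Gamma$-periodic $\eta$. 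Since $J_k(0)=|\Omega|\langle \overline g\mathbf e_k,\mathbf e_k\rangle$, this recovers the diagonal version of \eqref{Foigt-Reiss}.

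Now suppose $g^0 = \overline g$. Then $\langle g^0 \mathbf e_k,\mathbf e_k\rangle=\langle \overline g\mathbf e_k,\mathbf e_k\rangle$ for every $k$, so $\eta=0$ is also a minimizer of each $J_k$. Uniqueness of the minimizer in the mean-zero class then forces $\Lambda_k = 0$ for all $k$, that is, $\Lambda\equiv 0$; substituting back into \eqref{Lambda problem} gives $b(\D)^* g(\x)\mathbf 1_m = 0$, which is \eqref{overline-g}. The main (only) subtle point is justifying this uniqueness: it reduces to showing that $b(\D)\eta = 0$ for $\Gamma$-periodic $\eta$ with $\int_\Omega\eta\,d\x=0$ implies $\eta=0$. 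Expanding $\eta$ in a Fourier series over $\widetilde\Gamma$, this follows from injectivity of $b(\boldsymbol{\xi})$ for $\boldsymbol{\xi}\ne 0$, which is exactly the rank condition \eqref{<b^*b<}. The positivity $g>0$ ensures that $J_k$ is strictly convex in $b(\D)\eta$, completing the uniqueness argument. No finer information than \eqref{g}--\eqref{<b^*b<} is needed in the whole proof.
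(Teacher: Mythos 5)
Your proof is correct; it is the standard variational argument (the energy identity $|\Omega|\langle g^0\mathbf e_k,\mathbf e_k\rangle=\min_\eta J_k(\eta)$, followed by uniqueness of the mean-zero minimizer via the rank condition on $b(\boldsymbol{\xi})$ for $\boldsymbol{\xi}\ne 0$), and the small points you flag as subtle (strict convexity from $g>0$, Fourier-series injectivity) are handled adequately. The paper itself states this proposition without proof, referring to [BSu, Chapter~3, Propositions~1.6 and 1.7], so there is no internal argument to compare against; yours is, in substance, the argument used there.
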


\begin{proposition}
\label{Proposition g^0 = underline g}
The identity $g^0 =\underline{g}$ is equivalent to the representations
\begin{equation}
\label{underline-g}
{\mathbf l}_k(\x) = {\mathbf l}_k^0 + b(\D) {\mathbf w}_k,\ \ {\mathbf l}_k^0\in \C^m,\ \
{\mathbf w}_k \in \widetilde{H}^1(\Omega;\C^n),\ \ k=1,\dots,m,
\end{equation}
where ${\mathbf l}_k(\x)$, $k=1,\dots,m,$ are the columns of the matrix $g(\x)^{-1}$.
\end{proposition}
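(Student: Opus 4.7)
The plan is to combine the variational characterization of the effective matrix with a Cauchy--Schwarz argument producing the Voigt--Reuss lower bound, and then read off the representation \eqref{underline-g} from the equality case, paralleling the standard proof of Proposition~\ref{Proposition Voigt-Reuss}.

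First, I would recall the variational principle: combining \eqref{Lambda problem}, \eqref{g^0}, and \eqref{tilde g} (by testing the equation for $\Lambda\xi$ against $\mathbf{w}$ and completing the square) gives, for every $\xi\in\mathbb{C}^m$,
\begin{equation*}
\langle g^0 \xi,\xi\rangle = \min_{\mathbf{w}\in \widetilde H^1(\Omega;\mathbb{C}^n)} |\Omega|^{-1}\int_\Omega \langle g(\mathbf{x})(\xi + b(\mathbf{D})\mathbf{w}),\, \xi + b(\mathbf{D})\mathbf{w}\rangle\, d\mathbf{x},
\end{equation*}
with minimizer $\mathbf{w}^\ast = \Lambda\xi$, unique modulo additive constants by \eqref{<b^*b<}. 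For any $\eta\in\mathbb{C}^m$ and $\mathbf{v} = \xi + b(\mathbf{D})\mathbf{w}$, periodicity yields $\overline{\mathbf{v}} = \xi$, so $\langle\eta,\xi\rangle = |\Omega|^{-1}\int_\Omega \langle g^{-1/2}\eta, g^{1/2}\mathbf{v}\rangle\,d\mathbf{x}$; applying Cauchy--Schwarz pointwise in $\mathbb{C}^m$ and then in $L_2(\Omega)$, and taking the infimum over $\mathbf{w}$, yields the key inequality
\begin{equation*}
|\langle\eta,\xi\rangle|^2 \leq \langle \underline{g}^{-1}\eta,\eta\rangle\,\langle g^0\xi,\xi\rangle,\qquad \xi,\eta\in\mathbb{C}^m,
\end{equation*}
which is equivalent to $g^0\geq \underline{g}$.

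For the ``if'' direction, assume \eqref{underline-g}. Averaging over $\Omega$ and using periodicity of $\mathbf{w}_k$ identifies $\mathbf{l}_k^0 = \overline{\mathbf{l}_k} = \underline{g}^{-1}\mathbf{e}_k$. For arbitrary $\eta\in\mathbb{C}^m$, set $\mathbf{w}_\eta := \sum_k \eta_k \mathbf{w}_k$ and $\xi := \underline{g}^{-1}\eta$; then $\xi + b(\mathbf{D})\mathbf{w}_\eta = g^{-1}(\mathbf{x})\eta$, so the variational principle gives $\langle g^0\xi,\xi\rangle \leq \langle\underline{g}^{-1}\eta,\eta\rangle = \langle\underline{g}\xi,\xi\rangle$. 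Since $\eta$ (hence $\xi$) ranges over all of $\mathbb{C}^m$, we obtain $g^0\leq \underline{g}$, and together with Voigt--Reuss we conclude $g^0 = \underline{g}$.

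For the ``only if'' direction, assume $g^0 = \underline{g}$. Fixing $\eta\in\mathbb{C}^m$ and setting $\xi := \underline{g}^{-1}\eta$, direct substitution shows that both sides of the key inequality coincide for the minimizer $\mathbf{v}^\ast = \xi + b(\mathbf{D})(\Lambda\xi)$, so all three Cauchy--Schwarz estimates used above (the pointwise step in $\mathbb{C}^m$, the triangle step $|\int h|\leq \int|h|$, and the $L_2(\Omega)$ step) must hold with equality. The pointwise step forces $\mathbf{v}^\ast(\mathbf{x}) = \alpha(\mathbf{x})g^{-1}(\mathbf{x})\eta$ for some scalar function $\alpha$, and the two remaining equality conditions force both $|\alpha|$ and $\arg\alpha$ to be independent of $\mathbf{x}$, so $\alpha\equiv c$ is constant; averaging, $\xi = c\,\underline{g}^{-1}\eta = c\xi$, i.e.\ $c = 1$. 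Hence $g^{-1}(\mathbf{x})\eta - \underline{g}^{-1}\eta = b(\mathbf{D})(\Lambda\xi)(\mathbf{x})$, and specializing to $\eta = \mathbf{e}_k$ gives \eqref{underline-g} with $\mathbf{l}_k^0 = \underline{g}^{-1}\mathbf{e}_k$ and $\mathbf{w}_k = \Lambda\underline{g}^{-1}\mathbf{e}_k \in \widetilde H^1(\Omega;\mathbb{C}^n)$. The delicate point of the whole argument is this last equality analysis---disentangling the three Cauchy--Schwarz steps so as to conclude that the proportionality factor $\alpha(\mathbf{x})$ is truly constant; the remaining manipulations are routine.
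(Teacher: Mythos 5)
The paper does not prove Proposition~\ref{Proposition g^0 = underline g} itself; it cites \cite[Chapter~3, Proposition~1.7]{BSu}. Your argument is a correct, self-contained proof, and it follows the standard route one finds in Birman--Suslina: the variational characterization of $g^0$, the Cauchy--Schwarz derivation of the lower Voigt--Reuss bound in the dual form $|\langle\eta,\xi\rangle|^2 \leq \langle\underline{g}^{-1}\eta,\eta\rangle\langle g^0\xi,\xi\rangle$, and an analysis of the equality case. The details check out: the dual inequality is indeed equivalent to $\underline g \le g^0$ (set $\xi = (g^0)^{-1}\eta$, or equivalently $\eta=g^0\xi$), the ``if'' direction is a clean one-line test with $\mathbf{w}_\eta=\sum_k\eta_k\mathbf{w}_k$, and the ``only if'' direction correctly disentangles the three equality conditions --- pointwise proportionality gives $\mathbf{v}^\ast = \alpha(\mathbf{x})g^{-1}(\mathbf{x})\eta$, the $L_2$-step fixes $|\alpha|$, the phase step fixes $\arg\alpha$ (using that $\langle g^{-1/2}\eta,g^{1/2}\mathbf{v}^\ast\rangle = \alpha(\mathbf{x})^*|g^{-1/2}\eta|^2$ with $|g^{-1/2}\eta|>0$ a.e.), and averaging over $\Omega$ normalizes $\alpha\equiv 1$. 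Specializing to $\eta=\mathbf{e}_k$ then yields \eqref{underline-g} with $\mathbf{w}_k=\Lambda\,\underline{g}^{-1}\mathbf{e}_k\in\widetilde H^1(\Omega;\mathbb{C}^n)$, exactly as required. Two small remarks worth making explicit in a polished write-up: the claim $\overline{b(\mathbf{D})\mathbf{w}}=0$ for $\mathbf{w}\in\widetilde H^1(\Omega;\mathbb{C}^n)$ relies on periodicity (integration by parts over the cell with no boundary contribution), which is what makes $\overline{\mathbf{v}}=\xi$; and the pointwise Cauchy--Schwarz step is only meaningful for $\eta\ne 0$ (so that $g^{-1/2}\eta\ne 0$ a.e., making $\alpha$ well-defined), but the case $\eta=0$ is vacuous for the conclusion.
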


\subsection{The effective operator $\mathcal{B}^0$}
\label{Subsection Effective operator B^0}
 The effective operator for $\mathcal{B}_\varepsilon$ was introduced in \cite{SuAA} (see also \cite{Bo}).

 Suppose that a $\Gamma$-periodic  $(n\times n)$-matrix-valued function
$\widetilde{\Lambda}(\mathbf{x})$ is the (weak) solution of the problem
\begin{equation}
\label{tildeLambda_problem}
b(\mathbf{D})^*g(\mathbf{x})b(\mathbf{D})\widetilde{\Lambda }(\mathbf{x})+\sum \limits _{j=1}^dD_ja_j(\mathbf{x})^*=0,\quad \int _{\Omega }\widetilde{\Lambda }(\mathbf{x})\,d\mathbf{x}=0.
\end{equation}
Define constant matrices $V$ and $W$ as follows:
\begin{align}
\label{V matr}
&V=\vert \Omega \vert ^{-1}\int _{\Omega}(b(\mathbf{D})\Lambda (\mathbf{x}))^*g(\mathbf{x})(b(\mathbf{D})\widetilde{\Lambda}(\mathbf{x}))\,d\mathbf{x},\\
\label{W}
&W=\vert \Omega \vert ^{-1}\int _{\Omega} (b(\mathbf{D})\widetilde{\Lambda}(\mathbf{x}))^*g(\mathbf{x})(b(\mathbf{D})\widetilde{\Lambda}(\mathbf{x}))\,d\mathbf{x}.
\end{align}
The effective operator for the operator \eqref{B_eps} is given by
\begin{equation}
\label{B^0}
\mathcal{B}^0 =b(\mathbf{D})^*g^0b(\mathbf{D})-b(\mathbf{D})^*V-V^*b(\mathbf{D})
+\sum \limits _{j=1}^d (\overline{a_j+a_j^*})D_j-W+\overline{Q}.
\end{equation}
The operator $\mathcal{B}^0$ is a second order elliptic operator with constant coefficients.

According to \cite[(5.7)]{SuAA14}, if $\lambda > c_0+c_4$, then
the symbol $L_\lambda (\boldsymbol{\xi})$ of the operator $\mathcal{B}^0+\lambda I$ satisfies
$L_\lambda (\boldsymbol{\xi})\geqslant c_\lambda (\vert \boldsymbol{\xi}\vert ^2+1)\mathbf{1}_n$, $\boldsymbol{\xi}\in \mathbb{R}^d$,
where ${c}_\lambda =\min \lbrace c_*; \lambda - c_0 - c_4 \rbrace$.
Putting $\lambda _*:=c_0+c_4+c_*$, we obtain the following estimate for the symbol
$L_*(\boldsymbol{\xi})$ of the operator $\mathcal{B}^0+\lambda _* I$:
\begin{equation*}
L_*(\boldsymbol{\xi})\geqslant c_* (\vert \boldsymbol{\xi}\vert ^2+1)\mathbf{1}_n,\quad \boldsymbol{\xi}\in\mathbb{R}^d.
\end{equation*}
Consequently, the quadratic form $\mathfrak{b}^0$ of the operator $\mathcal{B}^0$ satisfies
\begin{equation*}
\mathfrak{b}^0[\mathbf{u},\mathbf{u}]+\lambda _*\Vert \mathbf{u}\Vert ^2 _{L_2(\mathbb{R}^d)}\geqslant
c_* \left(\Vert \mathbf{D}\mathbf{u}\Vert ^2 _{L_2(\mathbb{R}^d)}+\Vert \mathbf{u}\Vert ^2 _{L_2(\mathbb{R}^d)}\right),\quad \mathbf{u}\in H^1(\mathbb{R}^d;\mathbb{C}^n).
\end{equation*}
Since $\lambda _* =c_0+c_4+c_*$, we deduce
\begin{equation}
\label{frak b^0 >=}
\mathfrak{b}^0[\mathbf{u},\mathbf{u}]\geqslant c_*
 \Vert \mathbf{D}\mathbf{u}\Vert ^2 _{L_2(\mathbb{R}^d)}-(c_0+c_4)\Vert \mathbf{u}\Vert ^2 _{L_2(\mathbb{R}^d)},\quad \mathbf{u}\in H^1(\mathbb{R}^d;\mathbb{C}^n).
\end{equation}

Below we need the following estimates for $\widetilde{\Lambda}$ proved in  \cite[(7.52) and (7.51)]{SuAA}:
\begin{align}
\label{tildeLambda <=}
&\Vert \widetilde{\Lambda}\Vert _{L_2(\Omega)}\leqslant (2r_0)^{-1}C_an^{1/2}\alpha _0^{-1}\Vert g^{-1}\Vert _{L_\infty},\\
\label{DtildeLambda<=}
&\Vert \mathbf{D} \widetilde{\Lambda}\Vert _{L_2(\Omega)}\leqslant C_a n^{1/2}\alpha _0^{-1}\Vert g^{-1}\Vert _{L_\infty},
\end{align}
where $C_a^2=\sum _{j=1}^d\int _\Omega \vert a_j(\mathbf{x})\vert ^2\,d\mathbf{x}$.

\subsection{The generalized resolvent}
\label{Subsection generalized resolvent}
Let $Q_0(\mathbf{x})$ be a $\Gamma$-periodic $(n\times n)$-matrix-valued function
such that
\begin{equation*}
Q_0(\mathbf{x}) >0,\quad Q_0,Q_0^{-1}\in L_\infty (\mathbb{R}^d).
\end{equation*}
We study the generalized resolvent of the operator ${\mathcal B}_\varepsilon$,
i.~e., the operator $({\mathcal B}_\varepsilon - z Q_0^\varepsilon)^{-1}$. We rely on the results of  \cite{SuAA},
where approximations for this resolvent at a fixed real point $z$ were obtained.
Before we formulate the results, it is convenient to turn from the operator ${\mathcal B}_\varepsilon$
to the nonnegative operator
\begin{equation}
\label{Beps}
B_\varepsilon := {\mathcal B}_\varepsilon + c_5 Q_0^\varepsilon,
\end{equation}
putting $c_5:= (c_0 + c_4) \|Q_0^{-1}\|_{L_\infty}$. From \eqref{b_eps>=} it follows that  $B_\varepsilon \geqslant 0$.
Observe that the operator $B_\varepsilon$ can be considered as the operator of the form
\eqref{B_eps} with the initial coefficients $g^\varepsilon$, $a_j^\varepsilon$, and the ,,new'' matrix-valued potential
$\check{Q}^\varepsilon = Q^\varepsilon +c_5Q_0^\varepsilon$. The corresponding form
$\int_\Omega \langle d\mu({\mathbf x}) {\mathbf u}, {\mathbf u}\rangle +
c_5 \int_\Omega \langle Q_0({\mathbf x}) {\mathbf u}, {\mathbf u}\rangle\, d{\mathbf x}$
satisfies Condition \ref{Condition on measure mu} with $\check{c}_0=0$ in place of $c_0$,
the constant $\check{c}_3 = c_3 +c_5\Vert Q_0\Vert _{L_\infty}$ in place of $c_3$, and the initial
$\widetilde{c}$ and $\widetilde{c}_2$.

Let us fix $\lambda_0$ as follows: 
\begin{equation}
\label{lambda0}
\lambda_0 = 2 \| Q_0^{-1}\|_{L_\infty} c_4,
\end{equation}
cf. \cite[(5.27)]{SuAA}.
The operator $B_\varepsilon + \lambda_0 Q_0^\eps$ is positive definite, whence the generalized resolvent
$(B_\varepsilon + \lambda_0 Q_0^\eps)^{-1}$ is a bounded operator in $L_2({\mathbb R}^d;{\mathbb C}^n)$.

\begin{remark}
\label{Remark on condition on lambda_0}
In \cite{SuAA}, the generalized resolvent  $(B_\varepsilon + \lambda Q_0^\eps)^{-1}$
was studied for $\lambda \ge \lambda_0$, while in  \cite{SuAA14} it was studied under the weaker assumption that $\lambda >0$.
For our goals, it suffices to formulate the results for $\lambda$ fixed{\rm ;}
for convenience of referring to \cite{SuAA}, we use condition \textnormal{\eqref{lambda0}}.
\end{remark}

For convenience of further references, the set of parameters
\begin{equation}
\label{problem data}
\begin{split}
&d,\,m,\,n,\,\rho ;\,\alpha _0,\, \alpha _1 ,\,\Vert g\Vert _{L_\infty},\, \Vert g^{-1}\Vert _{L_\infty},\, \Vert a_j\Vert _{L_\rho (\Omega)},\, j=1,\dots ,d,\\
&\widetilde{c},\, c_0,\, \widetilde{c}_2,\, c_3 \mbox{ from Condition \ref{Condition on measure mu}};\, \Vert Q_0\Vert _{L_\infty},\,\Vert Q_0^{-1}\Vert _{L_\infty}; \\
& \mbox{ the parameters of the lattice}\ \Gamma
\end{split}
\end{equation}
is called the ,,initial data''.
Note that the constants $c_1$, $C(1)$, $\kappa$, $c_2$, and $c_4$ are determined by the initial data.

The effective operator for the operator \eqref{Beps} is given by
\begin{equation}
\label{B0}
B^0 = \mathcal{B}^0+c_5 \overline{Q_0}.
\end{equation}
Relation \eqref{frak b^0 >=} implies the lower estimate for the quadratic form $b^0$ of the operator~$B^0$:
\begin{equation}
\label{b^0[u,u]>=}
b^0[\mathbf{u},\mathbf{u}]\geqslant c_*
\Vert \mathbf{D}\mathbf{u}\Vert ^2_{L_2(\mathbb{R}^d)},\quad \mathbf{u}\in H^1(\mathbb{R}^d;\mathbb{C}^n).
\end{equation}
This is equivalent to the following estimate for the symbol $L_0(\boldsymbol{\xi})$ of the operator $B^0$:
\begin{equation}
\label{L0}
L_0(\boldsymbol{\xi})\geqslant {c}_* \vert \boldsymbol{\xi}\vert ^2 \mathbf{1}_n,\quad \boldsymbol{\xi}\in \mathbb{R}^d.
\end{equation}
The operator $B^0 + \lambda_0 \overline{Q_0}$ is the second order DO with constant coefficients with the symbol
\begin{equation*}
\begin{split}
&L(\boldsymbol{\xi})=L_0(\boldsymbol{\xi}) + \lambda_0 \overline{Q_0}
\\
&= b(\boldsymbol{\xi})^*g^0 b(\boldsymbol{\xi})-b(\boldsymbol{\xi})^*V-V^*b(\boldsymbol{\xi})+\sum\limits _{j=1}^d (\overline{a_j+a_j^*})\xi _j+\overline{Q}-W+ (c_5 + \lambda_0) \overline{Q_0}.
\end{split}
\end{equation*}
From \eqref{L0} and \eqref{lambda0} it follows that
\begin{equation}
\label{L>=}
L(\boldsymbol{\xi})\geqslant \check{c}_* (\vert \boldsymbol{\xi}\vert ^2+1)\mathbf{1}_n,\quad \boldsymbol{\xi}\in \mathbb{R}^d;
\quad \check{c}_*=\min \lbrace c_*;2c_4\rbrace.
\end{equation}

\subsection{Approximation of the operator $(B_\varepsilon +\lambda_0 Q_0^\varepsilon)^{-1}$}
Applying Theorem~9.2 from \cite{SuAA} to the operator \eqref{Beps}, we obtain the following result.

\begin{theorem}[\cite{SuAA}]\label{Theorem Su L2} Suppose that the assumptions of Subsections~\textnormal{\ref{Subsection Operator A}--\ref{Subsection generalized resolvent}} are satisfied. Let $\lambda_0$ be given by~\eqref{lambda0}.
 Then for $0<\varepsilon \leqslant 1$ we have
\begin{equation*}
\Vert (B_\varepsilon +\lambda_0 Q_0^\varepsilon )^{-1}-(B^0+\lambda_0 \overline{Q_0})^{-1}\Vert _{L_2(\mathbb{R}^d)\rightarrow L_2(\mathbb{R}^d)}\leqslant C_1\varepsilon .
\end{equation*}
The constant $C_1$ is controlled in terms of the initial data~\eqref{problem data}.
\end{theorem}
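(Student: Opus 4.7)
Since this is the fixed-point estimate used as an input to the two-parametric theory, the natural route is the Birman--Suslina scheme based on scaling, Floquet--Bloch decomposition, and analytic perturbation theory. First, I would undo the scaling. Since $B_\varepsilon = \varepsilon^{-2}T_\varepsilon^* \mathcal{B}(\varepsilon) T_\varepsilon + c_5 Q_0^\varepsilon$ (up to the harmless additive $c_5Q_0^\varepsilon$ which is already absorbed into $\check{Q}$), and since $T_\varepsilon$ is unitary, the desired $L_2\to L_2$ estimate for $(B_\varepsilon+\lambda_0 Q_0^\varepsilon)^{-1}-(B^0+\lambda_0\overline{Q_0})^{-1}$ is equivalent to an estimate of order $\varepsilon$ between
$\varepsilon^2(\mathcal{B}(\varepsilon)+\lambda_0\varepsilon^2 Q_0)^{-1}$ and $\varepsilon^2(\mathcal{B}^0(\varepsilon)+\lambda_0\varepsilon^2 \overline{Q_0})^{-1}$
at the fixed scale $1$. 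The advantage is that the rescaled operators are $\Gamma$-periodic on $\mathbb{R}^d$, so one can apply the Gelfand transform and decompose them into a direct integral $\int^\oplus_{\widetilde{\Omega}}\mathcal{B}(\mathbf{k};\varepsilon)\,d\mathbf{k}$ of fiber operators acting in $L_2(\Omega;\mathbb{C}^n)$ with quasiperiodic boundary conditions.

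Second, at fixed quasimomentum $\mathbf{k}\in\widetilde{\Omega}$ the fiber $\mathcal{B}(\mathbf{k};\varepsilon)$ depends analytically on the two small parameters $|\mathbf{k}|$ and $\varepsilon$; writing $\mathbf{k}=t\boldsymbol\theta$ with $\boldsymbol\theta\in\mathbb{S}^{d-1}$, one has $\mathcal{B}(\mathbf{k};\varepsilon)=\mathcal{A}(\mathbf{k})+\varepsilon(\ldots)+\varepsilon^2\check{Q}$, where $\mathcal{A}(0)=b(\mathbf{D})^* g\, b(\mathbf{D})$ on the torus has an $n$-dimensional kernel (the constant vectors). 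The central analytic fact is the threshold expansion of the lowest eigenbranches: by standard Kato perturbation theory combined with the defining problem \eqref{Lambda problem}--\eqref{tilde g} for $\Lambda$ and \eqref{tildeLambda_problem}--\eqref{W} for $\widetilde\Lambda$, the spectral germ associated with this kernel at $(\mathbf{k},\varepsilon)=(0,0)$ is precisely the symbol $L(t\boldsymbol\theta)$ of $\mathcal{B}^0+\lambda_0\overline{Q_0}$ (modulo terms of order $t^3$ and $\varepsilon t^2$ and $\varepsilon^2$). This identification explains why the effective operator takes the form \eqref{B^0}.

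Third, combining the threshold expansion with the uniform gap estimate \eqref{L>=} and the lower bound \eqref{b_eps>=} for $\mathcal{B}(\mathbf{k};\varepsilon)+\lambda_0\varepsilon^2 Q_0$, one obtains, uniformly in $\mathbf{k}$ and $\varepsilon$, the fiberwise estimate
\begin{equation*}
\|(\mathcal{B}(\mathbf{k};\varepsilon)+\lambda_0\varepsilon^2 Q_0)^{-1}-(\mathcal{B}^0(\mathbf{k};\varepsilon)+\lambda_0\varepsilon^2\overline{Q_0})^{-1}\|_{L_2(\Omega)\to L_2(\Omega)}\leqslant C\varepsilon^{-1},
\end{equation*}
which after integrating over $\widetilde{\Omega}$, multiplying by $\varepsilon^2$, and undoing the Gelfand transform yields the desired bound $C_1\varepsilon$. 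The constants are tracked through \eqref{Lambda <=}--\eqref{DLambda<=} and \eqref{tildeLambda <=}--\eqref{DtildeLambda<=}, and through $c_1,c_2,c_4,C(1),\check{c}_*$, so they depend only on the initial data \eqref{problem data}.

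\textbf{Main obstacle.} The delicate point is the threshold analysis in the presence of the unbounded first-order coefficients $a_j$ and of the form $q$, which is only form-bounded with relative bound smaller than one. One cannot treat $\varepsilon(\mathcal{Y}_2^*\mathcal{Y}+\mathcal{Y}^*\mathcal{Y}_2)+\varepsilon^2\check{Q}$ as a bounded perturbation of $\mathcal{A}(\mathbf{k})$; instead one must work at the level of the quadratic form $\mathfrak{b}(\varepsilon)$, using the Cauchy--Schwarz trick \eqref{Su_AA_5.24} to absorb the cross term into $\kappa\mathfrak{a}/2$ uniformly in $\varepsilon\in(0,1]$, and then verify that the perturbation-theoretic power series for the relevant spectral projection and eigenbranches converges on a fixed neighborhood of $\mathbf{k}=0$ independently of $\varepsilon$. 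This uniformity, together with the identification of the $O(t^2)$-coefficient of the germ with the symbol of $\mathcal{B}^0+\lambda_0\overline{Q_0}$, is what produces the sharp order $\varepsilon$ rather than a weaker rate.
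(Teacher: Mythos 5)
The paper itself gives no proof here: Theorem~\ref{Theorem Su L2} is quoted from \cite{SuAA} (Theorem~9.2 there, applied to the operator \eqref{Beps}), so the paper's argument is just the citation. Your sketch faithfully reproduces the Birman--Suslina scheme that \cite{SuAA} uses to establish this result---scaling to a $\Gamma$-periodic family, Gelfand/Floquet--Bloch direct-integral decomposition, two-parameter analytic perturbation theory near the bottom of the spectrum, identification of the spectral germ with the symbol of $\mathcal{B}^0+\lambda_0\overline{Q_0}$, and the absorption trick \eqref{Su_AA_5.24} to control the form-unbounded lower-order coefficients uniformly in $\varepsilon$---so at the level of strategy your approach agrees with the cited source.
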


In order to approximate the operator $(B_\varepsilon +\lambda_0 Q_0^\varepsilon )^{-1}$ in the
$(L_2\rightarrow H^1)$-operator norm, we introduce a corrector
\begin{equation}
\label{K(eps)}
K(\varepsilon )=\left([\Lambda ^\varepsilon] b(\mathbf{D})+[\widetilde{\Lambda}^\varepsilon] \right)\Pi _\varepsilon
(B^0+\lambda_0 \overline{Q_0})^{-1}.
\end{equation}
Here $\Pi_\varepsilon$ is given by \eqref{Pi_eps}.
The corrector \eqref{K(eps)} is a continuous mapping of $L_2(\mathbb{R}^d;\mathbb{C}^n)$ to $H^1(\mathbb{R}^d;\mathbb{C}^n)$.
This can be easily checked by using Proposition~\ref{Proposition Pi_eps L2->L2} and the relations $\Lambda ,\widetilde{\Lambda}\in \widetilde{H}^1(\Omega)$. Herewith,  $\Vert \varepsilon K(\varepsilon )\Vert _{L_2\rightarrow H^1}=O(1)$.

The following result was obtained in \cite[Theorem~9.7]{SuAA}.

\begin{theorem}[\cite{SuAA}]\label{Theorem Su L2->H1}
Suppose that the assumptions of Theorem~\textnormal{\ref{Theorem Su L2}} are satisfied.
Let $K(\varepsilon)$ be the operator given by \eqref{K(eps)}.
Then for $0<\varepsilon \leqslant 1$ we have
\begin{equation*}
\Vert (B_\varepsilon +\lambda_0 Q_0^\varepsilon )^{-1}-(B^0+\lambda_0 \overline{Q_0} )^{-1}-\varepsilon K(\varepsilon)\Vert _{L_2(\mathbb{R}^d)\rightarrow H^1(\mathbb{R}^d)}\leqslant C_2\varepsilon .
\end{equation*}
The constant $C_2$ is controlled in terms of the initial data \eqref{problem data}.
\end{theorem}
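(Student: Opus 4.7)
The plan is to combine the $(L_2\to L_2)$ estimate of Theorem~\ref{Theorem Su L2} with a first‑order corrector ansatz and an energy estimate, in the spirit of the Zhikov/BSu scheme adapted to lower order terms. Fix $\mathbf{F}\in L_2(\mathbb{R}^d;\mathbb{C}^n)$, set $\mathbf{u}_\varepsilon=(B_\varepsilon+\lambda_0 Q_0^\varepsilon)^{-1}\mathbf{F}$ and $\mathbf{u}_0=(B^0+\lambda_0\overline{Q_0})^{-1}\mathbf{F}$, and build the approximate solution
\begin{equation*}
\mathbf{v}_\varepsilon:=\mathbf{u}_0+\varepsilon[\Lambda^\varepsilon]b(\mathbf{D})\Pi_\varepsilon\mathbf{u}_0+\varepsilon[\widetilde{\Lambda}^\varepsilon]\Pi_\varepsilon\mathbf{u}_0=\mathbf{u}_0+\varepsilon K(\varepsilon)\mathbf{F}.
\end{equation*}
Since $B^0+\lambda_0\overline{Q_0}$ is a second order elliptic DO with constant coefficients with symbol bounded below by \eqref{L>=}, one gets $\|\mathbf{u}_0\|_{H^2(\mathbb{R}^d)}\le \mathfrak{C}\|\mathbf{F}\|_{L_2(\mathbb{R}^d)}$. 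The target inequality will follow once we show $\|\mathbf{v}_\varepsilon-\mathbf{u}_\varepsilon\|_{H^1(\mathbb{R}^d)}\le C\varepsilon\|\mathbf{F}\|_{L_2(\mathbb{R}^d)}$.

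The second step is to compute the discrepancy $\mathbf{G}_\varepsilon:=(\mathcal{B}_\varepsilon+\lambda_0 Q_0^\varepsilon)\mathbf{v}_\varepsilon-\mathbf{F}$ in the weak sense, i.\,e.\ in $H^{-1}(\mathbb{R}^d;\mathbb{C}^n)$. Using the cell problems \eqref{Lambda problem} and \eqref{tildeLambda_problem} for $\Lambda$ and $\widetilde{\Lambda}$, together with the definitions \eqref{g^0}, \eqref{V matr}, \eqref{W} of $g^0$, $V$, $W$, the leading rapidly oscillating contributions of $b(\mathbf{D})^*g^\varepsilon b(\mathbf{D})\mathbf{v}_\varepsilon$, of $\sum_j(a_j^\varepsilon D_j+D_j(a_j^\varepsilon)^*)\mathbf{v}_\varepsilon$ and of the potential terms cancel against the action of $\mathcal{B}^0+\lambda_0\overline{Q_0}$ on $\mathbf{u}_0$, modulo (i) $(\Pi_\varepsilon-I)$–type errors, (ii) commutators between multiplication by $\Lambda^\varepsilon,\widetilde{\Lambda}^\varepsilon$ and first order operators, and (iii) the oscillation of the mass coefficient $(Q_0^\varepsilon-\overline{Q_0})\mathbf{u}_0$. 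Category (i) is handled by Proposition~\ref{Proposition Pi_eps -I} applied to $\mathbf{u}_0$ and $b(\mathbf{D})\mathbf{u}_0$, yielding an extra factor $\varepsilon$ at the cost of one more derivative on $\mathbf{u}_0$; category (ii) is controlled by Proposition~\ref{Proposition Pi_eps L2->L2} combined with \eqref{Lambda <=}--\eqref{DLambda<=} and \eqref{tildeLambda <=}--\eqref{DtildeLambda<=}; category (iii) is reduced to a mean–zero periodic factor times a Sobolev function, which is a Poincaré–type gain of order $\varepsilon$. Summed up, these contributions produce $\|\mathbf{G}_\varepsilon\|_{H^{-1}(\mathbb{R}^d)}\le C\varepsilon\|\mathbf{F}\|_{L_2(\mathbb{R}^d)}$.

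The third step is the energy estimate. Because $\mathbf{v}_\varepsilon-\mathbf{u}_\varepsilon\in H^1(\mathbb{R}^d;\mathbb{C}^n)$ solves $(B_\varepsilon+\lambda_0 Q_0^\varepsilon)(\mathbf{v}_\varepsilon-\mathbf{u}_\varepsilon)=\mathbf{G}_\varepsilon$, the uniform coercivity of the form $\mathfrak{b}_\varepsilon+\lambda_0\|Q_0^{-1}\|_{L_\infty}^{-1}(\cdot,\cdot)_{L_2}$ provided by \eqref{b_eps>=} and the choice \eqref{lambda0} gives
\begin{equation*}
\|\mathbf{v}_\varepsilon-\mathbf{u}_\varepsilon\|_{H^1(\mathbb{R}^d)}\le \mathfrak{c}\,\|\mathbf{G}_\varepsilon\|_{H^{-1}(\mathbb{R}^d)}\le C\varepsilon\|\mathbf{F}\|_{L_2(\mathbb{R}^d)},
\end{equation*}
which is the claim.

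The hard part will be the bookkeeping in step two, specifically the lower order terms. The coefficients $a_j$ are only in $L_\rho(\Omega)$ with $\rho>d$, and the potential is a periodic matrix measure; products like $a_j^\varepsilon[\Lambda^\varepsilon]b(\mathbf{D})\Pi_\varepsilon\mathbf{u}_0$ must be estimated by Hölder and Sobolev embedding (as in \eqref{Y2<=}) rather than by $L_\infty$ bounds, and the measure‑type potential requires a version of Proposition~\ref{Proposition Pi_eps L2->L2} compatible with $d\mu$. Once these auxiliary bounds are in hand, the cancellations dictated by \eqref{Lambda problem}--\eqref{tildeLambda_problem} proceed as in BSu06 and the argument closes.
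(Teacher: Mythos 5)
The paper does not prove this theorem; it quotes it from \cite[Theorem~9.7]{SuAA}, where it is established by the Birman--Suslina operator-theoretic method: scaling transformation, Floquet--Bloch decomposition, and analytic (threshold) perturbation theory near the bottom of the spectrum. Your argument follows instead the Zhikov/Griso route -- define $\mathbf{v}_\varepsilon=\mathbf{u}_0+\varepsilon K(\varepsilon)\mathbf{F}$, show the discrepancy $(\mathcal{B}_\varepsilon+\lambda_0 Q_0^\varepsilon)\mathbf{v}_\varepsilon-\mathbf{F}$ is $O(\varepsilon)$ in $H^{-1}$, and close with the uniform Lax--Milgram bound coming from \eqref{b_eps>=} and the choice \eqref{lambda0}. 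That is a genuinely different strategy; step~3 (the energy estimate) and the $H^2$ control of $\mathbf{u}_0$ via \eqref{L>=} are clean and standard. The Floquet--Bloch route has the advantage that the effective operator $\mathcal{B}^0$ of \eqref{B^0}, including the nontrivial constants $V$ and $W$, drops out automatically from the threshold approximation; the Zhikov route is more elementary and more portable (e.g.\ to bounded domains), but its price is exactly the discrepancy bookkeeping.

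That bookkeeping is where your sketch is underspecified in a way that would cause a direct write-up to fail. You classify the residuals after cancellation into three $O(\varepsilon)$ buckets -- $(\Pi_\varepsilon-I)$ errors, commutators with $\Lambda^\varepsilon,\widetilde{\Lambda}^\varepsilon$, and the oscillation $(Q_0^\varepsilon-\overline{Q_0})\mathbf{u}_0$. But when you expand $b(\mathbf{D})^*g^\varepsilon b(\mathbf{D})\bigl(\varepsilon\widetilde{\Lambda}^\varepsilon\Pi_\varepsilon\mathbf{u}_0\bigr)$ together with the first-order part $\sum_j\bigl(a_j^\varepsilon D_j+D_j(a_j^\varepsilon)^*\bigr)$ acting on $\varepsilon K(\varepsilon)\mathbf{F}$, you encounter the $O(1)$ oscillating products $(b(\mathbf{D})\Lambda)^{\varepsilon*}g^\varepsilon(b(\mathbf{D})\widetilde{\Lambda})^\varepsilon$ and $(b(\mathbf{D})\widetilde{\Lambda})^{\varepsilon*}g^\varepsilon(b(\mathbf{D})\widetilde{\Lambda})^\varepsilon$; their cell averages are precisely $V$ and $W$ from \eqref{V matr}--\eqref{W}, and only the mean-zero fluctuations around those averages are $O(\varepsilon)$ after a divergence-structure/Poincar\'e argument. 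Your remark that the cancellations ``proceed as in BSu06'' does not cover this: BSu06 has no lower-order terms and therefore no $V$, $W$, and isolating these constants so that the rest is genuinely $O(\varepsilon)$ is the new difficulty that \cite{SuAA} had to resolve relative to \cite{BSu06}. The remaining ingredients you list -- H\"older/Sobolev handling of $a_j\in L_\rho(\Omega)$, the measure potential via Condition~\ref{Condition on measure mu}, Propositions~\ref{Proposition Pi_eps -I}--\ref{Proposition Pi_eps L2->L2} and the bounds \eqref{Lambda <=}--\eqref{DLambda<=}, \eqref{tildeLambda <=}--\eqref{DtildeLambda<=} -- are the right tools; it is the $V,W$ extraction that needs to be made explicit for the argument to close.
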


\begin{remark}
\label{Remark C1 C2 in Theorems by Su}
Below in Sections {\rm \ref{Section proof of main result L2}} and  {\rm \ref{Section Proof of main result in H1}}
we will apply Theorems~\textnormal{\ref{Theorem Su L2}} and~\textnormal{\ref{Theorem Su L2->H1}}
{\rm (}precisely, instead of Theorem \textnormal{\ref{Theorem Su L2->H1}}
we will use its analog, Theorem~\textnormal{\ref{Theorem S_eps z fixed})}
in order to approximate the generalized resolvent \hbox{$(B_\varepsilon(\vartheta) +\lambda_0 Q_0^\varepsilon )^{-1}$}
of the operator family~$B_\varepsilon(\vartheta)$ depending on the auxiliary parameter~$\vartheta \in (0,1]$.
The operator $B_\varepsilon(\vartheta)$ has the same principal part as $B_\varepsilon$ and the lower order
coefficients $\vartheta a_j^\varepsilon$ and $\vartheta^2 \check{Q}^\varepsilon$
{\rm (}see Subsection~{\rm \ref{Subsection operator B_eps (vartheta)})}.
Note that, for all $\vartheta \in (0,1]$,  the form $\vartheta^2 \left(\int_\Omega \langle d\mu({\mathbf x}) {\mathbf u}, {\mathbf u}\rangle + c_5
\int_\Omega \langle Q_0({\mathbf x}) {\mathbf u}, {\mathbf u}\rangle\, d{\mathbf x}\right)$
satisfies Condition~\textnormal{\ref{Condition on measure mu}} with the same constants:
$\check{c}_0=0$ in the role of $c_0$, $\check{c}_3 = c_3 +c_5\Vert Q_0\Vert _{L_\infty}$ in the role of $c_3$, and the previous
$\widetilde{c}$ and $\widetilde{c}_2$. Next, the constant $\nu_0 = \kappa^2 (16 c_1^2)^{-1}$
does not depend on~$\vartheta$. Multiplying \eqref{Y2<=} with $\nu=\nu_0$ by $\vartheta^2$,
we see that the inequality of the form~\eqref{Y2<=} {\rm (}in the case of the coefficients $\vartheta a_j${\rm )}
is valid with the constant $C(\nu_0)$ for all $\vartheta \in (0,1]$.
Hence, the constant $c_4$ {\rm (}see~{\rm \eqref{Su_AA_5.24})}, and then also $\lambda_0$
{\rm (}see~{\rm \eqref{lambda0})} can be taken independently of $\vartheta$.
Note also that the norms of the coefficients $\vartheta a_j$ in $L_\rho(\Omega)$
are majorated by the norms $\|a_j\|_{L_\rho(\Omega)}$ for all $\vartheta \in (0,1]$.
In \cite{SuAA}, the dependence of the constants $C_1$ and $C_2$ on the initial data was searched out
{\rm (}in particular, these constants increase, as the norms $\|a_j\|_{L_\rho(\Omega)}$ increase{\rm )}.
What was said allows us to choose the constants $C_1$ and $C_2$ in approximations for the operator
$(B_\varepsilon(\vartheta) +\lambda_0 Q_0^\varepsilon )^{-1}$ to be independent of the parameter $\vartheta$.
\end{remark}

\section{Auxiliary statements}
\label{Section Lemmas}
\setcounter{section}{2}
\setcounter{equation}{0}
\setcounter{theorem}{0}

\subsection{Properties of the matrix-valued function $\Lambda$}
We need the following result proved in~\cite[Lemma~2.3]{PSu}:

\begin{lemma}
\label{Lemma Lambda-varepsilon}
Suppose that $\Lambda $ is the $\Gamma$-periodic solution of problem \eqref{Lambda problem}.
Then for any $u\in C_0^\infty (\mathbb{R}^d)$ and $\varepsilon >0$ we have
\begin{equation*}
\int _{\mathbb{R}^d}\vert (\mathbf{D}\Lambda )^\varepsilon (\mathbf{x})\vert ^2\vert u(\mathbf{x})\vert ^2\,d\mathbf{x}
\leqslant\beta _1\Vert u\Vert ^2_{L_2(\mathbb{R}^d)}
+\beta _2 \varepsilon ^2\int _{\mathbb{R}^d}\vert \Lambda ^\varepsilon (\mathbf{x})\vert ^2 \vert \mathbf{D}u(\mathbf{x})\vert ^2\,d\mathbf{x}.
\end{equation*}
The constants $\beta _1$ and $\beta _2$ depend only on
$m$, $d$, $\alpha _0$, $\alpha _1$, $\Vert g\Vert _{L_\infty}$, and $\Vert g^{-1}\Vert _{L_\infty}$.
\end{lemma}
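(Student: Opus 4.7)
First, I reduce to $\varepsilon = 1$ by scaling. Using $(\mathbf{D}\Lambda)^\varepsilon(\mathbf{x}) = \varepsilon\mathbf{D}\Lambda^\varepsilon(\mathbf{x})$ and the substitution $\mathbf{y} = \mathbf{x}/\varepsilon$ applied to the function $u$, both sides of the desired inequality scale homogeneously (the factor $\varepsilon^{-d}$ from $d\mathbf{y}=\varepsilon^{-d}d\mathbf{x}$ cancels, and the prefactor $\varepsilon^2$ on the right is produced by $|\mathbf{D}u(\varepsilon\,\cdot)|^2=\varepsilon^2|(\mathbf{D}u)(\varepsilon\,\cdot)|^2$). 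So it suffices to prove
\begin{equation*}
\int_{\mathbb{R}^d}|\mathbf{D}\Lambda(\mathbf{y})|^2|u(\mathbf{y})|^2\,d\mathbf{y}\leq\beta_1\|u\|_{L_2(\mathbb{R}^d)}^2+\beta_2\int_{\mathbb{R}^d}|\Lambda(\mathbf{y})|^2|\mathbf{D}u(\mathbf{y})|^2\,d\mathbf{y}
\end{equation*}
for every $u\in C_0^\infty(\mathbb{R}^d)$.

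I work column by column. Let $\Lambda_k\in\widetilde{H}^1(\Omega;\mathbb{C}^n)$ denote the $k$-th column of $\Lambda$, $k=1,\ldots,m$; it satisfies the weak identity
\begin{equation*}
\int_{\mathbb{R}^d}\langle g(b(\mathbf{D})\Lambda_k+\mathbf{e}_k),b(\mathbf{D})\eta\rangle\,d\mathbf{y}=0
\end{equation*}
for every $\eta\in H^1(\mathbb{R}^d;\mathbb{C}^n)$ of compact support. Inserting the admissible test function $\eta=\Lambda_k|u|^2$, using the Leibniz expansion $b(\mathbf{D})(\Lambda_k|u|^2)=(b(\mathbf{D})\Lambda_k)|u|^2+\sum_l b_l\Lambda_k D_l|u|^2$, combining the uniform positivity of $g$ (from \eqref{g}) with the pointwise bound $|\mathbf{D}|u|^2|\leq 2|u||\mathbf{D}u|$, the estimate $|b_l|\leq\alpha_1^{1/2}$ from \eqref{bl<=}, Cauchy--Schwarz and Young's inequality, and then absorbing a small multiple of $\int|b(\mathbf{D})\Lambda_k+\mathbf{e}_k|^2|u|^2$ into the left-hand side, produces the Caccioppoli-type estimate
\begin{equation*}
\int_{\mathbb{R}^d}|b(\mathbf{D})\Lambda_k|^2|u|^2\,d\mathbf{y}\leq C_1\|u\|_{L_2}^2+C_2\int_{\mathbb{R}^d}|\Lambda_k|^2|\mathbf{D}u|^2\,d\mathbf{y},
\end{equation*}
with constants $C_1,C_2$ depending only on $\alpha_0,\alpha_1,\|g\|_{L_\infty},\|g^{-1}\|_{L_\infty}$.

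The main obstacle is to transfer this bound from $b(\mathbf{D})\Lambda_k$ to $\mathbf{D}\Lambda_k$: the ellipticity $b(\boldsymbol{\xi})^*b(\boldsymbol{\xi})\geq\alpha_0|\boldsymbol{\xi}|^2\mathbf{1}_n$ lives in Fourier space and has no direct pointwise counterpart in $\mathbf{y}$. The trick is to apply the global Plancherel inequality $\|\mathbf{D}\phi\|_{L_2(\mathbb{R}^d)}^2\leq\alpha_0^{-1}\|b(\mathbf{D})\phi\|_{L_2(\mathbb{R}^d)}^2$, which holds for every $\phi\in H^1(\mathbb{R}^d;\mathbb{C}^n)$, \emph{not} to $\Lambda_k$ itself (which does not decay) but to the cut-off product $\phi:=\Lambda_k u\in H^1(\mathbb{R}^d;\mathbb{C}^n)$. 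Together with the Leibniz expansions $\mathbf{D}(\Lambda_k u)=(\mathbf{D}\Lambda_k)u+\Lambda_k\,\mathbf{D}u$ and $b(\mathbf{D})(\Lambda_k u)=(b(\mathbf{D})\Lambda_k)u+\sum_l b_l\Lambda_k D_l u$, the elementary pointwise inequalities
\begin{equation*}
|\mathbf{D}(\Lambda_k u)|^2\geq\tfrac12|\mathbf{D}\Lambda_k|^2|u|^2-|\Lambda_k|^2|\mathbf{D}u|^2,\quad |b(\mathbf{D})(\Lambda_k u)|^2\leq 2|b(\mathbf{D})\Lambda_k|^2|u|^2+2d\alpha_1|\Lambda_k|^2|\mathbf{D}u|^2
\end{equation*}
convert the Caccioppoli bound into the desired weighted estimate for $|\mathbf{D}\Lambda_k|^2|u|^2$. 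Summing over $k=1,\ldots,m$ and reversing the scaling in step one finishes the proof, with $\beta_1,\beta_2$ depending only on $m,d,\alpha_0,\alpha_1,\|g\|_{L_\infty},\|g^{-1}\|_{L_\infty}$.
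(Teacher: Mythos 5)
Your proof is correct and follows the same method the paper uses in its detailed proof of the parallel Lemma~\ref{Lemma on Lambda-tilda} for $\widetilde{\Lambda}$ (and hence, presumably, the method of \cite[Lemma~2.3]{PSu}, which the paper cites for this statement): test the weak equation on $\mathbb{R}^d$ with $\Lambda_k|u|^2$ to get a Caccioppoli bound on $\int|b(\mathbf{D})\Lambda_k|^2|u|^2$, then apply the Plancherel inequality $\|\mathbf{D}\phi\|_{L_2}^2\leqslant\alpha_0^{-1}\|b(\mathbf{D})\phi\|_{L_2}^2$ to the compactly supported product $\phi=\Lambda_k u$ and unwind the Leibniz rules, exactly as in \eqref{Lm2.1_proof_9}--\eqref{Lm2.1_proof_10}. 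The only cosmetic difference is that you perform the scaling reduction to $\varepsilon=1$ at the outset, whereas the paper states and proves the $\varepsilon=1$ version first (Lemma~\ref{Lemma on Lambda-tilda}) and then scales (Lemma~\ref{Lemma Lambda-tilda-varepsilon}); this is immaterial.
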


From Lemma \ref{Lemma Lambda-varepsilon} and the density of $C_0^\infty({\mathbb R}^d)$ in $H^1({\mathbb R}^d)$
we deduce the following statement.

\begin{corollary}
\label{corollary_Lambda}
Suppose that $\Lambda $ is the $\Gamma$-periodic solution of problem \eqref{Lambda problem}.
Assume also that $\Lambda \in L_\infty$.
Then for any $u\in H^1 (\mathbb{R}^d)$ and $\varepsilon >0$ we have
\begin{equation*}
\int _{\mathbb{R}^d}\vert (\mathbf{D}\Lambda )^\varepsilon (\mathbf{x})\vert ^2\vert u(\mathbf{x})\vert ^2\,d\mathbf{x}
\leqslant\beta _1\Vert u\Vert ^2_{L_2(\mathbb{R}^d)}
+\beta _2 \varepsilon ^2 \|\Lambda\|^2_{L_\infty} \int _{\mathbb{R}^d} \vert \mathbf{D}u(\mathbf{x})\vert ^2\,d\mathbf{x}.
\end{equation*}
\end{corollary}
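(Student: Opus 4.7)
The plan is to pass from the density statement: since $C_0^\infty(\mathbb{R}^d)$ is dense in $H^1(\mathbb{R}^d)$, pick $u_k \in C_0^\infty(\mathbb{R}^d)$ with $u_k \to u$ in $H^1(\mathbb{R}^d)$, apply Lemma~\ref{Lemma Lambda-varepsilon} to each $u_k$, use the hypothesis $\Lambda \in L_\infty$ to convert the RHS term $\beta_2 \varepsilon^2 \int |\Lambda^\varepsilon|^2 |\mathbf{D} u_k|^2\, d\mathbf{x}$ into the desired form $\beta_2 \varepsilon^2 \|\Lambda\|_{L_\infty}^2 \|\mathbf{D} u_k\|_{L_2}^2$, and then let $k \to \infty$.

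The convergence of the right-hand side is straightforward: $\|u_k\|_{L_2} \to \|u\|_{L_2}$ and $\|\mathbf{D} u_k\|_{L_2} \to \|\mathbf{D} u\|_{L_2}$, so the constant-coefficient bound $\beta_1 \|u_k\|_{L_2}^2 + \beta_2 \varepsilon^2 \|\Lambda\|_{L_\infty}^2 \|\mathbf{D} u_k\|_{L_2}^2$ converges to the analogous expression for $u$. The slightly more delicate point is the left-hand side: the factor $|(\mathbf{D}\Lambda)^\varepsilon(\mathbf{x})|^2$ is only locally integrable (being $\Gamma$-periodic with $\mathbf{D}\Lambda \in L_2(\Omega)$; see~\eqref{DLambda<=}), and is not assumed bounded, so I cannot directly pass $u_k \to u$ through the integral by dominated convergence.

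To handle this, I extract a subsequence $u_{k_j} \to u$ almost everywhere in $\mathbb{R}^d$ (possible because $u_k \to u$ in $L_2$), so that $|u_{k_j}(\mathbf{x})|^2 \to |u(\mathbf{x})|^2$ a.e. Fatou's lemma applied to the non-negative integrand $|(\mathbf{D}\Lambda)^\varepsilon|^2 |u_{k_j}|^2$ then gives
\begin{equation*}
\int_{\mathbb{R}^d} |(\mathbf{D}\Lambda)^\varepsilon(\mathbf{x})|^2 |u(\mathbf{x})|^2\, d\mathbf{x}
\leqslant \liminf_{j \to \infty} \int_{\mathbb{R}^d} |(\mathbf{D}\Lambda)^\varepsilon(\mathbf{x})|^2 |u_{k_j}(\mathbf{x})|^2\, d\mathbf{x},
\end{equation*}
and combining with the majorized RHS yields the claimed inequality for $u$.

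The main (and essentially only) obstacle is the one just addressed, namely that $|(\mathbf{D}\Lambda)^\varepsilon|^2$ is unbounded and only locally integrable, which forces the use of Fatou's lemma rather than direct dominated convergence; the $L_\infty$ hypothesis on $\Lambda$ plays no role in this step but is crucial for converting the $\int |\Lambda^\varepsilon|^2 |\mathbf{D} u_k|^2$ term on the right into a convergent expression that does not require smoothness of $u$.
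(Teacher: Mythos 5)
Your proof is correct and follows exactly the route the paper indicates (density of $C_0^\infty$ in $H^1$ applied to Lemma~\ref{Lemma Lambda-varepsilon}); the paper leaves the passage to the limit implicit, and your use of a.e.\ convergence along a subsequence together with Fatou's lemma on the left-hand side is precisely the detail needed to make that passage rigorous, since $(\mathbf{D}\Lambda)^\varepsilon$ is only locally square-integrable.
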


\subsection{Properties of the matrix-valued function  $\widetilde{\Lambda}$}
The proof of the following statement is similar to the proof of Lemma~2.1 from \cite{PSu}.

\begin{lemma}
\label{Lemma on Lambda-tilda}
Suppose that $\widetilde{\Lambda}$ is the $\Gamma$-periodic solution of problem \eqref{tildeLambda_problem}.
Then for any $u\in C_0^\infty (\mathbb{R}^d)$ we have
\begin{equation}
\label{Lm2.1_tildeLambda}
\int _{\mathbb{R}^d}\vert \mathbf{D}\widetilde{\Lambda}(\mathbf{x})\vert ^2\vert u(\mathbf{x})\vert ^2\,d\mathbf{x}
\leqslant \widetilde{\beta} _1 \Vert u\Vert ^2_{H^1(\mathbb{R}^d)}+\widetilde{\beta} _2 \int _{\mathbb{R}^d}\vert \widetilde{\Lambda} (\mathbf{x})\vert ^2 \vert \mathbf{D}u(\mathbf{x})\vert ^2 \,d\mathbf{x}.
\end{equation}
The constants $\widetilde{\beta} _1$ and $\widetilde{\beta} _2$ are given below in \eqref{Lm2.1_proof_11}
and depend only on $n$, $d$, $\alpha _0$, $\alpha _1$, $\rho$, $\Vert g\Vert _{L_\infty}$, $\Vert g^{-1}\Vert _{L_\infty}$,
the norms $\Vert a_j\Vert _{L_\rho (\Omega)}$, $j=1,\dots ,d$, and the parameters of the lattice $\Gamma$.
 \end{lemma}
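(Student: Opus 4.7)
The plan is to imitate the proof of Lemma~2.1 in \cite{PSu}, which derives an analogous bound for $\Lambda$ from \eqref{Lambda problem}. The novelty in \eqref{tildeLambda_problem} is the divergence forcing term $\sum_j D_j a_j^*$, and this is precisely what produces the $\|u\|^2_{H^1}$ summand on the right of \eqref{Lm2.1_tildeLambda}. First I would write the weak formulation of \eqref{tildeLambda_problem} on $\R^d$: for every matrix-valued test function $\bphi\in H^1(\R^d;\C^{n\times n})$ with compact support,
\[
\int_{\R^d} \langle g(\x) b(\D)\wt\Lambda, b(\D)\bphi\rangle\, d\x + \sum_{j=1}^d \int_{\R^d} \langle a_j(\x)^*, D_j\bphi\rangle\, d\x = 0,
\]
and insert $\bphi=|u|^2\wt\Lambda$; this is admissible because $u\in C_0^\infty(\R^d)$ and $\wt\Lambda\in\wt H^1(\Omega)$ together imply $|u|^2\wt\Lambda\in H^1(\R^d;\C^{n\times n})$ with compact support.

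Next, applying the Leibniz rules $b(\D)(|u|^2\wt\Lambda)=|u|^2 b(\D)\wt\Lambda+b(\D|u|^2)\wt\Lambda$ and $D_j(|u|^2\wt\Lambda)=|u|^2 D_j\wt\Lambda+(D_j|u|^2)\wt\Lambda$, I would isolate $\int_{\R^d}|u|^2\langle g b(\D)\wt\Lambda,b(\D)\wt\Lambda\rangle\,d\x$, which by positivity of $g$ is at least $\|g^{-1}\|_{L_\infty}^{-1}\int_{\R^d}|u|^2|b(\D)\wt\Lambda|^2\,d\x$. The remaining contributions are controlled by Cauchy--Schwarz with absorption parameters, using the pointwise bounds $|\D|u|^2|\le 2|u||\D u|$ and $|b(\boldsymbol\xi)|\le\alpha_1^{1/2}|\boldsymbol\xi|$, together with the cellwise H\"older--Sobolev estimate
\[
\int_{\R^d}|a_j|^2|u|^2\,d\x\le C\|a_j\|_{L_\rho(\Omega)}^2\|u\|_{H^1(\R^d)}^2,
\]
which follows by summing the local H\"older inequality over the translates $\Omega+\a$, $\a\in\Gamma$, and invoking the embedding $H^1(\Omega+\a)\hookrightarrow L_{2\rho/(\rho-2)}(\Omega+\a)$, finite by the assumption $\rho>d$ (respectively $\rho=2$ in dimension one).

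Finally, to pass from control of $\int|u|^2|b(\D)\wt\Lambda|^2\,d\x$ to the desired $\int|u|^2|\D\wt\Lambda|^2\,d\x$, I would apply the global coercivity \eqref{<b^*b<} to the compactly supported function $u\wt\Lambda\in H^1(\R^d)$:
\[
\alpha_0\int_{\R^d}|\D(u\wt\Lambda)|^2\,d\x\le\int_{\R^d}|b(\D)(u\wt\Lambda)|^2\,d\x.
\]
Expanding both sides via Leibniz, using $|b(\boldsymbol\xi)|^2\le\alpha_1|\boldsymbol\xi|^2$ and the elementary $|A+B|^2\ge\tfrac12|A|^2-|B|^2$, yields an estimate of the form $\int_{\R^d}|u|^2|\D\wt\Lambda|^2\,d\x\le C_1\int_{\R^d}|u|^2|b(\D)\wt\Lambda|^2\,d\x+C_2\int_{\R^d}|\wt\Lambda|^2|\D u|^2\,d\x$, which combined with the bound from the preceding paragraph delivers \eqref{Lm2.1_tildeLambda}. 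The hard part is the term $\sum_j\int\langle a_j^*,|u|^2 D_j\wt\Lambda\rangle\,d\x$ coming from the forcing in \eqref{tildeLambda_problem}: it features $|D_j\wt\Lambda|$, the very object we wish to estimate, so at first sight the estimate is circular. The resolution is Cauchy--Schwarz with a small parameter $\eta$, splitting it as $\eta\int|u|^2|D_j\wt\Lambda|^2+(4\eta)^{-1}\int|a_j|^2|u|^2$; one then substitutes the $\D$-to-$b(\D)$ conversion for the first summand and absorbs it into the left-hand side by choosing $\eta$ small. The H\"older--Sobolev bound handles the residual $\int|a_j|^2|u|^2$ in terms of $\|u\|^2_{H^1}$, producing constants $\wt\beta_1,\wt\beta_2$ with the stated dependence.
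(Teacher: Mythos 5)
Your proposal is correct and follows essentially the same route as the paper's own proof: weak formulation tested against $|u|^2\widetilde\Lambda$, Leibniz expansion, isolation of $J=\int |u|^2\langle g\,b(\mathbf D)\widetilde\Lambda,b(\mathbf D)\widetilde\Lambda\rangle\,d\mathbf x$, Cauchy--Schwarz with absorption plus the cellwise H\"older--Sobolev bound for the $a_j$ terms, and finally the $b(\mathbf D)\to\mathbf D$ conversion via \eqref{<b^*b<} applied to $u\widetilde\Lambda$ followed by absorption of the self-referential term. The only cosmetic difference is that you argue with the full matrix $\widetilde\Lambda$ while the paper works column-by-column and sums over $k$ at the end; this affects nothing substantive.
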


\begin{proof}
Let $\mathbf{w}_k(\mathbf{x})$, $k=1,\dots ,n$, be the columns of the matrix $\widetilde{\Lambda}(\mathbf{x})$.
By \eqref{tildeLambda_problem}, for any function $\boldsymbol{\eta}\in H^1(\mathbb{R}^d;\mathbb{C}^n)$
such that  $\boldsymbol{\eta}(\mathbf{x})=0$ for $\vert \mathbf{x}\vert >R$ (with some $R>0$) we have
\begin{equation}
\label{Lm2.1_proof_1}
\int _{\mathbb{R}^d}\biggl( \langle g(\mathbf{x})b(\mathbf{D})\mathbf{w}_k(\mathbf{x}),b(\mathbf{D})\boldsymbol{\eta}(\mathbf{x})\rangle
+\sum \limits _{j=1}^d \langle a_j(\mathbf{x})^*\mathbf{e}_k ,D_j \boldsymbol{\eta}(\mathbf{x})\rangle \biggr) \,d\mathbf{x}=0.
\end{equation}
Here $\mathbf{e}_k$, $k=1,\dots ,n$, is the standard orthonormal basis in $\mathbb{C}^n$.

Let $u\in C_0^\infty (\mathbb{R}^d)$. We put $\boldsymbol{\eta}(\mathbf{x}):=\mathbf{w}_k(\mathbf{x})\vert u(\mathbf{x})\vert ^2$.
Then, by \eqref{b(D)=},
\begin{align}
\label{Lm2.1_proof_2}
b(\mathbf{D})\boldsymbol{\eta}(\mathbf{x})=\left(b(\mathbf{D})\mathbf{w}_k(\mathbf{x})\right)\vert u(\mathbf{x})\vert ^2
+\sum \limits _{l=1}^d b_l \mathbf{w}_k(\mathbf{x})D_l\vert u(\mathbf{x})\vert ^2,\\
\label{Lm2.1_proof_3}
D_j\boldsymbol{\eta}(\mathbf{x})=\left(D_j\mathbf{w}_k(\mathbf{x})\right)\vert u(\mathbf{x})\vert ^2+\mathbf{w}_k(\mathbf{x})D_j\vert u(\mathbf{x})\vert ^2.
\end{align}
Substituting \eqref{Lm2.1_proof_2} and \eqref{Lm2.1_proof_3} in \eqref{Lm2.1_proof_1}, we obtain
\begin{equation}
\label{Lm2.1_proof_4}
\begin{split}
J:&=\int _{\mathbb{R}^d} \langle g(\mathbf{x})b(\mathbf{D})\mathbf{w}_k(\mathbf{x}),b(\mathbf{D})\mathbf{w}_k(\mathbf{x})\rangle \vert u(\mathbf{x})\vert ^2 \,d\mathbf{x}\\
&=-\sum \limits _{j=1}^d \int _{\mathbb{R}^d}\langle a_j(\mathbf{x})^*\mathbf{e}_ k, D_j \mathbf{w}_k(\mathbf{x})\rangle \vert u(\mathbf{x})\vert ^2\,d\mathbf{x}\\
&+\sum \limits _{l=1}^d \int _{\mathbb{R}^d} \langle g(\mathbf{x})b(\mathbf{D})\mathbf{w}_k(\mathbf{x}),b_l\mathbf{w}_k(\mathbf{x})\rangle (u^*D_l u+u D_l u^*)\,d\mathbf{x}\\
&+\sum \limits _{j=1}^d \int _{\mathbb{R}^d}\langle a_j(\mathbf{x})^*\mathbf{e}_k,\mathbf{w}_k(\mathbf{x})\rangle (u^*D_j u +uD_ju^*)\,d\mathbf{x}.
\end{split}
\end{equation}
Denote the consecutive summands in the right-hand side of \eqref{Lm2.1_proof_4} by $J_1$, $J_2$, and $J_3$. We have
\begin{equation*}
\begin{split}
\vert J_1\vert &\leqslant 4\alpha _0^{-1}\Vert g^{-1}\Vert _{L_\infty}\sum \limits _{j=1}^d \int _{\mathbb{R}^d}\vert a_j(\mathbf{x})^*\mathbf{e}_k\vert ^2\vert u(\mathbf{x})\vert ^2\,d\mathbf{x}\\
&+\frac{1}{4}\left(4\alpha _0^{-1}\Vert g^{-1}\Vert _{L_\infty}\right)^{-1}\int _{\mathbb{R}^d}\vert \mathbf{D}\mathbf{w}_k(\mathbf{x})\vert ^2 \vert u(\mathbf{x})\vert ^2\,d\mathbf{x}.
\end{split}
\end{equation*}
Using  condition \eqref{a_j} on the coefficients $a_j$,  we see that
\begin{equation}
\label{Lm2.1_proof_5}
\begin{split}
\int _{\mathbb{R}^d}\vert a_j(\mathbf{x})^*\mathbf{e}_k\vert ^2\vert u(\mathbf{x})\vert ^2\,d\mathbf{x}
&\leqslant \int _{\mathbb{R}^d}\vert a_j (\mathbf{x})\vert ^2\vert u(\mathbf{x})\vert ^2\,d\mathbf{x}
\\
&\leqslant C_{\Omega ,\rho}^2\Vert a_j\Vert ^2 _{L_\rho (\Omega)}\Vert u\Vert ^2 _{H^1(\mathbb{R}^d)},
\end{split}
\end{equation}
where $C_{\Omega ,\rho}$ is the norm of the embedding $H^1(\Omega)\subset L_{2\rho /(\rho -2)}(\Omega)$. Hence,
\begin{equation}
\label{Lm2.1_proof_6}
\begin{split}
\vert J_1\vert &\leqslant 4\alpha _0^{-1}\Vert g^{-1}\Vert _{L_\infty}C_{\Omega ,\rho}^2\sum \limits _{j=1}^d \Vert a_j\Vert ^2_{L_\rho (\Omega)}\Vert u\Vert ^2_{H^1(\mathbb{R}^d)}\\
&+\frac{1}{4}\left( 4\alpha _0^{-1}\Vert g^{-1}\Vert _{L_\infty}\right)^{-1}\int _{\mathbb{R}^d}\vert \mathbf{D}\mathbf{w}_k(\mathbf{x})\vert ^2\vert u(\mathbf{x})\vert ^2\,d\mathbf{x}.
\end{split}
\end{equation}
Next, by \eqref{bl<=} and \eqref{Lm2.1_proof_4}, we obtain
\begin{equation}
\label{Lm2.1_proof_7}
\begin{split}
\vert J_2\vert &\leqslant 2\int _{\mathbb{R}^d}\vert g(\mathbf{x})^{1/2}b(\mathbf{D})\mathbf{w}_k(\mathbf{x})\vert \vert u(\mathbf{x})\vert
\left(\sum \limits _{l=1}^d \vert g(\mathbf{x})^{1/2}b_l \mathbf{w}_k(\mathbf{x})\vert \vert D_l u(\mathbf{x})\vert \right)\,d\mathbf{x}\\
&\leqslant \frac{1}{2}J+2d\alpha _1\Vert g\Vert _{L_\infty}\int _{\mathbb{R}^d}\vert \mathbf{w}_k(\mathbf{x})\vert ^2\vert \mathbf{D}u(\mathbf{x})\vert ^2\,d\mathbf{x}.
\end{split}
\end{equation}
Finally, we estimate $J_3$:
\begin{equation*}
\begin{split}
\vert J_3\vert &\leqslant 2\sum \limits _{j=1}^d \int _{\mathbb{R}^d}\vert a_j(\mathbf{x})\vert \vert \mathbf{w}_k(\mathbf{x})\vert \vert u(\mathbf{x})\vert \vert D_ju(\mathbf{x})\vert \,d\mathbf{x}\\
&\leqslant \sum\limits _{j=1}^d \int _{\mathbb{R}^d}\vert a_j(\mathbf{x})\vert ^2 \vert u(\mathbf{x})\vert ^2\,d\mathbf{x}
+\int _{\mathbb{R}^d}\vert \mathbf{w}_k(\mathbf{x})\vert ^2 \vert \mathbf{D} u(\mathbf{x})\vert ^2\,d\mathbf{x}.
\end{split}
\end{equation*}
Taking  \eqref{Lm2.1_proof_5} into account, we obtain
\begin{equation}
\label{Lm2.1_proof_8}
\vert J_3\vert \leqslant C_{\Omega ,\rho}^2\sum \limits _{j=1}^d \Vert a_j\Vert ^2_{L_\rho (\Omega)}\Vert u\Vert ^2_{H^1(\mathbb{R}^d)}
+\int _{\mathbb{R}^d}\vert \mathbf{w}_k(\mathbf{x})\vert ^2\vert \mathbf{D}u(\mathbf{x})\vert ^2\,d\mathbf{x}.
\end{equation}
Combining \eqref{Lm2.1_proof_4} and \eqref{Lm2.1_proof_6}--\eqref{Lm2.1_proof_8}, we arrive at
\begin{equation}
\label{Lm2.1_proof_9}
\begin{split}
J&\leqslant 2\left(4\alpha _0^{-1}\Vert g^{-1}\Vert _{L_\infty}+1\right)C_{\Omega ,\rho}^2\sum \limits _{j=1}^d \Vert a_j\Vert ^2_{L_\rho (\Omega)}\Vert u\Vert ^2_{H^1(\mathbb{R}^d)}\\
&+\frac{1}{2}\left(4\alpha _0^{-1}\Vert g^{-1}\Vert _{L_\infty}\right)^{-1}\int _{\mathbb{R}^d}\vert \mathbf{D}\mathbf{w}_k(\mathbf{x})\vert ^2\vert u(\mathbf{x})\vert ^2\,d\mathbf{x}\\
&+2\left(2d\alpha _1\Vert g\Vert _{L_\infty}+1\right)\int _{\mathbb{R}^d}\vert \mathbf{w}_k(\mathbf{x})\vert ^2\vert \mathbf{D}u(\mathbf{x})\vert ^2\,d\mathbf{x}.
\end{split}
\end{equation}

Now we deduce the required estimate from \eqref{Lm2.1_proof_9}.
By \eqref{<b^*b<},
\begin{equation*}
\int _{\mathbb{R}^d}\vert \mathbf{D}(\mathbf{w}_k u)(\mathbf{x})\vert ^2\,d\mathbf{x}\leqslant \alpha _0^{-1}\int _{\mathbb{R}^d}\vert b(\mathbf{D})(\mathbf{w}_k u)(\mathbf{x})\vert ^2\,d\mathbf{x}.
\end{equation*}
According to \eqref{b(D)=}, $b(\mathbf{D})=\sum _{l=1}^d b_l D_l$, whence
\begin{equation*}
b(\mathbf{D})(\mathbf{w}_k u)=(b(\mathbf{D})\mathbf{w}_k)u+\sum \limits _{l=1}^d b_l\mathbf{w}_kD_l u.
\end{equation*}
Using \eqref{bl<=} and the expression for $J$ (see \eqref{Lm2.1_proof_4}), we have
\begin{equation}
\label{Lm2.1_proof_10}
\begin{split}
\int _{\mathbb{R}^d}&\vert \mathbf{D}(\mathbf{w}_k u)(\mathbf{x})\vert ^2\,d\mathbf{x}\leqslant 2\alpha _0^{-1}\int _{\mathbb{R}^d}\vert b(\mathbf{D})\mathbf{w}_k(\mathbf{x})\vert ^2 \vert u(\mathbf{x})\vert ^2\,d\mathbf{x}\\
&+2\alpha _0^{-1}\alpha _1 d\int _{\mathbb{R}^d}\vert \mathbf{w}_k(\mathbf{x})\vert ^2\vert \mathbf{D}u(\mathbf{x})\vert ^2\,d\mathbf{x}\\
&\leqslant 2\alpha _0^{-1}\Vert g^{-1}\Vert _{L_\infty}J+2\alpha _0^{-1}\alpha _1 d\int _{\mathbb{R}^d}\vert \mathbf{w}_k(\mathbf{x})\vert ^2\vert \mathbf{D}u(\mathbf{x})\vert ^2\,d\mathbf{x}.
\end{split}
\end{equation}
Obviously,
\begin{equation*}
\begin{split}
\int _{\mathbb{R}^d}\vert \mathbf{D}\mathbf{w}_k(\mathbf{x})\vert ^2\vert u(\mathbf{x})\vert ^2\,d\mathbf{x}
&\leqslant 2\int _{\mathbb{R}^d}\vert \mathbf{D}(\mathbf{w}_k u)(\mathbf{x})\vert ^2\,d\mathbf{x}\\
&+2\int _{\mathbb{R}^d}\vert \mathbf{w}_k(\mathbf{x})\vert ^2\vert \mathbf{D}u(\mathbf{x})\vert ^2\,d\mathbf{x}.
\end{split}
\end{equation*}
Combining this with \eqref{Lm2.1_proof_9} and \eqref{Lm2.1_proof_10}, we obtain
\begin{equation*}
\begin{split}
\int _{\mathbb{R}^d}&\vert \mathbf{D}\mathbf{w}_k(\mathbf{x})\vert ^2 \vert u(\mathbf{x})\vert ^2\,d\mathbf{x}\\
&\leqslant 16 \alpha _0^{-1}\Vert g^{-1}\Vert _{L_\infty}\left(4\alpha _0^{-1}\Vert g^{-1}\Vert _{L_\infty}+1\right)C_{\Omega ,\rho}^2\sum \limits _{j=1}^d \Vert a_j\Vert ^2 _{L_\rho (\Omega)}\Vert u\Vert ^2_{H^1(\mathbb{R}^d)}\\
&+\left(16\alpha _0^{-1}\Vert g^{-1}\Vert _{L_\infty}(2d\alpha _1\Vert g\Vert _{L_\infty}+1)+8\alpha _0^{-1}\alpha _1 d+4\right)\\
&\times\int _{\mathbb{R}^d}\vert \mathbf{w}_k(\mathbf{x})\vert ^2 \vert \mathbf{D}u(\mathbf{x})\vert ^2\,d\mathbf{x}.
\end{split}
\end{equation*}
Summing up over $k$, we arrive at \eqref{Lm2.1_tildeLambda} with
\begin{equation}
\label{Lm2.1_proof_11}
\begin{split}
&\widetilde{\beta} _1 =16 n\alpha _0^{-1}\Vert g^{-1}\Vert _{L_\infty}\left(4\alpha _0^{-1}\Vert g^{-1}\Vert _{L_\infty}+1\right)C_{\Omega ,\rho}^2\sum \limits _{j=1}^d \Vert a_j\Vert ^2 _{L_\rho (\Omega)},\\
&\widetilde{\beta} _2=16\alpha _0^{-1}\Vert g^{-1}\Vert _{L_\infty}(2d\alpha _1\Vert g\Vert _{L_\infty}+1)+8\alpha _0^{-1}\alpha _1 d+4.
\end{split}
\end{equation}
\end{proof}

By the scaling transformation, Lemma~\ref{Lemma on Lambda-tilda} implies the following result.

\begin{lemma}
\label{Lemma Lambda-tilda-varepsilon}
Under the assumptions of Lemma~\textnormal{\ref{Lemma on Lambda-tilda}}, for $0< \varepsilon \leqslant 1$ we have
\begin{equation*}
\begin{split}
\int _{\mathbb{R}^d}\vert (\mathbf{D}\widetilde{\Lambda})^\varepsilon(\mathbf{x})\vert ^2\vert u(\mathbf{x})\vert ^2\,d\mathbf{x}
\leqslant \widetilde{\beta} _1 \Vert u\Vert ^2_{H^1(\mathbb{R}^d)}
+\widetilde{\beta} _2 \varepsilon ^2\int _{\mathbb{R}^d}\vert \widetilde{\Lambda}^\varepsilon (\mathbf{x})\vert ^2 \vert \mathbf{D}u(\mathbf{x})\vert ^2 \,d\mathbf{x}.
\end{split}
\end{equation*}
\end{lemma}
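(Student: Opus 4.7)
The plan is to obtain Lemma \ref{Lemma Lambda-tilda-varepsilon} from Lemma \ref{Lemma on Lambda-tilda} by a change of variables (equivalently, by using the unitary scaling transformation $T_\varepsilon$), treating $\varepsilon$ merely as a rescaling parameter rather than redoing the integration-by-parts argument. Concretely, I would introduce $v({\mathbf y}) := u(\varepsilon {\mathbf y})$, which still lies in $C_0^\infty({\mathbb R}^d)$ if $u$ does, and substitute ${\mathbf x} = \varepsilon {\mathbf y}$ in the left-hand side of the desired inequality.

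First I would compute
\begin{equation*}
\int_{{\mathbb R}^d}|({\mathbf D}\widetilde{\Lambda})^\varepsilon({\mathbf x})|^2|u({\mathbf x})|^2\,d{\mathbf x}
= \varepsilon^d\int_{{\mathbb R}^d}|{\mathbf D}\widetilde{\Lambda}({\mathbf y})|^2|v({\mathbf y})|^2\,d{\mathbf y},
\end{equation*}
and then apply Lemma \ref{Lemma on Lambda-tilda} to the function $v$ on the right-hand side. This produces the two terms $\widetilde{\beta}_1 \varepsilon^d \|v\|_{H^1({\mathbb R}^d)}^2$ and $\widetilde{\beta}_2 \varepsilon^d \int |\widetilde{\Lambda}({\mathbf y})|^2 |{\mathbf D}v({\mathbf y})|^2\,d{\mathbf y}$. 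Undoing the change of variables and using ${\mathbf D}v({\mathbf y}) = \varepsilon ({\mathbf D}u)(\varepsilon {\mathbf y})$, I get $\varepsilon^d \|v\|_{L_2}^2 = \|u\|_{L_2}^2$ and $\varepsilon^d \|{\mathbf D}v\|_{L_2}^2 = \varepsilon^2 \|{\mathbf D}u\|_{L_2}^2$, so the second term becomes exactly $\widetilde{\beta}_2 \varepsilon^2 \int |\widetilde{\Lambda}^\varepsilon({\mathbf x})|^2 |{\mathbf D}u({\mathbf x})|^2\,d{\mathbf x}$, matching the claim.

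The only point that needs the restriction $0 < \varepsilon \leq 1$ is the bound for the first term:
\begin{equation*}
\varepsilon^d \|v\|_{H^1({\mathbb R}^d)}^2
= \|u\|_{L_2({\mathbb R}^d)}^2 + \varepsilon^2 \|{\mathbf D}u\|_{L_2({\mathbb R}^d)}^2
\leq \|u\|_{H^1({\mathbb R}^d)}^2,
\end{equation*}
which is where the constant $\widetilde{\beta}_1$ is preserved without any $\varepsilon$-dependent deterioration. Finally, to extend from $C_0^\infty({\mathbb R}^d)$ to arbitrary $u \in H^1({\mathbb R}^d)$ — if this is needed at the point of application — I would approximate $u$ in $H^1$ by test functions and pass to the limit, exactly as in Corollary \ref{corollary_Lambda}; however, for the statement of the lemma as written it is enough to assume $u \in C_0^\infty$, so no such extension is required. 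There is no real obstacle here: the scaling invariance of the computation done in Lemma \ref{Lemma on Lambda-tilda} and the restriction $\varepsilon \le 1$ together make the transfer automatic.
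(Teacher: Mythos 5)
Your proof is correct and is exactly the argument the paper has in mind: the text before Lemma \ref{Lemma Lambda-tilda-varepsilon} states only that it follows from Lemma \ref{Lemma on Lambda-tilda} ``by the scaling transformation,'' and your change of variables $\mathbf{x}=\varepsilon\mathbf{y}$, together with the observation that $\varepsilon^d\|v\|_{H^1}^2=\|u\|_{L_2}^2+\varepsilon^2\|\mathbf{D}u\|_{L_2}^2\le\|u\|_{H^1}^2$ for $0<\varepsilon\le1$, is precisely that computation spelled out.
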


Below in Section 7, we will need the following simple statement.

\begin{lemma}
\label{Lemma_Lambda_tilda3}
Let $f({\mathbf x})$ be a $\Gamma$-periodic function in  ${\mathbb R}^d$ such that
\begin{equation}
\label{condition_p}
f \in L_p(\Omega),\quad p=2\mbox{ for } d=1,\quad p>2 \mbox{ for } d=2,\quad p=d \mbox{ for } d\geqslant 3.
\end{equation}
Then for $0< \varepsilon \leqslant 1$ the operator $[f^\varepsilon]$ is a continuous mapping of $H^1({\mathbb R}^d)$ to
$L_2({\mathbb R}^d)$, and
$$
\| [f^\varepsilon] \|_{H^1({\mathbb R}^d) \to L_2({\mathbb R}^d)} \leqslant \|f\|_{L_p(\Omega)} C^{(p)}_{\Omega},
$$
where $C^{(p)}_{\Omega}$ is the norm of the embedding  $H^1(\Omega )\hookrightarrow L_{2(p/2)'}(\Omega)$.
Here \hbox{$(p/2)'= \infty$} for $d=1$, and $(p/2)'= p (p-2)^{-1}$ for $d\geqslant 2$.
\end{lemma}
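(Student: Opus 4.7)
The plan is to prove the estimate by tiling $\mathbb{R}^d$ into $\varepsilon$-scaled copies of the cell $\Omega$, controlling $f^\varepsilon u$ on each small cell via Hölder's inequality together with the Sobolev embedding, and then summing.

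First I would partition: write $\mathbb{R}^d = \bigcup_{\mathbf a \in \Gamma} \varepsilon(\Omega + \mathbf a)$ (up to a set of measure zero) and consider
\begin{equation*}
\int_{\mathbb R^d} |f^\varepsilon(\mathbf x)|^2 |u(\mathbf x)|^2\, d\mathbf x = \sum_{\mathbf a \in \Gamma} \int_{\varepsilon(\Omega+\mathbf a)} |f^\varepsilon|^2 |u|^2\, d\mathbf x.
\end{equation*}
On each cell I would apply Hölder with exponents $p/2$ and $(p/2)'$ to split $|f^\varepsilon|^2$ and $|u|^2$. Condition \eqref{condition_p} guarantees $p/2 \ge 1$, so Hölder is legitimate, and the choice of $p$ makes $2(p/2)'$ fall in the Sobolev range for $H^1(\Omega)$ (for $d=1$ we get $L_\infty$; for $d=2$ any finite exponent; for $d\ge 3$ the critical $2d/(d-2)$).

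Second, I would handle the two factors by scaling. Using $\Gamma$-periodicity of $f$ and the substitution $\mathbf x = \varepsilon \mathbf y$, the first factor becomes
\begin{equation*}
\Bigl(\int_{\varepsilon(\Omega+\mathbf a)} |f^\varepsilon|^{p}\, d\mathbf x\Bigr)^{2/p} = \varepsilon^{2d/p} \|f\|_{L_p(\Omega)}^2.
\end{equation*}
For the second factor I would set $v(\mathbf y):= u(\varepsilon \mathbf y)$, apply the Sobolev embedding $\|v\|_{L_{2(p/2)'}(\Omega+\mathbf a)} \le C_\Omega^{(p)} \|v\|_{H^1(\Omega+\mathbf a)}$, and scale back. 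A direct computation of the Jacobians yields
\begin{equation*}
\Bigl(\int_{\varepsilon(\Omega+\mathbf a)} |u|^{2(p/2)'} d\mathbf x\Bigr)^{1/(p/2)'} \le (C_\Omega^{(p)})^2\, \varepsilon^{d/(p/2)' - d} \bigl( \|u\|^2_{L_2(\varepsilon(\Omega+\mathbf a))} + \varepsilon^2 \|\mathbf D u\|^2_{L_2(\varepsilon(\Omega+\mathbf a))}\bigr).
\end{equation*}

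Third, I would multiply the two bounds and observe the cancellation of powers: since $1/(p/2)' = (p-2)/p$, we have $2d/p + d(p-2)/p - d = 0$, so the $\varepsilon$-powers disappear. This is the key arithmetic check, and really the main (mildly) nontrivial point of the argument: the scaling of the Sobolev embedding on a cell of size $\varepsilon$ is precisely compensated by the $L_p$-scaling of $f^\varepsilon$. Summing over $\mathbf a \in \Gamma$ gives
\begin{equation*}
\int_{\mathbb R^d} |f^\varepsilon|^2 |u|^2\, d\mathbf x \le \|f\|_{L_p(\Omega)}^2 (C_\Omega^{(p)})^2 \bigl(\|u\|_{L_2(\mathbb R^d)}^2 + \varepsilon^2 \|\mathbf D u\|_{L_2(\mathbb R^d)}^2\bigr).
\end{equation*}
Finally, the restriction $0 < \varepsilon \le 1$ allows me to replace $\varepsilon^2$ by $1$ and bound the right-hand side by $\|f\|_{L_p(\Omega)}^2 (C_\Omega^{(p)})^2 \|u\|_{H^1(\mathbb R^d)}^2$; taking square roots gives the claim. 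I do not anticipate serious obstacles; the main things to be careful about are the range of $p$ in the different dimensions (so the Sobolev embedding into $L_{2(p/2)'}$ is applicable) and the bookkeeping of scaling exponents.
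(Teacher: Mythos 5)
Your proof is correct and follows essentially the same route as the paper's: tile by $\Gamma$, apply H\"older with exponents $p/2$ and $(p/2)'$, invoke the Sobolev embedding $H^1(\Omega)\hookrightarrow L_{2(p/2)'}(\Omega)$ cell-by-cell, sum, and use $\varepsilon\leqslant 1$. The only cosmetic difference is bookkeeping order: the paper performs the substitution $\mathbf{x}=\varepsilon\mathbf{y}$ once at the outset, works on unit cells $\Omega+\mathbf{a}$, and records the scaling via the single relation $\varepsilon^d\|v\|_{H^1(\mathbb{R}^d)}^2=\|u\|_{L_2}^2+\varepsilon^2\|\mathbf{D}u\|_{L_2}^2$, whereas you apply H\"older directly on the $\varepsilon$-cells and then verify that the resulting powers $\varepsilon^{2d/p}$ and $\varepsilon^{d/(p/2)'-d}$ cancel; the arithmetic $2d/p+d(p-2)/p-d=0$ (and its $d=1$ analogue with $(p/2)'=\infty$) is exactly the content of the paper's one-shot rescaling, so the two derivations are the same argument seen from opposite ends.
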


\begin{proof}
Let $d\geqslant 2$, and let ${u} \in H^1({\mathbb R}^d)$.
Substituting ${\mathbf x}=\varepsilon {\mathbf y}$, ${u}({\mathbf x})= {v}({\mathbf y})$,
and using the H\"older inequality and the Sobolev embedding theorem, for $0< \varepsilon \leqslant 1$ we obtain
\begin{equation*}
\begin{split}
&\int_{{\mathbb R}^d} | f^\varepsilon ({\mathbf x}) |^2 | {u}({\mathbf x})|^2 \, d{\mathbf x}=
\varepsilon^d \int_{{\mathbb R}^d} | f ({\mathbf y})|^2| {v}({\mathbf y})|^2 \, d{\mathbf y}
\cr
&=
\varepsilon^d  \sum_{{\mathbf a}\in \Gamma} \int_{\Omega + {\mathbf a}}
| f({\mathbf y})|^2| {v}({\mathbf y})|^2 \, d{\mathbf y}
\cr
&\leqslant \varepsilon^d  \|f \|^2_{L_p(\Omega)} \sum_{{\mathbf a}\in \Gamma} \left(\int_{\Omega + {\mathbf a}}
| {v}({\mathbf y})|^{2(p/2)'} \, d{\mathbf y}\right)^{1/(p/2)'}
\cr
&\leqslant
\varepsilon^d  \|f \|^2_{L_p(\Omega)} (C_\Omega ^{(p)})^2
\|{v}\|^2_{H^1({\mathbb R}^d)}
\leqslant \|f \|^2_{L_p(\Omega)} (C_\Omega ^{(p)})^2
\|{u}\|^2_{H^1({\mathbb R}^d)}.
\end{split}
\end{equation*}
Here $(p/2)^{-1} + ((p/2)')^{-1}=1$. For the case where $d=1$ the proof is similar
(the necessary changes are obvious).
\end{proof}

Lemma \ref{Lemma Lambda-tilda-varepsilon} and Lemma \ref{Lemma_Lambda_tilda3}
directly imply the following corollary.

\begin{corollary}
\label{Lemma_Lambda_tilda4}
Suppose that $\widetilde{\Lambda}$ is the $\Gamma$-periodic solution of problem \eqref{tildeLambda_problem}.
Suppose also that $\widetilde{\Lambda}$ satisfies condition of the form
\eqref{condition_p}. Then for any $u \in H^2({\mathbb R}^d)$ and $0< \varepsilon \leqslant 1$ we have
\begin{equation*}
\begin{split}
\int _{\mathbb{R}^d}\vert (\mathbf{D}\widetilde{\Lambda})^\varepsilon(\mathbf{x})\vert ^2\vert u(\mathbf{x})\vert ^2\,d\mathbf{x}
\leqslant \widetilde{\beta} _1 \Vert u\Vert ^2_{H^1(\mathbb{R}^d)}
+\widetilde{\beta} _2 \varepsilon ^2
\| \widetilde{\Lambda} \|_{L_p(\Omega)}^2 (C_\Omega^{(p)})^2 \| \mathbf{D}u \|^2_{H^1({\mathbb R}^d)}.
\end{split}
\end{equation*}
\end{corollary}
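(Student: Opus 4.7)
The plan is to combine the two preceding results in the obvious order. First I would apply Lemma~\ref{Lemma Lambda-tilda-varepsilon} to the function $u\in H^2(\mathbb{R}^d)\subset H^1(\mathbb{R}^d)$, which produces the bound
\begin{equation*}
\int_{\mathbb{R}^d}|(\mathbf{D}\widetilde{\Lambda})^\varepsilon(\mathbf{x})|^2|u(\mathbf{x})|^2\,d\mathbf{x}
\leqslant \widetilde{\beta}_1\|u\|_{H^1(\mathbb{R}^d)}^2
+\widetilde{\beta}_2\varepsilon^2\int_{\mathbb{R}^d}|\widetilde{\Lambda}^\varepsilon(\mathbf{x})|^2|\mathbf{D}u(\mathbf{x})|^2\,d\mathbf{x}.
\end{equation*}
The only task left is to absorb the weighted integral on the right-hand side into the desired form.

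For this second step, I would apply Lemma~\ref{Lemma_Lambda_tilda3} to the periodic function $f=\widetilde{\Lambda}$, which by hypothesis satisfies the integrability condition~\eqref{condition_p}, and to each scalar component of $\mathbf{D}u$. Since $u\in H^2(\mathbb{R}^d)$, each component of $\mathbf{D}u$ belongs to $H^1(\mathbb{R}^d)$, so the lemma gives componentwise bounds which sum (entry by entry in the matrix $\widetilde{\Lambda}$ and coordinate by coordinate in $\mathbf{D}u$) to
\begin{equation*}
\int_{\mathbb{R}^d}|\widetilde{\Lambda}^\varepsilon(\mathbf{x})|^2|\mathbf{D}u(\mathbf{x})|^2\,d\mathbf{x}
\leqslant \|\widetilde{\Lambda}\|_{L_p(\Omega)}^2\,(C_\Omega^{(p)})^2\,\|\mathbf{D}u\|_{H^1(\mathbb{R}^d)}^2.
\end{equation*}
Substituting this into the previous inequality yields the claimed estimate with the same constants $\widetilde{\beta}_1$, $\widetilde{\beta}_2$ as in Lemma~\ref{Lemma on Lambda-tilda}.

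There is no real obstacle here: the statement is explicitly flagged as a direct consequence of the two lemmas, and the only minor bookkeeping is to keep track of the matrix/vector indices when invoking the scalar Lemma~\ref{Lemma_Lambda_tilda3} for a matrix-valued $\widetilde{\Lambda}$ and a vector-valued $\mathbf{D}u$; this is harmless because the Euclidean norm $|\widetilde{\Lambda}|$ is controlled by the sum of the moduli of the entries (up to a dimensional constant that can be absorbed into $C_\Omega^{(p)}$), and analogously for $|\mathbf{D}u|$.
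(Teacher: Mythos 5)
Your proof is correct and follows precisely the route the paper intends: the corollary is flagged in the text as a direct consequence of Lemma~\ref{Lemma Lambda-tilda-varepsilon} and Lemma~\ref{Lemma_Lambda_tilda3}, and you combine them in the obvious order. One cosmetic remark: rather than applying Lemma~\ref{Lemma_Lambda_tilda3} entrywise in $\widetilde\Lambda$ (which introduces a spurious dimensional constant), it is cleaner to apply it directly to the scalar periodic function $f=|\widetilde\Lambda|$ and then sum over the components of $\mathbf{D}u$, which reproduces the stated constant $\|\widetilde\Lambda\|_{L_p(\Omega)}^2(C_\Omega^{(p)})^2$ exactly; also, strictly speaking Lemma~\ref{Lemma Lambda-tilda-varepsilon} is stated for $u\in C_0^\infty(\mathbb{R}^d)$, so a one-line density/Fatou argument is implicitly needed to pass to $u\in H^2(\mathbb{R}^d)$, but this is the same level of detail the paper itself suppresses.
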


\subsection{Lemma about $Q_0^\varepsilon -\overline{Q_0}$}
The following lemma will be needed for the proof of the main results.

\begin{lemma}
\label{Lemma Q0eps-mean value}
Suppose that $Q_0(\mathbf{x})$ is a $\Gamma$-periodic $(n\times n)$-matrix-valued function such that $Q_0 \in L_\infty$.
 Then for $\varepsilon >0$ the operator $[Q_0^\varepsilon -\overline{Q_0}]$ of multiplication by
the function  $Q_0^\varepsilon -\overline{Q_0}$ is a continuous mapping of
$H^1({\mathbb R}^d;{\mathbb C}^n)$ to $H^{-1}({\mathbb R}^d;{\mathbb C}^n)$, and
\begin{equation}
\label{lemma Q 0 eps -Q}
\Vert [Q_0^\varepsilon -\overline{Q_0}]\Vert _{H^1(\mathbb{R}^d)\rightarrow H^{-1}(\mathbb{R}^d)}\leqslant C_{Q_0}\varepsilon,
\quad \varepsilon >0.
\end{equation}
The constant $C_{Q_0}$ is controlled in terms of $d$, $\Vert Q_0\Vert _{L_\infty}$, and the parameters of the lattice $\Gamma$.
\end{lemma}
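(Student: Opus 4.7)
The plan rests on representing the mean-zero matrix-valued function $Q_0(\mathbf{x})-\overline{Q_0}$ in divergence form with bounded $\Gamma$-periodic potentials, and then integrating by parts once to extract the prefactor $\varepsilon$.

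First, I would construct $\Gamma$-periodic $(n\times n)$-matrix-valued functions $\Psi_1,\dots,\Psi_d$ with $\Psi_j\in L_\infty(\mathbb{R}^d)$ such that
\begin{equation*}
Q_0(\mathbf{x})-\overline{Q_0}=\sum_{j=1}^d \partial_{x_j}\Psi_j(\mathbf{x}),\qquad \mathbf{x}\in\mathbb{R}^d.
\end{equation*}
A convenient choice is $\Psi_j:=\partial_{x_j}\Phi$, where $\Phi$ is the $\Gamma$-periodic solution of the cell Poisson problem
\begin{equation*}
\Delta\Phi(\mathbf{x})=Q_0(\mathbf{x})-\overline{Q_0},\qquad \int_{\Omega}\Phi(\mathbf{x})\,d\mathbf{x}=0,
\end{equation*}
understood componentwise in the weak sense; solvability is granted by the compatibility condition $\int_{\Omega}(Q_0-\overline{Q_0})\,d\mathbf{x}=0$. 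Since $Q_0\in L_\infty\subset L_p(\Omega)$ for every $p<\infty$, periodic elliptic regularity yields $\Phi\in W^{2,p}(\Omega)$ for all finite $p$, and choosing any $p>d$, the Sobolev embedding $W^{2,p}(\Omega)\hookrightarrow W^{1,\infty}(\Omega)$ gives $\Psi_j\in L_\infty(\mathbb{R}^d)$, with norm controlled by $d$, $\|Q_0\|_{L_\infty}$, and the geometry of~$\Omega$ (hence by the lattice parameters).

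Next, the chain rule gives the scaling identity
\begin{equation*}
Q_0^\varepsilon(\mathbf{x})-\overline{Q_0}=\varepsilon\sum_{j=1}^d \partial_{x_j}\bigl[\Psi_j^\varepsilon(\mathbf{x})\bigr],\qquad \varepsilon>0.
\end{equation*}
For $\mathbf{u}\in H^1(\mathbb{R}^d;\mathbb{C}^n)$ and a test function $\mathbf{v}\in H^1(\mathbb{R}^d;\mathbb{C}^n)$, I would then pair and integrate by parts:
\begin{equation*}
\int_{\mathbb{R}^d}\langle (Q_0^\varepsilon-\overline{Q_0})\mathbf{u},\mathbf{v}\rangle\,d\mathbf{x}
=-\varepsilon\sum_{j=1}^d\int_{\mathbb{R}^d}\bigl(\langle\Psi_j^\varepsilon\partial_{x_j}\mathbf{u},\mathbf{v}\rangle+\langle\Psi_j^\varepsilon\mathbf{u},\partial_{x_j}\mathbf{v}\rangle\bigr)\,d\mathbf{x},
\end{equation*}
and estimate each summand by Cauchy--Schwarz together with the uniform bound on $\Psi_j^\varepsilon$, obtaining the majorant $C\varepsilon \max_j\|\Psi_j\|_{L_\infty}\|\mathbf{u}\|_{H^1}\|\mathbf{v}\|_{H^1}$. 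Taking the supremum over $\mathbf{v}$ with $\|\mathbf{v}\|_{H^1}\leqslant 1$ yields \eqref{lemma Q 0 eps -Q} with $C_{Q_0}$ depending only on $d$ and $\max_j\|\Psi_j\|_{L_\infty}$, and therefore on the quantities listed in the lemma.

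The only point requiring genuine care is the $L_\infty$-bound for the auxiliary potentials $\Psi_j$ starting from the bare assumption $Q_0\in L_\infty$: the periodic Poisson problem only gives $W^{2,p}$-regularity for $p<\infty$ (not $p=\infty$), so one must run $p$ past the dimension $d$ and invoke a Sobolev embedding to upgrade to uniform bounds on the gradient. Once this is in place, the rest of the argument is a one-line integration by parts.
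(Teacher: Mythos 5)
Your proof is correct and follows essentially the same route as the paper: write $Q_0-\overline{Q_0}$ in divergence form via the periodic Poisson problem, obtain $L_\infty$-bounds on the first derivatives of $\Phi$ by running the $W^{2,p}$ regularity past $p>d$ (the paper cites the Marcinkiewicz multiplier theorem for this step), then gain the factor $\varepsilon$ by the scaling identity and a single integration by parts in the duality pairing. There is no gap.
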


\begin{proof}
Since $Q_0-\overline{Q_0}\in L_\infty$ and
\begin{equation}
\label{int Q0-overline Q0}
\int _\Omega (Q_0(\mathbf{x})-\overline{Q_0})\,d\mathbf{x}=0,
\end{equation}
we have the following representation
\begin{equation}
\label{tozd h_j}
Q^\varepsilon _0(\mathbf{x})-\overline{Q}_0=-\varepsilon \sum _{j=1}^d D_jh_j^\varepsilon (\mathbf{x}),
\end{equation}
where $h_j$, $j=1,\dots,d,$ are $\Gamma$-periodic $(n\times n)$-matrix-valued functions such that $h_j\in L_\infty$.

Indeed, let $\Phi (\mathbf{x})$ be the $\Gamma$-periodic solution of the problem
\begin{equation*}
\bigtriangleup \Phi (\mathbf{x})=Q_0(\mathbf{x})-\overline{Q_0},\quad \int _\Omega \Phi (\mathbf{x})\,d\mathbf{x}=0.
\end{equation*}
By \eqref{int Q0-overline Q0}, the solvability condition is satisfied.
Since $Q_0-\overline{Q_0}\in L_\infty$, then $\Phi \in W_p^2(\Omega)$ for any $1\leqslant p<\infty$, and
\begin{equation}
\label{5.3.star}
\Vert \Phi \Vert _{W_p^2 (\Omega)}\leqslant \mathfrak{c}_1(p)\Vert Q_0 -\overline{Q_0}\Vert _{L_p(\Omega)}\leqslant \widetilde{\mathfrak{c}}_1(p)
\Vert Q_0\Vert _{L_\infty}.
\end{equation}
The constants $\mathfrak{c}_1(p)$ and $\widetilde{\mathfrak{c}}_1(p)$ depend only on $p$ and the parameters of the lattice $\Gamma$.
This follows from the Marcinkiewicz theorem about the Fourier multipliers \cite{Ma}.
We put $h_j(\mathbf{x})=D_j\Phi (\mathbf{x})$. Then
$$
Q_0(\mathbf{x})-\overline{Q_0}=-\sum _{j=1}^d D_jh_j(\mathbf{x}),
$$
and $h_j\in W_p^1(\Omega)$ for any $1\leqslant p<\infty$.
Let $p=d+1$. Then, by the embedding theorem,  $h_j\in L_\infty$. Combining this with \eqref{5.3.star}, we obtain
\begin{equation*}
\Vert h_j\Vert _{L_\infty}\leqslant \mathfrak{c}_2\Vert h_j\Vert _{W^1_p(\Omega)}\leqslant \mathfrak{c}_2\Vert \Phi \Vert _{W^2_p(\Omega)}\leqslant \mathfrak{c}_2\widetilde{\mathfrak{c}}_1(d+1)\Vert Q_0\Vert _{L_\infty}.
\end{equation*}
(Here $\mathfrak{c}_2$ is the norm of the corresponding embedding.)

Let $\mathbf{F}\in H^1(\mathbb{R}^d;\mathbb{C}^n)$. By \eqref{tozd h_j},
\begin{equation}
\label{Vert Q0-overlineQ0...=eps...}
\begin{split}
\Vert (Q^\varepsilon _0-\overline{Q_0})\mathbf{F}\Vert _{H^{-1}(\mathbb{R}^d)}
&=\sup \limits _{0\neq \mathbf{v}\in H^1(\mathbb{R}^d;\mathbb{C}^n)} \frac{\left\vert \left((Q^\varepsilon _0-\overline{Q_0})\mathbf{F},\mathbf{v}\right)_{L_2(\mathbb{R}^d)}\right\vert}{\Vert \mathbf{v}\Vert _{H^1(\mathbb{R}^d)}}
\\
&\leqslant \varepsilon \sup \limits _{0\neq \mathbf{v}\in H^1(\mathbb{R}^d;\mathbb{C}^n)}\frac{\sum \limits_{j=1}^d\left\vert\left((D_j h_j^\varepsilon)\mathbf{F},\mathbf{v}\right)_{L_2(\mathbb{R}^d)}\right\vert}{\Vert \mathbf{v}\Vert _{H^1(\mathbb{R}^d)}}.
\end{split}
\end{equation}
Using the identity $(D_jh_j^\varepsilon)\mathbf{F}=D_j(h_j^\varepsilon\mathbf{F})-h_j^\varepsilon D_j\mathbf{F}$
and integrating by parts, we obtain
\begin{equation*}
\begin{split}
\left((D_j h_j^\varepsilon)\mathbf{F},\mathbf{v}\right)_{L_2(\mathbb{R}^d)}
=\left(h_j^\varepsilon \mathbf{F},D_j\mathbf{v}\right)_{L_2(\mathbb{R}^d)}
-\left(h_j^\varepsilon D_j\mathbf{F},\mathbf{v}\right)_{L_2(\mathbb{R}^d)},\quad j=1,\dots ,d.
\end{split}
\end{equation*}
Together with \eqref{Vert Q0-overlineQ0...=eps...} this yields
\begin{equation}
\label{Vert Q0 after int by parts}
\begin{split}
\Vert (Q^\varepsilon _0-\overline{Q_0})\mathbf{F}\Vert _{H^{-1}(\mathbb{R}^d)}
&\leqslant \varepsilon  \sup \limits _{0\neq \mathbf{v}\in H^1(\mathbb{R}^d;\mathbb{C}^n)}\frac{ \sum \limits_{j=1}^d \left\vert\left(h_j^\varepsilon \mathbf{F},D_j\mathbf{v}\right)_{L_2(\mathbb{R}^d)}\right\vert}{\Vert \mathbf{v}\Vert _{H^1(\mathbb{R}^d)}}\\
&+\varepsilon  \sup \limits _{0\neq \mathbf{v}\in H^1(\mathbb{R}^d;\mathbb{C}^n)}\frac{ \sum \limits _{j=1}^d \left\vert \left(h_j^\varepsilon D_j\mathbf{F},\mathbf{v}\right)_{L_2(\mathbb{R}^d)}\right\vert}{\Vert \mathbf{v}\Vert _{H^1(\mathbb{R}^d)}}.
\end{split}
\end{equation}
Obviously,
\begin{align}
\label{Vert Q0 ... pervoe slag}
\begin{split}
\sum _{j=1}^d\left\vert \left( h_j^\varepsilon \mathbf{F},D_j\mathbf{v}\right)_{L_2(\mathbb{R}^d)}\right\vert
\leqslant C_h \Vert \mathbf{F}\Vert _{L_2(\mathbb{R}^d)}\Vert \mathbf{D} \mathbf{v}\Vert _{L_2(\mathbb{R}^d)},
\end{split}\\
\label{Vert Q0 part of second slagaemoe}
\begin{split}
\sum _{j=1}^d\left\vert \left( h_j^\varepsilon D_j\mathbf{F},\mathbf{v}\right)_{L_2(\mathbb{R}^d)}\right\vert
\leqslant C_h\Vert \mathbf{D}\mathbf{F}\Vert _{L_2(\mathbb{R}^d)}\Vert \mathbf{v}\Vert _{L_2(\mathbb{R}^d)},
\end{split}
\end{align}
where $C_h^2:=\esssup\limits_{\mathbf{x}\in\mathbb{R}^d}\,\sum \limits _{j=1}^d \vert h_j (\mathbf{x})\vert ^2 .$
Note that  $C_h\leqslant \mathfrak{c}_3 \Vert Q_0\Vert _{L_\infty}$, where the constant $\mathfrak{c}_3$ depends only on $d$
and the parameters of the lattice $\Gamma$.

Relations \eqref{Vert Q0 after int by parts}--\eqref{Vert Q0 part of second slagaemoe} imply
 \eqref{lemma Q 0 eps -Q} with the constant $C_{Q_0}=2C_h$.
\end{proof}

\section{The Steklov smoothing. Another approximation \\ of the generalized resolvent $(B_\varepsilon +\lambda_0 Q_0^\varepsilon )^{-1}$}
\label{Section Steklov smoothing}
\setcounter{section}{3}
\setcounter{equation}{0}
\setcounter{theorem}{0}

\subsection{The Steklov smoothing operator} The operator $S_\varepsilon^{(k)}$, $\varepsilon >0$,
acting in $L_2(\mathbb{R}^d;\mathbb{C}^k)$ (where $k\in\mathbb{N}$) and defined by
\begin{equation}
\label{S_eps}
\begin{split}
(S_\varepsilon^{(k)} \mathbf{u})(\mathbf{x})=\vert \Omega \vert ^{-1}\int _\Omega \mathbf{u}(\mathbf{x}-\varepsilon \mathbf{z})\,d\mathbf{z},\quad \mathbf{u}\in L_2(\mathbb{R}^d;\mathbb{C}^k),
\end{split}
\end{equation}
is called the Steklov smoothing operator.
We will omit the index $k$ in the notation and write simply $S_\varepsilon$.
Obviously, $S_\varepsilon \mathbf{D}^\alpha \mathbf{u}=\mathbf{D}^\alpha S_\varepsilon \mathbf{u}$ for $\mathbf{u}\in H^s(\mathbb{R}^d;\mathbb{C}^k)$  and any multiindex $\alpha$ such that $\vert \alpha\vert \leqslant s$.

We need the following properties of the operator $S_\varepsilon$
(see \cite[Lemmas 1.1 and 1.2]{ZhPas} or \cite[Propositions 3.1 and 3.2]{PSu}).

\begin{proposition}
\label{Proposition S_eps - I}
For any $\mathbf{u}\in H^1(\mathbb{R}^d;\mathbb{C}^k)$
and $\varepsilon >0$ we have
\begin{equation*}
\Vert S_\varepsilon \mathbf{u}-\mathbf{u}\Vert _{L_2(\mathbb{R}^d)}\leqslant \varepsilon r_1\Vert \mathbf{D}\mathbf{u}\Vert _{L_2(\mathbb{R}^d)},
\end{equation*}
where $2r_1=\mathrm{diam}\,\Omega$.
\end{proposition}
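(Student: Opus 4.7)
The plan is to represent the difference $S_\varepsilon \mathbf{u} - \mathbf{u}$ via the fundamental theorem of calculus, then use Minkowski's integral inequality and translation invariance to reduce everything to a bound on $|\mathbf{z}|$ for $\mathbf{z} \in \Omega$. Since both sides of the claimed inequality are continuous in the $H^1$-topology, it suffices to prove it for $\mathbf{u}\in C_0^\infty(\mathbb{R}^d;\mathbb{C}^k)$ and extend by density.

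For smooth $\mathbf{u}$, I write
\begin{equation*}
(S_\varepsilon \mathbf{u})(\mathbf{x}) - \mathbf{u}(\mathbf{x}) = |\Omega|^{-1}\int_\Omega \bigl[\mathbf{u}(\mathbf{x}-\varepsilon \mathbf{z}) - \mathbf{u}(\mathbf{x})\bigr]\,d\mathbf{z} = -|\Omega|^{-1}\int_\Omega \int_0^1 \varepsilon\,\langle \mathbf{z},\nabla \mathbf{u}(\mathbf{x}-t\varepsilon \mathbf{z})\rangle\,dt\,d\mathbf{z}.
\end{equation*}
Taking the pointwise modulus, applying Minkowski's integral inequality to interchange $\|\cdot\|_{L_2(\mathbb{R}^d)}$ with the integrals over $\Omega$ and $[0,1]$, and using translation invariance of Lebesgue measure (so that $\|\nabla \mathbf{u}(\cdot - t\varepsilon\mathbf{z})\|_{L_2(\mathbb{R}^d)} = \|\nabla \mathbf{u}\|_{L_2(\mathbb{R}^d)} = \|\mathbf{D}\mathbf{u}\|_{L_2(\mathbb{R}^d)}$), I obtain
\begin{equation*}
\Vert S_\varepsilon \mathbf{u}-\mathbf{u}\Vert_{L_2(\mathbb{R}^d)} \leqslant \varepsilon \Vert \mathbf{D}\mathbf{u}\Vert_{L_2(\mathbb{R}^d)}\cdot |\Omega|^{-1}\int_\Omega |\mathbf{z}|\,d\mathbf{z}.
\end{equation*}

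It remains to observe that $|\mathbf{z}|\leqslant r_1$ for all $\mathbf{z}\in\Omega$. This uses the symmetry of $\Omega$ about the origin: the parametrization $\tau_j\in(-1/2,1/2)$ from Subsection~1.1 shows that $\mathbf{z}\in\Omega$ implies $-\mathbf{z}\in\Omega$, hence $2|\mathbf{z}| = |\mathbf{z}-(-\mathbf{z})|\leqslant \mathrm{diam}\,\Omega = 2r_1$. Substituting this bound into the previous inequality yields the required estimate.

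There is no serious obstacle here; the only subtle point is the symmetry argument that converts the diameter bound into a bound on $|\mathbf{z}|$ itself (losing a factor of $2$ compared to the naive estimate), which is exactly what is needed to match the claimed constant $r_1$.
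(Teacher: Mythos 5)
Your proof is correct and follows the standard argument behind the cited reference (\cite{ZhPas}, Lemma~1.1; see also \cite{PSu}, Proposition~3.1): write $\mathbf{u}(\mathbf{x}-\varepsilon\mathbf{z})-\mathbf{u}(\mathbf{x})$ by the fundamental theorem of calculus, apply the generalized Minkowski inequality and translation invariance of the $L_2$-norm, and bound $\sup_{\mathbf{z}\in\Omega}|\mathbf{z}|\leqslant r_1$ using the central symmetry of the cell $\Omega$ (if $\mathbf{z}\in\Omega$ then $-\mathbf{z}\in\Omega$, hence $2|\mathbf{z}|\leqslant\mathrm{diam}\,\Omega=2r_1$). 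You correctly identified the symmetry observation as the step that produces the sharp constant $r_1$ rather than the cruder $\mathrm{diam}\,\Omega$.
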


\begin{proposition}
\label{Proposition S_eps L2->L2}
Let $f$ be a  $\Gamma$-periodic function in $\mathbb{R}^d$ such that $f\in L_2(\Omega)$.
Then the operator $[f^\varepsilon ]S_\varepsilon $ is continuous in  $L_2(\mathbb{R}^d)$, and
\begin{equation*}
\Vert [f^\varepsilon]S_\varepsilon \Vert _{L_2(\mathbb{R}^d)\rightarrow L_2(\mathbb{R}^d)}\leqslant \vert \Omega \vert ^{-1/2}\Vert f\Vert _{L_2(\Omega)}, \quad \varepsilon >0.
\end{equation*}
\end{proposition}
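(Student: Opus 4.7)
The plan is to bound $\|[f^\varepsilon]S_\varepsilon \mathbf{u}\|_{L_2(\mathbb{R}^d)}$ by a direct computation: apply Cauchy--Schwarz to the averaging kernel defining $S_\varepsilon$, exchange the order of integration by Fubini (all integrands being nonnegative), then use a translation and the $\Gamma$-periodicity of $f$ to recover the norm $\|f\|_{L_2(\Omega)}$.

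Concretely, for $\mathbf{u}\in L_2(\mathbb{R}^d;\mathbb{C}^k)$ I would start from
\begin{equation*}
|(S_\varepsilon \mathbf{u})(\mathbf{x})|^2 \leqslant |\Omega|^{-1}\int_\Omega |\mathbf{u}(\mathbf{x}-\varepsilon \mathbf{z})|^2\,d\mathbf{z},
\end{equation*}
which follows from Cauchy--Schwarz applied to \eqref{S_eps}. Multiplying by $|f^\varepsilon(\mathbf{x})|^2$ and integrating in $\mathbf{x}\in\mathbb{R}^d$, Fubini yields
\begin{equation*}
\|[f^\varepsilon]S_\varepsilon \mathbf{u}\|^2_{L_2(\mathbb{R}^d)} \leqslant |\Omega|^{-1}\int_\Omega \left(\int_{\mathbb{R}^d}|f^\varepsilon(\mathbf{x})|^2|\mathbf{u}(\mathbf{x}-\varepsilon \mathbf{z})|^2\,d\mathbf{x}\right)d\mathbf{z}.
\end{equation*}

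Next I would perform the substitution $\mathbf{y}=\mathbf{x}-\varepsilon\mathbf{z}$ in the inner integral; since $f^\varepsilon(\mathbf{y}+\varepsilon\mathbf{z})=f(\mathbf{y}/\varepsilon+\mathbf{z})$, switching the order of integration again produces
\begin{equation*}
\|[f^\varepsilon]S_\varepsilon \mathbf{u}\|^2_{L_2(\mathbb{R}^d)} \leqslant |\Omega|^{-1}\int_{\mathbb{R}^d}|\mathbf{u}(\mathbf{y})|^2\left(\int_\Omega|f(\mathbf{y}/\varepsilon+\mathbf{z})|^2\,d\mathbf{z}\right)d\mathbf{y}.
\end{equation*}
The inner integral equals $\int_{\Omega+\mathbf{y}/\varepsilon}|f(\mathbf{z}')|^2\,d\mathbf{z}'$, and by $\Gamma$-periodicity of $f$ together with the fact that $\Omega$ is a fundamental domain of $\Gamma$, this equals $\|f\|^2_{L_2(\Omega)}$ independently of $\mathbf{y}$ and $\varepsilon$. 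Substituting gives the claimed bound $\|[f^\varepsilon]S_\varepsilon\|_{L_2\to L_2}\leqslant |\Omega|^{-1/2}\|f\|_{L_2(\Omega)}$.

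The main obstacle is very mild — one must verify that the shifted cell integral $\int_{\Omega+\mathbf{w}}|f|^2$ equals $\int_\Omega|f|^2$ for any $\mathbf{w}\in\mathbb{R}^d$. This is the standard fact that averaging a periodic function over any translate of a fundamental domain recovers the same integral; it can be checked by writing $\Omega+\mathbf{w}$ as a disjoint union (up to measure zero) of pieces that are $\Gamma$-congruent to subsets of $\Omega$, and using periodicity of $|f|^2$. Everything else is routine: Cauchy--Schwarz, Fubini (legitimate since integrands are nonnegative), and a linear change of variables.
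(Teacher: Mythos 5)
Your proof is correct and follows the standard argument: Jensen/Cauchy--Schwarz on the Steklov kernel, Fubini, the change of variables $\mathbf{y}=\mathbf{x}-\varepsilon\mathbf{z}$, and the translation-invariance of $\int_{\Omega+\mathbf{w}}|f|^2$ under periodicity. The paper itself does not prove this proposition but cites \cite{ZhPas} and \cite{PSu}, where essentially this same computation appears, so your argument matches the intended route.
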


\subsection{Another approximation of the operator $(B_\varepsilon +\lambda_0 Q_0^\varepsilon)^{-1}$} We put
\begin{equation}
\label{tilde K(eps)}
\widetilde{K} (\varepsilon)=\left( [\Lambda ^\varepsilon]b(\mathbf{D})+[\widetilde{\Lambda}^\varepsilon ]\right) S_\varepsilon
(B^0+\lambda_0 \overline{Q_0})^{-1}.
\end{equation}
The operator $\widetilde{K}(\varepsilon )$ is a continuous mapping of $L_2(\mathbb{R}^d;\mathbb{C}^n)$ to $H^1(\mathbb{R}^d;\mathbb{C}^n)$.
This follows from Proposition \ref{Proposition S_eps L2->L2} and the relations $\Lambda ,\widetilde{\Lambda}\in\widetilde{H}^1(\Omega)$.

Along with Theorem \ref{Theorem Su L2->H1}, the following result is true.

\begin{theorem}
\label{Theorem S_eps z fixed}
Suppose that the assumptions of Theorem~\textnormal{\ref{Theorem Su L2}} are satisfied.
Let $\widetilde{K}(\varepsilon)$ be defined by \eqref{tilde K(eps)}. Then for $0<\varepsilon \leqslant 1$ we have
\begin{equation*}
\Vert (B_\varepsilon +\lambda_0 Q_0^\varepsilon )^{-1}-(B^0+\lambda_0 \overline{Q_0})^{-1}-\varepsilon \widetilde{K}(\varepsilon )\Vert _{L_2(\mathbb{R}^d)\rightarrow H^1(\mathbb{R}^d)}\leqslant C_3\varepsilon.
\end{equation*}
The constant $C_3$ depends only on the initial data \eqref{problem data}.
\end{theorem}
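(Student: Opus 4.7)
The plan is to deduce Theorem~\ref{Theorem S_eps z fixed} from Theorem~\ref{Theorem Su L2->H1} by bounding the difference $\varepsilon K(\varepsilon)-\varepsilon\widetilde{K}(\varepsilon)$ in the $(L_2\to H^1)$-operator norm by $C\varepsilon$; the triangle inequality will then give the claim with $C_3=C_2+C$. Setting $\mathcal{R}:=(B^0+\lambda_0\overline{Q_0})^{-1}$ and using that $\Pi_\varepsilon$ and $S_\varepsilon$ are translation invariant and therefore commute with $b(\mathbf{D})$ and with each other, I would rewrite
\begin{equation*}
\varepsilon K(\varepsilon)-\varepsilon\widetilde{K}(\varepsilon) = \varepsilon[\Lambda^\varepsilon](\Pi_\varepsilon - S_\varepsilon)b(\mathbf{D})\mathcal{R} + \varepsilon[\widetilde{\Lambda}^\varepsilon](\Pi_\varepsilon - S_\varepsilon)\mathcal{R}.
\end{equation*}
The symbol bound~\eqref{L>=} makes $\mathcal{R}$ bounded from $L_2(\mathbb{R}^d;\mathbb{C}^n)$ to $H^2(\mathbb{R}^d;\mathbb{C}^n)$ with constant depending only on the initial data, so for $\mathbf{F}\in L_2$, letting $\mathbf{u}:=\mathcal{R}\mathbf{F}$ and $\mathbf{g}:=b(\mathbf{D})\mathbf{u}$, I have $\|\mathbf{u}\|_{H^2}+\|\mathbf{g}\|_{H^1}\leq C\|\mathbf{F}\|_{L_2}$.

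The technical heart of the argument is the following lemma: for every $\Gamma$-periodic $f\in L_2(\Omega)$ and every $\mathbf{v}\in H^1(\mathbb{R}^d;\mathbb{C}^k)$, one has
\begin{equation*}
\|[f^\varepsilon](\Pi_\varepsilon-S_\varepsilon)\mathbf{v}\|_{L_2(\mathbb{R}^d)} \leq C\varepsilon\,|\Omega|^{-1/2}\|f\|_{L_2(\Omega)}\|\mathbf{D}\mathbf{v}\|_{L_2(\mathbb{R}^d)}.
\end{equation*}
This will follow from the algebraic identity
\begin{equation*}
\Pi_\varepsilon-S_\varepsilon = \Pi_\varepsilon(I-S_\varepsilon) - S_\varepsilon(I-\Pi_\varepsilon),
\end{equation*}
which holds because $\Pi_\varepsilon S_\varepsilon=S_\varepsilon\Pi_\varepsilon$ (both are Fourier multipliers). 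After this substitution, Propositions~\ref{Proposition Pi_eps L2->L2} and~\ref{Proposition S_eps L2->L2} supply the $L_2\to L_2$ bounds on $[f^\varepsilon]\Pi_\varepsilon$ and $[f^\varepsilon]S_\varepsilon$, while Propositions~\ref{Proposition Pi_eps -I} and~\ref{Proposition S_eps - I} provide the factor $\varepsilon\|\mathbf{D}\mathbf{v}\|_{L_2}$ coming from $\|(I-S_\varepsilon)\mathbf{v}\|_{L_2}$ and $\|(I-\Pi_\varepsilon)\mathbf{v}\|_{L_2}$.

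Granted this lemma, the rest is bookkeeping. For the $L_2$-component of the $H^1$-norm of each summand, the direct $L_2\to L_2$ bounds on $[\Lambda^\varepsilon]\Pi_\varepsilon$, $[\Lambda^\varepsilon]S_\varepsilon$ (and their $\widetilde{\Lambda}$-analogues) combined with~\eqref{Lambda <=} and~\eqref{tildeLambda <=} give the desired $C\varepsilon\|\mathbf{F}\|_{L_2}$. For the gradient, Leibniz yields
\begin{equation*}
\mathbf{D}\bigl(\varepsilon[\Lambda^\varepsilon](\Pi_\varepsilon-S_\varepsilon)\mathbf{g}\bigr) = [(\mathbf{D}\Lambda)^\varepsilon](\Pi_\varepsilon-S_\varepsilon)\mathbf{g} + \varepsilon[\Lambda^\varepsilon](\Pi_\varepsilon-S_\varepsilon)\mathbf{D}\mathbf{g},
\end{equation*}
and similarly for the $\widetilde{\Lambda}^\varepsilon$-summand. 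In each case the first term is handled by the key lemma applied with $f=\mathbf{D}\Lambda$ (resp.\ $f=\mathbf{D}\widetilde{\Lambda}$), using~\eqref{DLambda<=} and~\eqref{DtildeLambda<=} to control the $L_2(\Omega)$-norms of the periodic weights, yielding $\leq C\varepsilon\|\mathbf{F}\|_{L_2}$. The second term is immediate from $L_2\to L_2$ boundedness applied to $\mathbf{D}\mathbf{g}\in L_2$ (resp.\ $\mathbf{D}\mathbf{u}\in L_2$), multiplied by the overall $\varepsilon$.

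The main obstacle is the lemma in the second paragraph: a naive application of Propositions~\ref{Proposition Pi_eps L2->L2} and~\ref{Proposition S_eps L2->L2} only delivers \hbox{$\|[(\mathbf{D}\Lambda)^\varepsilon](\Pi_\varepsilon-S_\varepsilon)\mathbf{g}\|_{L_2}\leq C\|\mathbf{g}\|_{L_2}$} with no gain of $\varepsilon$, since the unbounded oscillating weight $(\mathbf{D}\Lambda)^\varepsilon$ cannot be decoupled from the smoothing difference. The commutation identity resolves this by pairing each occurrence of $[f^\varepsilon]$ with a smoothing that is $L_2$-controls it, leaving the remaining factor to supply the small quantity $I-S_\varepsilon$ or $I-\Pi_\varepsilon$.
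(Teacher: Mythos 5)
Your proof is correct, and it takes a genuinely different route from the paper. Both proofs reduce the claim to bounding $\varepsilon(K(\varepsilon)-\widetilde K(\varepsilon))$ in the $(L_2\to H^1)$-norm, and both split it into a $\Lambda$-part and a $\widetilde\Lambda$-part, but the technical core differs. The paper handles the $\Lambda$-part by citing \cite[Lemma~3.5]{PSu} and the $\widetilde\Lambda$-part via an energy-type estimate (Lemma~\ref{Lemma on Lambda-tilda} $\Rightarrow$ Lemma~\ref{Lemma Lambda-tilda-varepsilon} $\Rightarrow$ Lemma~\ref{Lemma need for proof theorem 3.3}): one tests the cell equation \eqref{tildeLambda_problem} against $\mathbf{w}_k\,|u|^2$ and runs a cutoff/truncation argument to pass to $(\Pi_\varepsilon-S_\varepsilon)u\in H^1$. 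You replace all of that with the single algebraic observation
\[
\Pi_\varepsilon - S_\varepsilon = \Pi_\varepsilon(I-S_\varepsilon) - S_\varepsilon(I-\Pi_\varepsilon),
\]
valid because the two Fourier multipliers commute, which immediately yields, for any $\Gamma$-periodic $f\in L_2(\Omega)$ and $\mathbf{v}\in H^1$,
\[
\|[f^\varepsilon](\Pi_\varepsilon-S_\varepsilon)\mathbf{v}\|_{L_2}\le (r_1+r_0^{-1})\,\varepsilon\,|\Omega|^{-1/2}\|f\|_{L_2(\Omega)}\|\mathbf{D}\mathbf{v}\|_{L_2}
\]
by Propositions~\ref{Proposition Pi_eps -I}, \ref{Proposition Pi_eps L2->L2}, \ref{Proposition S_eps - I}, \ref{Proposition S_eps L2->L2}. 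Applied with $f\in\{\Lambda,\widetilde\Lambda,\mathbf{D}\Lambda,\mathbf{D}\widetilde\Lambda\}$ (all in $L_2(\Omega)$ by \eqref{Lambda <=}--\eqref{DLambda<=}, \eqref{tildeLambda <=}--\eqref{DtildeLambda<=}), together with $\|(B^0+\lambda_0\overline{Q_0})^{-1}\|_{L_2\to H^2}\le\check c_*^{-1}$ and the Leibniz rule, it gives the claim. Your argument is more elementary and self-contained --- it avoids the weighted energy inequality and the external citation --- and it does not lose anything: the constant is still controlled by the initial data \eqref{problem data}. The paper's heavier machinery (Lemmas~\ref{Lemma Lambda-varepsilon}, \ref{Lemma on Lambda-tilda}) is then needed only in Section~\ref{section_special} for removing $S_\varepsilon$ from the corrector; for the present theorem your commutation identity suffices and is a genuine streamlining.
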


\begin{remark}
\label{Remark on smoothing}
Theorems \textnormal{\ref{Theorem Su L2->H1}} and \textnormal{\ref{Theorem S_eps z fixed}} show that
for homogenization problem in ${\mathbb R}^d$ different smoothing operators can be involved in the corrector
{\rm (}both $\Pi_\varepsilon$ and $S_\varepsilon$ are suitable{\rm )}.
However, for homogenization problems in a bounded domain {\rm (}see, e.~g., {\rm \cite{ZhPas}, \cite{PSu}, \cite{Su13, Su_SIAM, Su15})},
it is more convenient to use the Steklov smoothing. Since we are aimed on application of the results of the present paper to
study homogenization problems in a bounded domain  \textnormal{\cite{MSu_prep}}, we have turned to the Steklov smoothing.
\end{remark}

\begin{remark}
\label{Remark Constants C5}
Under the assumptions of Remark~{\rm \ref{Remark C1 C2 in Theorems by Su}}, the constant
$C_3$ can be taken independent of the parameter $\vartheta \in (0,1]$.
\end{remark}

\subsection{Proof of Theorem~\ref{Theorem S_eps z fixed}}
We deduce Theorem~\ref{Theorem S_eps z fixed} from Theorem~\ref{Theorem Su L2->H1}.

\begin{lemma}
\label{Lemma need for proof theorem 3.3}
For any $u\in H^1(\mathbb{R}^d)$ and $0<\varepsilon\leqslant 1$ we have
\begin{equation}
\label{lemma tilde Lambda u in H1}
\begin{split}
\int _{\mathbb{R}^d}&\vert (\mathbf{D}\widetilde{\Lambda})^\varepsilon (\mathbf{x})\vert ^2 \vert (\Pi _\varepsilon -S_\varepsilon )u\vert ^2\,d\mathbf{x}
\leqslant \widetilde{\beta }_1\Vert (\Pi _\varepsilon -S_\varepsilon )u\Vert ^2 _{H^1(\mathbb{R}^d)}\\
&+\widetilde{\beta}_2\varepsilon ^2 \int _{\mathbb{R}^d}\vert \widetilde{\Lambda}^\varepsilon (\mathbf{x}) \vert ^2\vert (\Pi _\varepsilon -S_\varepsilon )\mathbf{D}u\vert ^2\,d\mathbf{x}.
\end{split}
\end{equation}
\end{lemma}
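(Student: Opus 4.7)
The claim is nothing but Lemma~\ref{Lemma Lambda-tilda-varepsilon} applied with the function $v:=(\Pi_\varepsilon-S_\varepsilon)u$ in place of ``$u$''. My plan is to verify three quick ingredients and then invoke that lemma directly.

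First, I would observe that $v\in H^1(\mathbb{R}^d)$ whenever $u\in H^1(\mathbb{R}^d)$: the Fourier multiplier $\Pi_\varepsilon$ with bounded symbol $\chi_{\widetilde{\Omega}/\varepsilon}$ is bounded on every $H^s(\mathbb{R}^d)$, and the Steklov operator $S_\varepsilon$, being convolution with the integrable kernel $|\Omega|^{-1}\chi_{\varepsilon\Omega}$, is likewise bounded on every $H^s(\mathbb{R}^d)$. Second, I would record the key commutation $\mathbf{D} v=(\Pi_\varepsilon-S_\varepsilon)\mathbf{D} u$: for $\Pi_\varepsilon$ this is visible from~\eqref{Pi_eps}, and for $S_\varepsilon$ it follows by differentiating~\eqref{S_eps} under the integral sign. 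Third, I would substitute $v$ into the bound of Lemma~\ref{Lemma Lambda-tilda-varepsilon} and rewrite $\mathbf{D} v$ on the right-hand side; this reproduces~\eqref{lemma tilde Lambda u in H1} verbatim.

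The only delicate point is that Lemma~\ref{Lemma on Lambda-tilda}, from which Lemma~\ref{Lemma Lambda-tilda-varepsilon} is obtained by scaling, is stated for test functions $u\in C_0^\infty(\mathbb{R}^d)$, whereas the $v$ we plug in lies merely in $H^1(\mathbb{R}^d)$ and in general fails to have compact support. I would bridge this gap by the standard truncation-and-density argument: approximate $v$ by $\chi_R v$ with a smooth cut-off satisfying $\chi_R\equiv 1$ on $\{|\mathbf{x}|\le R\}$ and $|\mathbf{D}\chi_R|\le C/R$, further approximate each $\chi_R v$ by $C_0^\infty$ functions in $H^1(\mathbb{R}^d)$ to apply Lemma~\ref{Lemma on Lambda-tilda} after rescaling, and pass to the limit $R\to\infty$ using Fatou's lemma on the left-hand side and $H^1$-convergence on the first term of the right-hand side; the parasitic commutator piece $\widetilde{\beta}_2\varepsilon^2\int|\widetilde{\Lambda}^\varepsilon|^2|\mathbf{D}\chi_R|^2|v|^2\,d\mathbf{x}$ is $O(R^{-2})$ on the annular shell $R\leqslant|\mathbf{x}|\leqslant 2R$ and vanishes in the limit whenever the right-hand side of \eqref{lemma tilde Lambda u in H1} is finite (the estimate being trivially true otherwise). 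This density extension is the only non-mechanical ingredient; apart from it the lemma is a purely formal substitution.
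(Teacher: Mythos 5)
Your overall strategy — apply Lemma~\ref{Lemma Lambda-tilda-varepsilon} to $v=(\Pi_\varepsilon-S_\varepsilon)u$, using the commutation $\mathbf{D}v=(\Pi_\varepsilon-S_\varepsilon)\mathbf{D}u$, and extend to $u\in H^1$ via cut-off and density — is the same as the paper's, and your identification of the commutation identity and the cross term $(\mathbf{D}\chi_R)v$ is exactly right. But there is a genuine gap in the way you order the two approximations: you truncate first and then propose to approximate $\chi_R v$ by arbitrary $C_0^\infty$ functions $v_n$ in the $H^1$-norm, and this density step does not close. The obstruction is that $\widetilde{\Lambda}^\varepsilon$ is in general unbounded (only $\widetilde{\Lambda}\in L_2(\Omega)$), so the functional $w\mapsto\int|\widetilde{\Lambda}^\varepsilon|^2|\mathbf{D}w|^2\,d\mathbf{x}$ appearing on the right-hand side of Lemma~\ref{Lemma Lambda-tilda-varepsilon} is \emph{not} continuous in the $H^1$-topology: $\mathbf{D}v_n\to\mathbf{D}(\chi_R v)$ in $L_2$ does not imply $\widetilde{\Lambda}^\varepsilon\mathbf{D}v_n\to\widetilde{\Lambda}^\varepsilon\mathbf{D}(\chi_R v)$ in $L_2$. (The same issue afflicts the left-hand side with $(\mathbf{D}\widetilde{\Lambda})^\varepsilon$, although there Fatou at least gives the inequality in the useful direction.) Your write-up only addresses the $R\to\infty$ limit (Fatou on the left, $H^1$-convergence on the $\widetilde{\beta}_1$ term, the $O(R^{-2})$ annulus estimate for the commutator piece) and is silent on precisely the passage to the limit in this inner density approximation.

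The fix — and this is exactly the route the paper takes — is to do density \emph{first}, at the level of the statement~\eqref{lemma tilde Lambda u in H1} itself, not at the level of Lemma~\ref{Lemma Lambda-tilda-varepsilon}. Both sides of~\eqref{lemma tilde Lambda u in H1} \emph{are} continuous in $u$ in the $H^1(\mathbb{R}^d)$-norm, because there the singular multipliers are always paired with the smoothing operator: the left side equals $\|[(\mathbf{D}\widetilde{\Lambda})^\varepsilon](\Pi_\varepsilon-S_\varepsilon)u\|_{L_2}^2$, and the $\widetilde{\beta}_2$-term equals $\|[\widetilde{\Lambda}^\varepsilon](\Pi_\varepsilon-S_\varepsilon)\mathbf{D}u\|_{L_2}^2$, both of which are continuous in $u\in H^1$ by Propositions~\ref{Proposition Pi_eps L2->L2} and~\ref{Proposition S_eps L2->L2} together with $\widetilde{\Lambda},\mathbf{D}\widetilde{\Lambda}\in L_2(\Omega)$. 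Hence it suffices to prove~\eqref{lemma tilde Lambda u in H1} for $u\in C_0^\infty(\mathbb{R}^d)$; for such $u$ the function $(\Pi_\varepsilon-S_\varepsilon)u$ is $C^\infty$, so $\chi_R(\Pi_\varepsilon-S_\varepsilon)u$ is already in $C_0^\infty$ and Lemma~\ref{Lemma Lambda-tilda-varepsilon} applies with no further approximation, and the remaining $R\to\infty$ passage is handled by dominated convergence with dominants furnished by the same propositions. Equivalently, if you insist on your order of operations, you must approximate $u$ (not $\chi_R v$) by $u_n\in C_0^\infty$ and work with $\chi_R(\Pi_\varepsilon-S_\varepsilon)u_n$, so that the smoothing structure is preserved through the limit.
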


\begin{proof}
By Propositions \ref{Proposition Pi_eps L2->L2}, \ref{Proposition S_eps L2->L2}
and the relation $\widetilde{\Lambda}\in \widetilde{H}^1(\mathbb{R}^d)$, all the terms in  \eqref{lemma tilde Lambda u in H1}
are continuous functionals of $u$ in the $H^1(\mathbb{R}^d)$-norm.
Therefore, it suffices to check \eqref{lemma tilde Lambda u in H1} for $u\in C_0^\infty (\mathbb{R}^d)$.

We fix a function $F\in C ^\infty(\mathbb{R}_+)$ such that
$0\leqslant F(t)\leqslant 1$, $F(t)=1$ for $0\leqslant t\leqslant 1$, and $F(t)=0$ for $t\geqslant 2$.
Next, define the function $F_R(\mathbf{x}):=F(\vert \mathbf{x}\vert /R)$ in $\mathbb{R}^d$ depending on the parameter $R>0$.
If  $u\in C_0^\infty (\mathbb{R}^d)$, then $F_R(\Pi _\varepsilon -S_\varepsilon )u\in C_0^\infty (\mathbb{R}^d)$.
Hence, by Lemma \ref{Lemma Lambda-tilda-varepsilon},
\begin{equation*}
\begin{split}
\int _{\mathbb{R}^d}& \vert (\mathbf{D}\widetilde{\Lambda})^\varepsilon \vert ^2\vert F_R(\Pi _\varepsilon -S_\varepsilon )u\vert ^2\,d\mathbf{x}
\leqslant \widetilde{\beta}_1\Vert F_R(\Pi _\varepsilon -S_\varepsilon )u\Vert ^2_{L_2(\mathbb{R}^d)}\\
&+\widetilde{\beta}_1\sum _{j=1}^d\int _{\mathbb{R}^d}\vert (\partial _j F_R)(\Pi _\varepsilon -S_\varepsilon )u +F_R(\Pi _\varepsilon -S_\varepsilon )\partial _j u\vert ^2\,d\mathbf{x}\\
&+\widetilde{\beta}_2\varepsilon ^2 \sum _{j=1}^d\int _{\mathbb{R}^d}\vert \widetilde{\Lambda}^\varepsilon \vert ^2\vert (\partial _j F_R)(\Pi _\varepsilon -S_\varepsilon )u+F_R(\Pi _\varepsilon -S_\varepsilon )\partial _j u\vert ^2\,d\mathbf{x}.
\end{split}
\end{equation*}
Using the estimate $\max \vert \partial _jF_R\vert \leqslant c /R$ and passing to the limit as $R\rightarrow \infty$,  we deduce \eqref{lemma tilde Lambda u in H1} with the help of the Lebesgue theorem.
\end{proof}

Now we are ready to prove Theorem \ref{Theorem S_eps z fixed}. Obviously,
\begin{equation}
\label{K(eps)-tilde K(eps)}
\begin{split}
\varepsilon\Vert K(\varepsilon)-\widetilde{K}(\varepsilon)\Vert _{L_2\rightarrow H^1}&\leqslant \varepsilon\Vert [\Lambda ^\varepsilon ](\Pi _\varepsilon -S_\varepsilon )b(\mathbf{D})(B^0+\lambda_0 \overline{Q_0})^{-1}\Vert _{L_2\rightarrow H^1}\\
&+\varepsilon\Vert [\widetilde{\Lambda }^\varepsilon] (\Pi _\varepsilon -S_\varepsilon )(B^0+\lambda_0 \overline{Q_0})^{-1}
\Vert _{L_2\rightarrow H^1}.
\end{split}
\end{equation}
For the first term in the right-hand side of \eqref{K(eps)-tilde K(eps)}, we have:
\begin{equation}
\label{lm k-tilde k 1}
\begin{split}
\varepsilon &\Vert [\Lambda ^\varepsilon ](\Pi _\varepsilon -S_\varepsilon )b(\mathbf{D})(B^0+\lambda_0 \overline{Q_0})^{-1}\Vert _{L_2\rightarrow H^1}\\
&\leqslant \varepsilon \Vert [\Lambda ^\varepsilon ](\Pi _\varepsilon -S_\varepsilon )b(\mathbf{D})\Vert _{H^2\rightarrow H^1}\Vert
(B^0+\lambda_0 \overline{Q_0})^{-1}\Vert _{L_2\rightarrow H^2}.
\end{split}
\end{equation}
By \eqref{L>=},
  \begin{equation}
\label{B_0 res L2 ->H2}
\Vert (B^0+\lambda_0 \overline{Q_0})^{-1}\Vert _{L_2(\mathbb{R}^d)\rightarrow H^2(\mathbb{R}^d)} =
\sup \limits _{\boldsymbol{\xi}\in\mathbb{R}^d}(\vert \boldsymbol{\xi}\vert ^2+1)\vert L(\boldsymbol{\xi})^{-1}\vert \leqslant \check{c}_*^{-1}.
\end{equation}
According to  \cite[Lemma 3.5]{PSu},
\begin{equation}
\label{lemma 3.5 PSu}
\varepsilon\Vert [\Lambda ^\varepsilon ](\Pi _\varepsilon -S_\varepsilon )b(\mathbf{D})\Vert _{H^2(\mathbb{R}^d)\rightarrow H^1(\mathbb{R}^d)}\leqslant C_\Lambda \varepsilon,
\end{equation}
where the constant $C_\Lambda $ depends only on $m$, $d$, $\Vert g\Vert _{L_\infty}$, $\Vert g^{-1}\Vert _{L_\infty}$, $\alpha _0$, $\alpha _1$,
and the parameters of the lattice $\Gamma$. Combining \eqref{lm k-tilde k 1}--\eqref{lemma 3.5 PSu}, we arrive at
\begin{equation}
\label{lemma k-tilde k 1}
\varepsilon\Vert [\Lambda ^\varepsilon ](\Pi _\varepsilon -S_\varepsilon )b(\mathbf{D})(B^0+\lambda_0 \overline{Q_0})^{-1}\Vert _{L_2(\mathbb{R}^d)\rightarrow H^1(\mathbb{R}^d)}\leqslant
\check{c}_*^{-1} C_\Lambda \varepsilon  .
\end{equation}

Now we estimate the second term in the right-hand side of \eqref{K(eps)-tilde K(eps)}. By \eqref{B_0 res L2 ->H2},
\begin{equation}
\label{lm k-tilde k 2}
\varepsilon \Vert [\widetilde{\Lambda }^\varepsilon] (\Pi _\varepsilon -S_\varepsilon )(B^0+\lambda_0 \overline{Q_0})^{-1}\Vert _{L_2\rightarrow H^1}\leqslant \varepsilon \check{c}_*^{-1}\Vert [\widetilde{\Lambda }^\varepsilon] (\Pi _\varepsilon -S_\varepsilon )\Vert _{H^2\rightarrow H^1}.
\end{equation}
Let $\boldsymbol{\Phi}\in H^2 (\mathbb{R}^d;\mathbb{C}^n)$. Then
\begin{equation}
\label{eps ... Phi <=}
\begin{split}
\Vert &\varepsilon [\widetilde{\Lambda}^\varepsilon](\Pi _\varepsilon -S_\varepsilon)\boldsymbol{\Phi}\Vert _{H^1(\mathbb{R}^d)}
\leqslant
\varepsilon \Vert [\widetilde{\Lambda}^\varepsilon ](\Pi _\varepsilon -S_\varepsilon )\boldsymbol{\Phi}\Vert _{L_2(\mathbb{R}^d)}
\\
&+\Vert [(\mathbf{D}\widetilde{\Lambda})^\varepsilon ](\Pi _\varepsilon -S_\varepsilon )\boldsymbol{\Phi}\Vert _{L_2(\mathbb{R}^d)}
+\varepsilon \Vert [\widetilde{\Lambda}^\varepsilon ](\Pi _\varepsilon -S_\varepsilon )\mathbf{D}\boldsymbol{\Phi}\Vert _{L_2(\mathbb{R}^d)}.
\end{split}
\end{equation}
Using Propositions \ref{Proposition Pi_eps L2->L2}, \ref{Proposition S_eps L2->L2} and taking \eqref{tildeLambda <=}
into account, we obtain
\begin{align}
\label{tildeLambda Pi_eps}
&\Vert [\widetilde{\Lambda}^\varepsilon ]\Pi _\varepsilon \Vert _{L_2\rightarrow L_2}
\leqslant \vert \Omega \vert ^{-1/2}(2r_0)^{-1}C_a n^{1/2}\alpha _0^{-1}\Vert g^{-1}\Vert _{L_\infty}=:\widetilde{M}_1,
\ \varepsilon >0,\\
\label{tildeLambda S_eps}
&\Vert [\widetilde{\Lambda}^\varepsilon ]S _\varepsilon \Vert _{L_2\rightarrow L_2}\leqslant \widetilde{M}_1,
\quad \varepsilon >0.
\end{align}
By \eqref{tildeLambda Pi_eps} and \eqref{tildeLambda S_eps},
\begin{align}
\label{tilde Lambda (Pi-S)Phi}
&\Vert [\widetilde{\Lambda }^\varepsilon ](\Pi _\varepsilon -S_\varepsilon)\boldsymbol{\Phi}\Vert _{L_2(\mathbb{R}^d)}\leqslant 2\widetilde{M}_1\Vert \boldsymbol{\Phi}\Vert _{L_2(\mathbb{R}^d)},\\
\label{tilde Lambda (Pi-S)DPhi}
&\Vert [\widetilde{\Lambda }^\varepsilon ](\Pi _\varepsilon -S_\varepsilon)\mathbf{D}\boldsymbol{\Phi}\Vert _{L_2(\mathbb{R}^d)}\leqslant 2\widetilde{M}_1\Vert \mathbf{D}\boldsymbol{\Phi}\Vert _{L_2(\mathbb{R}^d)}.
\end{align}
For $0< \varepsilon \leqslant 1$, the second term in the right-hand side of \eqref{eps ... Phi <=} is estimated with the help of
Lemma~\ref{Lemma need for proof theorem 3.3}:
\begin{equation}
\label{DtildeLambda ... Phi<=}
\begin{split}
\Vert (\mathbf{D}\widetilde{\Lambda})^\varepsilon (\Pi _\varepsilon -S_\varepsilon )\boldsymbol{\Phi}\Vert ^2_{L_2(\mathbb{R}^d)}&\leqslant \widetilde{\beta} _1\Vert (\Pi _\varepsilon -S_\varepsilon )\boldsymbol{\Phi}\Vert ^2 _{H^1(\mathbb{R}^d)}\\
&+\widetilde{\beta} _2\varepsilon ^2\int _{\mathbb{R}^d}\vert \widetilde{\Lambda}^\varepsilon\vert ^2\vert (\Pi _\varepsilon -S_\varepsilon)\mathbf{D}\boldsymbol{\Phi}\vert ^2\,d\mathbf{x}.
\end{split}
\end{equation}
Using Propositions  \ref{Proposition Pi_eps -I} and \ref{Proposition S_eps - I}, we obtain
\begin{equation*}
\Vert (\Pi _\varepsilon -S_\varepsilon)\boldsymbol{\Phi}\Vert _{H^1(\mathbb{R}^d)}\leqslant \varepsilon (r_0^{-1}+r_1 )\Vert \boldsymbol{\Phi}\Vert _{H^2(\mathbb{R}^d)}.
\end{equation*}
Together with \eqref{tilde Lambda (Pi-S)DPhi} and \eqref{DtildeLambda ... Phi<=} this implies
\begin{equation}
\label{DtildeLambda...Phi<=eps...}
\Vert (\mathbf{D}\widetilde{\Lambda})^\varepsilon (\Pi _\varepsilon -S_\varepsilon)\boldsymbol{\Phi}\Vert _{L_2(\mathbb{R}^d)}
\leqslant \varepsilon \left(\widetilde{\beta} _1(r_0^{-1}+r_1 )^2+4\widetilde{\beta} _2\widetilde{M}_1^2\right)^{1/2}\Vert \boldsymbol{\Phi}\Vert _{H^2(\mathbb{R}^d)}.
\end{equation}
Combining \eqref{eps ... Phi <=}, \eqref{tilde Lambda (Pi-S)Phi}, \eqref{tilde Lambda (Pi-S)DPhi}, and \eqref{DtildeLambda...Phi<=eps...},
we conclude that
\begin{equation}
\label{eps tilde Lambda (Pi-S)<=C eps}
\Vert \varepsilon [\widetilde{\Lambda}^\varepsilon ](\Pi _\varepsilon -S_\varepsilon )\Vert _{H^2(\mathbb{R}^d)\rightarrow H^1(\mathbb{R}^d)}\leqslant C_{\widetilde{\Lambda}}\varepsilon,
\end{equation}
where $C_{\widetilde{\Lambda}}=4\widetilde{M}_1+ \left(\widetilde{\beta} _1(r_0^{-1}+r_1 )^2+4\widetilde{\beta} _2\widetilde{M}_1^2\right)^{1/2}$. Relations \eqref{lm k-tilde k 2} and \eqref{eps tilde Lambda (Pi-S)<=C eps} yield
\begin{equation}
\label{lemma k-tilde k 2}
 \varepsilon \Vert [\widetilde{\Lambda }^\varepsilon] (\Pi _\varepsilon -S_\varepsilon )(B^0+\lambda_0 \overline{Q_0})^{-1}\Vert _{L_2(\mathbb{R}^d)\rightarrow H^1(\mathbb{R}^d)}\leqslant \check{c}_*^{-1}C_{\widetilde{\Lambda}}\varepsilon.
\end{equation}

Finally, applying Theorem \ref{Theorem Su L2->H1} and estimates \eqref{K(eps)-tilde K(eps)}, \eqref{lemma k-tilde k 1}, \eqref{lemma k-tilde k 2},
we complete the proof.

\qed

\section{Main results}
\label{Section main results}
\setcounter{section}{4}
\setcounter{equation}{0}
\setcounter{theorem}{0}

\subsection{Formulation of the results}
In the present subsection, we formulate the main results of the paper.

\begin{theorem}
\label{Theorem main result L2}
Suppose that the assumptions of Subsections~\textnormal{\ref{Subsection Operator A}--\ref{Subsection generalized resolvent}}
are satisfied.
Let \hbox{$\zeta \in\mathbb{C}\setminus \mathbb{R}_+$}, $\zeta =\vert \zeta \vert e^{i\phi}$, $\phi \in (0,2\pi)$, and let $\vert \zeta \vert \geqslant 1$.
We put
\begin{equation}
\label{c(phi)}
c(\phi)=\begin{cases}
\vert \sin \phi \vert ^{-1}, &\phi\in (0,\pi /2)\cup (3\pi /2 ,2\pi),\\
1, &\phi\in [\pi /2,3\pi /2].
\end{cases}
\end{equation}
Then for $0<\varepsilon \leqslant 1$ we have
\begin{equation}
\label{Th L2 principal part}
\Vert(B_\varepsilon -\zeta Q_0^\varepsilon )^{-1}-(B^0-\zeta\overline{Q_0})^{-1}\Vert _{L_2(\mathbb{R}^d)\rightarrow L_2(\mathbb{R}^d)}\leqslant C_4\varepsilon c(\phi)^2\vert \zeta \vert ^{-1/2}.
\end{equation}
The constant $C_4$ depends only on the initial data \eqref{problem data}.
\end{theorem}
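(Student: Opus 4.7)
The plan follows the scheme sketched in the introduction: apply Theorem~\ref{Theorem Su L2} at the fixed real point $\lambda_0$, but to an auxiliary operator family $B_\varepsilon(\vartheta)$ indexed by a small parameter $\vartheta\in(0,1]$; transport the resulting estimate from $\lambda_0$ to the point $\widehat\zeta=e^{i\phi}$ via a resolvent identity; and finally calibrate $\vartheta$ and $\varepsilon$ in terms of $|\zeta|$ via the scaling $T_\varepsilon$ so that the estimate descends to the two-parametric form~\eqref{Th L2 principal part}.

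First I would introduce the family $B_\varepsilon(\vartheta)$ from Remark~\ref{Remark C1 C2 in Theorems by Su}, with the same principal part as $B_\varepsilon$ and lower-order coefficients $\vartheta a_j^\varepsilon$, $\vartheta^2 \check{Q}^\varepsilon$. By that remark, Theorem~\ref{Theorem Su L2} applied to $B_\varepsilon(\vartheta)$ yields a constant independent of $\vartheta\in(0,1]$. Using the scaling $T_\varepsilon$ from Subsection~\ref{Subsection Operator B_eps} and the conjugation $B_\varepsilon(\vartheta)=\varepsilon^{-2} T_\varepsilon^* B(\varepsilon;\vartheta) T_\varepsilon$ (with $B(\varepsilon;\vartheta)$ as in the introduction), this rewrites as
\begin{equation*}
\Vert(B(\varepsilon;\vartheta)+\lambda_0\varepsilon^2 Q_0)^{-1} - (B^0(\varepsilon;\vartheta)+\lambda_0\varepsilon^2\overline{Q_0})^{-1}\Vert_{L_2(\mathbb{R}^d)\rightarrow L_2(\mathbb{R}^d)} \leq C\varepsilon^{-1},
\end{equation*}
valid for $0<\vartheta\leq 1$ and $0<\varepsilon\leq\vartheta^{-1}$. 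For $1<\varepsilon\leq\vartheta^{-1}$ the two resolvents individually have operator norms of order $\varepsilon^{-2}$ by the form lower bounds \eqref{1.17a} and \eqref{frak b^0 >=}, so the bound holds automatically by the triangle inequality.

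The key step is to carry this estimate from $\lambda_0\varepsilon^2$ over to the point $\widehat\zeta\varepsilon^2$. Writing the second resolvent identity
\begin{equation*}
(A-\widehat\zeta\varepsilon^2 Q)^{-1} = (A+\lambda_0\varepsilon^2 Q)^{-1} + (\widehat\zeta+\lambda_0)\varepsilon^2(A-\widehat\zeta\varepsilon^2 Q)^{-1} Q (A+\lambda_0\varepsilon^2 Q)^{-1}
\end{equation*}
for both $A=B(\varepsilon;\vartheta)$, $Q=Q_0$ and for $A=B^0(\varepsilon;\vartheta)$, $Q=\overline{Q_0}$, and subtracting, the difference of the two complex resolvents reduces to the fixed-point difference (already estimated) plus cross-terms of the types $(A-\widehat\zeta\varepsilon^2 Q_0)^{-1}[Q_0-\overline{Q_0}](\cdots)$ (controlled by Lemma~\ref{Lemma Q0eps-mean value}) and terms that again reduce to the fixed-point $L_2\rightarrow L_2$ estimate. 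Each factor $(A-\widehat\zeta\varepsilon^2 Q_0)^{-1}$ or $(A^0-\widehat\zeta\varepsilon^2\overline{Q_0})^{-1}$ is bounded in $L_2(\mathbb{R}^d)\rightarrow L_2(\mathbb{R}^d)$ by $C\,c(\phi)\varepsilon^{-2}$, uniformly in $\vartheta\in(0,1]$, via the form bounds \eqref{1.17a}, \eqref{frak b^0 >=} and the elementary inequality $|t+\mu|\geq c(\phi)^{-1}(|t|+|\mu|)$ for $t\geq 0$ and $\mu=-\widehat\zeta\varepsilon^2$. Two such factors combine to produce the $c(\phi)^2$ factor, yielding
\begin{equation*}
\Vert(B(\varepsilon;\vartheta)-\widehat\zeta\varepsilon^2 Q_0)^{-1} - (B^0(\varepsilon;\vartheta)-\widehat\zeta\varepsilon^2\overline{Q_0})^{-1}\Vert_{L_2(\mathbb{R}^d)\rightarrow L_2(\mathbb{R}^d)}\leq C\,c(\phi)^2\,\varepsilon^{-1}.
\end{equation*}

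At this point I set $\varepsilon=\widetilde\varepsilon|\zeta|^{1/2}$ and $\vartheta=|\zeta|^{-1/2}$. The hypothesis $|\zeta|\geq 1$ forces $\vartheta\in(0,1]$, and $0<\widetilde\varepsilon\leq 1$ gives $\varepsilon\vartheta=\widetilde\varepsilon\leq 1$, so the hypotheses of the previous step are met. Direct inspection yields $\vartheta\varepsilon=\widetilde\varepsilon$, $\vartheta^2\varepsilon^2=\widetilde\varepsilon^2$, and $\widehat\zeta\varepsilon^2=\widetilde\varepsilon^2\zeta$, so $B(\varepsilon;\vartheta)$ coincides with the scaled operator corresponding to $B_{\widetilde\varepsilon}$ at the spectral point $\zeta$. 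Applying the inverse scaling $T_{\widetilde\varepsilon}^*$ and relabeling $\widetilde\varepsilon\rightsquigarrow\varepsilon$ delivers \eqref{Th L2 principal part}, with $C_4$ depending only on the initial data~\eqref{problem data}. The principal obstacle I foresee is securing, in the resolvent-identity step, genuinely $\vartheta$-uniform $L_2\rightarrow L_2$ bounds of order $c(\phi)\varepsilon^{-2}$ for the two complex resolvents, and tracking the angle dependence cleanly through the cross-terms; Remark~\ref{Remark C1 C2 in Theorems by Su}, the form bounds \eqref{1.17a} and \eqref{frak b^0 >=}, together with Lemma~\ref{Lemma Q0eps-mean value}, are the precise tools designed for this purpose.
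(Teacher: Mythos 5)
Your high-level strategy is the one the paper uses, but the order of operations is wrong in a way that breaks the power of $\varepsilon$. You apply the resolvent identity \emph{after} scaling, i.e.\ to $B(\varepsilon;\vartheta)$ at the spectral points $-\lambda_0\varepsilon^2$ and $\widehat\zeta\varepsilon^2$, and you claim the cross-term is "controlled by Lemma~\ref{Lemma Q0eps-mean value}". But at the scaled level the coefficient appearing in the cross-term is $Q_0(\mathbf{x})-\overline{Q_0}$ (period $\Gamma$), \emph{not} $Q_0^\varepsilon-\overline{Q_0}$; Lemma~\ref{Lemma Q0eps-mean value} applied to a fixed period (i.e.\ with $\varepsilon=1$) gives $\Vert[Q_0-\overline{Q_0}]\Vert_{H^1\to H^{-1}}\leqslant C_{Q_0}$ with no smallness, and then the crude accounting $\Vert R^0_{\widehat\zeta}\Vert_{H^{-1}\to L_2}\Vert Q_0-\overline{Q_0}\Vert_{H^1\to H^{-1}}\Vert R_{-\lambda_0}\Vert_{L_2\to H^1}\cdot\varepsilon^2\lesssim c(\phi)\varepsilon^{-2}\cdot 1\cdot\varepsilon^{-2}\cdot\varepsilon^2=c(\phi)\varepsilon^{-2}$ loses one power of $\varepsilon$ against the required $c(\phi)^2\varepsilon^{-1}$. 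The paper avoids this by writing the generalized-resolvent identity~\eqref{tozd generalized resolvent} for $B_\varepsilon(\vartheta)$ and $B^0(\vartheta)$ \emph{before} scaling; there the coefficient is genuinely $Q_0^\varepsilon-\overline{Q_0}$ and Lemma~\ref{Lemma Q0eps-mean value} delivers the $O(\varepsilon)$ factor cleanly, with resolvents bounded in the appropriate norms by $O(c(\phi))$ uniformly in $\varepsilon$. Only afterwards does one apply the scaling transformation to \eqref{Beps +Q0 eps - ... <= C eps}.

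A second, related problem is your accounting "each factor $(A-\widehat\zeta\varepsilon^2 Q_0)^{-1}$ is bounded by $C\,c(\phi)\varepsilon^{-2}$, and two such factors combine to produce $c(\phi)^2$". That does not add up: multiplying two factors of size $c(\phi)\varepsilon^{-2}$ around a fixed-point difference of size $\varepsilon^{-1}$ would give $c(\phi)^2\varepsilon^{-5}$, wildly off. The mechanism that actually produces the right orders is the sandwich structure of~\eqref{tozd generalized resolvent}: the dressing operators on the left and right are not bare resolvents but the compound factors $(B_\varepsilon(\vartheta)-\widehat\zeta Q_0^\varepsilon)^{-1}(B_\varepsilon(\vartheta)+\lambda_0 Q_0^\varepsilon)$ and $(B^0(\vartheta)+\lambda_0\overline{Q_0})(B^0(\vartheta)-\widehat\zeta\overline{Q_0})^{-1}$, and via the conjugation to $\widetilde B_\varepsilon(\vartheta)$ and the spectral-theorem bound $\sup_{\nu\geqslant 0}(\nu+\lambda_0)/|\nu-\widehat\zeta|\leqslant 2(1+\lambda_0)c(\phi)$ these are $O(c(\phi))$ uniformly, not $O(c(\phi)\varepsilon^{-2})$. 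That cancellation is the crux of the transfer step and must appear explicitly; if you push the raw second resolvent identity through and subtract, you obtain an implicit equation in $R_{\widehat\zeta}-R^0_{\widehat\zeta}$ that has to be inverted, and the inversion factor is exactly the compound operator $(B_\varepsilon(\vartheta)+\lambda_0 Q_0^\varepsilon)(B_\varepsilon(\vartheta)-\widehat\zeta Q_0^\varepsilon)^{-1}$, which is how the sandwich identity arises.
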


To formulate the result about approximation of the operator $(B_\varepsilon -\zeta Q_0^\varepsilon )^{-1}$
in the $(L_2\rightarrow H^1)$-operator norm, we introduce a corrector
\begin{equation}
\label{K(eps;zeta)}
K(\varepsilon ;\zeta)=\bigl([\Lambda ^\varepsilon]b(\mathbf{D})+[\widetilde{\Lambda}^\varepsilon ]\bigr)S_\varepsilon (B^0 -\zeta \overline{Q_0})^{-1}.
\end{equation}
Here $S_\varepsilon$ is the Steklov smoothing operator given by \eqref{S_eps}.
The operator \eqref{K(eps;zeta)} is a continuous mapping of $L_2(\mathbb{R}^d;\mathbb{C}^n)$ to $H^1(\mathbb{R}^d;\mathbb{C}^n)$.
This follows from Proposition \ref{Proposition S_eps L2->L2} and the relations $\Lambda ,\,\widetilde{\Lambda}\in \widetilde{H}^1(\Omega)$.

\begin{theorem}
\label{Theorem main result H1}
Suppose that the assumptions of Theorem \textnormal{\ref{Theorem main result L2}} are satisfied.
Let $K(\varepsilon ;\zeta)$ be the operator given by \eqref{K(eps;zeta)}.
Then for  $0<\varepsilon \leqslant 1$ and \hbox{$\zeta\in \mathbb{C}\setminus\mathbb{R}_+$}, $\vert \zeta \vert \geqslant 1$, we have
\begin{align}
\label{Th 3.2 L_2->L_2}
\begin{split}
\Vert (B_\varepsilon -\zeta Q_0^\varepsilon )^{-1}-(B^0-\zeta \overline{Q_0})^{-1}-\varepsilon K(\varepsilon ;\zeta )\Vert _{L_2(\mathbb{R}^d)\rightarrow L_2(\mathbb{R}^d)}
\\
\leqslant C_5 c(\phi )^2\varepsilon \vert \zeta \vert ^{-1/2},
\end{split}
\\
\label{Th 3.2 D L_2->L_2}
\begin{split}
\Vert \mathbf{D}\left(
(B_\varepsilon -\zeta Q_0^\varepsilon )^{-1}-(B^0-\zeta \overline{Q_0})^{-1}-\varepsilon K(\varepsilon ;\zeta )
\right)
\Vert _{L_2(\mathbb{R}^d)\rightarrow L_2(\mathbb{R}^d)}
\\
\leqslant C_6 c(\phi )^2\varepsilon .
\end{split}
\end{align}
The constants $C_5$ and $C_6$ are controlled in terms of the problem data \eqref{problem data}.
\end{theorem}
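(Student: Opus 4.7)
The plan follows the same five-step strategy outlined in the Introduction for Theorem~\ref{Theorem main result L2}, replacing the $L_2\to L_2$ fixed-point bound of Theorem~\ref{Theorem Su L2} by the $L_2\to H^1$ fixed-point bound with Steklov smoothing, Theorem~\ref{Theorem S_eps z fixed}, applied to the auxiliary family $B_\varepsilon(\vartheta)$. By Remark~\ref{Remark Constants C5} the constant $C_3$ is uniform in $\vartheta\in(0,1]$, so at the point $\lambda_0$ one has both $\Vert(B_\varepsilon(\vartheta)+\lambda_0 Q_0^\varepsilon)^{-1}-(B^0(\vartheta)+\lambda_0\overline{Q_0})^{-1}-\varepsilon\widetilde{K}(\varepsilon;\vartheta)\Vert_{L_2\to L_2}\leqslant C_3\varepsilon$ and its $\mathbf{D}$-analogue, uniformly in~$\vartheta$.

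Conjugating by $T_\varepsilon$ converts these bounds into estimates for the unscaled family $B(\varepsilon;\vartheta)=\varepsilon^2 T_\varepsilon B_\varepsilon(\vartheta)T_\varepsilon^*$ and its effective counterpart $B^0(\varepsilon;\vartheta)$. Using $T_\varepsilon[f^\varepsilon]T_\varepsilon^*=[f]$, $T_\varepsilon b(\mathbf{D})T_\varepsilon^*=\varepsilon^{-1}b(\mathbf{D})$, $T_\varepsilon S_\varepsilon T_\varepsilon^*=S_1$, and $\mathbf{D}T_\varepsilon=\varepsilon T_\varepsilon\mathbf{D}$, the scaled corrector becomes $\widetilde{K}_{\mathrm{sc}}:=\bigl([\Lambda]b(\mathbf{D})+\varepsilon\vartheta[\widetilde{\Lambda}]\bigr)S_1(B^0(\varepsilon;\vartheta)+\lambda_0\varepsilon^2\overline{Q_0})^{-1}$, and the two inequalities turn into $\Vert T-T^0-\widetilde{K}_{\mathrm{sc}}\Vert_{L_2\to L_2}\leqslant C_3\varepsilon^{-1}$ and $\Vert\mathbf{D}(T-T^0-\widetilde{K}_{\mathrm{sc}})\Vert_{L_2\to L_2}\leqslant C_3$, where $T$ and $T^0$ denote the respective resolvents at $+\lambda_0\varepsilon^2$. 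Next, one applies the identity $(A-\widehat{\zeta}\varepsilon^2 Q_0)^{-1}=(A+\lambda_0\varepsilon^2 Q_0)^{-1}+(\lambda_0+\widehat{\zeta})\varepsilon^2(A-\widehat{\zeta}\varepsilon^2 Q_0)^{-1}Q_0(A+\lambda_0\varepsilon^2 Q_0)^{-1}$ to $A=B(\varepsilon;\vartheta)$ and, with $\overline{Q_0}$ in place of $Q_0$, to $A=B^0(\varepsilon;\vartheta)$, to transfer both bounds from $+\lambda_0$ to $-\widehat{\zeta}=-e^{i\phi}$. Because $\vert\widehat{\zeta}\vert=1$, each free generalized resolvent at $\widehat{\zeta}$ contributes a sectorial factor $c(\phi)$, producing the overall factor $c(\phi)^2$; the cross-term generated by the discrepancy $Q_0^\varepsilon-\overline{Q_0}$ is absorbed at order $\varepsilon$ through Lemma~\ref{Lemma Q0eps-mean value}.

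Step~4 is the substitution $\varepsilon=\widetilde{\varepsilon}\vert\zeta\vert^{1/2}$, $\vartheta=\vert\zeta\vert^{-1/2}$ (the hypothesis $\vert\zeta\vert\geqslant 1$ is precisely what keeps $\vartheta\in(0,1]$): the products $\vartheta\varepsilon$ and $\vartheta^2\varepsilon^2$ collapse to $\widetilde{\varepsilon}$ and $\widetilde{\varepsilon}^2$, so $B(\varepsilon;\vartheta)\to B(\widetilde{\varepsilon})$, $B^0(\varepsilon;\vartheta)\to B^0(\widetilde{\varepsilon})$, and $\widehat{\zeta}\varepsilon^2\to\widetilde{\varepsilon}^2\zeta$. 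The inverse scaling by $T_{\widetilde{\varepsilon}}^*$, combined with $T_{\widetilde{\varepsilon}}^*[Q_0]T_{\widetilde{\varepsilon}}=[Q_0^{\widetilde{\varepsilon}}]$ and the fact that $B^0$ has constant coefficients, returns $(B_{\widetilde{\varepsilon}}-\zeta Q_0^{\widetilde{\varepsilon}})^{-1}$ and $(B^0-\zeta\overline{Q_0})^{-1}$; a direct computation with the same scaling identities gives $T_{\widetilde{\varepsilon}}^*\widetilde{K}_{\mathrm{sc}}T_{\widetilde{\varepsilon}}=\widetilde{\varepsilon}^{-1}K(\widetilde{\varepsilon};\zeta)$, so the corrector reassembles exactly into the form \eqref{K(eps;zeta)}. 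After bookkeeping of the factors $\widetilde{\varepsilon}$ and $\vert\zeta\vert^{1/2}$ arising from the scaling and the substitution (the $\widetilde{\varepsilon}^{-2}$ emerging from $T_{\widetilde{\varepsilon}}^*\cdots T_{\widetilde{\varepsilon}}$ on the resolvents, and an additional $\widetilde{\varepsilon}^{-1}$ when a derivative is pulled outside), renaming $\widetilde{\varepsilon}=:\varepsilon$ yields \eqref{Th 3.2 L_2->L_2} and \eqref{Th 3.2 D L_2->L_2}.

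The main obstacle is the resolvent-identity transfer in Step~3. Since the corrector decorates only the effective resolvent, the cross-terms produced by the identity do not immediately organize into the target quantity $T(\zeta)-T^0(\zeta)-\widetilde{K}_{\mathrm{sc}}(\zeta)$; one has to add and subtract terms carefully to re-express everything as (i) the Step~2 quantity $T-T^0-\widetilde{K}_{\mathrm{sc}}$, (ii) a $[Q_0^\varepsilon-\overline{Q_0}]$-term handled by Lemma~\ref{Lemma Q0eps-mean value} sandwiched between an $H^{-1}\to L_2$ bound and an $L_2\to H^1$ bound on the free resolvents, and (iii) norm-bounded products of sectorial resolvents at $\widehat{\zeta}$. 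A secondary technicality, specific to \eqref{Th 3.2 D L_2->L_2}, is the need for an $H^2$-bound on $(B^0-\zeta\overline{Q_0})^{-1}$ with explicit $c(\phi)$-dependence, obtained from symbolic calculus together with the lower estimate \eqref{L0}, so that one derivative can be absorbed through $\widetilde{K}_{\mathrm{sc}}(\zeta)$ without losing the claimed order in $\varepsilon$.
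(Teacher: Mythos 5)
Your plan reproduces the paper's overall scheme faithfully: the auxiliary family $B_\varepsilon(\vartheta)$, the $\vartheta$-uniform fixed-point bound of Theorem~\ref{Theorem S_eps z fixed} via Remark~\ref{Remark Constants C5}, the three-term decomposition for the resolvent-identity transfer, and the substitution $\varepsilon=\widetilde{\varepsilon}\vert\zeta\vert^{1/2}$, $\vartheta=\vert\zeta\vert^{-1/2}$ followed by inverse scaling, so you have the right skeleton. However there is one genuine gap and one inefficiency. The gap: the fixed-point bound of Theorem~\ref{Theorem S_eps z fixed} is stated only for $0<\varepsilon\leqslant 1$, but after the substitution the argument $\varepsilon=\widetilde{\varepsilon}\vert\zeta\vert^{1/2}$ ranges over $(0,\vartheta^{-1}]$, which exceeds $1$ whenever $\vert\zeta\vert>1$. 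The paper therefore first extends the fixed-point estimate to the full window $0<\varepsilon\leqslant\vartheta^{-1}$ via rough resolvent bounds together with the bounds on $\mathcal{K}(\varepsilon;\vartheta)$ of Lemma~\ref{Lemma K(eps,theta)} (this is the step producing $\widehat C_3$ in \eqref{Th 3.4 for B eps (theta) eps >0}); your plan omits this step, so the substitution as written is only valid for $\widetilde{\varepsilon}\leqslant\vert\zeta\vert^{-1/2}$ rather than all $\widetilde{\varepsilon}\leqslant 1$.

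Two further remarks. First, you propose to scale by $T_\varepsilon$ before transferring from $\lambda_0$ to $\widehat{\zeta}$; this commutes with the paper's order, but in the scaled picture the cross-term involves $Q_0-\overline{Q_0}$, not $Q_0^\varepsilon-\overline{Q_0}$, and the factor $\varepsilon$ that Lemma~\ref{Lemma Q0eps-mean value} supplies must then be recovered by working with the $\varepsilon$-dependent Sobolev norm $\Vert u\Vert^2_{L_2}+\varepsilon^2\Vert\mathbf{D}u\Vert^2_{L_2}$ (or equivalently by unscaling that one factor). Your writeup still says ``$Q_0^\varepsilon-\overline{Q_0}$'' inside the scaled setting, which signals that this bookkeeping has not actually been carried out; the paper avoids the issue entirely by doing the transfer in the $B_\varepsilon(\vartheta)$ variables, where Lemma~\ref{Lemma Q0eps-mean value} applies verbatim, and scaling only afterwards. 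Second, for the derivative estimate \eqref{Th 3.2 D L_2->L_2} the paper does not distribute $\mathbf{D}$ term-by-term with $H^2$-bounds on $(B^0-\zeta\overline{Q_0})^{-1}$ as you suggest; instead it applies $B_\varepsilon(\vartheta)^{1/2}$ to both sides of the identity \eqref{tozd corrector}, estimates each piece in $L_2\to L_2$ using the form identity $\Vert B_\varepsilon(\vartheta)^{1/2}\mathbf{w}\Vert^2=\widetilde{b}_\varepsilon(\vartheta)[(f^\varepsilon)^{-1}\mathbf{w},(f^\varepsilon)^{-1}\mathbf{w}]$ and spectral calculus for $\widetilde{B}_\varepsilon(\vartheta)$, and only at the end converts $B_\varepsilon(\vartheta)^{1/2}$ to $\mathbf{D}$ via the lower form bound \eqref{3/10a b_eps(theta)>=}. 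Your direct $\mathbf{D}$-approach would need separate control of $\mathbf{D}$ acting on the sandwich $(B_\varepsilon(\vartheta)-\widehat{\zeta}Q_0^\varepsilon)^{-1}(B_\varepsilon(\vartheta)+\lambda_0 Q_0^\varepsilon)$ and on the corrector, and without the $B_\varepsilon^{1/2}$-trick the necessary cancellations are harder to organize without losing a power of $\varepsilon$.
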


Theorem \ref{Theorem main result H1} directly implies the following corollary.

\begin{corollary}
\label{Corollary main result L2->H1}
Under the assumptions of Theorem \textnormal{\ref{Theorem main result H1}}, we have
\begin{equation*}
\begin{split}
\Vert &(B_\varepsilon -\zeta Q_0^\varepsilon )^{-1}-(B^0-\zeta \overline{Q_0})^{-1}-\varepsilon K(\varepsilon ;\zeta )\Vert _{L_2(\mathbb{R}^d)\rightarrow H^1(\mathbb{R}^d)}
\\
&\leqslant (C_5+C_6)c(\phi )^2\varepsilon ,\quad 0<\varepsilon \leqslant 1.
\end{split}
\end{equation*}
\end{corollary}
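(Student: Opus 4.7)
The plan is to obtain this corollary as a direct consequence of Theorem~\ref{Theorem main result H1}, with no new ingredients required beyond the two estimates \eqref{Th 3.2 L_2->L_2} and \eqref{Th 3.2 D L_2->L_2} already proved there.

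First, I would recall the elementary inequality
$\Vert \mathbf{u}\Vert _{H^1(\mathbb{R}^d)} \leqslant \Vert \mathbf{u}\Vert _{L_2(\mathbb{R}^d)} + \Vert \mathbf{D}\mathbf{u}\Vert _{L_2(\mathbb{R}^d)}$, $\mathbf{u}\in H^1(\mathbb{R}^d;\mathbb{C}^n)$. Applied to the operator
$T(\varepsilon;\zeta):=(B_\varepsilon -\zeta Q_0^\varepsilon )^{-1}-(B^0-\zeta \overline{Q_0})^{-1}-\varepsilon K(\varepsilon ;\zeta )$, this gives
\begin{equation*}
\Vert T(\varepsilon;\zeta)\Vert _{L_2(\mathbb{R}^d)\rightarrow H^1(\mathbb{R}^d)} \leqslant \Vert T(\varepsilon;\zeta)\Vert _{L_2(\mathbb{R}^d)\rightarrow L_2(\mathbb{R}^d)} + \Vert \mathbf{D}\,T(\varepsilon;\zeta)\Vert _{L_2(\mathbb{R}^d)\rightarrow L_2(\mathbb{R}^d)}.
\end{equation*}

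Next, I would invoke \eqref{Th 3.2 L_2->L_2} to bound the first summand by $C_5 c(\phi)^2 \varepsilon \vert\zeta\vert^{-1/2}$ and \eqref{Th 3.2 D L_2->L_2} to bound the second by $C_6 c(\phi)^2 \varepsilon$. Using the standing assumption $\vert\zeta\vert \geqslant 1$, we have $\vert\zeta\vert^{-1/2}\leqslant 1$, so the first bound is majorized by $C_5 c(\phi)^2 \varepsilon$. Adding the two estimates yields precisely $(C_5+C_6)c(\phi)^2 \varepsilon$, which is the desired inequality.

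There is essentially no obstacle: the corollary is a purely formal consequence of two estimates already at hand, together with the definition of the $H^1$-norm and the restriction $\vert\zeta\vert\geqslant 1$. The only (very minor) point worth flagging is that the factor $\vert\zeta\vert^{-1/2}$ in \eqref{Th 3.2 L_2->L_2} is \emph{not} retained in the $(L_2\to H^1)$-estimate; this reflects the fact that, in contrast with the $(L_2\to L_2)$-approximation, the corrector $\varepsilon K(\varepsilon;\zeta)$ does not decay as $\vert\zeta\vert\to\infty$, so the gradient term in \eqref{Th 3.2 D L_2->L_2} dominates and dictates the final bound.
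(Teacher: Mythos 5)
Your proof is correct and is precisely the argument the paper has in mind — the paper merely states that Theorem~\ref{Theorem main result H1} ``directly implies'' the corollary, and the direct implication is exactly the combination of \eqref{Th 3.2 L_2->L_2} and \eqref{Th 3.2 D L_2->L_2} via $\Vert\mathbf{u}\Vert_{H^1}\leqslant\Vert\mathbf{u}\Vert_{L_2}+\Vert\mathbf{D}\mathbf{u}\Vert_{L_2}$ together with $\vert\zeta\vert^{-1/2}\leqslant 1$. Your closing remark on why the factor $\vert\zeta\vert^{-1/2}$ is lost in the $(L_2\to H^1)$-bound is an accurate observation, consistent with the discussion in the introduction.
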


Using Theorem \ref{Theorem main result H1}, we obtain the result about approximation of the operator
$g^\varepsilon b(\mathbf{D})(B_\varepsilon -\zeta Q_0^\varepsilon )^{-1}$ (corresponding to the ,,flux'').

\begin{theorem}
\label{Theorem main result fluxes}
Suppose that the assumptions of Theorem \textnormal{\ref{Theorem main result L2}} are satisfied.
Let $\widetilde{g}(\mathbf{x})$ be the matrix-valued function defined by~\eqref{tilde g}. Denote
\begin{equation}
\label{G(eps,zeta)}
G(\varepsilon ;\zeta):=\widetilde{g}^\varepsilon S_\varepsilon b(\mathbf{D})(B^0-\zeta \overline{Q_0})^{-1}+g^\varepsilon \bigl(b(\mathbf{D})\widetilde{\Lambda}\bigr
)^\varepsilon S_\varepsilon (B^0-\zeta \overline{Q_0})^{-1}.
\end{equation}
Then for $0<\varepsilon \leqslant 1$ and $\zeta \in\mathbb{C}\setminus\mathbb{R}_+$, $\vert \zeta \vert \geqslant 1$, we have
\begin{equation}
\label{Th 4.4}
\Vert g^\varepsilon b(\mathbf{D})(B_\varepsilon -\zeta Q_0^\varepsilon )^{-1}-G(\varepsilon;\zeta)\Vert _{L_2(\mathbb{R}^d)\rightarrow L_2(\mathbb{R}^d)}\leqslant C_7 c(\phi)^2\varepsilon .
\end{equation}
The constant $C_7$ depends only on the initial data \eqref{problem data}.
\end{theorem}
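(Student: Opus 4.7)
The plan is to reduce the claim to Theorem~\ref{Theorem main result H1} and to an algebraic
identity for $g^\varepsilon b(\mathbf{D})\bigl((B^0-\zeta\overline{Q_0})^{-1} + \varepsilon K(\varepsilon;\zeta)\bigr)$.
Write $R_\varepsilon := (B_\varepsilon -\zeta Q_0^\varepsilon)^{-1}$ and $R^0(\zeta) := (B^0-\zeta\overline{Q_0})^{-1}$.
By \eqref{<b^*b<} and \eqref{g}, the operator $[g^\varepsilon b(\mathbf{D})]$ is a bounded
mapping from $H^1(\mathbb{R}^d;\mathbb{C}^n)$ to $L_2(\mathbb{R}^d;\mathbb{C}^m)$ with norm at most
$\alpha_1^{1/2}\Vert g\Vert_{L_\infty}$. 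Hence \eqref{Th 3.2 D L_2->L_2} immediately yields
\begin{equation*}
\Vert g^\varepsilon b(\mathbf{D})\bigl(R_\varepsilon - R^0(\zeta) - \varepsilon K(\varepsilon;\zeta)\bigr)\Vert_{L_2\to L_2}
\leqslant \alpha_1^{1/2}\Vert g\Vert_{L_\infty} C_6 c(\phi)^2\varepsilon,
\end{equation*}
so it remains to compare $g^\varepsilon b(\mathbf{D})\bigl(R^0(\zeta) + \varepsilon K(\varepsilon;\zeta)\bigr)$ with~$G(\varepsilon;\zeta)$.

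The core computation is the Leibniz rule at scale $\varepsilon$: for any periodic matrix-valued function~$f$ with $f\in\widetilde{H}^1(\Omega)$,
\begin{equation*}
\varepsilon\, b(\mathbf{D})[f^\varepsilon] = [(b(\mathbf{D})f)^\varepsilon] + \varepsilon[f^\varepsilon]b(\mathbf{D}),
\end{equation*}
applied with $f=\Lambda$ and $f=\widetilde{\Lambda}$. Since $S_\varepsilon$ commutes with $b(\mathbf{D})$
and $g^\varepsilon(\mathbf{1}_m + (b(\mathbf{D})\Lambda)^\varepsilon) = \widetilde{g}^\varepsilon$, after regrouping terms
we obtain the identity
\begin{equation*}
\begin{split}
&g^\varepsilon b(\mathbf{D})\bigl(R^0(\zeta) + \varepsilon K(\varepsilon;\zeta)\bigr) - G(\varepsilon;\zeta)
\\
&= g^\varepsilon(I-S_\varepsilon)b(\mathbf{D})R^0(\zeta)
+ \varepsilon\, g^\varepsilon[\Lambda^\varepsilon]b(\mathbf{D})^2 S_\varepsilon R^0(\zeta)
+ \varepsilon\, g^\varepsilon[\widetilde{\Lambda}^\varepsilon]b(\mathbf{D})S_\varepsilon R^0(\zeta).
\end{split}
\end{equation*}

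The three remainder terms are then estimated separately. For the first term I use
Proposition~\ref{Proposition S_eps - I} to gain one $\varepsilon$ at the cost of one extra derivative:
$\Vert(I-S_\varepsilon)b(\mathbf{D})R^0(\zeta)\Vert_{L_2\to L_2} \leqslant \varepsilon r_1 \alpha_1^{1/2}
\Vert\mathbf{D}^2 R^0(\zeta)\Vert_{L_2\to L_2}$.
For the second and third terms I apply Proposition~\ref{Proposition S_eps L2->L2}
(together with the bounds \eqref{Lambda <=}, \eqref{tildeLambda <=} on $\Vert\Lambda\Vert_{L_2(\Omega)}$ and
$\Vert\widetilde{\Lambda}\Vert_{L_2(\Omega)}$) to obtain
$\Vert[\Lambda^\varepsilon]S_\varepsilon\Vert_{L_2\to L_2}\leqslant M_1$ and
$\Vert[\widetilde{\Lambda}^\varepsilon]S_\varepsilon\Vert_{L_2\to L_2}\leqslant\widetilde{M}_1$
for all $\varepsilon>0$, reducing matters to bounds on $\Vert b(\mathbf{D})^2 R^0(\zeta)\Vert_{L_2\to L_2}$
and $\Vert b(\mathbf{D})R^0(\zeta)\Vert_{L_2\to L_2}$.

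The key remaining ingredient is therefore the two-parametric bound
\begin{equation*}
\Vert \mathbf{D}^s R^0(\zeta)\Vert_{L_2\to L_2} \leqslant \mathfrak{C}_s\, c(\phi)\, \vert\zeta\vert^{(s-2)/2},
\quad s=0,1,2,\qquad \vert\zeta\vert\geqslant 1,
\end{equation*}
which follows from the ellipticity estimate \eqref{L>=} for the symbol~$L(\boldsymbol{\xi})$ of $B^0+\lambda_0\overline{Q_0}$
together with the resolvent identity
\hbox{$R^0(\zeta)=(B^0+\lambda_0\overline{Q_0})^{-1}\bigl(I+(\zeta+\lambda_0)\overline{Q_0}R^0(\zeta)\bigr)$}
and the standard pointwise inequality
$\vert(L(\boldsymbol{\xi})-(\zeta+\lambda_0)\overline{Q_0})^{-1}\vert \leqslant c(\phi)(\check{c}_*\vert\boldsymbol{\xi}\vert^2+\vert\zeta\vert)^{-1}$ on the Fourier side.
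Inserting these bounds into the three remainder terms produces an overall estimate of order
$\varepsilon c(\phi)$ (or $\varepsilon c(\phi)^2$ after also using a factor $c(\phi)$ for the original reduction), which is absorbed in $C_7 c(\phi)^2\varepsilon$.

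\textbf{Main obstacle.} The only delicate point is tracking the sharp dependence on $\zeta$, in particular verifying that
$\Vert\mathbf{D}^2 R^0(\zeta)\Vert_{L_2\to L_2}$ stays bounded as $\vert\zeta\vert\to\infty$ with at most a factor $c(\phi)$.
This is the reason why the corrector $G(\varepsilon;\zeta)$ already contains the Steklov-smoothed factor $\widetilde{g}^\varepsilon S_\varepsilon b(\mathbf{D})R^0(\zeta)$
rather than simply $\widetilde{g}^\varepsilon b(\mathbf{D})R^0(\zeta)$: the difference is precisely the $(I-S_\varepsilon)$ term above,
and estimating it uniformly in $\zeta$ would fail without the smoothing.
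All remaining steps are straightforward composition estimates.
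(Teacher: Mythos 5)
Your plan --- bound $g^\varepsilon b(\mathbf{D})$ applied to $(B_\varepsilon -\zeta Q_0^\varepsilon )^{-1}-(B^0-\zeta \overline{Q_0})^{-1}-\varepsilon K(\varepsilon ;\zeta )$ through Theorem~\ref{Theorem main result H1}, then expand $g^\varepsilon b(\mathbf D)\bigl(I+\varepsilon K(\varepsilon;\zeta)\bigr)(B^0-\zeta\overline{Q_0})^{-1}$ by the Leibniz rule and absorb the $(b(\mathbf D)\Lambda)^\varepsilon$ piece into $\widetilde g^\varepsilon$ --- is exactly the paper's. Three remarks.

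First, the ``identity'' you display is not quite right: the term $\varepsilon\, g^\varepsilon[\Lambda^\varepsilon]b(\mathbf{D})^2 S_\varepsilon (B^0-\zeta\overline{Q_0})^{-1}$ is dimensionally incoherent (the $(m\times n)$-symbol $b(\boldsymbol\xi)$ cannot be iterated when $m\ne n$), and in $\varepsilon\,g^\varepsilon[\widetilde\Lambda^\varepsilon]b(\mathbf D)S_\varepsilon(B^0-\zeta\overline{Q_0})^{-1}$ the multiplier $[\widetilde\Lambda^\varepsilon]$ is on the wrong side. What the Leibniz computation actually produces, after using $\widetilde g^\varepsilon = g^\varepsilon\bigl(\mathbf 1_m+(b(\mathbf D)\Lambda)^\varepsilon\bigr)$ and $S_\varepsilon b(\mathbf D)=b(\mathbf D)S_\varepsilon$, is
\begin{equation*}
g^\varepsilon b(\mathbf{D})(I-S_\varepsilon)(B^0-\zeta\overline{Q_0})^{-1}
+\varepsilon\sum_{l=1}^d g^\varepsilon b_l\bigl([\Lambda^\varepsilon]S_\varepsilon b(\mathbf D)D_l+[\widetilde\Lambda^\varepsilon]S_\varepsilon D_l\bigr)(B^0-\zeta\overline{Q_0})^{-1},
\end{equation*}
with $[\Lambda^\varepsilon]$, $[\widetilde\Lambda^\varepsilon]$ sandwiched between the constant matrices $b_l$ and the smoothing $S_\varepsilon$. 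That placement is not cosmetic: it is precisely what lets you invoke Proposition~\ref{Proposition S_eps L2->L2} to get $\Vert[\Lambda^\varepsilon]S_\varepsilon\Vert\le M_1$ and $\Vert[\widetilde\Lambda^\varepsilon]S_\varepsilon\Vert\le\widetilde M_1$, as you correctly do in the next sentence; with your wording the multipliers are not adjacent to $S_\varepsilon$.

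Second, your ``main obstacle'' paragraph misattributes the role of the Steklov smoothing in the first term of $G(\varepsilon;\zeta)$. If one replaced $\widetilde g^\varepsilon S_\varepsilon b(\mathbf D)(B^0-\zeta\overline{Q_0})^{-1}$ by $\widetilde g^\varepsilon b(\mathbf D)(B^0-\zeta\overline{Q_0})^{-1}$, the $(I-S_\varepsilon)$ remainder would in fact disappear and be replaced by $g^\varepsilon(b(\mathbf D)\Lambda)^\varepsilon(S_\varepsilon-I)b(\mathbf D)(B^0-\zeta\overline{Q_0})^{-1}$. The obstruction there is not $\zeta$-dependence but that $(b(\mathbf D)\Lambda)^\varepsilon$ is only an $L_2(\Omega)$-periodic multiplier, so it does not act boundedly on $L_2(\mathbb R^d)$ without a smoothing next to it; this is the same reason $S_\varepsilon$ sits next to $[\widetilde\Lambda^\varepsilon]$, and it is why Theorem~\ref{Theorem main result fluxes no S_eps} requires Conditions~\ref{Condition Lambda in L_infty} and \ref{Condition on Lambda-tilda} before removing the smoothing. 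The actual $(I-S_\varepsilon)b(\mathbf D)(B^0-\zeta\overline{Q_0})^{-1}$ piece that does occur is controlled uniformly in $\zeta$ with no trouble, via Proposition~\ref{Proposition S_eps - I} and \eqref{rim 8a}.

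Third, the two-parametric bounds $\Vert\mathbf D^s(B^0-\zeta\overline{Q_0})^{-1}\Vert\lesssim c(\phi)|\zeta|^{(s-2)/2}$ for $s=1,2$ are the ones the proof needs and are correct; but the pointwise Fourier-side inequality you propose for the matrix symbol $(L_0(\boldsymbol\xi)-\zeta\overline{Q_0})^{-1}$ is delicate when $L_0(\boldsymbol\xi)$ and $\overline{Q_0}$ do not commute. The paper sidesteps this by factoring $B^0=f_0^{-1}\widetilde B^0 f_0^{-1}$ with $\widetilde B^0\ge 0$ self-adjoint, and then reducing to the scalar suprema $\sup_{\nu\ge 0}\nu^{s/2}/|\nu-\zeta|$ (see \eqref{rim 8}--\eqref{rim 9}); this route gives the constants cleanly and avoids any non-commutativity issue. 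Either way the exponents match, so the sketch is sound once the bookkeeping above is corrected.
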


\subsection{Discussion}
Homogenization of the resolvent of the operator $\mathcal{A}_\varepsilon$ in dependence of the spectral parameter
was studied in \cite{Su15}. The two-parametric error estimates that we obtain (Theorems \ref{Theorem main result L2},
\ref{Theorem main result H1}, and \ref{Theorem main result fluxes}) have the same behavior as
the estimates from  \cite[Theorems 2.2, 2.4, and 2.6]{Su15}.

However, there is a difference between our results and the results of \cite{Su15}.
Approximations of the resolvent $(\mathcal{A}_\varepsilon - \zeta I)^{-1}$
with error estimates of the form \eqref{Th L2 principal part}, \eqref{Th 3.2 L_2->L_2}, \eqref{Th 3.2 D L_2->L_2}, and \eqref{Th 4.4}
were proved for any $\zeta \in {\mathbb C} \setminus {\mathbb R}_+$.
For the operator $B_\varepsilon$ the situation is different: in Theorems
\ref{Theorem main result L2}, \ref{Theorem main result H1}, and \ref{Theorem main result fluxes} it is assumed in addition that
$|\zeta|\ge 1$. This restriction is related to the presence of the lower order terms in $B_\varepsilon$.
Below in Section \ref{Section another approximation} we extend the domain of admissible values of $\zeta$,
but the order of estimates (with respect to $\zeta$) will be less precise.

\section{Proof of Theorem \ref{Theorem main result L2}}
\label{Section proof of main result L2}
\setcounter{section}{5}
\setcounter{equation}{0}
\setcounter{theorem}{0}

\subsection{The operator $B(\varepsilon ;\vartheta)$}
\label{51}
The proof of Theorems \ref{Theorem main result L2} and \ref{Theorem main result H1}
is based on application of Theorems \ref{Theorem Su L2} and \ref{Theorem S_eps z fixed}
to the auxiliary family of operators depending on the additional parameter $0 < \vartheta \leqslant 1$.
In Subsections \ref{51}--\ref{53}, we introduce the necessary objects.

Suppose that the assumptions of Subsections \ref{Subsection Operator A}--\ref{Subsection form q} are satisfied.
Let $Q_0$ be the matrix-valued function introduced in Subsection \ref{Subsection generalized resolvent}, and let
$c_5=(c_0+c_4)\Vert  Q_0^{-1}\Vert _{L_\infty}$.
In $L_2(\mathbb{R}^d;\mathbb{C}^n)$, consider the quadratic form
\begin{equation}
\label{forma b(e,theta)}
\begin{split}
b(\varepsilon ;\vartheta)[\mathbf{u},\mathbf{u}]&=\mathfrak{a}[\mathbf{u},\mathbf{u}]+2\vartheta \varepsilon \textnormal{Re}\,(\mathcal{Y}\mathbf{u},\mathcal{Y}_2\mathbf{u})_{L_2(\mathbb{R}^d)}\\
&+\vartheta ^2\varepsilon ^2 q[\mathbf{u},\mathbf{u}]+\vartheta ^2\varepsilon ^2 c_5(Q_0\mathbf{u},\mathbf{u})_{L_2(\mathbb{R}^d)},\quad
 \mathbf{u}\in H^1(\mathbb{R}^d;\mathbb{C}^n).
\end{split}
\end{equation}
Here $\varepsilon >0$ and $0 < \vartheta \leqslant 1$. \textit{We assume that $0 < \varepsilon \vartheta \leqslant 1$.}

Note that, by \eqref{b(eps)} and \eqref{forma b(e,theta)}, we have
$b(\varepsilon ;\vartheta)[\mathbf{u},\mathbf{u}]=\mathfrak{b}(\varepsilon \vartheta)[\mathbf{u},\mathbf{u}]+c_5\varepsilon ^2\vartheta ^2(Q_0\mathbf{u},\mathbf{u})_{L_2}$ for any $\mathbf{u}\in H^1(\mathbb{R}^d;\mathbb{C}^n)$.
Combining this with \eqref{1.17a}, \eqref{1.18a} and the formula for $c_5$,
we see that the form $b(\varepsilon ;\vartheta)$ is closed and nonnegative, and
\begin{equation}
\label{b(e,theta)>=}
\begin{split}
&c_* \Vert \mathbf{D} \mathbf{u}\Vert ^2_{L_2(\mathbb{R}^d)} \leqslant
b(\varepsilon ;\vartheta)[\mathbf{u},\mathbf{u}]\leqslant C_* \Vert \mathbf{D} \mathbf{u}\Vert ^2_{L_2(\mathbb{R}^d)}
\\
&+(C(1)+c_3+c_5\Vert Q_0\Vert _{L_\infty})\varepsilon ^2\Vert \mathbf{u}\Vert ^2_{L_2(\mathbb{R}^d)},
\quad \mathbf{u}\in H^1(\mathbb{R}^d;\mathbb{C}^n),\quad 0< \varepsilon \leqslant \vartheta^{-1}.
\end{split}
\end{equation}
By $B(\varepsilon ;\vartheta)$ we denote the selfadjoint operator in $L_2(\mathbb{R}^d;\mathbb{C}^n)$ generated by this form.
Formally, we have
\begin{equation}
\label{3.7a B(eps;theta)=}
\begin{split}
B(\varepsilon ;\vartheta )&=b(\mathbf{D})^*g(\mathbf{x})b(\mathbf{D})+\vartheta \varepsilon \sum \limits _{j=1}^d (a_j(\mathbf{x})D_j+D_j a_j(\mathbf{x})^*)
\\
&+\vartheta ^2\varepsilon ^2 Q(\mathbf{x})+\vartheta ^2\varepsilon ^2 c_5Q_0(\mathbf{x}).
\end{split}
\end{equation}

\subsection{The operator $B_\varepsilon (\vartheta)$}
\label{Subsection operator B_eps (vartheta)}

In $L_2(\mathbb{R}^d;\mathbb{C}^n)$, we consider the quadratic form
\begin{equation}
\label{b_e(theta)}
\begin{split}
b_\varepsilon (\vartheta)[\mathbf{u},\mathbf{u}]&=\mathfrak{a}_\varepsilon [\mathbf{u},\mathbf{u}]+2\vartheta \textnormal{Re}\,(\mathcal{Y}\mathbf{u},\mathcal{Y}_{2,\varepsilon}\mathbf{u})_{L_2(\mathbb{R}^d)}+\vartheta ^2 q_\varepsilon [\mathbf{u},\mathbf{u}]\\&+\vartheta ^2c_5(Q_0^\varepsilon \mathbf{u},\mathbf{u})_{L_2(\mathbb{R}^d)},\quad
 \mathbf{u}\in H^1(\mathbb{R}^d;\mathbb{C}^n),
\end{split}
\end{equation}
where $0< \vartheta \leqslant 1$ and $0< \varepsilon \leqslant \vartheta^{-1}$.
We have
\begin{equation}
\label{b_e(theta)=}
b_\varepsilon (\vartheta)[\mathbf{u},\mathbf{u}]=\varepsilon ^{-2}b(\varepsilon ;\vartheta)[T_\varepsilon\mathbf{u},T_\varepsilon \mathbf{u}],\quad \mathbf{u}\in H^1(\mathbb{R}^d;\mathbb{C}^n),
\end{equation}
where $T_\varepsilon$ is the scaling transformation given by \eqref{T_eps}.
Relations \eqref{b(e,theta)>=} and \eqref{b_e(theta)=} imply that the form \eqref{b_e(theta)}
is closed and nonnegative, and satisfies the estimates 
\begin{equation}
\label{3/10a b_eps(theta)>=}
\begin{split}
c_* \Vert \mathbf{D} \mathbf{u}\Vert ^2_{L_2(\mathbb{R}^d)} \leqslant
b_\varepsilon (\vartheta )[\mathbf{u},\mathbf{u}]\leqslant
c_6 \Vert \mathbf{u}\Vert ^2_{H^1(\mathbb{R}^d)},
\\
\quad
\mathbf{u}\in H^1(\mathbb{R}^d;\mathbb{C}^n),\quad 0< \varepsilon \leqslant \vartheta^{-1},
\end{split}
\end{equation}
where $c_6=\max \lbrace C_*; C(1)+c_3 +c_5\Vert Q_0\Vert _{L_\infty}\rbrace $.
Let $B_\varepsilon (\vartheta)$ be the selfadjoint operator in $L_2(\mathbb{R}^d;\mathbb{C}^n)$
corresponding to the form \eqref{b_e(theta)=}.

We will apply Theorem \ref{Theorem Su L2} to approximate
the generalized resolvent $(B_\varepsilon (\vartheta) + \lambda _0 Q_0^\varepsilon)^{-1}$, where
$\lambda_0$ is given by \eqref{lambda0} (see Remark \ref{Remark C1 C2 in Theorems by Su}).

\subsection{The operators $B^0(\vartheta)$ and $B^0(\varepsilon ;\vartheta)$}
\label{53}
Obviously (see \hbox{\eqref{Lambda problem}--\eqref{tilde g}}, \eqref{tildeLambda_problem}--\eqref{B^0}, and \eqref{B0}),
the effective operator for $B_\varepsilon (\vartheta)$ takes the form
\begin{equation*}
\begin{split}
B^0(\vartheta)&=\mathcal{A}^0+ \vartheta \biggl(-b(\mathbf{D})^*V-V^*b(\mathbf{D})+\sum _{j=1}^d(\overline{a_j+a_j^*})D_j\biggr)\\&+\vartheta ^2(\overline{Q}+c_5\overline{Q_0}-W).
\end{split}
\end{equation*}
According to \eqref{b^0[u,u]>=}, the quadratic form $b^0(\vartheta )$ of the operator $B^0(\vartheta )$ satisfies
\begin{equation}
\label{b^0(theta)>=}
b^0(\vartheta )[\mathbf{u},\mathbf{u}]\geqslant c_* \Vert \mathbf{D}\mathbf{u}\Vert ^2_{L_2(\mathbb{R}^d)},\quad \mathbf{u}\in H^1(\mathbb{R}^d;\mathbb{C}^n).
\end{equation}
This is equivalent to the following estimate for the symbol
$L_0(\boldsymbol{\xi} ;\vartheta)$ of the operator $B^0(\vartheta)$:
\begin{equation}
\label{L0>=}
L_0(\boldsymbol{\xi};\vartheta )\geqslant {c}_* \vert \boldsymbol{\xi}\vert ^2 \mathbf{1}_n,
\quad \boldsymbol{\xi}\in\mathbb{R}^d,\quad 0< \vartheta \leqslant 1.
\end{equation}
Hence, by \eqref{lambda0}, the symbol
\begin{equation*}
\begin{split}
L(\boldsymbol{\xi} ;\vartheta)&=b(\boldsymbol{\xi})^*g^0b(\boldsymbol{\xi})-\vartheta b(\boldsymbol{\xi})^*V-\vartheta V^*b(\boldsymbol{\xi})\\
&+\vartheta \sum \limits _{j=1}^d (\overline{a_j+a_j^*})\xi _j +\vartheta ^2(\overline{Q}+c_5\overline{Q_0})-\vartheta ^2W+\lambda_0 \overline{Q_0}
\end{split}
\end{equation*}
of the operator $B^0(\vartheta)+\lambda_0 \overline{Q_0}$ satisfies
\begin{equation}
\label{L(xi,theta)>=}
L(\boldsymbol{\xi};\vartheta )\geqslant \check{c}_*(\vert \boldsymbol{\xi}\vert ^2+1)\mathbf{1}_n,\quad \boldsymbol{\xi}\in\mathbb{R}^d,\quad 0< \vartheta \leqslant 1,\quad\check{c}_*=\min \lbrace c_*;2c_4\rbrace.
\end{equation}

Note that
\begin{equation*}
B^0(\vartheta)=\varepsilon ^{-2}T_\varepsilon ^*B^0(\varepsilon ;\vartheta)T_\varepsilon ,
\end{equation*}
where $B^0(\varepsilon ;\vartheta)$ is the selfadjoint operator in $L_2(\mathbb{R}^d;\mathbb{C}^n)$ given by
\begin{equation}
\label{B0(eps;theta)}
\begin{split}
B^0(\varepsilon ;\vartheta)&=\mathcal{A}^0+\varepsilon \vartheta
\biggl(-b(\mathbf{D})^*V-V^*b(\mathbf{D})+\sum _{j=1}^d(\overline{a_j+a_j^*})D_j\biggr)\\
&+\varepsilon ^2\vartheta ^2(\overline{Q}-W+c_5\overline{Q_0}).
\end{split}
\end{equation}

\subsection{The operators $\widetilde{B}_\varepsilon (\vartheta)$ and $\widetilde{B}^0(\vartheta)$}
\label{Subsection tile-operators(theta)}
We factorize the matrices $Q_0(\mathbf{x})$ and $\overline{Q_0}$ as follows:
\begin{equation}
\label{6.10aa}
Q_0(\mathbf{x})=(f(\mathbf{x})^*)^{-1}f(\mathbf{x})^{-1},
\end{equation}
$\overline{Q_0}=f_0^{-2}$. Note that
\begin{equation}
\label{f0_estimates}
\vert f_0\vert \leqslant \Vert f\Vert _{L_\infty}=\Vert Q_0 ^{-1}\Vert _{L_\infty}^{1/2},
 \quad
 \vert f_0^{-1}\vert \leqslant\Vert f^{-1}\Vert _{L_\infty}=\Vert Q_0\Vert ^{1/2}_{L_\infty}.
\end{equation}
Let $\widetilde{B}_\varepsilon (\vartheta)$ be the selfadjoint operator in $L_2(\mathbb{R}^d;\mathbb{C}^n)$ generated by
 the quadratic form
\begin{equation}
\label{6.10a}
\widetilde{b}_\varepsilon (\vartheta)[\mathbf{u},\mathbf{u}]:=b_\varepsilon (\vartheta)[f^\varepsilon \mathbf{u},f^\varepsilon \mathbf{u}]
\end{equation}
defined on the domain
$$\textnormal{Dom}\,\widetilde{b}_\varepsilon (\vartheta)=\lbrace \mathbf{u}\in L_2(\mathbb{R}^d;\mathbb{C}^n) : f^\varepsilon \mathbf{u}\in H^1(\mathbb{R}^d;\mathbb{C}^n)\rbrace.$$
Here $0< \vartheta \leqslant 1$ and $0< \varepsilon \leqslant \vartheta^{-1}$.
Since the operator $B_\varepsilon (\vartheta)$ is nonnegative, the operator $\widetilde{B}_\varepsilon (\vartheta)$ is also nonnegative.

Let $\widetilde{B}^0(\vartheta)=f_0B^0(\vartheta)f_0$.
 Note that
\begin{align}
\label{Res B eps and tilde B eps}
&(B_\varepsilon (\vartheta)-\zeta Q_0^\varepsilon)^{-1}=f^\varepsilon (\widetilde{B}_\varepsilon (\vartheta )-\zeta I )^{-1}(f^\varepsilon )^*,\quad \zeta \in \mathbb{C}\setminus \mathbb{R}_+,\\
\label{3/14a}
&(B^0(\vartheta )-\zeta \overline{Q_0})^{-1}=f_0(\widetilde{B}^0(\vartheta)-\zeta I)^{-1}f_0 ,\quad \zeta \in \mathbb{C}\setminus \mathbb{R}_+.
\end{align}

\subsection{Proof of Theorem \ref{Theorem main result L2}}
Applying Theorem~\ref{Theorem Su L2} to the operator~$B_\varepsilon (\vartheta )$ and taking Remark~\ref{Remark C1 C2 in Theorems by Su}
into account, we obtain
\begin{equation}
\label{Th9.2 for B(theta) 0<e<1}
\begin{split}
\Vert (B_\varepsilon (\vartheta )+\lambda_0 Q_0^\varepsilon )^{-1}-
(B^0(\vartheta)+\lambda_0 \overline{Q_0})^{-1}\Vert _{L_2(\mathbb{R}^d)\rightarrow L_2(\mathbb{R}^d)}\leqslant C_1\varepsilon ,
\\
0 < \vartheta \leqslant 1,\;0<\varepsilon \leqslant 1.
\end{split}
\end{equation}
Now we carry this estimate over to all $0 < \varepsilon \leqslant \vartheta ^{-1}$.
 By \eqref{Res B eps and tilde B eps},
\begin{equation}
\label{Res Be(theta)<}
\begin{split}
\Vert (B_\varepsilon (\vartheta)+\lambda_0 Q_0^\varepsilon)^{-1}\Vert _{L_2(\mathbb{R}^d)\rightarrow L_2(\mathbb{R}^d)}
&\leqslant \Vert f\Vert ^2 _{L_\infty}\Vert (\widetilde{B}_\varepsilon (\vartheta)+\lambda _0 I)^{-1}\Vert _{L_2(\mathbb{R}^d)\rightarrow L_2(\mathbb{R}^d)}\\
&\leqslant  \lambda_0^{-1}\Vert f\Vert ^2_{L_\infty}.
\end{split}
\end{equation}
Similarly, \eqref{3/14a} and \eqref{f0_estimates} imply that
\begin{equation}
\label{Res B0(theta)<}
\Vert (B^0 (\vartheta)+\lambda_0 \overline{Q_0})^{-1}\Vert _{L_2(\mathbb{R}^d)\rightarrow L_2(\mathbb{R}^d)}\leqslant
\lambda_0^{-1}\Vert f\Vert ^2_{L_\infty}.
\end{equation}
From \eqref{Res Be(theta)<} and \eqref{Res B0(theta)<} it follows that the left-hand side of \eqref{Th9.2 for B(theta) 0<e<1}
does not exceed $2 \lambda_0^{-1}\Vert f\Vert ^2_{L_\infty}\varepsilon$ for $1<\varepsilon \leqslant \vartheta^{-1}$.
Together with \eqref{Th9.2 for B(theta) 0<e<1} this implies
\begin{equation}
\label{Th9.2 for B(theta) lambda 0}
\begin{split}
\Vert (B_\varepsilon (\vartheta )+\lambda_0 Q_0^\varepsilon )^{-1}-(B^0(\vartheta)+\lambda_0 \overline{Q_0})^{-1}\Vert _{L_2(\mathbb{R}^d)\rightarrow L_2(\mathbb{R}^d)}\leqslant \widehat{C}_1\varepsilon ,
\\
0< \vartheta \leqslant 1,\; 0 < \varepsilon \leqslant \vartheta^{-1},
\end{split}
\end{equation}
with $\widehat{C}_1 =\max\lbrace C_1;2\lambda_0^{-1}\Vert f\Vert ^2_{L_\infty}\rbrace $.

Now we obtain an analog of estimate \eqref{Th9.2 for B(theta) lambda 0} for
$(B_\varepsilon (\vartheta)-\widehat{\zeta} Q_0^\varepsilon )^{-1}$, where $\widehat{\zeta}=e^{i\phi}$ with $\phi \in (0,2\pi)$.
We rely on the identity
\begin{equation}
\label{tozd generalized resolvent}
\begin{split}
&(B_\varepsilon (\vartheta )-\widehat{\zeta} Q_0^\varepsilon )^{-1}-(B^0(\vartheta)-\widehat{\zeta} \overline{Q_0})^{-1}\\
&=(B_\varepsilon (\vartheta)-\widehat{\zeta} Q_0^\varepsilon )^{-1}(B_\varepsilon (\vartheta)+\lambda _0 Q_0^\varepsilon )\\
&\times\left( (B_\varepsilon (\vartheta)+\lambda _0 Q_0^\varepsilon )^{-1}-(B^0(\vartheta)+\lambda _0 \overline{Q_0})^{-1}\right)\\
&\times
(B^0(\vartheta)+\lambda _0 \overline{Q_0})(B^0(\vartheta)-\widehat{\zeta} \overline{Q_0})^{-1}\\
&+(\lambda _0 +\widehat{\zeta} )(B_\varepsilon (\vartheta)-\widehat{\zeta} Q_0^\varepsilon )^{-1}(Q^\varepsilon _0-\overline{Q_0})(B^0(\vartheta)-\widehat{\zeta} \overline{Q_0})^{-1}.
\end{split}
\end{equation}

By \eqref{Res B eps and tilde B eps}, we have
\begin{equation}
\label{6.17a}
\begin{split}
\Vert &(B_\varepsilon (\vartheta)-\widehat{\zeta} Q_0^\varepsilon)^{-1}(B_\varepsilon (\vartheta)+\lambda_0 Q_0^\varepsilon )\Vert _{L_2(\mathbb{R}^d)\rightarrow L_2(\mathbb{R}^d)}\\
&=\Vert f^\varepsilon (\widetilde{B}_\varepsilon (\vartheta)-\widehat{\zeta} I)^{-1}(\widetilde{B}_\varepsilon (\vartheta) +\lambda_0 I)(f^\varepsilon )^{-1}\Vert _{L_2(\mathbb{R}^d)\rightarrow L_2(\mathbb{R}^d)}\\
&\leqslant
\Vert f\Vert _{L_\infty}\Vert f^{-1}\Vert _{L_\infty} \Vert (\widetilde{B}_\varepsilon (\vartheta)-\widehat{\zeta} I)^{-1}(\widetilde{B}_\varepsilon (\vartheta) +\lambda_0 I) \Vert _{L_2(\mathbb{R}^d)\rightarrow L_2(\mathbb{R}^d)}.
\end{split}
\end{equation}
Since $\widetilde{B}_\varepsilon(\vartheta) \ge 0$, then
\begin{equation}
\label{6.17b}
\begin{split}
\Vert& (\widetilde{B}_\varepsilon (\vartheta)-\widehat{\zeta} I)^{-1}(\widetilde{B}_\varepsilon (\vartheta) +\lambda_0 I) \Vert _{L_2(\mathbb{R}^d)\rightarrow L_2(\mathbb{R}^d)}
\leqslant\sup _{\nu \geqslant 0}\frac{\nu +\lambda_0}{\vert \nu -\widehat{\zeta}\vert}\\
&\leqslant \sup _{\nu \geqslant 0}\frac{\nu +\lambda_0}{\nu+1} \sup _{\nu \geqslant 0}\frac{\nu +1}{\vert \nu -\widehat{\zeta}\vert}
\leqslant 2(1+\lambda_0)c(\phi).
\end{split}
\end{equation}
Here $c(\phi)$ is given by \eqref{c(phi)}. Relations \eqref{6.17a} and \eqref{6.17b} imply that
\begin{equation}
\label{tozd levaja obveska}
\begin{split}
\Vert &(B_\varepsilon (\vartheta)-\widehat{\zeta} Q_0^\varepsilon)^{-1}(B_\varepsilon (\vartheta)+\lambda_0 Q_0^\varepsilon )\Vert _{L_2(\mathbb{R}^d)\rightarrow L_2(\mathbb{R}^d)}\\
&\leqslant 2(1+\lambda_0)\Vert f\Vert _{L_\infty}\Vert f^{-1}\Vert _{L_\infty}c(\phi).
\end{split}
\end{equation}
Similarly,
\begin{equation}
\label{tozd prav obveska}
\begin{split}
\Vert &(B^0(\vartheta)+\lambda_0 \overline{Q_0})(B^0(\vartheta)-\widehat{\zeta} \overline{Q_0})^{-1}\Vert _{L_2(\mathbb{R}^d)\rightarrow L_2(\mathbb{R}^d)}\\
&\leqslant
2(1+\lambda_0)\Vert f\Vert _{L_\infty}\Vert f^{-1}\Vert _{L_\infty}c(\phi).
\end{split}
\end{equation}

Let us estimate the norm of the second term in the right-hand side of~\eqref{tozd generalized resolvent}:
\begin{equation}
\label{norm second summand in tozd}
\begin{split}
\Vert &(\widehat{\zeta} +\lambda_0)(B_\varepsilon (\vartheta)-\widehat{\zeta} Q_0^\varepsilon )^{-1}(Q^\varepsilon _0-\overline{Q_0})(B^0(\vartheta)-\widehat{\zeta} \overline{Q_0})^{-1}\Vert _{L_2(\mathbb{R}^d)\rightarrow L_2(\mathbb{R}^d)}\\
&\leqslant (1+\lambda_0)\Vert (B_\varepsilon (\vartheta)-\widehat{\zeta} Q_0^\varepsilon )^{-1}\Vert _{H^{-1}(\mathbb{R}^d)\rightarrow L_2(\mathbb{R}^d)}\\
&\times\Vert [Q_0^\varepsilon -\overline{Q_0}]\Vert _{H^1(\mathbb{R}^d)\rightarrow H^{-1}(\mathbb{R}^d)}\Vert (B^0(\vartheta)-\widehat{\zeta} \overline{Q_0})^{-1}\Vert _{L_2(\mathbb{R}^d)\rightarrow H^{1}(\mathbb{R}^d)}.
\end{split}
\end{equation}

By the duality argument, we have
\begin{equation}
\label{dvoystvennost'}
\begin{split}
\Vert (B_\varepsilon (\vartheta)-\widehat{\zeta} Q_0^\varepsilon )^{-1}\Vert _{H^{-1}(\mathbb{R}^d)\rightarrow L_2(\mathbb{R}^d)}
=\Vert( B_\varepsilon (\vartheta)-\widehat{\zeta}^*Q_0^\varepsilon )^{-1}\Vert _{L_2(\mathbb{R}^d)\rightarrow H^1(\mathbb{R}^d)}.
\end{split}
\end{equation}
By \eqref{Res B eps and tilde B eps},
\begin{equation}
\label{Gen res B_eps L_2->L_2}
\begin{split}
\Vert ( B_\varepsilon (\vartheta)-\widehat{\zeta}^*Q_0^\varepsilon )^{-1}\Vert _{L_2(\mathbb{R}^d)\rightarrow L_2(\mathbb{R}^d)}
&\leqslant \Vert f\Vert ^2_{L_\infty}\Vert (\widetilde{B}_\varepsilon (\vartheta)-\widehat{\zeta}^* I)^{-1}\Vert _{L_2(\mathbb{R}^d)\rightarrow L_2(\mathbb{R}^d)}\\
&\leqslant \Vert f\Vert ^2_{L_\infty}c(\phi).
\end{split}
\end{equation}
Next, the lower estimate \eqref{3/10a b_eps(theta)>=}, \eqref{6.10a}, and \eqref{Res B eps and tilde B eps} imply that
\begin{equation}
\label{6.22a}
\begin{split}
\Vert & \mathbf{D}( B_\varepsilon (\vartheta)-\widehat{\zeta}^*Q_0^\varepsilon )^{-1}\Vert _{L_2(\mathbb{R}^d)\rightarrow L_2(\mathbb{R}^d)}\\
&\leqslant c_*^{-1/2}\Vert B_\varepsilon (\vartheta )^{1/2}( B_\varepsilon (\vartheta)-\widehat{\zeta}^*Q_0^\varepsilon )^{-1}\Vert _{L_2(\mathbb{R}^d)\rightarrow L_2(\mathbb{R}^d)}\\
&=
c_*^{-1/2} \Vert \widetilde{B}_\varepsilon (\vartheta )^{1/2}(\widetilde{B}_\varepsilon (\vartheta )-\widehat{\zeta}^*I)^{-1}(f^\varepsilon )^*\Vert _{L_2(\mathbb{R}^d)\rightarrow L_2(\mathbb{R}^d)}\\
&\leqslant c_*^{-1/2} \Vert f\Vert _{L_\infty}\sup _{\nu \geqslant 0}\frac{\nu ^{1/2}}{\vert \nu-\widehat{\zeta}^*\vert }
\leqslant c_*^{-1/2}\Vert f\Vert _{L_\infty}c(\phi).
\end{split}
\end{equation}
Combining this with \eqref{dvoystvennost'} and \eqref{Gen res B_eps L_2->L_2}, we see that \begin{equation}
\label{Beps+Q_0 H-1 ->L2}
\Vert (B_\varepsilon (\vartheta)-\widehat{\zeta} Q_0^\varepsilon )^{-1}\Vert _{H^{-1}(\mathbb{R}^d)\rightarrow L_2(\mathbb{R}^d)}\leqslant \mathfrak{C}_1c(\phi),
\end{equation}
where $\mathfrak{C}_1:=\Vert f\Vert ^2_{L_\infty}+c_*^{-1/2} \Vert f\Vert _{L_\infty}$.

By analogy with \eqref{Gen res B_eps L_2->L_2} and \eqref{6.22a},
we estimate the $(L_2\rightarrow H^1)$-norm of the operator $(B^0(\vartheta)-\widehat{\zeta}\overline{Q_0})^{-1}$,
using \eqref{b^0(theta)>=}, \eqref{f0_estimates}, and \eqref{3/14a}. This yields 
\begin{equation}
\label{B0+Q0 L2->H1}
\begin{split}
\Vert (B^0(\vartheta )-\widehat{\zeta}\overline{Q_0})^{-1}\Vert _{L_2(\mathbb{R}^d)\rightarrow H^1(\mathbb{R}^d)}
\leqslant \mathfrak{C}_1 c(\phi) .
\end{split}
\end{equation}

Relations \eqref{lemma Q 0 eps -Q}, \eqref{norm second summand in tozd}, \eqref{Beps+Q_0 H-1 ->L2}, and \eqref{B0+Q0 L2->H1} imply that
\begin{equation*}
\Vert (\widehat{\zeta} +\lambda_0)(B_\varepsilon (\vartheta)-\widehat{\zeta} Q_0^\varepsilon )^{-1}(Q^\varepsilon _0-\overline{Q_0})(B^0(\vartheta)-\widehat{\zeta} \overline{Q_0})^{-1}\Vert _{L_2(\mathbb{R}^d)\rightarrow L_2(\mathbb{R}^d)}\leqslant \mathfrak{C}_2\varepsilon c(\phi)^2 ,
\end{equation*}
where $\mathfrak{C}_2=(1+\lambda_0)C_{Q_0}\mathfrak{C}_1^2$.
Combining this with \eqref{Th9.2 for B(theta) lambda 0}, \eqref{tozd generalized resolvent}, \eqref{tozd levaja obveska},
 and \eqref{tozd prav obveska}, we obtain
\begin{equation}
\label{Beps +Q0 eps - ... <= C eps}
\begin{split}
\Vert (B_\varepsilon (\vartheta )-\widehat{\zeta}Q_0^\varepsilon )^{-1}-(B^0(\vartheta )-\widehat{\zeta}\overline{Q_0})^{-1}\Vert _{L_2(\mathbb{R}^d)\rightarrow L_2(\mathbb{R}^d)}\leqslant C_4\varepsilon c(\phi)^2 ,\\
0 <  \vartheta \leqslant 1,\quad 0< \varepsilon \leqslant \vartheta^{-1},\quad \widehat{\zeta}=e^{i\phi},\quad \phi \in (0,2\pi),
\end{split}
\end{equation}
with $C_4=4(1+\lambda_0)^2\Vert f\Vert ^2_{L_\infty}\Vert f^{-1}\Vert ^2_{L_\infty}\widehat{C}_1+\mathfrak{C}_2$.

By the scaling transformation, from \eqref{Beps +Q0 eps - ... <= C eps} we deduce
\begin{equation}
\label{posle T_eps L2Rd}
\begin{split}
\Vert (B(\varepsilon ;\vartheta)-\widehat{\zeta} \varepsilon ^2 Q_0)^{-1}-(B^0(\varepsilon ;\vartheta)-\widehat{\zeta} \varepsilon ^2\overline{Q_0})^{-1}\Vert _{L_2(\mathbb{R}^d)\rightarrow L_2(\mathbb{R}^d)}
\leqslant  C_4 c(\phi )^2\varepsilon ^{-1}.
\end{split}
\end{equation}
In this estimate, we substitute $\widetilde{\varepsilon }\vert \zeta \vert ^{1/2}$ in place of $\varepsilon $, assuming that
$0<\widetilde{\varepsilon} \leqslant 1$, $\zeta =\vert \zeta \vert e^{i\phi }\in \mathbb{C}\setminus \mathbb{R}_+$, $\phi \in (0,2\pi)$,
$\vert \zeta \vert \geqslant 1$, and put $\vartheta = \vert \zeta \vert ^{-1/2}$.
Then the conditions $0<\vartheta \leqslant 1$ and $0<\varepsilon \vartheta \leqslant 1$ are satisfied.
We have (see \eqref{3.7a B(eps;theta)=} and \eqref{B0(eps;theta)})
\begin{align*}
&B(\widetilde{\varepsilon }\vert \zeta \vert ^{1/2};\vert \zeta \vert ^{-1/2})=B(\widetilde{\varepsilon } ;1),\\
 &B^0(\widetilde{\varepsilon }\vert \zeta \vert ^{1/2};\vert \zeta \vert ^{-1/2} )=B^0(\widetilde{\varepsilon };1).
\end{align*}
Therefore, \eqref{posle T_eps L2Rd} implies that
\begin{equation*}
\begin{split}
\Vert &(B(\widetilde{\varepsilon };1)-\zeta \widetilde{\varepsilon }^{\,2} Q_0)^{-1}-(B^0(\widetilde{\varepsilon } ;1)-
\zeta \widetilde{\varepsilon}^{\,2} \overline{Q_0})^{-1}\Vert _{L_2(\mathbb{R}^d)\rightarrow L_2(\mathbb{R}^d)}\\
&\leqslant  C_4 c(\phi)^2\widetilde{\varepsilon} ^{\,-1}\vert \zeta \vert ^{-1/2},\quad 0<\widetilde{\varepsilon } \leqslant 1,\quad \zeta =\vert \zeta \vert e^{i\phi},\quad \vert \zeta \vert \geqslant 1,\quad 0<\phi <2\pi.
\end{split}
\end{equation*}
Renaming $\varepsilon :=\widetilde{\varepsilon}$
and applying the inverse scaling transformation, we arrive at \eqref{Th L2 principal part}.
This completes the proof of Theorem~\ref{Theorem main result L2}.

 \qed

\section{Proof of Theorems \ref{Theorem main result H1} and \ref{Theorem main result fluxes}}
\label{Section Proof of main result in H1}

\subsection{The operator $\mathcal{K}(\varepsilon ;\vartheta)$}
As above, assume that $0< \vartheta \leqslant 1$ and \hbox{$0< \varepsilon \leqslant \vartheta^{-1}$}.
Consider the generalized resolvent $(B_\varepsilon (\vartheta)+\lambda _0 Q_0^\varepsilon )^{-1}$.
Taking into account the form of the problems for $\Lambda$ and $\widetilde{\Lambda}$
(see \eqref{Lambda problem} and \eqref{tildeLambda_problem}),
we see that the analog of the corrector~\eqref{tilde K(eps)} for the generalized resolvent
 $(B_\varepsilon (\vartheta)+\lambda _0 Q_0^\varepsilon )^{-1}$ takes the form
\begin{equation}
\label{mathscr K(eps;theta)}
\begin{split}
\mathcal{K}(\varepsilon ;\vartheta)
=\left([\Lambda ^\varepsilon ]b(\mathbf{D})+\vartheta [\widetilde{\Lambda}^\varepsilon ]\right)S_\varepsilon (B^0(\vartheta)+\lambda_0\overline{Q_0})^{-1}
.
\end{split}
\end{equation}
The operator $\mathcal{K}(\varepsilon ;\vartheta)$ is a continuous mapping of $L_2(\mathbb{R}^d;\mathbb{C}^n)$ to
$H^1(\mathbb{R}^d;\mathbb{C}^n)$. This follows from the next lemma.

\begin{lemma}
\label{Lemma K(eps,theta)}
Let $\mathcal{K}(\varepsilon ;\vartheta)$ be the operator given by \eqref{mathscr K(eps;theta)}.
Then for $0 < \vartheta \leqslant 1$ and $\varepsilon >0$ the operator $\mathcal{K}(\varepsilon ;\vartheta)$
is a continuous mapping of $L_2(\mathbb{R}^d;\mathbb{C}^n)$ to $H^1(\mathbb{R}^d;\mathbb{C}^n)$, and
\begin{align}
\label{K<= L_2 Lm}
&\Vert \mathcal{K}(\varepsilon ;\vartheta)\Vert _{L_2(\mathbb{R}^d)\rightarrow L_2(\mathbb{R}^d)}\leqslant C_K^{(1)},\\
\label{epsDK<= L_2 Lm}
&\Vert \varepsilon \mathbf{D}\mathcal{K}(\varepsilon ;\vartheta)\Vert _{L_2(\mathbb{R}^d)\rightarrow L_2(\mathbb{R}^d)}\leqslant C_K^{(2)}\varepsilon +C_K^{(3)}.
\end{align}
The constants $C_K^{(1)}$, $C_K^{(2)}$, and $C_K^{(3)}$ depend only on the initial data \eqref{problem data}.
\end{lemma}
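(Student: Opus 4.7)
The plan is to split
\[
\mathcal{K}(\varepsilon;\vartheta) = \mathcal{K}_1 + \mathcal{K}_2,
\qquad
\mathcal{K}_1 := [\Lambda^\varepsilon] b(\mathbf{D}) S_\varepsilon R_\vartheta,
\quad
\mathcal{K}_2 := \vartheta\,[\widetilde{\Lambda}^\varepsilon] S_\varepsilon R_\vartheta,
\]
where $R_\vartheta := (B^0(\vartheta) + \lambda_0 \overline{Q_0})^{-1}$, and to estimate each piece separately in the $(L_2\to L_2)$- and derivative norms. The ingredient that makes every subsequent bound uniform in $\vartheta$ is the symbol estimate~\eqref{L(xi,theta)>=}: as in~\eqref{B_0 res L2 ->H2}, it yields $\|R_\vartheta\|_{L_2(\mathbb{R}^d)\to H^s(\mathbb{R}^d)} \leq \check{c}_*^{-1}$ for $s=1,2$, and $\check{c}_*$ does not depend on $\vartheta$.

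For~\eqref{K<= L_2 Lm} I would use the commutation $b(\mathbf{D}) S_\varepsilon = S_\varepsilon b(\mathbf{D})$ to rewrite $\mathcal{K}_1 = [\Lambda^\varepsilon] S_\varepsilon b(\mathbf{D}) R_\vartheta$. Proposition~\ref{Proposition S_eps L2->L2} combined with~\eqref{Lambda <=} gives $\|[\Lambda^\varepsilon] S_\varepsilon\|_{L_2\to L_2} \leq M_1$, while~\eqref{<b^*b<} together with the $H^1$-bound on $R_\vartheta$ controls $b(\mathbf{D}) R_\vartheta$ on $L_2$. The analogous scheme, with~\eqref{tildeLambda <=} in place of~\eqref{Lambda <=} and the prefactor $\vartheta \leq 1$ absorbed into the constant, handles $\mathcal{K}_2$.

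For~\eqref{epsDK<= L_2 Lm} the decisive observation is the Leibniz identity
\[
\varepsilon D_j \bigl([f^\varepsilon] \mathbf{v}\bigr)
= [(D_j f)^\varepsilon] \mathbf{v}
+ \varepsilon [f^\varepsilon] D_j \mathbf{v},
\]
which shows that the factor $\varepsilon^{-1}$ produced by differentiating the rapidly oscillating $[\Lambda^\varepsilon]$ or $[\widetilde{\Lambda}^\varepsilon]$ is cancelled exactly by the prefactor~$\varepsilon$. Applying this to each of $\varepsilon \mathbf{D}\mathcal{K}_1$ and $\varepsilon\mathbf{D}\mathcal{K}_2$ (and commuting $S_\varepsilon$ past $b(\mathbf{D})$ and $D_j$ as before), one arrives at a sum in which every summand is of one of two types: either a term in which $[\Lambda^\varepsilon]$, $[\widetilde{\Lambda}^\varepsilon]$ is replaced by $[(D_j\Lambda)^\varepsilon]$, $[(D_j\widetilde{\Lambda})^\varepsilon]$, estimated via Proposition~\ref{Proposition S_eps L2->L2} together with~\eqref{DLambda<=} and~\eqref{DtildeLambda<=} and the $L_2\to H^1$-bound on $R_\vartheta$ (contributing to $C_K^{(3)}$); or a term carrying an explicit factor~$\varepsilon$, estimated via~\eqref{Lambda <=},~\eqref{tildeLambda <=} and the $L_2\to H^2$-bound on $R_\vartheta$ (contributing to $C_K^{(2)}\varepsilon$).

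I do not anticipate any real obstacle: the proof is essentially bookkeeping. The only delicate point is uniformity of the constants in $\vartheta$, but this is automatic, since the symbol lower bound~\eqref{L(xi,theta)>=} is $\vartheta$-independent, the norms controlled in~\eqref{Lambda <=}--\eqref{DLambda<=} and~\eqref{tildeLambda <=}--\eqref{DtildeLambda<=} involve only the initial data~\eqref{problem data}, and the factor $\vartheta$ multiplying $[\widetilde{\Lambda}^\varepsilon]$ is bounded by~$1$.
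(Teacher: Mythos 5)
Your proposal follows the paper's own argument almost step for step: the same decomposition into the $\Lambda$- and $\widetilde{\Lambda}$-pieces, commuting $b(\mathbf{D})$ past $S_\varepsilon$, Proposition~\ref{Proposition S_eps L2->L2} together with \eqref{Lambda <=}, \eqref{DLambda<=}, \eqref{tildeLambda <=}, \eqref{DtildeLambda<=} for the rapidly oscillating factors, the $\vartheta$-uniform symbol bound coming from \eqref{L(xi,theta)>=} (and, for the paper, \eqref{L0>=}) for the effective resolvent, the Leibniz identity to cancel the $\varepsilon^{-1}$ coming from differentiating $[\Lambda^\varepsilon]$, $[\widetilde{\Lambda}^\varepsilon]$, and absorbing $\vartheta\leq 1$. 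The only cosmetic difference is that for some factors you invoke $\|R_\vartheta\|_{L_2\to H^s}\leq\check{c}_*^{-1}$ where the paper uses the slightly sharper $\|\mathbf{D} R_\vartheta\|\leq(2\check{c}_*)^{-1}$, $\|b(\mathbf{D})\mathbf{D}R_\vartheta\|\leq\alpha_1^{1/2}c_*^{-1}$, and $\|R_\vartheta\|\leq\lambda_0^{-1}\|f\|_{L_\infty}^2$; this changes constants but not the structure or validity of the argument.
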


\begin{proof}
First, we estimate the $(L_2\rightarrow L_2)$-norm of the corrector:
\begin{equation}
\label{K(e,lambda0)<= in L_2}
\begin{split}
\Vert &\mathcal{K}(\varepsilon ;\vartheta)\Vert _{L_2(\mathbb{R}^d)\rightarrow L_2(\mathbb{R}^d)}\\
&\leqslant
\Vert [\Lambda ^\varepsilon ]S _\varepsilon \Vert _{L_2(\mathbb{R}^d)\rightarrow L_2(\mathbb{R}^d)}
\Vert b(\mathbf{D})(B^0(\vartheta) +\lambda_0 \overline{Q_0})^{-1}\Vert _{L_2(\mathbb{R}^d)\rightarrow L_2(\mathbb{R}^d)}\\
&+\Vert [\widetilde{\Lambda}^\varepsilon ]S _\varepsilon \Vert _{L_2(\mathbb{R}^d)\rightarrow L_2(\mathbb{R}^d)}\Vert (B^0(\vartheta)+\lambda_0 \overline{Q_0})^{-1}\Vert _{L_2(\mathbb{R}^d)\rightarrow L_2(\mathbb{R}^d)}.
\end{split}
\end{equation}
Proposition \ref{Proposition S_eps L2->L2} and \eqref{Lambda <=} imply that
\begin{align}
\label{Lambda S_eps <=}
\Vert [\Lambda ^\varepsilon ]S _\varepsilon \Vert _{L_2(\mathbb{R}^d)\rightarrow L_2(\mathbb{R}^d)}
\leqslant M_1.
\end{align}
According to \eqref{<b^*b<},
\begin{equation}
\label{7.6.star}
\begin{split}
\Vert & b(\mathbf{D})(B^0(\vartheta) +\lambda_0 \overline{Q_0})^{-1}\Vert _{L_2(\mathbb{R}^d)\rightarrow L_2(\mathbb{R}^d)}\\
&\leqslant \alpha _1^{1/2}\Vert \mathbf{D}(B^0(\vartheta) +\lambda_0\overline{Q_0})^{-1}\Vert _{L_2(\mathbb{R}^d)\rightarrow L_2(\mathbb{R}^d)}.
\end{split}
\end{equation}
Since the symbol of the operator $B^0(\vartheta)+\lambda_0 \overline{Q_0}$ satisfies \eqref{L(xi,theta)>=}, we have
\begin{equation}
\label{DRes}
\Vert \mathbf{D}(B^0(\vartheta)+\lambda_0 \overline{Q_0})^{-1}\Vert _{L_2(\mathbb{R}^d)\rightarrow L_2(\mathbb{R}^d)}\leqslant\check{c}_*^{-1}\sup \limits _{\boldsymbol{\xi}\in \mathbb{R}^d}\frac{\vert \boldsymbol{\xi}\vert}{\vert \boldsymbol{\xi}\vert ^2+1}
 \leqslant (2\check{c}_*)^{-1}.
\end{equation}
Relations \eqref{tildeLambda S_eps}, \eqref{Res B0(theta)<}, and \eqref{K(e,lambda0)<= in L_2}--\eqref{DRes} imply \eqref{K<= L_2 Lm} 
with the constant $C_K^{(1)}=\alpha _1^{1/2}(2\check{c}_*)^{-1}M_1+
\lambda_0^{-1}\widetilde{M}_1\Vert f\Vert ^2_{L_\infty}$.

Now we check \eqref{epsDK<= L_2 Lm}. Obviously,
\begin{equation*}
\begin{split}
\varepsilon D_j \mathcal{K}(\varepsilon ;\vartheta)&=[(D_j\Lambda )^\varepsilon ]S _\varepsilon b(\mathbf{D})(B^0(\vartheta)
+\lambda_0\overline{Q_0})^{-1}\\
&+\varepsilon [\Lambda ^\varepsilon ]S _\varepsilon b(\mathbf{D})D_j(B^0(\vartheta)+\lambda_0\overline{Q_0})^{-1}\\
&+\vartheta [(D_j\widetilde{\Lambda })^\varepsilon ]S _\varepsilon (B^0(\vartheta)+\lambda_0\overline{Q_0})^{-1}\\
&+\varepsilon\vartheta [\widetilde{\Lambda}^\varepsilon]S _\varepsilon D_j(B^0(\vartheta)+\lambda_0 \overline{Q_0})^{-1}.
\end{split}
\end{equation*}
Hence,
\begin{equation}
\label{eps DK ^2 <=}
\begin{split}
\Vert &\varepsilon \mathbf{D}\mathcal{K}(\varepsilon ;\vartheta)\Vert ^2 _{L_2(\mathbb{R}^d)\rightarrow L_2(\mathbb{R}^d)}\\
&\leqslant
4 \Vert [(\mathbf{D}\Lambda )^\varepsilon ]S _\varepsilon b(\mathbf{D})(B^0(\vartheta)+\lambda_0\overline{Q_0})^{-1}\Vert ^2 _{L_2(\mathbb{R}^d)\rightarrow L_2(\mathbb{R}^d)}\\
&+4\Vert \varepsilon [\Lambda ^\varepsilon ]S _\varepsilon b(\mathbf{D})\mathbf{D}(B^0(\vartheta)+\lambda_0\overline{Q_0})^{-1}\Vert ^2 _{L_2(\mathbb{R}^d)\rightarrow L_2(\mathbb{R}^d)}\\
&+4\Vert [(\mathbf{D}\widetilde{\Lambda})^\varepsilon ]S _\varepsilon (B^0(\vartheta)+\lambda_0\overline{Q_0})^{-1}\Vert ^2 _{L_2(\mathbb{R}^d)\rightarrow L_2(\mathbb{R}^d)}\\
&+4\Vert \varepsilon [\widetilde{\Lambda}^\varepsilon ]S _\varepsilon \mathbf{D}(B^0(\vartheta)+\lambda_0\overline{Q_0})^{-1}\Vert ^2 _{L_2(\mathbb{R}^d)\rightarrow L_2(\mathbb{R}^d)}.
\end{split}
\end{equation}
Applying Proposition \ref{Proposition S_eps L2->L2} and \eqref{DLambda<=}, \eqref{DtildeLambda<=}, we obtain \begin{align}
\label{DLambda S eps<=}
&\Vert [(\mathbf{D}\Lambda )^\varepsilon] S _\varepsilon \Vert _{L_2(\mathbb{R}^d)\rightarrow L_2(\mathbb{R}^d)}
\leqslant M_2,\\
\label{DtildeLambda S eps<=}
&\Vert [(\mathbf{D}\widetilde{\Lambda})^\varepsilon] S _\varepsilon \Vert _{L_2(\mathbb{R}^d)\rightarrow L_2(\mathbb{R}^d)} \leqslant \vert \Omega \vert ^{-1/2}C_a n^{1/2}\alpha _0^{-1}\Vert g^{-1}\Vert _{L_\infty}=:\widetilde{M}_2.
\end{align}
By \eqref{<b^*b<} and \eqref{L0>=},
\begin{equation}
\label{b(D)DRes <=}
\Vert b(\mathbf{D})\mathbf{D}(B^0(\vartheta)+\lambda_0 \overline{Q_0})^{-1}\Vert _{L_2(\mathbb{R}^d)\rightarrow L_2(\mathbb{R}^d)}
\leqslant \alpha _1^{1/2} {c}_*^{-1}.
\end{equation}
Combining \eqref{tildeLambda S_eps}, \eqref{Res B0(theta)<}, and \eqref{Lambda S_eps <=}--\eqref{b(D)DRes <=},
we arrive at \eqref{epsDK<= L_2 Lm} with
\begin{align*}
&C_K^{(2)}= (4\alpha _1 M_1^2 {c}_*^{-2} +\widetilde{M}_1^2 \check{c}_*^{-2})^{1/2},\\
&C_K^{(3)}=(\alpha _1 M_2^2\check{c}_*^{-2}+4\lambda _0^{-2}\Vert f\Vert ^4_{L_\infty}\widetilde{M}_2^2)^{1/2}.
\end{align*}
\end{proof}

\subsection{Proof of Theorem \ref{Theorem main result H1}}
Applying Theorem~\ref{Theorem S_eps z fixed} to $B_\varepsilon (\vartheta)$ and taking Remark \ref{Remark Constants C5}
into account, we obtain
\begin{equation}
\label{Th 3.4 for B eps (theta)}
\begin{split}
\Vert (B_\varepsilon (\vartheta)+\lambda _0 Q_0^\varepsilon )^{-1}-(B^0(\vartheta )+\lambda _0 \overline{Q_0})^{-1}-\varepsilon \mathcal{K}(\varepsilon ;\vartheta)\Vert _{L_2(\mathbb{R}^d)\rightarrow H^1(\mathbb{R}^d)}
\leqslant C_3\varepsilon ,\\
0 < \vartheta \leqslant 1,\quad 0<\varepsilon \leqslant 1.
\end{split}
\end{equation}
Now we carry this estimate over to all $0<\varepsilon \leqslant \vartheta ^{-1}$.
For $1<\varepsilon \leqslant \vartheta ^{-1}$ we use rather rough estimates.
By analogy with \eqref{Gen res B_eps L_2->L_2} and \eqref{6.22a}, we see that
\begin{equation}
\label{(B eps + lambda 1 Q) -1 L2 H1}
\Vert (B_\varepsilon (\vartheta )+\lambda _0 Q_0^\varepsilon )^{-1}\Vert _{L_2(\mathbb{R}^d)
\rightarrow H^1(\mathbb{R}^d)}\leqslant \mathfrak{C}_3,
\end{equation}
where $\mathfrak{C}_3=\lambda _0^{-1}\Vert f\Vert ^2_{L_\infty}+ \frac{1}{2} \lambda _0^{-1/2} c_*^{-1/2} \Vert f\Vert _{L_\infty}$.

Similarly, from \eqref{b^0(theta)>=} and \eqref{f0_estimates} it follows that 
\begin{equation}
\label{B0+lambda 1 Q0 L2->H1}
\Vert (B^0(\vartheta )+\lambda _0 \overline{Q_0})^{-1}\Vert _{L_2(\mathbb{R}^d)\rightarrow H^1(\mathbb{R}^d)}
\leqslant \mathfrak{C}_3.
\end{equation}

For $1< \varepsilon \leqslant \vartheta^{-1}$ we use Lemma~\ref{Lemma K(eps,theta)} and \eqref{(B eps + lambda 1 Q) -1 L2 H1},
\eqref{B0+lambda 1 Q0 L2->H1}, while for $0<\varepsilon \leqslant 1$ we apply \eqref{Th 3.4 for B eps (theta)}.
Then
\begin{equation}
\label{Th 3.4 for B eps (theta) eps >0}
\begin{split}
\Vert (B_\varepsilon (\vartheta )+\lambda _0 Q_0^\varepsilon )^{-1}-(B^0(\vartheta )+\lambda _0 \overline{Q_0})^{-1}-\varepsilon \mathcal{K}(\varepsilon ;\vartheta)\Vert _{L_2(\mathbb{R}^d)\rightarrow H^1(\mathbb{R}^d)}\leqslant \widehat{C}_3\varepsilon ,\\
0 < \vartheta \leqslant 1,\quad 0< \varepsilon \leqslant \vartheta^{-1},
\end{split}
\end{equation}
where $\widehat{C}_3=\max\lbrace C_3 ; 2 \mathfrak{C}_3 +C_K^{(1)}+C_K^{(2)}+C_K^{(3)}\rbrace $.

We put
\begin{equation}
\label{7.15a}
K(\varepsilon ;\vartheta ;\zeta):=\left( [\Lambda ^\varepsilon ]b(\mathbf{D})+\vartheta [\widetilde{\Lambda }^\varepsilon ]\right) S_\varepsilon (B^0(\vartheta )-\zeta \overline{Q_0})^{-1},\quad \zeta \in \mathbb{C}\setminus \mathbb{R}_+.
\end{equation}
Note that $K(\varepsilon ;\vartheta ;-\lambda _0)=\mathcal{K}(\varepsilon ;\vartheta)$.

We prove an analog of estimate \eqref{Th 3.4 for B eps (theta) eps >0} for the operator
$(B_\varepsilon (\vartheta )-\widehat{\zeta} Q_0^\varepsilon )^{-1}$,  where $\widehat{\zeta}=e^{i\phi}$ with $\phi \in  (0,2\pi)$,
with the help of the identity
\begin{equation}
\label{tozd corrector}
\begin{split}
(&B_\varepsilon (\vartheta)-\widehat{\zeta} Q_0^\varepsilon )^{-1}-(B^0(\vartheta )-\widehat{\zeta} \overline{Q_0})^{-1}-\varepsilon K(\varepsilon ;\vartheta ;\widehat{\zeta})\\
&=(B_\varepsilon (\vartheta )-\widehat{\zeta} Q_0^\varepsilon )^{-1}(B_\varepsilon (\vartheta )+\lambda _0 Q_0^\varepsilon )\\
&\times \left( (B_\varepsilon (\vartheta )+\lambda_0 Q_0^\varepsilon )^{-1}-(B^0(\vartheta )+\lambda_0 \overline{Q_0})^{-1}-\varepsilon K(\varepsilon ;\vartheta ;-\lambda_0 )\right)\\
&\times (B^0(\vartheta )+\lambda_0 \overline{Q_0})(B^0(\vartheta )-\widehat{\zeta} \overline{Q_0})^{-1}\\
&+\varepsilon (\lambda _0 +\widehat{\zeta} )(B_\varepsilon (\vartheta)-\widehat{\zeta} Q_0^\varepsilon )^{-1}Q_0^\varepsilon K(\varepsilon ;\vartheta ;\widehat{\zeta} )\\
&+(\lambda_0 +\widehat{\zeta})(B_\varepsilon (\vartheta )-\widehat{\zeta} Q_0^\varepsilon )^{-1}(Q_0^\varepsilon -\overline{Q_0})(B^0(\vartheta )-\widehat{\zeta} \overline{Q_0})^{-1}.
\end{split}
\end{equation}
Denote the consecutive summands in the right-hand side by
${\mathcal J}_l(\varepsilon; \vartheta;\widehat{\zeta})$, \hbox{$l=1,2,3$}.
First, we estimate the $(L_2\rightarrow L_2)$-norm of each summand.
By \eqref{6.10aa}, \eqref{mathscr K(eps;theta)}, and \eqref{7.15a},
\begin{equation*}
\begin{split}
\Vert &Q_0^\varepsilon K(\varepsilon ;\vartheta ;\widehat{\zeta})\Vert _{L_2(\mathbb{R}^d)\rightarrow L_2(\mathbb{R}^d)}\leqslant \Vert  f^{-1}\Vert ^2_{L_\infty} \Vert \mathcal{K}(\varepsilon ;\vartheta )\Vert _{L_2(\mathbb{R}^d)\rightarrow L_2(\mathbb{R}^d)}\\
&\times
\Vert (B^0(\vartheta)+\lambda _0\overline{Q_0})(B^0(\vartheta )-\widehat{\zeta}\overline{Q_0})^{-1}\Vert _{L_2(\mathbb{R}^d)\rightarrow L_2(\mathbb{R}^d)}.
\end{split}
\end{equation*}
Together with \eqref{tozd prav obveska} and \eqref{K<= L_2 Lm} this yields
\begin{equation}
\label{7.17a}
\Vert Q_0^\varepsilon K(\varepsilon ;\vartheta ;\widehat{\zeta})\Vert _{L_2(\mathbb{R}^d)\rightarrow L_2(\mathbb{R}^d)}\leqslant 2(1+\lambda _0) C_K^{(1)} \Vert f\Vert _{L_\infty} \Vert f^{-1}\Vert ^3 _{L_\infty}c(\phi).
\end{equation}
Relations \eqref{Gen res B_eps L_2->L_2} and \eqref{7.17a} imply the following estimate for
the operator ${\mathcal J}_2(\varepsilon; \vartheta;\widehat{\zeta})$:
\begin{equation*}
\Vert {\mathcal J}_2(\varepsilon; \vartheta; \widehat{\zeta})
\Vert _{L_2(\mathbb{R}^d)\rightarrow L_2(\mathbb{R}^d)}
\leqslant 2\varepsilon (\lambda _0+1)^2 \Vert f\Vert ^3_{L_\infty}\Vert f^{-1}\Vert ^3 _{L_\infty}C_K^{(1)}c(\phi)^2.
\end{equation*}
To estimate the $(L_2\rightarrow L_2)$-norm of the operator ${\mathcal J}_1(\varepsilon;\vartheta; \widehat{\zeta})$, we
use \eqref{tozd levaja obveska}, \eqref{tozd prav obveska}, and \eqref{Th 3.4 for B eps (theta) eps >0};
to estimate the term ${\mathcal J}_3(\varepsilon; \vartheta;\widehat{\zeta})$, we apply \eqref{lemma Q 0 eps -Q},
\eqref{norm second summand in tozd}, \eqref{Beps+Q_0 H-1 ->L2}, and \eqref{B0+Q0 L2->H1}.
Then we arrive at
\begin{equation}
\label{<= tilde C7}
\begin{split}
\Vert &(B_\varepsilon (\vartheta )-\widehat{\zeta}Q_0^\varepsilon )^{-1}-(B^0(\vartheta )-\widehat{\zeta}\overline{Q_0})^{-1}-\varepsilon K(\varepsilon ;\vartheta ;\widehat{\zeta})\Vert _{L_2(\mathbb{R}^d)\rightarrow L_2(\mathbb{R}^d)}\\
&\leqslant C_5\varepsilon c(\phi)^2 ,\quad
 0 < \vartheta \leqslant 1,\quad 0 < \varepsilon  \leqslant \vartheta^{-1},\quad \widehat{\zeta}=e^{i\phi},\quad 0<\phi <2\pi .
\end{split}
\end{equation}
Here
\begin{equation*}
\begin{split}
C_5&=4(1+\lambda _0)^2\Vert f\Vert ^2_{L_\infty}\Vert f^{-1}\Vert ^2_{L_\infty}\widehat{C}_3+2(1+\lambda _0)^2\Vert f\Vert ^3_{L_\infty}\Vert f^{-1}\Vert ^3_{L_\infty}C_K^{(1)}\\
&+
(1+\lambda _0) C_{Q_0}\mathfrak{C}_1 ^2.
\end{split}
\end{equation*}

Now, we apply the operator $B_\varepsilon (\vartheta )^{1/2}$ to both sides of \eqref{tozd corrector}:
\begin{equation}
\label{6.20a}
\begin{split}
&B_\varepsilon (\vartheta)^{1/2}
\left((B_\varepsilon (\vartheta)-\widehat{\zeta} Q_0^\varepsilon )^{-1}-(B^0(\vartheta )-\widehat{\zeta} \overline{Q_0})^{-1}-\varepsilon K(\varepsilon ;\vartheta ;\widehat{\zeta}) \right) \\
&=B_\varepsilon (\vartheta)^{1/2} {\mathcal J}_1(\varepsilon; \vartheta;\widehat{\zeta})
 + B_\varepsilon (\vartheta)^{1/2} {\mathcal J}_2(\varepsilon; \vartheta; \widehat{\zeta})
 + B_\varepsilon (\vartheta)^{1/2} {\mathcal J}_3(\varepsilon; \vartheta; \widehat{\zeta}),
 \end{split}
\end{equation}
and estimate the $(L_2\rightarrow L_2)$-norm of each term on the right.

By \eqref{6.10a}, for any $\mathbf{w}\in H^1(\mathbb{R}^d;\mathbb{C}^n)$ we have 
\begin{equation}
\label{rim 1}
\begin{split}
\Vert &B_\varepsilon (\vartheta )^{1/2}\mathbf{w}\Vert ^2_{L_2(\mathbb{R}^d)}
=b_\varepsilon (\vartheta )[\mathbf{w},\mathbf{w}]\\
&=\widetilde{b}_\varepsilon (\vartheta )[(f^\varepsilon )^{-1}\mathbf{w},(f^\varepsilon )^{-1}\mathbf{w}]
=\Vert \widetilde{B}_\varepsilon (\vartheta) ^{1/2}(f^\varepsilon )^{-1}\mathbf{w}\Vert ^2_{L_2(\mathbb{R}^d)}.
\end{split}
\end{equation}
Next, from \eqref{Res B eps and tilde B eps} it follows that
\begin{equation}
\label{rim 2}
(f^\varepsilon)^*(B_\varepsilon (\vartheta )+\lambda _0 Q_0^\varepsilon )
=(\widetilde{B}_\varepsilon (\vartheta )+\lambda _0 I)(f^\varepsilon)^{-1}.
\end{equation}
Using \eqref{Res B eps and tilde B eps}, \eqref{6.17b}, \eqref{rim 1}, and \eqref{rim 2}, we obtain
\begin{equation}
\label{1789}
\begin{split}
\Vert& B_\varepsilon (\vartheta )^{1/2}(B_\varepsilon (\vartheta )-\widehat{\zeta}Q_0^\varepsilon)^{-1}
(B_\varepsilon (\vartheta )+\lambda _0 Q_0^\varepsilon )\Vert _{H^1(\mathbb{R}^d)\rightarrow L_2(\mathbb{R}^d)}\\
&\leqslant \Vert (\widetilde{B}_\varepsilon (\vartheta )-\widehat{\zeta}I)^{-1}(\widetilde{B}_\varepsilon(\vartheta)+\lambda _0 I)\Vert _{L_2(\mathbb{R}^d)\rightarrow L_2(\mathbb{R}^d)}\\
&\times
\Vert \widetilde{B}_\varepsilon (\vartheta)^{1/2}(f^\varepsilon )^{-1}\Vert _{H^1(\mathbb{R}^d)\rightarrow L_2(\mathbb{R}^d)}\\
&\leqslant 2(\lambda _0 +1)c(\phi)\Vert B_\varepsilon (\vartheta )^{1/2}\Vert _{H^1(\mathbb{R}^d)\rightarrow L_2(\mathbb{R}^d)}.
\end{split}
\end{equation}
Combining the upper estimate \eqref{3/10a b_eps(theta)>=},
\eqref{tozd prav obveska}, \eqref{Th 3.4 for B eps (theta) eps >0}, and \eqref{1789}, we deduce the following
estimate for the first term in the right-hand side of~\eqref{6.20a}:
\begin{equation}
\label{6.23a}
\begin{split}
& \Vert B_\varepsilon (\vartheta )^{1/2} {\mathcal J}_1(\varepsilon; \vartheta;\widehat{\zeta}) \Vert_{L_2(\mathbb{R}^d)\rightarrow L_2(\mathbb{R}^d)} \leqslant \mathfrak{C}_4\varepsilon c(\phi )^2,\\
& 0 < \vartheta \leqslant 1,\quad 0< \varepsilon \leqslant \vartheta^{-1},\quad \widehat{\zeta}=e^{i\phi},\quad 0<\phi <2\pi,
\end{split}
\end{equation}
where
$\mathfrak{C}_4:= 4(1+\lambda _0)^2c_6^{1/2}\widehat{C}_3\Vert f\Vert _{L_\infty}\Vert f^{-1}\Vert _{L_\infty}$.

Next,
\begin{equation}
\label{6.24}
\begin{split}
 &\Vert B_\varepsilon (\vartheta )^{1/2} {\mathcal J}_2(\varepsilon; \vartheta;\widehat{\zeta}) \Vert_{L_2(\mathbb{R}^d)\rightarrow L_2(\mathbb{R}^d)}
 \\
 &\leqslant
\varepsilon (\lambda _0+1)\Vert B_\varepsilon (\vartheta )^{1/2}(B_\varepsilon (\vartheta )-\widehat{\zeta}Q_0^\varepsilon )^{-1}\Vert _{L_2\rightarrow L_2}
\Vert Q_0^\varepsilon K(\varepsilon ;\vartheta ;\widehat{\zeta})\Vert_{L_2\rightarrow L_2}.
\end{split}
\end{equation}
By \eqref{Res B eps and tilde B eps} and \eqref{rim 1},
\begin{equation}
\label{7.22b}
\begin{split}
\Vert & B_\varepsilon (\vartheta)^{1/2}(B_\varepsilon (\vartheta)-\widehat{\zeta}Q_0^\varepsilon )^{-1}\Vert _{L_2(\mathbb{R}^d)\rightarrow L_2(\mathbb{R}^d)}\\
&=\Vert \widetilde{B}_\varepsilon (\vartheta)^{1/2}(\widetilde{B}_\varepsilon (\vartheta )-\widehat{\zeta}I)^{-1} (f^\varepsilon )^*\Vert _{L_2(\mathbb{R}^d)\rightarrow L_2(\mathbb{R}^d)}.
\end{split}
\end{equation}
Obviously,
\begin{equation}
\label{7.22a}
\Vert \widetilde{B}_\varepsilon (\vartheta)^{1/2}(\widetilde{B}_\varepsilon (\vartheta )-\widehat{\zeta}I)^{-1} \Vert _{L_2(\mathbb{R}^d)\rightarrow L_2(\mathbb{R}^d)}
\leqslant \sup \limits _{\nu \geqslant 0}\frac{\nu^{1/2}}{\vert \nu -\widehat{\zeta}\vert}\leqslant c(\phi).
\end{equation}
Relations \eqref{7.22b} and \eqref{7.22a} imply that
\begin{equation}
\label{6.27}
\begin{split}
\Vert B_\varepsilon (\vartheta )^{1/2}(B_\varepsilon (\vartheta )-\widehat{\zeta}Q_0^\varepsilon )^{-1}\Vert _{L_2(\mathbb{R}^d)\rightarrow L_2(\mathbb{R}^d)}\leqslant \Vert f\Vert _{L_\infty}c(\phi).
\end{split}
\end{equation}
From \eqref{7.17a}, \eqref{6.24}, and \eqref{6.27} we deduce the following
estimate for the second term in the right-hand side of \eqref{6.20a}:
\begin{equation}
\label{6.27a}
\begin{split}
& \Vert B_\varepsilon (\vartheta )^{1/2} {\mathcal J}_2( \varepsilon;\vartheta; \widehat{\zeta}) \Vert_{L_2(\mathbb{R}^d)\rightarrow L_2(\mathbb{R}^d)} \leqslant \mathfrak{C}_5\varepsilon c(\phi )^2,\\
& 0 < \vartheta \leqslant 1,\quad 0< \varepsilon \leqslant \vartheta^{-1},\quad \widehat{\zeta}=e^{i\phi},\quad 0<\phi <2\pi,
\end{split}
\end{equation}
where
$\mathfrak{C}_5:= 2(1+\lambda _0)^2 C_K^{(1)} \Vert f\Vert _{L_\infty}^2 \Vert f^{-1}\Vert _{L_\infty}^3$.

Now we proceed to the third term in the right-hand side of \eqref{6.20a}. Obviously,
\begin{equation}
\label{6.27b}
\begin{split}
& \Vert B_\varepsilon (\vartheta )^{1/2} {\mathcal J}_3(\varepsilon;\vartheta; \widehat{\zeta}) \Vert_{L_2(\mathbb{R}^d)\rightarrow L_2(\mathbb{R}^d)}
\\
&\leqslant
(\lambda _0 +1)\Vert B_\varepsilon (\vartheta)^{1/2}(B_\varepsilon (\vartheta )-\widehat{\zeta }Q_0^\varepsilon )^{-1}\Vert _{H^{-1}(\mathbb{R}^d)\rightarrow L_2(\mathbb{R}^d)}\\
&\times
\Vert [Q_0^\varepsilon -\overline{Q_0}]\Vert _{H^1(\mathbb{R}^d)\rightarrow H^{-1}(\mathbb{R}^d)}\Vert (B^0(\vartheta )-\widehat{\zeta}\overline{Q_0})^{-1}\Vert _{L_2(\mathbb{R}^d)\rightarrow H^1(\mathbb{R}^d)}.
\end{split}
\end{equation}

Using \eqref{Res B eps and tilde B eps}, \eqref{rim 1}, and the duality arguments, we have
\begin{equation}
\label{rim 5}
\begin{split}
\Vert& B_\varepsilon (\vartheta )^{1/2}(B_\varepsilon (\vartheta )-\widehat{\zeta}Q_0^\varepsilon )^{-1}
\Vert _{H^{-1}(\mathbb{R}^d)\rightarrow L_2(\mathbb{R}^d)}
\\
&=\Vert \widetilde{B}_\varepsilon (\vartheta )^{1/2} (\widetilde{B}_\varepsilon (\vartheta )-\widehat{\zeta} I  )^{-1} (f^\varepsilon)^*
\Vert _{H^{-1}(\mathbb{R}^d)\rightarrow L_2(\mathbb{R}^d)}\\
&=
\Vert f^\varepsilon (\widetilde{B}_\varepsilon (\vartheta )-\widehat{\zeta} ^* I)^{-1}\widetilde{B}_\varepsilon (\vartheta )^{1/2}\Vert _{L_2(\mathbb{R}^d)\rightarrow H^1(\mathbb{R}^d)}\\
&=\Vert f^\varepsilon \widetilde{B}_\varepsilon (\vartheta )^{1/2}(\widetilde{B}_\varepsilon (\vartheta )-\widehat{\zeta} ^* I)^{-1}\Vert _{L_2(\mathbb{R}^d)\rightarrow H^1(\mathbb{R}^d)}.
\end{split}
\end{equation}
The lower estimate \eqref{3/10a b_eps(theta)>=} and \eqref{6.10a} imply that
\begin{equation}
\label{rim 6}
\begin{split}
\Vert &\mathbf{D}[f^\varepsilon ]\widetilde{B}_\varepsilon(\vartheta )^{1/2}(\widetilde{B}_\varepsilon (\vartheta )-\widehat{\zeta}^* I)^{-1}
\Vert _{L_2(\mathbb{R}^d) \rightarrow L_2(\mathbb{R}^d)}\\
&\leqslant c_*^{-1/2}
\Vert B_\varepsilon (\vartheta )^{1/2}f^\varepsilon \widetilde{B}_\varepsilon (\vartheta)^{1/2}(\widetilde{B}_\varepsilon (\vartheta )-\widehat{\zeta}^*I)^{-1}\Vert _{L_2(\mathbb{R}^d) \rightarrow L_2(\mathbb{R}^d)}\\
&= c_*^{-1/2}
\Vert \widetilde{B}_\varepsilon (\vartheta)(\widetilde{B}_\varepsilon (\vartheta )-\widehat{\zeta}^*I)^{-1}\Vert _{L_2(\mathbb{R}^d)\rightarrow L_2(\mathbb{R}^d)}.
\end{split}
\end{equation}
Obviously,
\begin{equation}
\label{7.25a}
\Vert \widetilde{B}_\varepsilon (\vartheta)(\widetilde{B}_\varepsilon (\vartheta )-\widehat{\zeta}^*I)^{-1}\Vert _{L_2(\mathbb{R}^d)\rightarrow L_2(\mathbb{R}^d)}
\leqslant \sup \limits _{\nu \geqslant 0}\frac{\nu}{\vert \nu -\widehat{\zeta}^*\vert}\leqslant c(\phi).
\end{equation}
From  \eqref{7.22a} at the point $\widehat{\zeta}^*$ and \eqref{rim 5}--\eqref{7.25a} it follows that
\begin{equation}
\label{rim 7}
\begin{split}
\Vert B_\varepsilon (\vartheta )^{1/2}(B_\varepsilon (\vartheta )-\widehat{\zeta}Q_0^\varepsilon )^{-1}\Vert _{H^{-1}(\mathbb{R}^d)\rightarrow L_2(\mathbb{R}^d)}\leqslant (\Vert f\Vert _{L_\infty}+ c_*^{-1/2})c(\phi).
\end{split}
\end{equation}
Inequalities  \eqref{lemma Q 0 eps -Q}, \eqref{B0+Q0 L2->H1}, \eqref{6.27b}, and \eqref{rim 7} imply that
\begin{equation}
\label{6.31a}
\begin{split}
& \Vert B_\varepsilon (\vartheta )^{1/2} {\mathcal J}_3(\vartheta; \varepsilon; \widehat{\zeta}) \Vert_{L_2(\mathbb{R}^d)\rightarrow L_2(\mathbb{R}^d)} \leqslant \mathfrak{C}_6\varepsilon c(\phi )^2,\\
& 0 < \vartheta \leqslant 1,\quad 0< \varepsilon \leqslant \vartheta^{-1},\quad \widehat{\zeta}=e^{i\phi},\quad 0<\phi <2\pi,
\end{split}
\end{equation}
where $\mathfrak{C}_6:= (1+\lambda _0)  (\Vert f\Vert _{L_\infty} + c_*^{-1/2}) C_{Q_0} {\mathfrak C}_1$.

Finally, relations \eqref{6.20a}, \eqref{6.23a}, \eqref{6.27a}, and \eqref{6.31a} lead to the estimate
\begin{equation}
\label{7.27}
\begin{split}
\Vert & B_\varepsilon (\vartheta )^{1/2}
\bigl(
(B_\varepsilon (\vartheta )-\widehat{\zeta}Q_0^\varepsilon )^{-1}-(B^0(\vartheta )-\widehat{\zeta}\overline{Q_0})^{-1}-\varepsilon K(\varepsilon ;\vartheta ;\widehat{\zeta})
\bigr)
\Vert _{L_2 \rightarrow L_2}\\
&\leqslant \widehat{C}_6\varepsilon c(\phi)^2,
\quad 0< \vartheta \leqslant 1,\quad 0< \varepsilon \leqslant \vartheta^{-1},\quad \widehat{\zeta}=e^{i\phi},\quad 0<\phi<2\pi ,
\end{split}
\end{equation}
with $\widehat{C}_6=\mathfrak{C}_4+ \mathfrak{C}_5 + \mathfrak{C}_6$.
By the lower estimate \eqref{3/10a b_eps(theta)>=}, from \eqref{7.27} it follows that
\begin{equation}
\label{7.28}
\begin{split}
\Vert &\mathbf{D}\bigl(
(B_\varepsilon (\vartheta )-\widehat{\zeta}Q_0^\varepsilon )^{-1}-(B^0(\vartheta )-\widehat{\zeta}\overline{Q_0})^{-1}-\varepsilon K(\varepsilon ;\vartheta ;\widehat{\zeta})
\bigr)
\Vert _{L_2(\mathbb{R}^d)\rightarrow L_2(\mathbb{R}^d)}
\\
&\leqslant C_6\varepsilon c(\phi)^2,\quad 0 < \vartheta \leqslant 1,\quad
0 < \varepsilon \leqslant \vartheta^{-1},\quad\widehat{\zeta}=e^{i\phi},\quad 0<\phi <2\pi ,
\end{split}
\end{equation}
where $C_6= c_*^{-1/2} \widehat{C}_6$.

Applying the scaling transformation, from \eqref{<= tilde C7} and \eqref{7.28} we deduce
\begin{align}
\label{7.29}
\begin{split}
\Vert &(B(\varepsilon ;\vartheta )-\widehat{\zeta}\varepsilon ^2Q_0)^{-1}-(B^0(\varepsilon ;\vartheta )-\widehat{\zeta}\varepsilon ^2 \overline{Q_0})^{-1}-\check{K}(\varepsilon ;\vartheta ;\widehat{\zeta})\Vert _{L_2\rightarrow L_2}\\
&\leqslant C_5\varepsilon ^{-1}c(\phi)^2,
\end{split}\\
\label{7.30}
\begin{split}
\Vert& \mathbf{D}
\bigl(
(B(\varepsilon ;\vartheta )-\widehat{\zeta}\varepsilon ^2Q_0)^{-1}-(B^0(\varepsilon ;\vartheta )-\widehat{\zeta}\varepsilon ^2 \overline{Q_0})^{-1}-\check{K}(\varepsilon ;\vartheta ;\widehat{\zeta})
\bigr)
\Vert _{L_2\rightarrow L_2}\\
&\leqslant C_6 c(\phi)^2,
\end{split}
\end{align}
where
\begin{equation*}
\check{K}(\varepsilon ;\vartheta ;\widehat{\zeta}):=\left([\Lambda ]b(\mathbf{D})+\varepsilon \vartheta [\widetilde{\Lambda}]\right)S_1(B^0(\varepsilon ;\vartheta )-\widehat{\zeta}\varepsilon ^2\overline{Q_0})^{-1}.
\end{equation*}
As in the proof of Theorem~\ref{Theorem main result L2}, 
in \eqref{7.29} and \eqref{7.30} we substitute $\widetilde{\varepsilon}\vert \zeta \vert ^{1/2}$ in place of $\varepsilon$,
assuming that~$0<\widetilde{\varepsilon}\leqslant 1$, $\zeta =\vert \zeta \vert e^{i\phi}$, $\vert \zeta \vert \geqslant 1$, and put
$\vartheta = \vert \zeta  \vert ^{-1/2}$. Next, we rename $\widetilde{\varepsilon} =: \varepsilon$ and apply the
inverse scaling transformation, taking into account that
\begin{equation*}
\varepsilon K(\varepsilon ;\zeta )=\varepsilon ^2T_\varepsilon ^*\check{K}(\varepsilon \vert \zeta \vert ^{1/2};\vert \zeta \vert ^{-1/2};
\widehat{\zeta} )T_\varepsilon,\quad \zeta =\vert \zeta \vert e^{i\phi}.
\end{equation*}
Here $T_\varepsilon$ is given by \eqref{T_eps},
and the operator $K(\varepsilon ;\zeta)$ is given by~\eqref{K(eps;zeta)}.
This leads to the required estimates \eqref{Th 3.2 L_2->L_2}, \eqref{Th 3.2 D L_2->L_2}, and completes the proof
 of Theorem~\ref{Theorem main result H1}.

\qed

\subsection{Proof of Theorem \ref{Theorem main result fluxes}}
\label{Subsection proof fluxes}
We deduce the statement of Theorem~\ref{Theorem main result fluxes} from Theorem~\ref{Theorem main result H1}.
From \eqref{Th 3.2 D L_2->L_2} and \eqref{<b^*b<} it follows that
\begin{equation}
\label{7.30a}
\begin{split}
\Vert &g^\varepsilon b(\mathbf{D})(B_\varepsilon -\zeta Q_0^\varepsilon )^{-1}\\
&-g^\varepsilon b(\mathbf{D})\left(I+\varepsilon \left( [\Lambda ^\varepsilon]b(\mathbf{D})+[\widetilde{\Lambda}^\varepsilon ]\right)S_\varepsilon\right)(B^0-\zeta \overline{Q_0})^{-1}\Vert _{L_2\rightarrow L_2}\\
&\leqslant \Vert g\Vert _{L_\infty}\alpha _1^{1/2}C_6c(\phi )^2\varepsilon .
\end{split}
\end{equation}
By \eqref{b(D)=},
\begin{equation}
\label{d-vu th 4/4 1}
\begin{split}
\varepsilon &
g^\varepsilon b(\mathbf{D})\left( [\Lambda ^\varepsilon ]b(\mathbf{D})+[\widetilde{\Lambda }^\varepsilon ]\right)S_\varepsilon (B^0-\zeta \overline{Q_0})^{-1}\\
&=g^\varepsilon (b(\mathbf{D})\Lambda )^\varepsilon S_\varepsilon b(\mathbf{D})(B^0-\zeta \overline{Q_0})^{-1}
+g^\varepsilon (b(\mathbf{D})\widetilde{\Lambda})^\varepsilon S_\varepsilon (B^0-\zeta \overline{Q_0})^{-1}\\
&+\varepsilon \sum _{l=1}^d g^\varepsilon b_l\left([\Lambda ^\varepsilon ]S_\varepsilon b(\mathbf{D})D_l +[\widetilde{\Lambda }^\varepsilon ]S_\varepsilon D_l\right) (B^0-\zeta \overline{Q_0})^{-1}.
\end{split}
\end{equation}

Relations \eqref{<b^*b<}, \eqref{bl<=}, \eqref{tildeLambda S_eps}, and \eqref{Lambda S_eps <=}
imply the following estimate for the third term in the right-hand side of \eqref{d-vu th 4/4 1}:
\begin{equation}
\label{7.32}
\begin{split}
\biggl\Vert & \varepsilon \sum _{l=1}^d g^\varepsilon b_l\left([\Lambda ^\varepsilon]S_\varepsilon b(\mathbf{D})D_l +[\widetilde{\Lambda }^\varepsilon ]S_\varepsilon D_l \right) (B^0-\zeta \overline{Q_0})^{-1} \biggr\Vert _{L_2\rightarrow L_2}\\
&\leqslant \varepsilon \Vert g\Vert _{L_\infty}\alpha _1^{1/2} M_1\sum _{l=1}^d \Vert b(\mathbf{D})D_l(B^0-\zeta \overline{Q_0})^{-1}
\Vert _{L_2\rightarrow L_2}\\
&+\varepsilon \Vert g\Vert _{L_\infty}\alpha _1^{1/2}\widetilde{M}_1\sum _{l=1}^d \Vert D_l (B^0-\zeta \overline{Q_0})^{-1}\Vert _{L_2\rightarrow L_2}\\
&\leqslant \varepsilon \Vert g\Vert _{L_\infty}\alpha _1 d^{1/2} M_1
\Vert {\mathbf D}^2 (B^0-\zeta \overline{Q_0})^{-1}\Vert _{L_2\rightarrow L_2}\\
&+ \varepsilon \Vert g\Vert _{L_\infty}\alpha _1^{1/2}d^{1/2}\widetilde{M}_1\Vert \mathbf{D}(B^0-\zeta \overline{Q_0})^{-1}\Vert _{L_2\rightarrow L_2}.
\end{split}
\end{equation}
From \eqref{L0} it follows that
\begin{equation}
\label{rim 8}
\Vert {\mathbf D}^2 (B^0 -\zeta \overline{Q_0})^{-1}\Vert _{L_2 \rightarrow L_2 }\leqslant
{c}_*^{-1}\Vert B^0 (B^0-\zeta \overline{Q_0})^{-1}\Vert _{L_2\rightarrow L_2}.
\end{equation}
Using \eqref{f0_estimates} and the relation between the operators
$B^0=B^0(1)$ and $\widetilde{B}^0= \widetilde{B}^0(1)$ (see Subsection~\ref{Subsection tile-operators(theta)}), we obtain
\begin{equation}
\label{7.39_star}
\begin{split}
\Vert &B^0 (B^0-\zeta \overline{Q_0})^{-1}\Vert _{L_2\rightarrow L_2}
=
\Vert f_0^{-1} \widetilde{B}^0  (\widetilde{B}^0-\zeta I )^{-1} f_0 \Vert _{L_2 \rightarrow L_2 }
\\
&\leqslant \Vert f\Vert _{L_\infty}\Vert f^{-1}\Vert _{L_\infty}\sup \limits _{\nu \geqslant 0}\frac{\nu}{\vert \nu -\zeta \vert}
\leqslant \Vert f\Vert _{L_\infty}\Vert f^{-1}\Vert _{L_\infty} c(\phi)
.
\end{split}
\end{equation}
Together with \eqref{rim 8} this implies
\begin{equation}
\label{rim 8a}
\Vert {\mathbf D}^2 (B^0-\zeta \overline{Q_0})^{-1}\Vert _{L_2\rightarrow L_2}\leqslant {c}_*^{-1}
\Vert f\Vert _{L_\infty}\Vert f^{-1}\Vert _{L_\infty} c(\phi).
\end{equation}
Similarly, using \eqref{b^0[u,u]>=}, we see that
\begin{equation}
\label{rim 9}
\begin{split}
\Vert &\mathbf{D}(B^0-\zeta \overline{Q_0})^{-1}\Vert _{L_2 \rightarrow L_2}
\leqslant {c}_*^{-1/2}\Vert (B^0)^{1/2}(B^0-\zeta \overline{Q_0})^{-1}\Vert _{L_2\rightarrow L_2}\\
&={c}_*^{-1/2}\Vert (\widetilde{B}^0)^{1/2}(\widetilde{B}^0-\zeta I)^{-1}f_0\Vert _{L_2 \rightarrow L_2 }
\\
&\leqslant {c}_*^{-1/2}\Vert f\Vert _{L_\infty}\sup \limits _{\nu \geqslant 0}\frac{\nu^{1/2}}{\vert \nu -\zeta \vert}
\leqslant {c}_*^{-1/2} \Vert f\Vert _{L_\infty}c(\phi)\vert\zeta\vert ^{-1/2}.
\end{split}
\end{equation}
From \eqref{7.32}, \eqref{rim 8a}, \eqref{rim 9}, and the restriction $\vert \zeta \vert \geqslant 1$ it follows that
\begin{equation}
\label{rim 10}
\begin{split}
\biggl\Vert  \varepsilon \sum _{l=1}^d g^\varepsilon b_l\left([\Lambda ^\varepsilon]S_\varepsilon b(\mathbf{D})D_l +[\widetilde{\Lambda }^\varepsilon ]S_\varepsilon D_l \right) (B^0-\zeta \overline{Q_0})^{-1} \biggr\Vert _{L_2\rightarrow L_2}
\leqslant \mathfrak{C}_7\varepsilon c(\phi),
\end{split}
\end{equation}
where
\begin{equation*}
\begin{split}
\mathfrak{C}_7 &=\Vert g\Vert _{L_\infty}d^{1/2}\Vert f\Vert _{L_\infty}
\left( \alpha _1M_1 {c}_*^{-1} \Vert f^{-1}\Vert _{L_\infty}
+\alpha _1^{1/2}\widetilde{M}_1 {c}_*^{-1/2}\right).
\end{split}
\end{equation*}

By Proposition \ref{Proposition S_eps - I} and relations \eqref{<b^*b<}, \eqref{rim 8a}, we have
\begin{equation}
\label{rim 11}
\begin{split}
\Vert &g^\varepsilon b(\mathbf{D})(I-S_\varepsilon )(B^0-\zeta \overline{Q_0})^{-1}\Vert _{L_2(\mathbb{R}^d)\rightarrow L_2(\mathbb{R}^d)}\\
&\leqslant \Vert g\Vert _{L_\infty}\varepsilon r_1\Vert \mathbf{D}b(\mathbf{D})(B^0-\zeta \overline{Q_0})^{-1}\Vert _{L_2(\mathbb{R}^d)\rightarrow L_2(\mathbb{R}^d)}\\
&\leqslant \varepsilon r_1\alpha_1^{1/2}\Vert g\Vert _{L_\infty}\Vert \mathbf{D}^2(B^0-\zeta \overline{Q_0})^{-1}\Vert _{L_2(\mathbb{R}^d)\rightarrow L_2(\mathbb{R}^d)}
\leqslant \mathfrak{C}_{8}\varepsilon c(\phi),
\end{split}
\end{equation}
where $\mathfrak{C}_{8}:=r_1\alpha _1^{1/2}\Vert g\Vert _{L_\infty} {c}_*^{-1} \Vert f\Vert _{L_\infty}\Vert f^{-1}\Vert _{L_\infty}$.

Finally, relations \eqref{tilde g}, \eqref{7.30a}, \eqref{d-vu th 4/4 1}, \eqref{rim 10}, and \eqref{rim 11}
imply the required estimate \eqref{Th 4.4} with $C_7=\Vert g\Vert _{L_\infty}\alpha _1^{1/2}C_6+\mathfrak{C}_7 +\mathfrak{C}_{8}$.
This completes the proof of Theorem~\ref{Theorem main result fluxes}.

\qed

\section{Removal of the smoothing operator. \\ Special cases}
\label{section_special}

\subsection{Removal of $S_\varepsilon$ in the corrector}
It turns out that the smoothing operator $S _\varepsilon$ in the corrector can be removed
 under some additional assumptions on the matrix-valued functions $\Lambda (\mathbf{x})$ and $\widetilde{\Lambda}(\mathbf{x})$.

\begin{condition}
\label{Condition Lambda in L_infty}
Suppose that the $\Gamma$-periodic solution $\Lambda({\mathbf x})$
of problem \eqref{Lambda problem} is bounded, i.~e., $\Lambda \in L_\infty (\mathbb{R}^d)$.
\end{condition}

Some cases where Condition \ref{Condition Lambda in L_infty} is satisfied were distinguished in
\cite[Lemma~8.7]{BSu06}.

\begin{proposition}[\cite{BSu06}]
\label{Proposition Condition on Lambda holds}
Suppose that at least one of the following assumptions is satisfied{\rm :}

\noindent$1^\circ )$ $d\leqslant 2$\textnormal{;}

\noindent$2^\circ )$ $d\geqslant 1$, and the operator $\mathcal{A}_\varepsilon$ is of the form $\mathcal{A}_\varepsilon =\mathbf{D}^*g^\varepsilon (\mathbf{x})\mathbf{D}$, where $g(\mathbf{x})$ is symmetric matrix with real entries\textnormal{;}

\noindent$3^\circ )$ the dimension $d$ is arbitrary, and $g^0=\underline{g}$, i.~e., 
relations \eqref{underline-g} are satisfied.

\noindent
Then Condition \textnormal{\ref{Condition Lambda in L_infty}} is fulfilled.
\end{proposition}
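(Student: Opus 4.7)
The plan is to treat each of the three sufficient conditions separately, in each case invoking a different regularity mechanism to upgrade the $\widetilde H^1(\Omega)$-solution of \eqref{Lambda problem} to an element of $L_\infty$.

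For case $1^\circ)$, with $d=1$ the equation $b(\mathbf{D})^*g(b(\mathbf{D})\Lambda+\mathbf{1}_m)=0$ is a first-order ODE with $\Gamma$-periodicity and the mean-value constraint, whose solution is explicit and continuous on the compact period, hence bounded. For $d=2$ I would invoke the Meyers higher-integrability theorem for divergence-form elliptic systems with $L_\infty$-coefficients: the $\widetilde H^1$-solution of \eqref{Lambda problem} belongs to $\widetilde W^{1,p}(\Omega;\mathbb{C}^n)$ for some $p>2=d$ depending only on $\alpha_0,\alpha_1,\|g\|_{L_\infty},\|g^{-1}\|_{L_\infty}$; then the Sobolev embedding $W^{1,p}\hookrightarrow C$ for $p>d$ yields $\Lambda\in L_\infty$.

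For case $2^\circ)$, the system decouples into $m$ scalar divergence-form equations $-\div g(\mathbf{x})\nabla\lambda_k=\div(g(\mathbf{x})\mathbf{e}_k)$, $k=1,\dots,m$, with $g$ real, symmetric, bounded, and uniformly elliptic. Here the De Giorgi--Nash--Moser theory applies: weak periodic solutions with $L_\infty$ right-hand side are locally bounded (in fact H\"older continuous), with constants depending only on the ellipticity data. The $\Gamma$-periodicity of $\lambda_k$ promotes these local bounds to a uniform $L_\infty$-estimate, giving $\Lambda\in L_\infty$.

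For case $3^\circ)$, I would exploit the structural content of Proposition \ref{Proposition g^0 = underline g}. A direct computation using \eqref{Lambda problem}, \eqref{g^0}, \eqref{tilde g}, and the identity $\overline{g^{-1}}=\underline g^{-1}=(g^0)^{-1}$ (which is forced by $g^0=\underline g$) shows that in fact $\widetilde g(\mathbf{x})\equiv g^0$ almost everywhere, so \eqref{Lambda problem} is equivalent to the pointwise identity $b(\mathbf{D})\Lambda(\mathbf{x})=g(\mathbf{x})^{-1}g^0-\mathbf{1}_m$. Substituting the representation \eqref{underline-g} of the columns of $g^{-1}$ and using $L^0=(g^0)^{-1}$ then produces the explicit closed form $\Lambda=(W-\overline W)g^0$, where $W$ is the $(n\times m)$-matrix-valued periodic function whose columns are the $\mathbf{w}_k$ in \eqref{underline-g}. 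Thus boundedness of $\Lambda$ reduces to the existence of a bounded representative $\mathbf{w}_k$ in the class \eqref{underline-g}; this is the step I expect to be the main obstacle. I would handle it by noting that $b(\mathbf{D})\mathbf{w}_k=\mathbf{l}_k-\mathbf{l}_k^0\in L_\infty$ and that $b(\mathbf{D})^*b(\mathbf{D})$ is an elliptic constant-coefficient second-order operator on the torus, so one can construct an $L_\infty$-representative of $\mathbf{w}_k$ from its Calder\'on--Zygmund-type parametrix applied to $b(\mathbf{D})^*(\mathbf{l}_k-\mathbf{l}_k^0)$, modifying the choice by periodic constants to preserve $\int_\Omega\mathbf{w}_k=0$ if needed.
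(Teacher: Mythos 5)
The paper itself does not prove this proposition; it simply cites \cite{BSu06} (Lemma~8.7 there), so there is no internal proof against which to line up your argument step by step. That said, your proposal is a correct and complete reconstruction, and it matches the route that the cited lemma takes: Meyers' higher integrability (or, for $d=1$, trivial Sobolev embedding) in case $1^\circ$; De Giorgi--Nash--Moser for the decoupled scalar divergence-form equations in case $2^\circ$; and, in case $3^\circ$, the structural identity $\widetilde g\equiv g^0$ that follows from $g^0=\underline g$ together with a Calder\'on--Zygmund estimate for the constant-coefficient system $b(\mathbf{D})\mathbf{w}_k=\mathbf{l}_k-\mathbf{l}_k^0$.

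Two small points worth making explicit if you write this up. First, in case $2^\circ$ the decoupling into scalar equations tacitly uses that $n=1$ (so $b(\mathbf{D})=\mathbf{D}$ acts on scalar functions, $m=d$, and each column $\lambda_k$ of the row $\Lambda$ solves its own equation); this is indeed what the hypothesis in \cite{BSu06} means, but it should be said. Second, in case $3^\circ$ your ``direct computation'' showing $\widetilde g\equiv g^0$ can be made completely concrete: using the weak equation for $\Lambda$ one gets $|\Omega|^{-1}\int_\Omega\widetilde g^{\,*}g^{-1}\widetilde g\,d\mathbf{x}=g^0$ and $|\Omega|^{-1}\int_\Omega g^{-1}\widetilde g\,d\mathbf{x}=\mathbf{1}_m$, and, since $g^0=\underline g$ forces $\overline{g^{-1}}=(g^0)^{-1}$, the quantity $|\Omega|^{-1}\int_\Omega(\widetilde g-g^0)^*g^{-1}(\widetilde g-g^0)\,d\mathbf{x}$ vanishes; as $g^{-1}>0$ this gives $\widetilde g=g^0$ a.e. Your final step --- $b(\mathbf{D})\mathbf{w}_k\in L_\infty\subset L_p$ for all $p<\infty$, then $\mathbf{D}\mathbf{w}_k\in L_p$ by the zero-order multiplier with symbol $\boldsymbol\xi\,(b(\boldsymbol\xi)^*b(\boldsymbol\xi))^{-1}b(\boldsymbol\xi)^*$, then Morrey embedding with $p>d$ --- is exactly right; the detour through a parametrix of $b(\mathbf{D})^*b(\mathbf{D})$ is not needed but does no harm.
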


In order to remove $S _\varepsilon$ in the term of the corrector containing $\widetilde{\Lambda}^\varepsilon $,
it suffices to impose the following condition.

\begin{condition}
\label{Condition on Lambda-tilda}
Suppose that the $\Gamma$-periodic solution $\widetilde{\Lambda}(\mathbf{x})$
of problem \eqref{tildeLambda_problem} is such that 
\begin{equation*}
\widetilde{\Lambda}\in L_p(\Omega),\quad p=2\mbox{ for } d=1,\quad p>2 \mbox{ for } d=2,\quad p=d \mbox{ for } d\geqslant 3.
\end{equation*}
\end{condition}

The following result was obtained in \cite[Proposition~8.11]{SuAA}.

\begin{proposition}[\cite{SuAA}]
\label{Proposition Condition on tilde Lambda holds}
Suppose that at least one of the following assumptions is satisfied{\rm :}

\noindent$1^\circ )$ $d\leqslant 4$\textnormal{;}

\noindent$2^\circ )$ the dimension $d$ is arbitrary and the operator $\mathcal{A}_\varepsilon$
is of the form $\mathbf{D}^*g^\varepsilon (\mathbf{x})\mathbf{D}$, where $g(\mathbf{x})$
is symmetric matrix with real entries.

\noindent
Then Condition \textnormal{\ref{Condition on Lambda-tilda}} is fulfilled.
\end{proposition}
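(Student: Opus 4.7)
My plan is to secure $H^1$-regularity of $\widetilde{\Lambda}$ on the cell, and then argue differently in the two cases. As a preliminary, combining \eqref{tildeLambda <=} and \eqref{DtildeLambda<=} gives $\widetilde{\Lambda}\in H^1(\Omega;\mathbb{C}^{n\times n})$ with norm controlled by the initial data \eqref{problem data}.

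In case $1^\circ$, I would simply invoke the Sobolev embedding on the bounded Lipschitz cell $\Omega$. For $d=1$ one has $H^1(\Omega)\hookrightarrow L_\infty(\Omega)\subset L_2(\Omega)$; for $d=2$, $H^1(\Omega)\hookrightarrow L_q(\Omega)$ for every $q<\infty$, so $\widetilde{\Lambda}\in L_p(\Omega)$ for some $p>2$; for $d=3$ or $4$, the embedding $H^1(\Omega)\hookrightarrow L_{2d/(d-2)}(\Omega)$ together with the elementary inequality $\tfrac{2d}{d-2}\geq d$ for $d\leq 4$ delivers $\widetilde{\Lambda}\in L_d(\Omega)$. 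In each subcase Condition~\ref{Condition on Lambda-tilda} is verified.

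In case $2^\circ$, the principal part is the scalar divergence-form operator $\mathbf{D}^* g \mathbf{D}$ with bounded, uniformly elliptic, real symmetric coefficient matrix $g$. Splitting $\widetilde{\Lambda}$ into real and imaginary parts (and, for $n>1$, into its scalar entries), problem \eqref{tildeLambda_problem} reduces to a collection of scalar real divergence-form equations of the form $-\mathrm{div}\bigl(g(\mathbf{x})\nabla v\bigr)=\mathrm{div}\,\mathbf{F}$, with $\mathbf{F}\in L_\rho(\Omega;\mathbb{R}^d)$, $\rho>d$, for $d\geq 2$ (the case $d=1$ already followed from the Sobolev embedding above). I would then invoke the classical De~Giorgi--Nash--Moser theorem for scalar divergence-form equations together with the Stampacchia $L_\infty$-bound for right-hand sides in $\mathrm{div}\,L_\rho$ with $\rho>d$: each such weak solution is H\"older continuous and in particular bounded on $\Omega$. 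Consequently $\widetilde{\Lambda}\in L_\infty(\Omega)\subset L_p(\Omega)$ for every $p$, which is stronger than what Condition~\ref{Condition on Lambda-tilda} requires.

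The delicate point, and the only real obstacle, is case $2^\circ$ in high dimensions: the $H^1$-estimate alone falls short of $L_d(\Omega)$ as soon as $d\geq 5$, so one cannot avoid the deeper elliptic regularity theory, which is in turn available precisely because the problem is scalar and the coefficient matrix is real symmetric. Without one of these two structural assumptions---e.g.\ for genuine elliptic systems, or for complex-valued or non-symmetric $g$---no analogous $L_\infty$-bound is available in general high dimensions, which is exactly why case $2^\circ$ cannot be extended to the generic operator $b(\mathbf{D})^*g\,b(\mathbf{D})$.
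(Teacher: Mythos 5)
Your proof is correct and follows the same route the paper relies on: for $d\leq 4$ the a priori bounds \eqref{tildeLambda <=}, \eqref{DtildeLambda<=} place $\widetilde{\Lambda}$ in $H^1(\Omega)$ and the Sobolev embedding $H^1(\Omega)\hookrightarrow L_{2d/(d-2)}(\Omega)$ together with $2d/(d-2)\geq d$ (precisely when $d\leq 4$) gives $\widetilde{\Lambda}\in L_d(\Omega)$ (and the $d\leq 2$ subcases as you note), while for the scalar real divergence-form operator in arbitrary dimension the equation decouples into real scalar problems $-\mathrm{div}(g\nabla v)=\mathrm{div}\,\mathbf{F}$ with $\mathbf{F}\in L_\rho$, $\rho>d$, whose boundedness is the Ladyzhenskaya--Ural'tseva (De~Giorgi--Nash--Moser/Stampacchia) estimate invoked in Remark~\ref{Remark A with d=1 and real g}. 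Your closing comment correctly identifies the structural reason that case $2^\circ$ does not extend to general systems $b(\mathbf{D})^*g\,b(\mathbf{D})$ or to non-real/non-symmetric $g$.
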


\begin{remark}
\label{Remark A with d=1 and real g}
If $\mathcal{A}_\varepsilon=\mathbf{D}^* g^\varepsilon(\mathbf{x})\mathbf{D}$,
 where $g(\mathbf{x})$ is symmetric matrix with real entries,
 from Theorem {\rm 13.1} of \cite[Chapter {\rm III}]{LaU} it follows that
  the norm $\Vert \Lambda \Vert _{L_\infty}$ does not exceed a number depending only on
  $d$, $\Vert g\Vert _{L_\infty}$, $\Vert g^{-1}\Vert _{L_\infty}$, and $\Omega$,
  while the norm $\Vert \widetilde{\Lambda}\Vert _{L_\infty}$ is controlled in terms of
  $d$, $\rho$, $\Vert g\Vert _{L_\infty}$, $\Vert g^{-1}\Vert _{L_\infty}$, $\Vert a_j\Vert _{L_\rho (\Omega )}$, $j=1,\dots ,d$, and $\Omega$.
  Herewith, Conditions~\textnormal{\ref{Condition Lambda in L_infty}} and \textnormal{\ref{Condition on Lambda-tilda}}
  are satisfied.
\end{remark}

In this subsection, our goal is to prove the following theorem.

\begin{theorem}
\label{Theorem main result H1 no S_eps}
Suppose that the assumptions of Theorem~\textnormal{\ref{Theorem main result L2}} are satisfied.

\noindent
$1^\circ$. Suppose that Condition~\textnormal{\ref{Condition Lambda in L_infty}} is satisfied.
Then for $0<\varepsilon \leqslant 1$ and \hbox{$\zeta \in \mathbb{C}\setminus\mathbb{R}_+$}, $\vert \zeta \vert \geqslant 1$,
we have
\begin{align*}
\begin{split}
\Vert & (B_\varepsilon -\zeta Q_0^\varepsilon )^{-1}-(I+\varepsilon [\Lambda ^\varepsilon ]b(\mathbf{D})+\varepsilon [\widetilde{\Lambda }^\varepsilon ]S_\varepsilon )(B^0-\zeta \overline{Q_0})^{-1}\Vert _{L_2(\mathbb{R}^d)\rightarrow L_2(\mathbb{R}^d)}\\
&\leqslant C_{8}'c(\phi )^2\varepsilon \vert \zeta \vert ^{-1/2},
\end{split}\\
\begin{split}
\Vert &\mathbf{D}
\bigl(
(B_\varepsilon -\zeta Q_0^\varepsilon )^{-1}-(I+\varepsilon [\Lambda ^\varepsilon ]b(\mathbf{D})+\varepsilon [\widetilde{\Lambda }^\varepsilon ]S_\varepsilon )(B^0-\zeta \overline{Q_0})^{-1}
\bigr)
\Vert _{L_2(\mathbb{R}^d)\rightarrow L_2(\mathbb{R}^d)}\\
&\leqslant C_{9}'c(\phi )^2\varepsilon .
\end{split}
\end{align*}
The constants $C_{8}'$ and $C_{9}'$ depend only on the initial data \eqref{problem data} and the norm~$\Vert \Lambda \Vert _{L_\infty}$.

\noindent
$2^\circ$. Suppose that Condition~\textnormal{\ref{Condition on Lambda-tilda}} is satisfied.
Then for~$0<\varepsilon \leqslant 1$ and \hbox{$\zeta \in \mathbb{C}\setminus\mathbb{R}_+$}, $\vert \zeta \vert \geqslant 1$, we have
\begin{align*}
\begin{split}
\Vert & (B_\varepsilon -\zeta Q_0^\varepsilon )^{-1}-(I+\varepsilon [\Lambda ^\varepsilon ]b(\mathbf{D})S_\varepsilon +\varepsilon [\widetilde{\Lambda }^\varepsilon ] )(B^0-\zeta \overline{Q_0})^{-1}\Vert _{L_2(\mathbb{R}^d)\rightarrow L_2(\mathbb{R}^d)}\\
&\leqslant C_{8}''c(\phi )^2\varepsilon \vert \zeta \vert ^{-1/2},
\end{split}\\
\begin{split}
\Vert &\mathbf{D}
\bigl(
(B_\varepsilon -\zeta Q_0^\varepsilon )^{-1}-(I+\varepsilon [\Lambda ^\varepsilon ]b(\mathbf{D})S_\varepsilon +\varepsilon [\widetilde{\Lambda }^\varepsilon ])(B^0-\zeta \overline{Q_0})^{-1}
\bigr)
\Vert _{L_2(\mathbb{R}^d)\rightarrow L_2(\mathbb{R}^d)}\\
&\leqslant C_{9}''c(\phi )^2\varepsilon .
\end{split}
\end{align*}
The constants $C_{8}''$ and $C_{9}''$ are controlled in terms of the initial data \eqref{problem data}, $p$, and the norm
$\Vert \widetilde{\Lambda}\Vert _{L_p(\Omega)}$.

\noindent
$3^\circ$. Suppose that Conditions \textnormal{\ref{Condition Lambda in L_infty}} and
\textnormal{\ref{Condition on Lambda-tilda}} are satisfied.
Then for $0<\varepsilon \leqslant 1$ and $\zeta \in \mathbb{C}\setminus\mathbb{R}_+$, $\vert \zeta \vert \geqslant 1$,
we have
\begin{align}
\begin{split}
\Vert & (B_\varepsilon -\zeta Q_0^\varepsilon )^{-1}-(I+\varepsilon [\Lambda ^\varepsilon ]b(\mathbf{D}) +\varepsilon [\widetilde{\Lambda }^\varepsilon ] )(B^0-\zeta \overline{Q_0})^{-1}\Vert _{L_2(\mathbb{R}^d)\rightarrow L_2(\mathbb{R}^d)}\\
&\leqslant C_{8}c(\phi )^2\varepsilon \vert \zeta \vert ^{-1/2},
\end{split}
\nonumber
\\
\label{7*}
\begin{split}
\Vert &\mathbf{D}
\bigl(
(B_\varepsilon -\zeta Q_0^\varepsilon )^{-1}-(I+\varepsilon [\Lambda ^\varepsilon ]b(\mathbf{D}) +\varepsilon [\widetilde{\Lambda }^\varepsilon ])(B^0-\zeta \overline{Q_0})^{-1}
\bigr)
\Vert _{L_2(\mathbb{R}^d)\rightarrow L_2(\mathbb{R}^d)}\\
&\leqslant C_{9}c(\phi )^2\varepsilon .
\end{split}
\end{align}
The constants $C_{8}$ and $C_{9}$ depend only on the initial data \eqref{problem data}, $p$, and the norms
$\Vert \Lambda \Vert _{L_\infty}$,  $\Vert \widetilde{\Lambda}\Vert _{L_p(\Omega)}$.
\end{theorem}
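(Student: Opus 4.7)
The plan is to deduce Theorem~\ref{Theorem main result H1 no S_eps} from Theorem~\ref{Theorem main result H1} together with Corollary~\ref{Corollary main result L2->H1} by estimating, under the relevant hypothesis, the two operator differences
\begin{equation*}
\Delta_\Lambda := \varepsilon[\Lambda^\varepsilon]b(\mathbf{D})(I-S_\varepsilon)(B^0-\zeta\overline{Q_0})^{-1},
\quad
\Delta_{\widetilde{\Lambda}} := \varepsilon[\widetilde{\Lambda}^\varepsilon](I-S_\varepsilon)(B^0-\zeta\overline{Q_0})^{-1}
\end{equation*}
in both the $(L_2\to L_2)$- and the $(L_2\to H^1)$-operator norms. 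Once each is shown to be $O(\varepsilon c(\phi)^2|\zeta|^{-1/2})$ in the first norm and $O(\varepsilon c(\phi)^2)$ in the second, parts $1^\circ$, $2^\circ$, $3^\circ$ follow by subtracting $\Delta_\Lambda$, $\Delta_{\widetilde{\Lambda}}$, or both from $\varepsilon K(\varepsilon;\zeta)$ and invoking the triangle inequality.

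The workhorse tools are two elliptic regularity bounds on the effective resolvent,
\begin{equation*}
\Vert\mathbf{D}(B^0-\zeta\overline{Q_0})^{-1}\Vert_{L_2\to L_2} \leq C c(\phi)|\zeta|^{-1/2},
\quad
\Vert\mathbf{D}^2(B^0-\zeta\overline{Q_0})^{-1}\Vert_{L_2\to L_2} \leq C c(\phi),
\end{equation*}
valid for $|\zeta|\geq 1$; these are obtained as in the proof of Theorem~\ref{Theorem main result fluxes} (cf.~\eqref{rim 8a}, \eqref{rim 9}) via the factorization $(B^0-\zeta\overline{Q_0})^{-1} = f_0(\widetilde{B}^0-\zeta I)^{-1}f_0$, spectral calculus on the nonnegative operator $\widetilde{B}^0$, and the ellipticity estimates \eqref{b^0[u,u]>=}, \eqref{L0}. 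For the $(L_2 \to L_2)$-estimate of $\Delta_\Lambda$ (under Condition~\ref{Condition Lambda in L_infty}), I would combine $\Vert I-S_\varepsilon\Vert_{L_2\to L_2}\leq 2$, the trivial bound $\Vert[\Lambda^\varepsilon]\Vert_{L_2\to L_2}\leq \Vert\Lambda\Vert_{L_\infty}$, the estimate \eqref{<b^*b<}, and the first resolvent bound above, obtaining directly the claimed order $\varepsilon c(\phi)|\zeta|^{-1/2}$. The analogous $(L_2\to L_2)$-estimate for $\Delta_{\widetilde{\Lambda}}$ under Condition~\ref{Condition on Lambda-tilda} uses Lemma~\ref{Lemma_Lambda_tilda3} to bound $\Vert[\widetilde{\Lambda}^\varepsilon]\Vert_{H^1\to L_2}$ and Proposition~\ref{Proposition S_eps - I} to extract one power of $\varepsilon$ from $I-S_\varepsilon$, combined with the first resolvent estimate.

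The hardest part will be the derivative estimates. For $\mathbf{D}\Delta_\Lambda$, I apply the Leibniz rule $\mathbf{D}(\varepsilon[\Lambda^\varepsilon]\boldsymbol{\psi}) = [(\mathbf{D}\Lambda)^\varepsilon]\boldsymbol{\psi} + \varepsilon[\Lambda^\varepsilon]\mathbf{D}\boldsymbol{\psi}$ with $\boldsymbol{\psi} = b(\mathbf{D})(I-S_\varepsilon)(B^0-\zeta\overline{Q_0})^{-1}\mathbf{F}$: the second term is immediate from $\Lambda\in L_\infty$ together with Proposition~\ref{Proposition S_eps - I} and the $\mathbf{D}^2$-bound on the effective resolvent, while the first term, in which $(\mathbf{D}\Lambda)^\varepsilon$ is generally unbounded, is the main technical obstacle. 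Here Corollary~\ref{corollary_Lambda} is crucial: its two-term estimate, combined once more with Proposition~\ref{Proposition S_eps - I} applied to $\mathbf{D}\boldsymbol{\psi}$ and the $\mathbf{D}^2$-bound on the effective resolvent, produces $O(\varepsilon c(\phi))$; since $|\zeta|\geq 1$ gives $|\zeta|^{-1/2}\leq 1$, this is absorbed into the required $O(\varepsilon c(\phi)^2)$. The derivative estimate for $\Delta_{\widetilde{\Lambda}}$ is entirely parallel, with Corollary~\ref{Lemma_Lambda_tilda4} in the role of Corollary~\ref{corollary_Lambda}. Part $3^\circ$ is obtained by combining the two bounds.
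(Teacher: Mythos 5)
Your decomposition into $\Delta_\Lambda$ and $\Delta_{\widetilde{\Lambda}}$, and the tools you name, match exactly what the paper does in Lemmas~\ref{lemma_Lambda} and \ref{lemma_Lambda_tilde}; your treatment of $\Delta_\Lambda$ in both norms and of both derivative estimates is correct in structure. However, the $(L_2\to L_2)$-estimate for $\Delta_{\widetilde{\Lambda}}$ as you describe it would lose the stated $|\zeta|^{-1/2}$ decay. Since Lemma~\ref{Lemma_Lambda_tilda3} bounds $[\widetilde{\Lambda}^\varepsilon]$ only as a map $H^1\to L_2$, you must control $(I-S_\varepsilon)(B^0-\zeta\overline{Q_0})^{-1}\mathbf{F}$ in the $H^1$-norm. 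If you insist on extracting a factor $\varepsilon$ from $I-S_\varepsilon$ in $H^1$ via Proposition~\ref{Proposition S_eps - I}, the derivative part of that $H^1$-norm then requires $\|\mathbf{D}^2(B^0-\zeta\overline{Q_0})^{-1}\|_{L_2\to L_2}$, which (see~\eqref{rim 8a}) is $O(c(\phi))$ with no $|\zeta|^{-1/2}$ decay. You would get $\|\Delta_{\widetilde{\Lambda}}\|_{L_2\to L_2}=O(\varepsilon^2 c(\phi))$, and this does not dominate the required $O(\varepsilon c(\phi)^2|\zeta|^{-1/2})$ when $|\zeta|$ is large (take, say, $\phi=\pi$, $\varepsilon$ fixed, $|\zeta|\to\infty$). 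The paper avoids this entirely: in \eqref{Lm 7/2 d} it uses only the trivial bound $\|I-S_\varepsilon\|_{H^1\to H^1}\leqslant 2$, keeps the prefactor $\varepsilon$ untouched, and obtains $|\zeta|^{-1/2}$ from the $(L_2\to H^1)$-resolvent bound assembled from \eqref{Lm 7/2 e} and \eqref{rim 9}. Replacing your extraction step by the trivial bound makes the argument coincide with the paper's Lemma~\ref{lemma_Lambda_tilde}.
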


From Corollary \ref{corollary_Lambda}, Lemma \ref{Lemma_Lambda_tilda3}, and Corollary~\ref{Lemma_Lambda_tilda4}
it follows that the operators under the norm sign in the estimates of Theorem~\ref{Theorem main result H1 no S_eps}
are bounded (under the corresponding assumptions).
The statements of Theorem \ref{Theorem main result H1 no S_eps} follow from \eqref{Th 3.2 L_2->L_2}, \eqref{Th 3.2 D L_2->L_2},
and Lemmas \ref{lemma_Lambda}, \ref{lemma_Lambda_tilde} proved below.

\begin{lemma}
\label{lemma_Lambda}
Suppose that the assumptions of Theorem \textnormal{\ref{Theorem main result L2}} and Condition~\textnormal{\ref{Condition Lambda in L_infty}}
are satisfied. Then for $\zeta\in\mathbb{C}\setminus\mathbb{R}_+$, $\vert \zeta \vert \geqslant 1$, and $0<\varepsilon\leqslant 1$
we have
\begin{align}
\label{Lm 7/2 a}
&\varepsilon \Vert [\Lambda ^\varepsilon ]b(\mathbf{D})(S_\varepsilon -I)(B^0-\zeta\overline{Q_0})^{-1}\Vert _{L_2(\mathbb{R}^d)\rightarrow L_2(\mathbb{R}^d)}\leqslant\mathfrak{C}^{(1)}_\Lambda \varepsilon c(\phi)\vert\zeta\vert ^{-1/2},\\
\label{Lm 7/2 b}
&\varepsilon\Vert \mathbf{D}[\Lambda ^\varepsilon ]b(\mathbf{D})(S_\varepsilon -I)(B^0-\zeta\overline{Q_0})^{-1}\Vert _{L_2(\mathbb{R}^d)\rightarrow L_2(\mathbb{R}^d)}\leqslant\mathfrak{C}^{(2)}_\Lambda \varepsilon c(\phi).
\end{align}
The constants $\mathfrak{C}^{(1)}_\Lambda$ and $\mathfrak{C}^{(2)}_\Lambda$ depend only on the initial data
\eqref{problem data} and $\Vert \Lambda\Vert _{L_\infty}$.
\end{lemma}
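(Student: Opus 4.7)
The plan is to reduce both estimates to the already-available resolvent bounds for $(B^0-\zeta\overline{Q_0})^{-1}$ derived in Section~\ref{Section Proof of main result in H1} (namely~\eqref{rim 8a} and~\eqref{rim 9}), exploiting that $S_\varepsilon$ commutes with $\mathbf{D}$ and hence with $b(\mathbf{D})$, and using Condition~\ref{Condition Lambda in L_infty} to treat $[\Lambda^\varepsilon]$ as a bounded multiplier in $L_2(\mathbb{R}^d)$ with norm at most $\|\Lambda\|_{L_\infty}$.

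For~\eqref{Lm 7/2 a} I first rewrite the operator as
\begin{equation*}
[\Lambda^\varepsilon] b(\mathbf{D})(S_\varepsilon - I)(B^0-\zeta\overline{Q_0})^{-1} = [\Lambda^\varepsilon](S_\varepsilon - I)b(\mathbf{D})(B^0-\zeta\overline{Q_0})^{-1}.
\end{equation*}
The trivial bound $\|S_\varepsilon - I\|_{L_2 \to L_2} \leq 2$ then reduces the task to estimating $\|b(\mathbf{D})(B^0-\zeta\overline{Q_0})^{-1}\|_{L_2\to L_2}$; by~\eqref{<b^*b<} this is at most $\alpha_1^{1/2}\|\mathbf{D}(B^0-\zeta\overline{Q_0})^{-1}\|_{L_2\to L_2}$, and the latter norm is $O(c(\phi)|\zeta|^{-1/2})$ by~\eqref{rim 9}. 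Multiplying by $\varepsilon\|\Lambda\|_{L_\infty}$ yields~\eqref{Lm 7/2 a}.

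For~\eqref{Lm 7/2 b} I apply the Leibniz rule $\varepsilon\mathbf{D}[\Lambda^\varepsilon] = [(\mathbf{D}\Lambda)^\varepsilon] + \varepsilon[\Lambda^\varepsilon]\mathbf{D}$, which splits the operator into two summands. The summand $\varepsilon[\Lambda^\varepsilon]\mathbf{D} b(\mathbf{D})(S_\varepsilon - I)(B^0-\zeta\overline{Q_0})^{-1}$ is handled exactly as in part~(a), but now using $\|\mathbf{D}^2(B^0-\zeta\overline{Q_0})^{-1}\|_{L_2\to L_2}$ in place of $\|\mathbf{D}(B^0-\zeta\overline{Q_0})^{-1}\|_{L_2\to L_2}$; invoking~\eqref{rim 8a} gives the desired $O(\varepsilon c(\phi))$ bound (without a $|\zeta|^{-1/2}$ factor, consistent with the statement). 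The delicate summand is $[(\mathbf{D}\Lambda)^\varepsilon] b(\mathbf{D})(S_\varepsilon - I)(B^0-\zeta\overline{Q_0})^{-1}\mathbf{F}$. Setting $u := (S_\varepsilon - I)b(\mathbf{D})(B^0-\zeta\overline{Q_0})^{-1}\mathbf{F}$, I apply Corollary~\ref{corollary_Lambda} (which is applicable precisely because of Condition~\ref{Condition Lambda in L_infty}) to obtain
\begin{equation*}
\|[(\mathbf{D}\Lambda)^\varepsilon] u\|_{L_2(\mathbb{R}^d)}^2 \leq \beta_1\|u\|_{L_2(\mathbb{R}^d)}^2 + \beta_2\varepsilon^2\|\Lambda\|_{L_\infty}^2\|\mathbf{D}u\|_{L_2(\mathbb{R}^d)}^2.
\end{equation*}
Proposition~\ref{Proposition S_eps - I} then gives $\|u\|_{L_2} \leq \varepsilon r_1\|\mathbf{D}b(\mathbf{D})(B^0-\zeta\overline{Q_0})^{-1}\mathbf{F}\|_{L_2}$, which by~\eqref{<b^*b<} and~\eqref{rim 8a} is $O(\varepsilon c(\phi))\|\mathbf{F}\|_{L_2}$; using $\|S_\varepsilon - I\|_{L_2\to L_2} \leq 2$, I control $\|\mathbf{D}u\|_{L_2}$ similarly by $O(c(\phi))\|\mathbf{F}\|_{L_2}$. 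Substituting shows $\|[(\mathbf{D}\Lambda)^\varepsilon] u\|_{L_2} = O(\varepsilon c(\phi))\|\mathbf{F}\|_{L_2}$, which combined with the easy summand yields~\eqref{Lm 7/2 b}.

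The main obstacle is exactly the $[(\mathbf{D}\Lambda)^\varepsilon]$-summand above: the multiplier $(\mathbf{D}\Lambda)^\varepsilon$ is not uniformly bounded in $L_\infty$, so a naive operator-norm argument fails. Corollary~\ref{corollary_Lambda} is the crucial device here; it converts the undesired $L_\infty$-norm of $\mathbf{D}\Lambda$ into a bound involving the $L_\infty$-norm of $\Lambda$ itself at the cost of introducing $\|\mathbf{D}u\|_{L_2}$, and the compensating smallness in $\varepsilon$ is supplied by the Steklov difference $(S_\varepsilon - I)$ acting on $b(\mathbf{D})(B^0-\zeta\overline{Q_0})^{-1}$ via Proposition~\ref{Proposition S_eps - I}.
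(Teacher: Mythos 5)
Your proof is correct and follows essentially the same route as the paper: for \eqref{Lm 7/2 a} you commute $S_\varepsilon$ past $b(\mathbf D)$, bound $\|S_\varepsilon-I\|\le 2$, and invoke \eqref{<b^*b<} together with \eqref{rim 9}; for \eqref{Lm 7/2 b} you use the same Leibniz decomposition as the paper's \eqref{Lm 7/2 derivatives} and handle the term with $(\mathbf D\Lambda)^\varepsilon$ via Corollary~\ref{corollary_Lambda} and Proposition~\ref{Proposition S_eps - I}, feeding the result into \eqref{rim 8a}. The identification of the $(\mathbf D\Lambda)^\varepsilon$-summand as the genuinely delicate part, and of Corollary~\ref{corollary_Lambda} as the device that trades the unavailable $L_\infty$-bound on $\mathbf D\Lambda$ for an $\varepsilon$-weighted gradient term, is exactly the mechanism the authors use.
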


\begin{proof} To check \eqref{Lm 7/2 a}, we apply \eqref{<b^*b<} and \eqref{rim 9}:
\begin{equation*}
\begin{split}
\Vert &[\Lambda ^\varepsilon ]b(\mathbf{D})(S_\varepsilon -I)(B^0-\zeta\overline{Q_0})^{-1}\Vert _{L_2(\mathbb{R}^d)\rightarrow L_2(\mathbb{R}^d)}\\
&\leqslant 2\alpha _1^{1/2}\Vert \Lambda \Vert _{L_\infty}\Vert \mathbf{D}(B^0-\zeta\overline{Q_0})^{-1}\Vert _{L_2(\mathbb{R}^d)\rightarrow L_2(\mathbb{R}^d)}
\leqslant \mathfrak{C}^{(1)}_\Lambda  c(\phi)\vert\zeta\vert ^{-1/2},
\end{split}
\end{equation*}
where $\mathfrak{C}^{(1)}_\Lambda = 2\alpha _1^{1/2}c_*^{-1/2}\Vert f\Vert _{L_\infty}\Vert \Lambda\Vert _{L_\infty}$.

Now let us check \eqref{Lm 7/2 b}. We have
\begin{equation}
\label{Lm 7/2 derivatives}
\begin{split}
\varepsilon\partial _j [\Lambda ^\varepsilon ]b(\mathbf{D})(S_\varepsilon -I)(B^0-\zeta\overline{Q_0})^{-1}
&=[(\partial _j\Lambda )^\varepsilon ](S_\varepsilon -I)b(\mathbf{D})(B^0-\zeta\overline{Q_0})^{-1}\\
&+\varepsilon [\Lambda ^\varepsilon ](S_\varepsilon -I)b(\mathbf{D})\partial _j (B^0-\zeta\overline{Q_0})^{-1}.
\end{split}
\end{equation}
Hence,
\begin{equation*}
\begin{split}
\varepsilon ^2\Vert & \mathbf{D}[\Lambda ^\varepsilon ]b(\mathbf{D})(S_\varepsilon -I)(B^0-\zeta\overline{Q_0})^{-1}\Vert ^2_{L_2(\mathbb{R}^d)\rightarrow L_2(\mathbb{R}^d)}\\
&\leqslant 2\Vert [(\mathbf{D}\Lambda)^\varepsilon ](S_\varepsilon -I)b(\mathbf{D})(B^0-\zeta\overline{Q_0})^{-1}\Vert ^2_{L_2(\mathbb{R}^d)\rightarrow L_2(\mathbb{R}^d)}
\\
&+2\varepsilon ^2  \| \Lambda\|^2_{L_\infty} \sum _{j=1}^d \Vert (S_\varepsilon -I)
b(\mathbf{D})\partial _j (B^0-\zeta\overline{Q_0})^{-1}\Vert ^2_{L_2(\mathbb{R}^d)\rightarrow L_2(\mathbb{R}^d)}.
\end{split}
\end{equation*}

Combining this with Corollary  \ref{corollary_Lambda}, we obtain
\begin{equation*}
\begin{split}
\varepsilon ^2\Vert & \mathbf{D}[\Lambda ^\varepsilon ]b(\mathbf{D})(S_\varepsilon -I)(B^0-\zeta\overline{Q_0})^{-1}\Vert ^2_{L_2(\mathbb{R}^d)\rightarrow L_2(\mathbb{R}^d)}\\
&\leqslant 2 \beta _1\Vert (S_\varepsilon -I)b(\mathbf{D})(B^0-\zeta\overline{Q_0})^{-1}\Vert ^2 _{L_2(\mathbb{R}^d)\rightarrow L_2(\mathbb{R}^d)}\\
&+2 \varepsilon ^2\Vert \Lambda \Vert ^2 _{L_\infty}(\beta _2 +1)\sum _{j=1}^d \Vert (S_\varepsilon -I)b(\mathbf{D})\partial _j (B^0-\zeta\overline{Q_0})^{-1}\Vert _{L_2(\mathbb{R}^d)\rightarrow L_2(\mathbb{R}^d)}^2.
\end{split}
\end{equation*}
Applying Proposition \ref{Proposition S_eps - I} to estimate the first term on the right and using \eqref{<b^*b<},
we arrive at
\begin{equation*}
\begin{split}
\varepsilon ^2\Vert & \mathbf{D}[\Lambda ^\varepsilon ]b(\mathbf{D})(S_\varepsilon -I)(B^0-\zeta\overline{Q_0})^{-1}\Vert ^2_{L_2(\mathbb{R}^d)\rightarrow L_2(\mathbb{R}^d)}\\
&\leqslant \varepsilon ^2\alpha _1 \left( 2r_1^2\beta _1+8(\beta _2+1)\Vert \Lambda \Vert ^2_{L_\infty}\right)\Vert \mathbf{D}^2(B^0-\zeta\overline{Q_0})^{-1}\Vert ^2_{L_2(\mathbb{R}^d)\rightarrow L_2(\mathbb{R}^d)}.
\end{split}
\end{equation*}
Together with \eqref{rim 8a} this implies the required inequality~\eqref{Lm 7/2 b} with
\begin{equation*}
\mathfrak{C}_\Lambda ^{(2)}=\alpha _1^{1/2}c_*^{-1}\left(2r_1^2\beta _1+8(\beta _2+1)\Vert \Lambda \Vert ^2_{L_\infty}\right)^{1/2}\Vert f\Vert _{L_\infty}\Vert f^{-1}\Vert _{L_\infty}.
\end{equation*}
\end{proof}

\begin{lemma}
\label{lemma_Lambda_tilde}
Suppose that the assumptions of Theorem \textnormal{\ref{Theorem main result L2}} and Condition~\textnormal{\ref{Condition on Lambda-tilda}} are satisfied.
Then for $\zeta\in\mathbb{C}\setminus\mathbb{R}_+$, $\vert \zeta \vert \geqslant 1$, and $0<\varepsilon\leqslant 1$ we have
\begin{align}
\label{Lm 7/2 v}
&\varepsilon\Vert [\widetilde{\Lambda}^\varepsilon ](S_\varepsilon -I)(B^0-\zeta\overline{Q_0})^{-1}\Vert _{L_2(\mathbb{R}^d)\rightarrow L_2(\mathbb{R}^d)}\leqslant\mathfrak{C}^{(1)}_{\widetilde{\Lambda}}\varepsilon c(\phi)\vert\zeta\vert ^{-1/2},\\
\label{Lm 7/2 g}
&\varepsilon\Vert \mathbf{D}[\widetilde{\Lambda}^\varepsilon](S_\varepsilon -I)(B^0-\zeta\overline{Q_0})^{-1}\Vert _{L_2(\mathbb{R}^d)\rightarrow L_2(\mathbb{R}^d)}\leqslant\mathfrak{C}^{(2)}_{\widetilde{\Lambda}} \varepsilon c(\phi).
\end{align}
The constants $\mathfrak{C}^{(1)}_{\widetilde{\Lambda}}$ and $\mathfrak{C}^{(2)}_{\widetilde{\Lambda}}$
depend only on the initial data \eqref{problem data}, $p$, and $\Vert \widetilde{\Lambda}\Vert _{L_p(\Omega)}$.
\end{lemma}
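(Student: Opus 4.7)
\emph{Proof plan.} The strategy parallels the proof of Lemma~\ref{lemma_Lambda}, replacing the use of Condition~\ref{Condition Lambda in L_infty} (boundedness of $\Lambda$) by Condition~\ref{Condition on Lambda-tilda} (local $L_p$-integrability of $\widetilde\Lambda$). Accordingly, Lemma~\ref{Lemma_Lambda_tilda3} furnishes $(H^1\to L_2)$-continuity of $[\widetilde\Lambda^\varepsilon]$ with norm $\leqslant\|\widetilde\Lambda\|_{L_p(\Omega)}C_\Omega^{(p)}$, while Corollary~\ref{Lemma_Lambda_tilda4} provides the multiplier estimate for $(\mathbf{D}\widetilde\Lambda)^\varepsilon$ on $H^2$. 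Write $R(\zeta):=(B^0-\zeta\overline{Q_0})^{-1}$. Besides \eqref{rim 8a} and \eqref{rim 9}, I shall use
\begin{equation*}
\|R(\zeta)\|_{L_2\to L_2}\leqslant\|f\|_{L_\infty}^2\,c(\phi)|\zeta|^{-1},
\end{equation*}
which follows from the representation $R(\zeta)=f_0(\widetilde{B}^0-\zeta I)^{-1}f_0$ together with $|f_0|\leqslant\|f\|_{L_\infty}$ and $\sup_{\nu\geqslant 0}|\nu-\zeta|^{-1}\leqslant c(\phi)|\zeta|^{-1}$. Since $|\zeta|\geqslant 1$, this in particular yields $\|R(\zeta)\|_{L_2\to H^1}\leqslant C\,c(\phi)|\zeta|^{-1/2}$.

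For \eqref{Lm 7/2 v}, I would decompose $[\widetilde\Lambda^\varepsilon](S_\varepsilon-I)R(\zeta)=[\widetilde\Lambda^\varepsilon]S_\varepsilon R(\zeta)-[\widetilde\Lambda^\varepsilon]R(\zeta)$. The first summand is controlled via \eqref{tildeLambda S_eps} and the $L_2$-norm of $R(\zeta)$ by $\widetilde{M}_1\|f\|_{L_\infty}^2 c(\phi)|\zeta|^{-1}$; the second by Lemma~\ref{Lemma_Lambda_tilda3} combined with the $(L_2\to H^1)$-bound on $R(\zeta)$ by $\|\widetilde\Lambda\|_{L_p(\Omega)}C_\Omega^{(p)}\cdot C\,c(\phi)|\zeta|^{-1/2}$. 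Using $|\zeta|^{-1}\leqslant|\zeta|^{-1/2}$ for the first summand and multiplying through by $\varepsilon$ yields \eqref{Lm 7/2 v}.

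For \eqref{Lm 7/2 g}, set $u:=R(\zeta)v\in H^2$ and $w:=(S_\varepsilon-I)u\in H^2$ (regularity is preserved by $S_\varepsilon$). The product rule $\varepsilon D_j([\widetilde\Lambda^\varepsilon]w)=[(D_j\widetilde\Lambda)^\varepsilon]w+\varepsilon[\widetilde\Lambda^\varepsilon]D_jw$ reduces the estimate to two terms. The second, $\varepsilon\|[\widetilde\Lambda^\varepsilon]\mathbf{D}w\|_{L_2}$, is handled by Lemma~\ref{Lemma_Lambda_tilda3} applied to $\mathbf{D}w=(S_\varepsilon-I)\mathbf{D}u\in H^1$; noting that $\|(S_\varepsilon-I)\mathbf{D}u\|_{L_2}\leqslant\varepsilon r_1\|\mathbf{D}^2u\|_{L_2}$ by Proposition~\ref{Proposition S_eps - I} while $\|(S_\varepsilon-I)\mathbf{D}^2u\|_{L_2}\leqslant 2\|\mathbf{D}^2u\|_{L_2}$, the $H^1$-norm is bounded by $(\varepsilon r_1+2)\|\mathbf{D}^2u\|_{L_2}\leqslant C\,c(\phi)\|v\|_{L_2}$ via \eqref{rim 8a}. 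The first term, $\|[(\mathbf{D}\widetilde\Lambda)^\varepsilon]w\|_{L_2}$, is handled by Corollary~\ref{Lemma_Lambda_tilda4} applied to $w\in H^2$: the summand $\widetilde\beta_1\|w\|_{H^1}^2$ is controlled via Proposition~\ref{Proposition S_eps - I} combined with \eqref{rim 8a}--\eqref{rim 9}, giving $\|w\|_{H^1}\leqslant\varepsilon r_1(\|\mathbf{D}u\|_{L_2}+\|\mathbf{D}^2u\|_{L_2})\leqslant C\varepsilon\,c(\phi)\|v\|_{L_2}$; the summand $\widetilde\beta_2\varepsilon^2\|\widetilde\Lambda\|_{L_p(\Omega)}^2(C_\Omega^{(p)})^2\|\mathbf{D}w\|_{H^1}^2$ uses the $H^1$-bound on $\mathbf{D}w$ obtained just above. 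Assembling the pieces yields \eqref{Lm 7/2 g}.

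The main subtlety is that $\|\mathbf{D}w\|_{H^1}$ does not acquire an extra factor of $\varepsilon$: only its $L_2$-component $\|(S_\varepsilon-I)\mathbf{D}u\|_{L_2}$ is smoothed by Proposition~\ref{Proposition S_eps - I}, while $\|(S_\varepsilon-I)\mathbf{D}^2u\|_{L_2}$ is merely uniformly bounded, since we cannot afford to invoke $\mathbf{D}^3u$. This mild loss is absorbed by the explicit $\varepsilon^2$ already present in front of $\|\mathbf{D}w\|_{H^1}^2$ in Corollary~\ref{Lemma_Lambda_tilda4}, so that each branch of the estimate still produces the required factor $\varepsilon\,c(\phi)\|v\|_{L_2}$.
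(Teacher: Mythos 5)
Your proof is correct and follows essentially the same route as the paper's: for \eqref{Lm 7/2 v} use the $(H^1\to L_2)$-continuity of $[\widetilde\Lambda^\varepsilon]$ from Lemma~\ref{Lemma_Lambda_tilda3} together with \eqref{Lm 7/2 e} and \eqref{rim 9}, and for \eqref{Lm 7/2 g} use the product rule, Corollary~\ref{Lemma_Lambda_tilda4}, Lemma~\ref{Lemma_Lambda_tilda3}, Proposition~\ref{Proposition S_eps - I}, and \eqref{rim 8a}--\eqref{rim 9}. The only cosmetic differences are that you split $S_\varepsilon-I$ into two summands in the first part (invoking \eqref{tildeLambda S_eps} for the $S_\varepsilon$ piece) where the paper applies Lemma~\ref{Lemma_Lambda_tilda3} once to the whole operator using $\|S_\varepsilon-I\|_{H^1\to H^1}\le 2$, and that you estimate the two product-rule terms separately where the paper collects them into a single squared inequality; your remark about why $\|\mathbf{D}w\|_{H^1}$ contributes no extra $\varepsilon$ (the $\varepsilon^2$ prefactor in Corollary~\ref{Lemma_Lambda_tilda4} already supplies it) correctly mirrors the factor $8(\widetilde\beta_2+1)$ in the paper's final constant.
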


\begin{proof}
By Lemma~\ref{Lemma_Lambda_tilda3} and Condition~\ref{Condition on Lambda-tilda}, we have
\begin{equation}
\label{Lm 7/2 d}
\begin{split}
\Vert & [\widetilde{\Lambda}^\varepsilon](S_\varepsilon -I)(B^0-\zeta\overline{Q_0})^{-1}\Vert _{L_2(\mathbb{R}^d)\rightarrow L_2(\mathbb{R}^d)}\\
&\leqslant 2 C_\Omega ^{(p)}\Vert \widetilde{\Lambda}\Vert _{L_p(\Omega)}\Vert (B^0-\zeta\overline{Q_0})^{-1}\Vert _{L_2(\mathbb{R}^d)\rightarrow H^1(\mathbb{R}^d)}.
\end{split}
\end{equation}
Using the relation between the operators $B^0=B^0(1)$ and $\widetilde{B}^0=\widetilde{B}^0(1)$ (see Subsection~\ref{Subsection tile-operators(theta)}), and also \eqref{f0_estimates}, we obtain
\begin{equation}
\label{Lm 7/2 e}
\begin{split}
\Vert (B^0-\zeta\overline{Q_0})^{-1}\Vert _{L_2(\mathbb{R}^d)\rightarrow L_2(\mathbb{R}^d)}
&\leqslant\vert f_0\vert ^2 \Vert (\widetilde{B}^0-\zeta I)^{-1}\Vert _{L_2(\mathbb{R}^d)\rightarrow L_2(\mathbb{R}^d)}\\
&\leqslant \Vert f\Vert ^2_{L_\infty}c(\phi)\vert\zeta\vert ^{-1}.
\end{split}
\end{equation}
Relations \eqref{rim 9}, \eqref{Lm 7/2 d}, and \eqref{Lm 7/2 e} imply  \eqref{Lm 7/2 v} with  $$\mathfrak{C}^{(1)}_{\widetilde{\Lambda}}=2C_\Omega^{(p)}\Vert \widetilde{\Lambda}\Vert _{L_p(\Omega)}(\Vert f\Vert ^2_{L_\infty}+c_*^{-1/2}\Vert f\Vert _{L_\infty}).$$

Now we prove  \eqref{Lm 7/2 g}. Similarly to \eqref{Lm 7/2 derivatives},
\begin{equation*}
\begin{split}
\varepsilon\partial _j [\widetilde{\Lambda} ^\varepsilon ]  (S_\varepsilon -I)(B^0-\zeta\overline{Q_0})^{-1}
&=[(\partial _j \widetilde{\Lambda} )^\varepsilon ](S_\varepsilon -I) (B^0-\zeta\overline{Q_0})^{-1}\\
&+\varepsilon [\widetilde{\Lambda} ^\varepsilon ](S_\varepsilon -I) \partial _j (B^0-\zeta\overline{Q_0})^{-1}.
\end{split}
\end{equation*}
Together with  Corollary \ref{Lemma_Lambda_tilda4} and Lemma \ref{Lemma_Lambda_tilda3} this implies
\begin{equation*}
\begin{split}
\varepsilon ^2 \Vert & \mathbf{D}[\widetilde{\Lambda}^\varepsilon](S_\varepsilon -I)(B^0-\zeta\overline{Q_0})^{-1}
\Vert ^2_{ L_2(\mathbb{R}^d)\to L_2(\mathbb{R}^d)}\\
&\leqslant 2\widetilde{\beta}_1\Vert (S_\varepsilon -I)(B^0-\zeta\overline{Q_0})^{-1}\Vert ^2_{L_2(\mathbb{R}^d) \to H^1(\mathbb{R}^d)}\\
&+2(\widetilde{\beta }_2+1)\varepsilon ^2  \|\widetilde{\Lambda} \|_{L_p(\Omega)}^2 (C_\Omega^{(p)})^2
\| (S_\varepsilon -I)\mathbf{D}(B^0-\zeta\overline{Q_0})^{-1}\|^2_{L_2(\mathbb{R}^d) \to H^1(\mathbb{R}^d)}.
\end{split}
\end{equation*}
Applying Proposition \ref{Proposition S_eps - I} to estimate the first term on the right, we arrive at
\begin{equation*}
\begin{split}
&\varepsilon\Vert  \mathbf{D}[\widetilde{\Lambda}^\varepsilon ](S_\varepsilon -I)(B^0-\zeta\overline{Q_0})^{-1}
\Vert  _{ L_2(\mathbb{R}^d) \to L_2(\mathbb{R}^d)}\\
&\leqslant \varepsilon \left(
2\widetilde{\beta}_1r_1^2+8(\widetilde{\beta }_2+1)\left(C_\Omega ^{(p)}\right)^2\Vert \widetilde{\Lambda}\Vert ^2_{L_p(\Omega)}
\right)^{1/2}
\Vert \mathbf{D}(B^0-\zeta\overline{Q_0})^{-1} \Vert _{ L_2(\mathbb{R}^d) \to  H^1(\mathbb{R}^d)}.
\end{split}
\end{equation*}
Combining this with \eqref{rim 8a} and \eqref{rim 9}, we obtain \eqref{Lm 7/2 g} with
\begin{equation*}
\begin{split}
\mathfrak{C}_{\widetilde{\Lambda}}^{(2)}&=\left(
2\widetilde{\beta}_1r_1^2+8(\widetilde{\beta }_2+1)\left(C_\Omega ^{(p)}\right)^2\Vert \widetilde{\Lambda}\Vert ^2_{L_p(\Omega)}
\right)^{1/2}\\
&\times
\left(c_*^{-1}\Vert f\Vert _{L_\infty}\Vert f^{-1}\Vert _{L_\infty}+c_*^{-1/2}\Vert f\Vert _{L_\infty}\right) .
\end{split}
\end{equation*}
\end{proof}

\subsection{Removal of $S_\varepsilon$ in approximation of the flux}

\begin{theorem}
\label{Theorem main result fluxes no S_eps}
Suppose that the assumptions of Theorem~\textnormal{\ref{Theorem main result L2}} are satisfied.
Let $\widetilde{g}(\mathbf{x})$ be given by \eqref{tilde g}.

\noindent
$1^\circ$. Suppose that Condition \textnormal{\ref{Condition Lambda in L_infty}} is satisfied. Denote
\begin{equation}
\label{G_1(eps,zeta)}
G_1(\varepsilon ;\zeta ):=\widetilde{g}^\varepsilon b(\mathbf{D})(B^0-\zeta \overline{Q_0})^{-1}
+g^\varepsilon \bigl(b(\mathbf{D})\widetilde{\Lambda }\bigr)^\varepsilon S_\varepsilon (B^0-\zeta \overline{Q_0})^{-1}.
\end{equation}
Then for $0<\varepsilon \leqslant 1$ and $\zeta \in \mathbb{C}\setminus\mathbb{R}_+$, $\vert \zeta \vert \geqslant 1$, we have
\begin{equation*}
\Vert g^\varepsilon b(\mathbf{D})(B_\varepsilon -\zeta Q_0^\varepsilon )^{-1}-G_1(\varepsilon ;\zeta )\Vert _{L_2(\mathbb{R}^d)\rightarrow L_2(\mathbb{R}^d)}\leqslant C_{10}'c(\phi)^2\varepsilon .
\end{equation*}
The constant $C_{10}'$ is controlled in terms of the problem data \eqref{problem data} and $\Vert \Lambda \Vert _{L_\infty}$.

\noindent
$2^\circ$. Suppose that Condition \textnormal{\ref{Condition on Lambda-tilda}} is satisfied.
Denote 
\begin{equation}
\label{G_2(eps,zeta)}
G_2(\varepsilon ;\zeta ):=\widetilde{g}^\varepsilon S_\varepsilon b(\mathbf{D})(B^0-\zeta \overline{Q_0})^{-1}
+g^\varepsilon \bigl(b(\mathbf{D})\widetilde{\Lambda }\bigr)^\varepsilon  (B^0-\zeta \overline{Q_0})^{-1}.
\end{equation}
Then for $0<\varepsilon \leqslant 1$ and $\zeta \in \mathbb{C}\setminus\mathbb{R}_+$, $\vert \zeta \vert \geqslant 1$, we have
\begin{equation*}
\Vert g^\varepsilon b(\mathbf{D})(B_\varepsilon -\zeta Q_0^\varepsilon )^{-1}-G_2(\varepsilon ;\zeta )\Vert _{L_2(\mathbb{R}^d)\rightarrow L_2(\mathbb{R}^d)}\leqslant C_{10}''c(\phi)^2\varepsilon .
\end{equation*}
The constant $C_{10}''$ depends only on the initial data \eqref{problem data}, $p$, and $\Vert \widetilde{\Lambda}\Vert _{L_p(\Omega)}$.

\noindent
$3^\circ$. Suppose that Conditions \textnormal{\ref{Condition Lambda in L_infty}} and \textnormal{\ref{Condition on Lambda-tilda}}
are satisfied. Denote
\begin{equation}
\label{G_3(eps,zeta)}
G_3(\varepsilon ;\zeta ):=\widetilde{g}^\varepsilon  b(\mathbf{D})(B^0-\zeta \overline{Q_0})^{-1}
+g^\varepsilon \bigl(b(\mathbf{D})\widetilde{\Lambda }\bigr)^\varepsilon  (B^0-\zeta \overline{Q_0})^{-1}.
\end{equation}
Then for $0<\varepsilon \leqslant 1$ and $\zeta \in \mathbb{C}\setminus\mathbb{R}_+$, $\vert \zeta \vert \geqslant 1$, we have
\begin{equation}
\label{7100}
\Vert g^\varepsilon b(\mathbf{D})(B_\varepsilon -\zeta Q_0^\varepsilon )^{-1}-G_3(\varepsilon ;\zeta )\Vert _{L_2(\mathbb{R}^d)\rightarrow L_2(\mathbb{R}^d)}\leqslant C_{10}c(\phi)^2\varepsilon .
\end{equation}
The constant $C_{10}$ depends only on the initial data \eqref{problem data}, $p$, and the norms
$\Vert \Lambda \Vert _{L_\infty}$, $\Vert \widetilde{\Lambda}\Vert _{L_p(\Omega)}$.
\end{theorem}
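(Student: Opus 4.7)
I will focus on part $3^\circ$, which combines both techniques needed for parts $1^\circ$ and $2^\circ$. The plan is to reduce the statement to an estimate on $G(\varepsilon;\zeta)-G_3(\varepsilon;\zeta)$: by Theorem~\ref{Theorem main result fluxes} and the triangle inequality, \eqref{7100} will follow once we show
$$
\|G(\varepsilon;\zeta)-G_3(\varepsilon;\zeta)\|_{L_2(\mathbb{R}^d)\to L_2(\mathbb{R}^d)}\leqslant \mathfrak{C}\,c(\phi)^2\varepsilon,
$$
under the same restrictions on $\varepsilon$ and $\zeta$. Writing $v:=(B^0-\zeta\overline{Q_0})^{-1}u$ for $u\in L_2$, the difference splits as
$$
G-G_3=\underbrace{\widetilde g^\varepsilon (S_\varepsilon-I)b(\mathbf{D})(B^0-\zeta\overline{Q_0})^{-1}}_{=:\mathcal T_A}+\underbrace{g^\varepsilon\bigl(b(\mathbf{D})\widetilde\Lambda\bigr)^\varepsilon(S_\varepsilon-I)(B^0-\zeta\overline{Q_0})^{-1}}_{=:\mathcal T_B}.
$$

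For $\mathcal T_A$, I would use the factorization $\widetilde g=g(b(\mathbf{D})\Lambda+\mathbf{1}_m)$ to split further:
$$
\mathcal T_A u=g^\varepsilon(S_\varepsilon-I)b(\mathbf{D})v+g^\varepsilon(b(\mathbf{D})\Lambda)^\varepsilon(S_\varepsilon-I)b(\mathbf{D})v.
$$
The first summand is handled directly by Proposition~\ref{Proposition S_eps - I}, boundedness of $g$, and \eqref{<b^*b<}, yielding a bound $\|g\|_{L_\infty}\alpha_1^{1/2}\varepsilon r_1\|\mathbf{D}^2 v\|_{L_2}$. For the second summand, the pointwise estimate $|b(\mathbf{D})\Lambda|\leqslant d^{1/2}\alpha_1^{1/2}|\mathbf{D}\Lambda|$ together with Corollary~\ref{corollary_Lambda} (which needs exactly Condition~\ref{Condition Lambda in L_infty}) applied to the $H^1$-function $w:=(S_\varepsilon-I)b(\mathbf{D})v$ gives
$$
\|g^\varepsilon(b(\mathbf{D})\Lambda)^\varepsilon w\|_{L_2}^2\leqslant \|g\|^2_{L_\infty}d\alpha_1\bigl(\beta_1\|w\|_{L_2}^2+\beta_2\varepsilon^2\|\Lambda\|_{L_\infty}^2\|\mathbf{D}w\|_{L_2}^2\bigr).
$$
Both norms of $w$ are $O(\varepsilon\|\mathbf{D}^2 v\|_{L_2})$ (the $L_2$-norm by Proposition~\ref{Proposition S_eps - I}, the $\mathbf{D}w$-norm by the uniform bound $\|\mathbf{D}(S_\varepsilon-I)\|_{L_2\to L_2}\leqslant 2$ combined with $\|\mathbf{D}b(\mathbf{D})v\|_{L_2}\leqslant\alpha_1^{1/2}\|\mathbf{D}^2 v\|_{L_2}$), so $\|\mathcal T_A u\|_{L_2}\leqslant \mathfrak{C}_A\varepsilon\|\mathbf{D}^2 v\|_{L_2}$.

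For $\mathcal T_B$, I would apply Corollary~\ref{Lemma_Lambda_tilda4} (valid exactly under Condition~\ref{Condition on Lambda-tilda}) to the $H^2$-function $u_0:=(S_\varepsilon-I)v$, noting that $u_0\in H^2$ with $\|u_0\|_{H^1}\leqslant\varepsilon r_1\|v\|_{H^2}$ and $\|\mathbf{D}u_0\|_{H^1}\leqslant 2\|v\|_{H^2}$. Combined with $|b(\mathbf{D})\widetilde\Lambda|\leqslant d^{1/2}\alpha_1^{1/2}|\mathbf{D}\widetilde\Lambda|$ and $g\in L_\infty$, this gives $\|\mathcal T_B u\|_{L_2}\leqslant \mathfrak{C}_B\varepsilon\|v\|_{H^2}$. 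Finally, the $(L_2\to H^2)$-norm of the effective resolvent is controlled via \eqref{rim 8a}, \eqref{rim 9} and \eqref{Lm 7/2 e}, all together bounded by $\mathfrak{C}\,c(\phi)$ under the assumption $|\zeta|\geqslant 1$. Since $c(\phi)\leqslant c(\phi)^2$, the required estimate \eqref{7100} follows.

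The main obstacle is the term $\mathcal T_A$: the factor $\widetilde g$ is in general only in $L_2(\Omega)$, so one cannot directly bound $\widetilde g^\varepsilon$ times an $L_2$-function. The factorization $\widetilde g=g(b(\mathbf{D})\Lambda+\mathbf{1}_m)$ isolates the problematic part $g^\varepsilon(b(\mathbf{D})\Lambda)^\varepsilon$, which becomes a bounded operator from $H^1$ to $L_2$ precisely under Condition~\ref{Condition Lambda in L_infty} via Corollary~\ref{corollary_Lambda}; this is the point where the hypothesis on $\Lambda$ enters crucially. The estimates for parts $1^\circ$ and $2^\circ$ are obtained by keeping $S_\varepsilon$ in the term whose removal requires the condition that is not assumed, and repeating only the argument above that needs the condition that is assumed.
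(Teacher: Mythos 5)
Your proposal is correct, and it takes a genuinely different route from the paper. The paper proves Theorem~\ref{Theorem main result fluxes no S_eps} by starting from Theorem~\ref{Theorem main result H1 no S_eps} (the $(L_2\to H^1)$ estimate with the simplified corrector $I+\varepsilon[\Lambda^\varepsilon]b(\mathbf{D})+\varepsilon[\widetilde\Lambda^\varepsilon]$), applying $g^\varepsilon b(\mathbf{D})$, expanding via the Leibniz rule (the analogue of \eqref{d-vu th 4/4 1}), identifying $G_3$ in the leading terms via $\widetilde g = g(b(\mathbf{D})\Lambda + \mathbf{1}_m)$, and estimating the $O(\varepsilon)$ remainder with \eqref{710c}--\eqref{710d}; in effect, it replays the argument of Subsection~\ref{Subsection proof fluxes} with the unsmoothed corrector in place of the smoothed one. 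You instead start directly from the already-established flux approximation $G(\varepsilon;\zeta)$ of Theorem~\ref{Theorem main result fluxes} and estimate $\|G - G_3\|$ by removing $S_\varepsilon$ at the flux level, which is more economical: it avoids a second pass through the Leibniz expansion. Both proofs ultimately invoke the same ingredients (Corollary~\ref{corollary_Lambda} under Condition~\ref{Condition Lambda in L_infty}, and Corollary~\ref{Lemma_Lambda_tilda4} under Condition~\ref{Condition on Lambda-tilda}, together with the resolvent bounds \eqref{rim 8a}, \eqref{rim 9}, \eqref{Lm 7/2 e}); the difference is where in the argument the smoothing is removed. Your observation that the factorization $\widetilde g = g(b(\mathbf{D})\Lambda + \mathbf{1}_m)$ is what makes the term $\mathcal T_A$ tractable (since $\widetilde g$ is in general only $L_2(\Omega)$) is exactly the point, and it is the same identity the paper uses implicitly when collecting $G_3$ from the Leibniz expansion. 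The reductions of cases $1^\circ$ and $2^\circ$ to the same argument with one of the two terms $\mathcal T_A$, $\mathcal T_B$ kept intact are also sound, and the bookkeeping of constant dependencies (initial data plus $\|\Lambda\|_{L_\infty}$, $p$, $\|\widetilde\Lambda\|_{L_p(\Omega)}$) comes out the same.
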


\begin{proof}
The statement of Theorem \ref{Theorem main result fluxes no S_eps} is deduced from Theorem \ref{Theorem main result H1 no S_eps}.
The proof is similar to that of Theorem \ref{Theorem main result fluxes} (see Subsection~\ref{Subsection proof fluxes}).
To be concrete, let us prove statement~$3^\circ$.
By analogy with \eqref{7.30a}, from \eqref{7*} it follows that
\begin{equation}
\label{710a}
\begin{split}
\Vert &g^\varepsilon b(\mathbf{D})(B_\varepsilon -\zeta Q_0^\varepsilon )^{-1}\\
&-g^\varepsilon b(\mathbf{D})\left(I+\varepsilon \left( [\Lambda ^\varepsilon]b(\mathbf{D})+[\widetilde{\Lambda}^\varepsilon ]\right) \right)
(B^0-\zeta \overline{Q_0})^{-1}\Vert _{L_2\rightarrow L_2}\\
&\leqslant \Vert g\Vert _{L_\infty}\alpha _1^{1/2}C_9 c(\phi )^2\varepsilon .
\end{split}
\end{equation}
Next, similarly to \eqref{d-vu th 4/4 1},
\begin{equation}
\label{710b}
\begin{split}
\varepsilon &
g^\varepsilon b(\mathbf{D})\left( [\Lambda ^\varepsilon ]b(\mathbf{D})+[\widetilde{\Lambda }^\varepsilon ]\right)(B^0-\zeta \overline{Q_0})^{-1}\\
&=g^\varepsilon (b(\mathbf{D})\Lambda )^\varepsilon  b(\mathbf{D})(B^0-\zeta \overline{Q_0})^{-1}
+g^\varepsilon (b(\mathbf{D})\widetilde{\Lambda})^\varepsilon  (B^0-\zeta \overline{Q_0})^{-1}\\
&+\varepsilon \sum _{l=1}^d g^\varepsilon b_l\left([\Lambda ^\varepsilon ] b(\mathbf{D})D_l +[\widetilde{\Lambda }^\varepsilon ] D_l\right) (B^0-\zeta \overline{Q_0})^{-1}.
\end{split}
\end{equation}
The difference with the proof of Theorem \ref{Theorem main result fluxes} is related to
estimation of the third term in the right-hand side of \eqref{710b}.
Condition \ref{Condition Lambda in L_infty} and relations  \eqref{<b^*b<}, \eqref{bl<=} imply that
\begin{equation}
\label{710c}
\begin{split}
\varepsilon &\sum _{j=1}^d\Vert g^\varepsilon b_l[\Lambda ^\varepsilon ]b(\mathbf{D})D_l(B^0-\zeta\overline{Q_0})^{-1}\Vert _{L_2(\mathbb{R}^d)\rightarrow L_2(\mathbb{R}^d)}
\\
&\leqslant\varepsilon\alpha _1d^{1/2}\Vert g\Vert _{L_\infty}\Vert \Lambda\Vert _{L_\infty}\Vert \mathbf{D}^2(B^0-\zeta\overline{Q_0})^{-1}\Vert _{L_2(\mathbb{R}^d)\rightarrow L_2(\mathbb{R}^d)}.
\end{split}
\end{equation}
Using Condition~\ref{Condition on Lambda-tilda}, \eqref{bl<=}, and Lemma~\ref{Lemma_Lambda_tilda3}, we obtain
\begin{equation}
\label{710d}
\begin{split}
\varepsilon &\sum _{j=1}^d\Vert g^\varepsilon b_l [\widetilde{\Lambda}^\varepsilon ]D_l (B^0-\zeta\overline{Q_0})^{-1}\Vert _{L_2(\mathbb{R}^d)\rightarrow L_2(\mathbb{R}^d)}\\
&\leqslant\varepsilon\alpha _1^{1/2}d^{1/2}C_\Omega ^{(p)}\|g\|_{L_\infty}\Vert \widetilde{\Lambda}\Vert _{L_p(\Omega)}\Vert \mathbf{D}(B^0-\zeta\overline{Q_0})^{-1}\Vert _{L_2(\mathbb{R}^d)\rightarrow H^1(\mathbb{R}^d)}.
\end{split}
\end{equation}
From \eqref{rim 8a},  \eqref{rim 9}, \eqref{710c},  and \eqref{710d} it follows that the~$(L_2 \to L_2)$-norm of the third term
in the right-hand side of \eqref{710b} does not exceed $\widehat{C}_{10}c(\phi)\varepsilon$, where
\begin{equation*}
\begin{split}
\widehat{C}_{10}&= \alpha _1 d^{1/2}\Vert g\Vert _{L_\infty}\Vert \Lambda\Vert _{L_\infty} c_*^{-1}\Vert f\Vert _{L_\infty}
\Vert f^{-1} \Vert_{L_\infty}
\cr
 &+
\alpha _1^{1/2}d^{1/2} C_\Omega ^{(p)}\|g\|_{L_\infty} \Vert \widetilde{\Lambda}\Vert _{L_p(\Omega)}
\left(c_*^{-1}\Vert f\Vert _{L_\infty}\Vert f^{-1}\Vert _{L_\infty}+c_*^{-1/2}\Vert f\Vert _{L_\infty}\right) .
\end{split}
\end{equation*}
Together with \eqref{710a} and \eqref{710b} this implies
\eqref{7100} with $C_{10}= \Vert g\Vert _{L_\infty}\alpha _1^{1/2}C_9 + \widehat{C}_{10}$.

Statements  $1^\circ$ and $2^\circ$ are proved in a similar fashion;
to check $2^\circ$, in addition we have to take \eqref{rim 11} into account.
\end{proof}

\subsection{The case of the zero corrector} Suppose that $g^0=\overline{g}$, which is equivalent to~\eqref{overline-g}. 
Then the $\Gamma$-periodic solution of problem~\eqref{Lambda problem}
is equal to zero: $\Lambda (\mathbf{x})=0$. Suppose also that
\begin{equation}
\label{Djaj=0}
\sum _{j=1}^d D_j a_j(\mathbf{x})^*=0.
\end{equation}
Then the $\Gamma$-periodic solution of problem \eqref{tildeLambda_problem} is equal to zero:
$\widetilde{\Lambda}(\mathbf{x})=0$. Under these assumptions, the operator \eqref{K(eps;zeta)} is equal to zero. Hence,
relation \eqref{Th 3.2 D L_2->L_2} simplifies, and Theorem~\ref{Theorem main result H1} implies the following result.

\begin{proposition}
\label{Proposition K=0}
Suppose that the assumptions of Theorem~\textnormal{\ref{Theorem main result L2}} are satisfied.
Suppose also that relations \eqref{overline-g} and \eqref{Djaj=0} are satisfied.
Then for $0<\varepsilon\leqslant 1$ and $\zeta \in \mathbb{C}\setminus\mathbb{R}_+$, $\vert \zeta \vert \geqslant 1$, we have
\begin{equation*}
\Vert \mathbf{D}
\left(
(B_\varepsilon -\zeta Q_0^\varepsilon )^{-1}-(B^0-\zeta \overline{Q_0})^{-1}
\right)
\Vert _{L_2(\mathbb{R}^d)\rightarrow L_2(\mathbb{R}^d)}\leqslant C_6c(\phi )^2\varepsilon .
\end{equation*}
\end{proposition}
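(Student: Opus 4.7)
The plan is to observe that, under the two additional hypotheses, both periodic correctors $\Lambda$ and $\widetilde{\Lambda}$ vanish identically, whence the operator $K(\varepsilon;\zeta)$ defined in \eqref{K(eps;zeta)} is the zero operator. Once that is established, the conclusion is a literal specialization of estimate \eqref{Th 3.2 D L_2->L_2} of Theorem~\ref{Theorem main result H1}.

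First I would check that $\Lambda\equiv 0$. By Proposition~\ref{Proposition g^0=mean value g}, the identity $g^0=\overline{g}$ is equivalent to \eqref{overline-g}, i.e.\ $b(\mathbf{D})^*\mathbf{g}_k=0$ for each column $\mathbf{g}_k$ of $g(\mathbf{x})$. Written column-by-column, this is precisely the statement that $b(\mathbf{D})^*g(\mathbf{x})\mathbf{1}_m=0$ in the weak sense. Hence the zero function is a $\Gamma$-periodic weak solution of \eqref{Lambda problem} with zero mean; by uniqueness of such a solution, $\Lambda\equiv 0$. Similarly, the second hypothesis $\sum_{j=1}^d D_j a_j(\mathbf{x})^* = 0$ says exactly that $\widetilde{\Lambda}\equiv 0$ is a $\Gamma$-periodic weak solution of \eqref{tildeLambda_problem} with zero mean, so by uniqueness $\widetilde{\Lambda}\equiv 0$.

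With $\Lambda=0$ and $\widetilde{\Lambda}=0$, formula \eqref{K(eps;zeta)} gives $K(\varepsilon;\zeta)\equiv 0$. Plugging this into the bound \eqref{Th 3.2 D L_2->L_2} of Theorem~\ref{Theorem main result H1} yields the claimed estimate with the same constant $C_6$. There is no genuine obstacle here: the whole content is the vanishing of the two periodic problems' solutions, which is immediate from Proposition~\ref{Proposition g^0=mean value g} and from inspecting \eqref{tildeLambda_problem}. In effect, Proposition~\ref{Proposition K=0} is a corollary of Theorem~\ref{Theorem main result H1} together with these two structural identifications.
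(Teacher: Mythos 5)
Your proposal is correct and follows exactly the paper's route: under \eqref{overline-g} the zero function solves \eqref{Lambda problem}, and under \eqref{Djaj=0} the zero function solves \eqref{tildeLambda_problem}, so $\Lambda=\widetilde{\Lambda}=0$, hence $K(\varepsilon;\zeta)=0$ by \eqref{K(eps;zeta)}, and the claim is a literal specialization of \eqref{Th 3.2 D L_2->L_2}. No gap.
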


\subsection{The special case} Suppose that $g^0=\underline{g}$, which is equivalent to~\eqref{underline-g}.
Then, by Proposition~\ref{Proposition Condition on Lambda holds}($3^\circ$), Condition~\ref{Condition Lambda in L_infty}
is satisfied. According to \cite[Remark~3.5]{BSu05}, in this case the matrix-valued function \eqref{tilde g} is constant
and coincides with $g^0$, i.~e., $\widetilde{g}(\mathbf{x})=g^0=\underline{g}$.
Thus, $\widetilde{g}^\varepsilon b(\mathbf{D})(B^0-\zeta \overline{Q_0})^{-1}=g^0b(\mathbf{D})(B^0-\zeta \overline{Q_0})^{-1}$.

In addition, suppose that relation \eqref{Djaj=0} is satisfied.
Then $\widetilde{\Lambda}(\mathbf{x})=0$, and Theorem~\ref{Theorem main result fluxes no S_eps}($3^\circ$) implies the
following result.

\begin{proposition}
\label{Proposition non-general case}
Suppose that the assumptions of Theorem~\textnormal{\ref{Theorem main result L2}} are satisfied.
Suppose also that relations \eqref{underline-g} and \eqref{Djaj=0} are satisfied.
Then for $0<\varepsilon \leqslant 1$ and $\zeta \in \mathbb{C}\setminus\mathbb{R}_+$, $\vert \zeta \vert \geqslant 1$, we have
\begin{equation*}
\Vert g^\varepsilon b(\mathbf{D})(B_\varepsilon -\zeta Q_0^\varepsilon )^{-1}-g^0b(\mathbf{D})(B^0-\zeta \overline{Q_0})^{-1}\Vert _{L_2(\mathbb{R}^d)\rightarrow L_2(\mathbb{R}^d)}\leqslant C_{10}c(\phi)^2\varepsilon .
\end{equation*}
\end{proposition}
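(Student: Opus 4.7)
The plan is to derive Proposition~\ref{Proposition non-general case} as a direct specialization of Theorem~\ref{Theorem main result fluxes no S_eps}($3^\circ$) by showing that the hypotheses \eqref{underline-g} and \eqref{Djaj=0} force both auxiliary matrix-valued functions $\Lambda$ and $\widetilde{\Lambda}$ to behave in the simplest possible way, which collapses the corrector $G_3(\varepsilon;\zeta)$ into the single leading term~$g^0 b(\mathbf{D})(B^0-\zeta\overline{Q_0})^{-1}$.

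First, I would verify that the hypotheses of Theorem~\ref{Theorem main result fluxes no S_eps}($3^\circ$) are satisfied. Under the assumption $g^0=\underline{g}$ (i.e., \eqref{underline-g}), Proposition~\ref{Proposition Condition on Lambda holds}($3^\circ$) gives Condition~\ref{Condition Lambda in L_infty}, so $\Lambda\in L_\infty$. Moreover, as recalled in the excerpt (citing \cite[Remark~3.5]{BSu05}), in this case the matrix-valued function $\widetilde{g}(\mathbf{x})=g(\mathbf{x})(b(\mathbf{D})\Lambda(\mathbf{x})+\mathbf{1}_m)$ is in fact the constant matrix~$g^0$, so $\widetilde{g}^\varepsilon=g^0=\underline{g}$. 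Next, the assumption \eqref{Djaj=0} means that the inhomogeneous term in problem \eqref{tildeLambda_problem} vanishes, so the $\Gamma$-periodic solution with zero mean is simply $\widetilde{\Lambda}(\mathbf{x})\equiv 0$. Consequently Condition~\ref{Condition on Lambda-tilda} holds trivially (with arbitrary~$p$), and both $\widetilde{\Lambda}^\varepsilon$ and $(b(\mathbf{D})\widetilde{\Lambda})^\varepsilon$ vanish.

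With these two observations, the corrector \eqref{G_3(eps,zeta)} of Theorem~\ref{Theorem main result fluxes no S_eps}($3^\circ$) becomes
\begin{equation*}
G_3(\varepsilon;\zeta)=\widetilde{g}^\varepsilon b(\mathbf{D})(B^0-\zeta\overline{Q_0})^{-1}+g^\varepsilon\bigl(b(\mathbf{D})\widetilde{\Lambda}\bigr)^\varepsilon(B^0-\zeta\overline{Q_0})^{-1}=g^0 b(\mathbf{D})(B^0-\zeta\overline{Q_0})^{-1}.
\end{equation*}
Substituting this identity into estimate \eqref{7100} yields exactly the bound claimed in Proposition~\ref{Proposition non-general case}, with the constant $C_{10}$ inherited from Theorem~\ref{Theorem main result fluxes no S_eps}($3^\circ$) and controlled in terms of the initial data \eqref{problem data} and $\|\Lambda\|_{L_\infty}$ (the dependence on $\|\widetilde{\Lambda}\|_{L_p(\Omega)}$ is moot since this norm is zero).

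There is essentially no obstacle in this proof: all the analytic work has already been done in Theorem~\ref{Theorem main result fluxes no S_eps}, and the proposition is a bookkeeping reduction. The only point worth verifying carefully is the constancy of $\widetilde{g}$ under \eqref{underline-g}, which follows from Propositions~\ref{Proposition g^0 = underline g} and the identity $\widetilde{g}(\mathbf{x})=g(\mathbf{x})(b(\mathbf{D})\Lambda(\mathbf{x})+\mathbf{1}_m)$: substituting the representation \eqref{underline-g} for the columns of $g^{-1}$ identifies $b(\mathbf{D})\Lambda+\mathbf{1}_m$ with $g^{-1}g^0$ up to a solenoidal term that vanishes after multiplication by $g$, leaving $\widetilde{g}=g^0$.
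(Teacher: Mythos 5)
Your proposal is correct and follows the paper's own argument verbatim: establish $\Lambda\in L_\infty$ and $\widetilde{g}=g^0$ from \eqref{underline-g}, establish $\widetilde{\Lambda}\equiv 0$ from \eqref{Djaj=0}, and then specialize Theorem~\ref{Theorem main result fluxes no S_eps}($3^\circ$) with the collapsed corrector. (One small imprecision in your closing remark: the identification $b(\mathbf{D})\Lambda+\mathbf{1}_m=g^{-1}g^0$ under \eqref{underline-g} is in fact exact, with no residual solenoidal term to cancel; but this is immaterial since you, like the paper, ultimately rely on the cited reference for the constancy of $\widetilde{g}$.)
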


\section{Another approximation \\ of the generalized resolvent $(B_\varepsilon -\zeta Q_0^\varepsilon )^{-1}$}
\label{Section another approximation}

\subsection{The result in the general case}
In Theorems of Sections \ref{Section main results} and \ref{section_special}, it was assumed that
$\zeta \in \mathbb{C}\setminus\mathbb{R}_+$ and $\vert \zeta \vert \geqslant 1$.
In the present subsection, we obtain the results in a wider domain of $\zeta$.

\begin{theorem}
\label{Theorem another approximation}
Suppose that the assumptions of Theorem~\textnormal{\ref{Theorem main result H1}} are satisfied.
Let $f(\mathbf{x})$ and $f_0$ be defined in Subsection \textnormal{\ref{Subsection tile-operators(theta)}}.
Let $\zeta \in \mathbb{C}\setminus [c_\flat ,\infty )$, where $c_\flat \geqslant 0$ is a common lower bound of the operators $\widetilde{B}_\varepsilon =(f^\varepsilon )^*B_\varepsilon f^\varepsilon$ and $\widetilde{B}^0=f_0B^0 f_0$.
We put $\zeta -c_\flat=\vert \zeta -c_\flat\vert e^{i\psi}$, $\psi \in (0,2\pi)$, and denote
\begin{equation}
\label{rho (zeta)}
\varrho (\zeta )=\begin{cases}
c(\psi )^2\vert \zeta -c_\flat \vert ^{-2}, &\vert \zeta -c_\flat \vert <1,\\
c(\psi )^2, &\vert \zeta -c_\flat \vert \geqslant 1.
\end{cases}
\end{equation}
Then for $0<\varepsilon \leqslant 1$ we have
\begin{align}
\label{Th 9.1 1}
\Vert &(B_\varepsilon -\zeta Q_0^\varepsilon )^{-1}-(B^0-\zeta \overline{Q_0})^{-1}\Vert _{L_2(\mathbb{R}^d)\rightarrow L_2(\mathbb{R}^d)}\leqslant C_{11}\varrho (\zeta )\varepsilon ,\\
\label{Th 9.1 2}
\Vert &(B_\varepsilon -\zeta Q_0^\varepsilon )^{-1}-(B^0-\zeta \overline{Q_0})^{-1}-\varepsilon K(\varepsilon ;\zeta)\Vert _{L_2(\mathbb{R}^d)\rightarrow L_2(\mathbb{R}^d)}\leqslant C_{12}\varrho (\zeta )\varepsilon ,\\
\label{Th 9.1 3}
\begin{split}
\Vert &\mathbf{D}
\bigl(
(B_\varepsilon -\zeta Q_0^\varepsilon )^{-1}-(B^0-\zeta \overline{Q_0})^{-1}-\varepsilon K(\varepsilon ;\zeta)
\bigr)\Vert _{L_2(\mathbb{R}^d)\rightarrow L_2(\mathbb{R}^d)}\\
&\leqslant \bigl( C_{13}+\vert \zeta +1\vert ^{1/2}C_{14}\bigr)\varrho (\zeta )\varepsilon .
\end{split}
\end{align}
Let $G(\varepsilon ;\zeta )$ be the operator defined by \eqref{G(eps,zeta)}. Then for $0<\varepsilon \leqslant 1$ we have
\begin{equation}
\label{Th 9.1 4}
\Vert g^\varepsilon b(\mathbf{D})(B_\varepsilon -\zeta Q_0^\varepsilon )^{-1}-G(\varepsilon ;\zeta )\Vert _{L_2(\mathbb{R}^d)\rightarrow L_2(\mathbb{R}^d)}
\leqslant ( C_{15}+\vert \zeta +1\vert ^{1/2}C_{16})\varrho (\zeta )\varepsilon .
\end{equation}
The constants $C_{11}$, $C_{12}$, $C_{13}$, $C_{14}$, $C_{15}$, and $C_{16}$ are controlled
in terms of the initial data \eqref{problem data} and  $c_\flat$.
\end{theorem}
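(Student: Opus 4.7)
\emph{Approach.} The plan is to prove Theorem~\ref{Theorem another approximation} by the same resolvent-identity mechanism used in Sections~\ref{Section proof of main result L2}--\ref{Section Proof of main result in H1}, but applied directly to $B_\varepsilon$ (not to the $\vartheta$-family), since the goal here is to cover the full region $\mathbb{C}\setminus[c_\flat,\infty)$ with an error that degenerates as $\zeta\to c_\flat$, rather than to trade $\varepsilon$ against $|\zeta|^{-1}$. The base $L_2$ and $L_2\to H^1$ approximations at the point $\lambda_0$ are provided by Theorems~\ref{Theorem Su L2} and~\ref{Theorem S_eps z fixed}, and the passage to an arbitrary $\zeta$ is achieved through identities of the form \eqref{tozd generalized resolvent} and \eqref{tozd corrector} (with $B_\varepsilon$ in place of $B_\varepsilon(\vartheta)$). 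The outer factors are controlled by scalar suprema over $[c_\flat,\infty)$ via the intertwinings $\widetilde{B}_\varepsilon=(f^\varepsilon)^*B_\varepsilon f^\varepsilon$ and $\widetilde{B}^0=f_0 B^0 f_0$.

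\emph{Steps.} First, from Theorem~\ref{Theorem Su L2}, $\|(B_\varepsilon+\lambda_0 Q_0^\varepsilon)^{-1}-(B^0+\lambda_0\overline{Q_0})^{-1}\|_{L_2\to L_2}\le C_1\varepsilon$, and from Theorem~\ref{Theorem S_eps z fixed}, the analogous $L_2\to H^1$ estimate with corrector $\widetilde K(\varepsilon)$. Second, plug these into the identities used in the proofs of Theorems~\ref{Theorem main result L2} and~\ref{Theorem main result H1}. The outer factor is handled by writing $\nu+\lambda_0=(\nu-\zeta)+(\zeta+\lambda_0)$, which yields
\begin{equation*}
\sup_{\nu\ge c_\flat}\frac{\nu+\lambda_0}{|\nu-\zeta|}\le 1+|\zeta+\lambda_0|\,\frac{c(\psi)}{|\zeta-c_\flat|}\le \mathfrak C(c_\flat)\sqrt{\varrho(\zeta)},
\end{equation*}
and symmetrically on the right; multiplied by the base $\varepsilon$-estimate this gives \eqref{Th 9.1 1}. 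The $(Q_0^\varepsilon-\overline{Q_0})$-term is estimated by Lemma~\ref{Lemma Q0eps-mean value} together with $\|(B_\varepsilon-\zeta Q_0^\varepsilon)^{-1}\|_{H^{-1}\to L_2}$ and $\|(B^0-\zeta\overline{Q_0})^{-1}\|_{L_2\to H^1}$, both of which reduce (by duality and by intertwining) to $\sup_{\nu\ge c_\flat}\nu^{1/2}/|\nu-\zeta|$; I would bound this by splitting $\nu^{1/2}=(\nu-c_\flat)^{1/2}+c_\flat^{1/2}$ and treating $\mu:=\nu-c_\flat$ separately in the ranges $\mu\le 2|\zeta-c_\flat|$ (via $|\mu-z|\ge c(\psi)^{-1}|\zeta-c_\flat|$) and $\mu\ge 2|\zeta-c_\flat|$ (via $|\mu-z|\ge\mu/2$). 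Third, for \eqref{Th 9.1 2} repeat the process with the corrector-including identity \eqref{tozd corrector}, checking along the way that $\|K(\varepsilon;\zeta)\|_{L_2\to L_2}$ satisfies an analogue of Lemma~\ref{Lemma K(eps,theta)} with $B^0-\zeta\overline{Q_0}$ in place of $B^0(\vartheta)+\lambda_0\overline{Q_0}$. Fourth, for \eqref{Th 9.1 3} apply $B_\varepsilon^{1/2}$ to the corrected identity and invoke the form lower bound $\mathfrak b_\varepsilon[\mathbf u,\mathbf u]\ge c_*\|\mathbf D\mathbf u\|^2$ (valid for $B_\varepsilon$ in view of \eqref{b_eps>=} and the choice of $c_5$), so that $\|\mathbf D\,\cdot\,\|\le c_*^{-1/2}\|B_\varepsilon^{1/2}\,\cdot\,\|$; the factor $|\zeta+1|^{1/2}$ enters from the supremum $\sup_{\nu\ge c_\flat}\nu/|\nu-\zeta|\le 1+|\zeta|\,c(\psi)/|\zeta-c_\flat|$ appearing in $\|B_\varepsilon (B_\varepsilon-\zeta Q_0^\varepsilon)^{-1}\|_{L_2\to L_2}$ and its $B^0$-counterpart. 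Finally, \eqref{Th 9.1 4} follows from \eqref{Th 9.1 3} by the same computation as in Subsection~\ref{Subsection proof fluxes}, the factor $|\zeta+1|^{1/2}$ again originating from $\|\mathbf D^2(B^0-\zeta\overline{Q_0})^{-1}\|_{L_2\to L_2}$, which by \eqref{rim 8}--\eqref{rim 9} reduces to $\|B^0(B^0-\zeta\overline{Q_0})^{-1}\|_{L_2\to L_2}$.

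\emph{Main obstacle.} The principal technical difficulty is to organize the various scalar suprema over $[c_\flat,\infty)$ -- namely $\sup(\nu+\lambda_0)/|\nu-\zeta|$, $\sup\nu^{1/2}/|\nu-\zeta|$, and $\sup\nu/|\nu-\zeta|$ -- into the compact bound of the form $\mathfrak C(c_\flat)(1+|\zeta+1|^{1/2})\sqrt{\varrho(\zeta)}$ required by the statement. This bookkeeping demands a separate treatment of the regimes $|\zeta-c_\flat|<1$ and $|\zeta-c_\flat|\ge 1$, together with a further split of the supremum variable into a neighbourhood of $c_\flat$ and a large-$\nu$ tail, in order to identify precisely where growth in $|\zeta|$ is unavoidable (and where it is not). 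Once these scalar estimates are in place, the remaining arguments carry over essentially verbatim from Sections~\ref{Section proof of main result L2}--\ref{section_special}, with constants depending additionally on $c_\flat$.
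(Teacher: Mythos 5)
Your proposal is essentially correct and follows the same resolvent-identity mechanism as the paper, with two minor variations that are worth noting. First, you anchor the identity at the base point $-\lambda_0$ and invoke the fixed-point results of Theorems~\ref{Theorem Su L2} and~\ref{Theorem S_eps z fixed} directly, whereas the paper anchors at $\zeta=-1$ and invokes the already-established Theorem~\ref{Theorem main result L2} and Corollary~\ref{Corollary main result L2->H1} at that point (with $c(\pi)=1$). Both yield an $O(\varepsilon)$ base estimate, and your scalar bound $\sup_{\nu\geqslant c_\flat}(\nu+\lambda_0)/|\nu-\zeta|\leqslant \mathfrak C(c_\flat,\lambda_0)\varrho(\zeta)^{1/2}$ plays exactly the role of the paper's bound~\eqref{II} with $\nu+1$. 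Second, for~\eqref{Th 9.1 2} the paper takes the shortcut of~\eqref{Th 9.1 1} plus a direct bound $\|K(\varepsilon;\zeta)\|_{L_2\to L_2}\lesssim\varrho(\zeta)^{1/2}\leqslant\varrho(\zeta)$, while you propose to run the full corrector identity~\eqref{tozd corrector}; both give~\eqref{Th 9.1 2}, but the triangle-inequality route is cleaner. The technical heart of the matter — which you correctly isolate — is the absorption of the prefactor $|\zeta+1|$ (or $|\zeta+\lambda_0|$) into the products of the $(H^{-1}\to L_2)$ and $(L_2\to H^1)$ factors. For the estimates without a residual $|\zeta+1|^{1/2}$ (i.e.~\eqref{Th 9.1 1},~\eqref{Th 9.1 2}), one needs the sharper bound $\sup_{\nu\geqslant c_\flat}\nu^{1/2}/|\nu-\zeta|\lesssim c(\psi)|\zeta-c_\flat|^{-1/2}$ in the regime $|\zeta-c_\flat|\geqslant 1$, not merely $\lesssim\varrho(\zeta)^{1/2}$; your proposed split of $\nu^{1/2}=(\nu-c_\flat)^{1/2}+c_\flat^{1/2}$ with the near/far dichotomy in $\mu=\nu-c_\flat$ does produce this extra $|\zeta-c_\flat|^{-1/2}$, and for~\eqref{Th 9.1 3}--\eqref{Th 9.1 4} the extra $B_\varepsilon^{1/2}$ (resp.~$\mathbf D^2$) raises the supremum to $\sup\nu/|\nu-\zeta|$, which has no such gain, whence the residual $|\zeta+1|^{1/2}$. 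In short: same approach, different anchor, equally valid.
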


\begin{corollary}
Under the assumptions of Theorem~\textnormal{\ref{Theorem another approximation}}, we have
\begin{equation}
\label{8.5a}
\begin{split}
\Vert &(B_\varepsilon -\zeta Q_0^\varepsilon )^{-1}-(B^0-\zeta \overline{Q_0})^{-1}-\varepsilon K(\varepsilon ;\zeta)\Vert _{L_2(\mathbb{R}^d)\rightarrow H^1(\mathbb{R}^d)}\\
&\leqslant \bigl(C_{12}+ C_{13}+\vert \zeta +1\vert ^{1/2}C_{14}\bigr)\varrho (\zeta )\varepsilon,\quad  0<\varepsilon\leqslant 1.
\end{split}
\end{equation}
\end{corollary}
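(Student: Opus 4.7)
The plan is straightforward: the corollary is a direct consequence of estimates \eqref{Th 9.1 2} and \eqref{Th 9.1 3} combined via the definition of the $H^1$-norm. Denote by $\mathcal{R}(\varepsilon;\zeta)$ the operator-valued difference
\begin{equation*}
\mathcal{R}(\varepsilon;\zeta):=(B_\varepsilon -\zeta Q_0^\varepsilon )^{-1}-(B^0-\zeta \overline{Q_0})^{-1}-\varepsilon K(\varepsilon ;\zeta),
\end{equation*}
viewed as a bounded map from $L_2(\mathbb{R}^d;\mathbb{C}^n)$ to $H^1(\mathbb{R}^d;\mathbb{C}^n)$. (Boundedness into $H^1$ is already verified in Section~\ref{Section main results} for $\varepsilon K(\varepsilon;\zeta)$, and for the two resolvents it follows by the argument analogous to~\eqref{B0+Q0 L2->H1}.)

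First, I would recall the elementary pointwise bound
\begin{equation*}
\|\mathbf{F}\|_{H^1(\mathbb{R}^d)}=\bigl(\|\mathbf{F}\|_{L_2(\mathbb{R}^d)}^2+\|\mathbf{D}\mathbf{F}\|_{L_2(\mathbb{R}^d)}^2\bigr)^{1/2}\leqslant \|\mathbf{F}\|_{L_2(\mathbb{R}^d)}+\|\mathbf{D}\mathbf{F}\|_{L_2(\mathbb{R}^d)}
\end{equation*}
for $\mathbf{F}\in H^1(\mathbb{R}^d;\mathbb{C}^n)$. Applying this to $\mathbf{F}=\mathcal{R}(\varepsilon;\zeta)\mathbf{u}$ with arbitrary $\mathbf{u}\in L_2(\mathbb{R}^d;\mathbb{C}^n)$ and taking the supremum over $\|\mathbf{u}\|_{L_2}=1$, we obtain
\begin{equation*}
\|\mathcal{R}(\varepsilon;\zeta)\|_{L_2(\mathbb{R}^d)\to H^1(\mathbb{R}^d)}\leqslant \|\mathcal{R}(\varepsilon;\zeta)\|_{L_2(\mathbb{R}^d)\to L_2(\mathbb{R}^d)}+\|\mathbf{D}\mathcal{R}(\varepsilon;\zeta)\|_{L_2(\mathbb{R}^d)\to L_2(\mathbb{R}^d)}.
\end{equation*}

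Finally, I would insert the two bounds established in Theorem~\ref{Theorem another approximation}: \eqref{Th 9.1 2} controls the first term by $C_{12}\varrho(\zeta)\varepsilon$, and \eqref{Th 9.1 3} controls the second by $(C_{13}+|\zeta+1|^{1/2}C_{14})\varrho(\zeta)\varepsilon$. Summing yields exactly \eqref{8.5a}. There is no genuine obstacle here — the corollary is purely bookkeeping; all the analytic work is concentrated in the parent Theorem~\ref{Theorem another approximation}, where the scaling/perturbation machinery of Sections~\ref{Section proof of main result L2} and~\ref{Section Proof of main result in H1} together with the bounds on $\widetilde{B}_\varepsilon$, $\widetilde{B}^0$ near the common lower bound $c_\flat$ yield the factor $\varrho(\zeta)$.
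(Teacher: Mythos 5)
Your proof is correct and is precisely the intended argument: the corollary is a direct consequence of combining \eqref{Th 9.1 2} and \eqref{Th 9.1 3} through the elementary bound $\|\mathbf{F}\|_{H^1}\leqslant\|\mathbf{F}\|_{L_2}+\|\mathbf{D}\mathbf{F}\|_{L_2}$, exactly as Corollary~\ref{Corollary main result L2->H1} follows from Theorem~\ref{Theorem main result H1}. The paper omits the proof as routine; your write-up matches what was left implicit.
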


\begin{remark}
{\rm 1)} We do not control the lower edges of the spectra of the operators $\widetilde{B}_\varepsilon \geqslant 0$ and
$\widetilde{B}^0\geqslant 0$ explicitly. However, we can always take $c_\flat =0$.
In this case, we have $\psi =\phi$, and for $\vert \zeta \vert =\vert \zeta -c_\flat \vert \geqslant 1$
estimates of Theorem {\rm \ref{Theorem another approximation}} are more rough than the results of
Theorems~\textnormal{\ref{Theorem main result L2}, \ref{Theorem main result H1}}, and
{\rm \ref{Theorem main result fluxes}}. {\rm 2)} For large $\vert \zeta \vert$ it is more convenient to apply
Theorems~\textnormal{\ref{Theorem main result L2}}, \textnormal{\ref{Theorem main result H1}}, and {\rm \ref{Theorem main result fluxes}},
while for bounded values of $\vert \zeta \vert$ Theorem~\textnormal{\ref{Theorem another approximation}} may be preferable.
\end{remark}

\begin{proof}
To check \eqref{Th 9.1 1}, we apply Theorem~\ref{Theorem main result L2} for $\zeta =-1$.
According to~\eqref{Th L2 principal part},
\begin{equation*}
\Vert (B_\varepsilon +Q_0^\varepsilon )^{-1}-(B^0+\overline{Q_0})^{-1}\Vert _{L_2(\mathbb{R}^d)\rightarrow L_2(\mathbb{R}^d)}\leqslant C_4\varepsilon ,\quad 0<\varepsilon \leqslant 1.
\end{equation*}
Similarly to \eqref{6.17a}--\eqref{tozd prav obveska},
using the analog of identity~\eqref{tozd generalized resolvent} for $\vartheta=1$
(with $\widehat{\zeta}$ and $\lambda _0$ replaced by $\zeta $ and $1$, respectively),
we obtain
\begin{equation}
\label{I}
\begin{split}
\Vert (B_\varepsilon -\zeta Q_0^\varepsilon )^{-1}-(B^0-\zeta \overline{Q_0})^{-1}\Vert _{L_2\rightarrow L_2}
\leqslant C_4\varepsilon \Vert f\Vert ^2_{L_\infty}\Vert f^{-1}\Vert ^2_{L_\infty}\sup \limits _{\nu \geqslant c_\flat} \frac{(\nu +1)^2}{\vert \nu -\zeta \vert ^2}\\
+\vert 1+\zeta \vert \Vert (B_\varepsilon -\zeta Q_0^\varepsilon )^{-1}\Vert _{H^{-1}\rightarrow L_2}\Vert [Q_0^\varepsilon -\overline{Q_0}]\Vert _{H^1\rightarrow H^{-1}}
\Vert (B^0-\zeta \overline{Q_0})^{-1}\Vert _{L_2\rightarrow H^1}.
\end{split}
\end{equation}
A calculation shows that
\begin{equation}
\label{II}
\sup \limits _{\nu \geqslant c_\flat} \frac{(\nu +1)^2}{\vert \nu -\zeta \vert ^2}\leqslant (c_\flat +2)^2\varrho (\zeta ),\quad \zeta \in \mathbb{C}\setminus [c_\flat ,\infty).
\end{equation}
Next, by the duality argument,
\begin{equation}
\label{III}
\Vert (B_\varepsilon -\zeta Q_0^\varepsilon )^{-1}\Vert _{H^{-1}\rightarrow L_2}
=\Vert (B_\varepsilon -\zeta ^* Q_0^\varepsilon )^{-1}\Vert _{L_2\rightarrow H^1}.
\end{equation}
By \eqref{Res B eps and tilde B eps}, we have
\begin{equation}
\label{IV}
\begin{split}
\Vert (B_\varepsilon -\zeta ^*Q_0^\varepsilon )^{-1}\Vert _{L_2\rightarrow L_2}\leqslant \Vert f\Vert ^2_{L_\infty}\sup\limits _{\nu\geqslant c_\flat}\frac{1}{\vert \nu -\zeta ^*\vert}
=\Vert f\Vert ^2_{L_\infty}c(\psi)\vert \zeta -c_\flat\vert ^{-1}.
\end{split}
\end{equation}
Note that
\begin{align}
\label{V}
&\vert \zeta +1\vert ^{1/2}\leqslant (2+c_\flat)^{1/2}\;\mbox{for}\;\vert \zeta -c_\flat \vert <1,\\
\label{VI}
&\vert \zeta +1\vert ^{1/2}\vert \zeta -c_\flat \vert ^{-1}\leqslant (2+c_\flat)^{1/2}\;\mbox{for}\;\vert \zeta -c_\flat \vert \geqslant 1.
\end{align}
Therefore,
\begin{equation}
\label{VII}
\vert \zeta +1\vert ^{1/2}\Vert (B_\varepsilon -\zeta ^*Q_0^\varepsilon )^{-1}\Vert _{L_2\rightarrow L_2}\leqslant \Vert f\Vert ^2_{L_\infty}(2+c _\flat )^{1/2}\varrho (\zeta )^{1/2}.
\end{equation}
By analogy with \eqref{6.22a}, we obtain
\begin{equation}
\label{9.10a}
\Vert \mathbf{D}(B_\varepsilon -\zeta ^*Q_0^\varepsilon )^{-1}\Vert _{L_2\rightarrow L_2}\leqslant
c_*^{-1/2} \Vert f\Vert _{L_\infty}
\sup \limits _{\nu \geqslant c_\flat}\frac{\nu ^{1/2}}{\vert \nu-\zeta ^*\vert}.
\end{equation}
A calculation shows that 
\begin{equation}
\label{VIIa}
\sup \limits _{\nu \geqslant c_\flat}\frac{\nu}{\vert \nu -\zeta ^* \vert^2}
\leqslant \begin{cases}
(c_\flat +1)c(\psi)^2\vert \zeta -c_\flat \vert ^{-2}, &\vert \zeta -c_\flat \vert <1,\\
(c_\flat +1)c(\psi)^2\vert \zeta -c_\flat \vert ^{-1},&\vert \zeta -c_\flat\vert \geqslant 1.
\end{cases}
\end{equation}
Using \eqref{V} and the estimate
$\vert \zeta +1\vert \vert \zeta -c_\flat \vert ^{-1}\leqslant 2+c_\flat$ for $\vert \zeta -c_\flat \vert \geqslant 1$,
we see that
\begin{equation}
\label{VIII}
\vert \zeta +1\vert \sup\limits _{\nu \geqslant c_\flat }\frac{\nu}{\vert \nu -\zeta ^*\vert ^2}\leqslant (c_\flat +2)(c_\flat +1)\varrho (\zeta ).
\end{equation}
By \eqref{9.10a} and \eqref{VIII},
\begin{equation}
\label{IX}
\begin{split}
\vert &\zeta +1\vert ^{1/2}\Vert \mathbf{D}(B_\varepsilon -\zeta ^*Q_0^\varepsilon )^{-1}\Vert _{L_2\rightarrow L_2}\\
&\leqslant c_*^{-1/2} \Vert f\Vert _{L_\infty} (c_\flat +2)^{1/2}(c_\flat +1)^{1/2}\varrho (\zeta )^{1/2}.
\end{split}
\end{equation}
Relations \eqref{III}, \eqref{VII}, and \eqref{IX} imply that
\begin{equation}
\label{X}
\vert \zeta +1\vert ^{1/2}\Vert (B_\varepsilon -\zeta Q_0^\varepsilon )^{-1}\Vert _{H^{-1}\rightarrow L_2}
\leqslant \mathfrak{C}_{9}\varrho (\zeta )^{1/2},
\end{equation}
where
\begin{equation*}
\mathfrak{C}_{9}=\Vert f\Vert ^2_{L_\infty}(2+c_\flat )^{1/2}
+c_*^{-1/2} \Vert f\Vert _{L_\infty}(c_\flat +2)^{1/2}(c_\flat +1)^{1/2}.
\end{equation*}

Similarly to \eqref{IV} and \eqref{9.10a}, using \eqref{b^0[u,u]>=} and \eqref{f0_estimates}, we obtain
\begin{equation}
\label{8.18}
\Vert (B^0 -\zeta \overline{Q_0} )^{-1}\Vert _{L_2\rightarrow L_2}\leqslant
\Vert f\Vert ^2_{L_\infty}c(\psi)\vert \zeta -c_\flat\vert ^{-1},
\end{equation}
\begin{equation}
\label{8.18a}
\Vert {\mathbf D} (B^0 -\zeta \overline{Q_0} )^{-1}\Vert _{L_2\rightarrow L_2}\leqslant
c_*^{-1/2} \Vert f\Vert _{L_\infty}
\sup \limits _{\nu \geqslant c_\flat}\frac{\nu ^{1/2}}{\vert \nu-\zeta ^*\vert}.
\end{equation}
Together with \eqref{V}, \eqref{VI}, and \eqref{VIII} this yields 
\begin{equation}
\label{8.18b}
|\zeta+1|^{1/2} \Vert  (B^0 -\zeta \overline{Q_0} )^{-1}\Vert _{L_2\rightarrow H^1}\leqslant
{\mathfrak C}_9 \rho(\zeta)^{1/2}.
\end{equation}

Combining \eqref{lemma Q 0 eps -Q}, \eqref{I}, \eqref{II}, \eqref{X}, and \eqref{8.18b},
we arrive at \eqref{Th 9.1 1} with
$C_{11}=C_4(c_\flat+2)^2\Vert f\Vert ^2_{L_\infty}\Vert f^{-1}\Vert ^2 _{L_\infty}+C_{Q_0}\mathfrak{C}_{9}^2$.

To prove \eqref{Th 9.1 2}, we rely on the already proved estimate \eqref{Th 9.1 1}:
\begin{equation}
\label{XIII}
\begin{split}
\Vert &(B_\varepsilon -\zeta Q_0^\varepsilon )^{-1}-(B^0-\zeta \overline{Q_0})^{-1}-\varepsilon K(\varepsilon ;\zeta )\Vert _{L_2\rightarrow L_2}\\
&\leqslant C_{11}\varrho(\zeta )\varepsilon +\varepsilon \Vert K(\varepsilon ;\zeta )\Vert _{L_2\rightarrow L_2}.
\end{split}
\end{equation}
By \eqref{<b^*b<}, \eqref{tildeLambda S_eps}, and \eqref{Lambda S_eps <=}, the operator \eqref{K(eps;zeta)} satisfies
\begin{equation}
\label{XIV}
\begin{split}
\Vert K(\varepsilon ;\zeta )\Vert _{L_2\rightarrow L_2}&\leqslant M_1\alpha _1^{1/2}
\Vert \mathbf{D}(B^0-\zeta \overline{Q_0})^{-1}\Vert _{L_2\rightarrow L_2}\\
&+\widetilde{M}_1\Vert (B^0-\zeta \overline{Q_0})^{-1}\Vert _{L_2\rightarrow L_2}.
\end{split}
\end{equation}
Taking \eqref{VIIa} and \eqref{8.18a} into account, we have
\begin{equation}
\label{8.22a}
\Vert \mathbf{D}(B^0-\zeta \overline{Q_0})^{-1}\Vert _{L_2\rightarrow L_2}
\leqslant {c}_*^{-1/2}  (c_\flat +1)^{1/2} \Vert f\Vert _{L_\infty} \rho(\zeta)^{1/2}.
\end{equation}
By \eqref{8.18},  \eqref{XIV}, and \eqref{8.22a},
\begin{equation}
\label{XV}
\Vert K(\varepsilon ;\zeta )\Vert _{L_2\rightarrow L_2}
\leqslant \mathfrak{C}_{10}\varrho (\zeta )^{1/2},
\end{equation}
where $\mathfrak{C}_{10}= M_1 \alpha_1^{1/2} {c}_*^{-1/2}  (c_\flat +1)^{1/2} \Vert f\Vert _{L_\infty}+ \widetilde{M}_1 \Vert f\Vert _{L_\infty}^2$.
Combining \eqref{XIII} and \eqref{XV}, and noting that $\varrho (\zeta )^{1/2}\leqslant \varrho (\zeta )$,
we arrive at estimate \eqref{Th 9.1 2} with $C_{12}=C_{11}+\mathfrak{C}_{10}$.

In order to prove \eqref{Th 9.1 3}, we write down the analog of \eqref{tozd corrector} for $\vartheta =1$
(at the points $\zeta $ and $1$) and apply the operator $B_\varepsilon ^{1/2}$
to both sides of the corresponding identity:
\begin{equation}
\label{tozd corrector_S9}
\begin{split}
&B_\varepsilon^{1/2} \bigl((B_\varepsilon - {\zeta} Q_0^\varepsilon )^{-1}-(B^0 -{\zeta} \overline{Q_0})^{-1}-
\varepsilon K(\varepsilon  ;{\zeta})\bigr)\\
&= B_\varepsilon^{1/2} (B_\varepsilon - {\zeta} Q_0^\varepsilon )^{-1}(B_\varepsilon+ Q_0^\varepsilon )
 \left( (B_\varepsilon +Q_0^\varepsilon )^{-1}-(B^0+ \overline{Q_0})^{-1}-\varepsilon K(\varepsilon  ;-1 )\right)\\
&\times (B^0 + \overline{Q_0})(B^0 - {\zeta} \overline{Q_0})^{-1}
+\varepsilon (1 +{\zeta} ) B_\varepsilon^{1/2} (B_\varepsilon - {\zeta} Q_0^\varepsilon )^{-1}Q_0^\varepsilon K(\varepsilon ;{\zeta} )\\
&+(1 +{\zeta})B_\varepsilon^{1/2} (B_\varepsilon - {\zeta} Q_0^\varepsilon )^{-1}(Q_0^\varepsilon -\overline{Q_0})
(B^0-{\zeta} \overline{Q_0})^{-1}.
\end{split}
\end{equation}
Denote the consecutive summands in the right-hand side by ${\mathcal I}_j(\varepsilon;\zeta)$,\break $j=1,2,3$.

Using the analogs of \eqref{Res B eps and tilde B eps}, \eqref{rim 1}, and \eqref{rim 2} with $\vartheta =1$,
we see that
\begin{equation}
\label{9.20b}
\begin{split}
\Vert &B_\varepsilon ^{1/2}(B_\varepsilon -\zeta Q_0^\varepsilon )^{-1}(B_\varepsilon +Q_0^\varepsilon )\mathbf{w}\Vert _{L_2(\mathbb{R}^d)}\\
&=\Vert \widetilde{B}_\varepsilon ^{1/2}(\widetilde{B}_\varepsilon -\zeta I)^{-1}(\widetilde{B}_\varepsilon +I)(f^\varepsilon )^{-1}\mathbf{w}\Vert _{L_2(\mathbb{R}^d)}\\
&\leqslant \Vert (\widetilde{B}_\varepsilon -\zeta I)^{-1}(\widetilde{B}_\varepsilon +I)\Vert _{L_2(\mathbb{R}^d)\rightarrow L_2(\mathbb{R}^d)}\Vert B_\varepsilon ^{1/2}\mathbf{w}\Vert _{L_2(\mathbb{R}^d)}
\end{split}
\end{equation}
for any $\mathbf{w}\in H^1(\mathbb{R}^d;\mathbb{C}^n)$.
The first term in the right-hand side of~\eqref{tozd corrector_S9}
is estimated with the help of \eqref{9.20b} and the upper estimate~\eqref{3/10a b_eps(theta)>=}
with $\vartheta=1$:
\begin{equation}
\label{9.20_star}
\begin{split}
\| {\mathcal I}_1&(\varepsilon;\zeta)\|_{L_2 \to L_2}
\leqslant c_6^{1/2}\Vert (\widetilde{B}_\varepsilon -\zeta I)^{-1}(\widetilde{B}_\varepsilon +I)\Vert _{L_2\rightarrow L_2}
\\
&\times
\Vert (B_\varepsilon +Q_0^\varepsilon )^{-1}-(B^0+\overline{Q_0})^{-1}-\varepsilon K(\varepsilon ;-1)\Vert _{L_2\rightarrow H^1}\\
&\times \Vert (B^0+\overline{Q_0})(B^0-\zeta \overline{Q_0})^{-1}\Vert _{L_2\rightarrow L_2}
\leqslant {\mathfrak C}_{11} \varrho (\zeta)\varepsilon,
\end{split}
\end{equation}
where ${\mathfrak C}_{11} = c_6^{1/2}(C_5+C_6)(c_\flat +2)^2\Vert f\Vert _{L_\infty}\Vert f^{-1}\Vert _{L_\infty}$.
In the last passage, we have used Corollary~\ref{Corollary main result L2->H1} for $\zeta =-1$,
the analog of \eqref{7.39_star}, and \eqref{II}.

Now we estimate the second term in the right-hand side of \eqref{tozd corrector_S9}:
\begin{equation}
\label{I2}
\| {\mathcal I}_2(\varepsilon;\zeta)\|_{L_2\to L_2}
\leqslant \varepsilon \vert \zeta +1\vert \Vert B_\varepsilon ^{1/2}(B_\varepsilon -\zeta Q_0^\varepsilon )^{-1}
\Vert _{L_2\rightarrow L_2}\Vert f^{-1}\Vert ^2_{L_\infty}\Vert K(\varepsilon ;\zeta )\Vert _{L_2\rightarrow L_2}.
\end{equation}
According to \eqref{6.10a} and \eqref{Res B eps and tilde B eps} (with $\vartheta =1$), we have
\begin{equation*}
\Vert B_\varepsilon ^{1/2}(B_\varepsilon -\zeta Q_0^\varepsilon )^{-1}\Vert _{L_2\rightarrow L_2}\leqslant \Vert f\Vert _{L_\infty}\sup\limits _{\nu \geqslant c_\flat}\frac{\nu ^{1/2}}{\vert \nu -\zeta\vert }.
\end{equation*}
By \eqref{VIII}, this implies that
\begin{equation}
\label{XVII}
\vert \zeta +1\vert ^{1/2}\Vert B_\varepsilon ^{1/2}(B_\varepsilon -\zeta Q_0^\varepsilon )^{-1}\Vert _{L_2\rightarrow L_2}
\leqslant (c_\flat +2)^{1/2}(c_\flat +1)^{1/2}\Vert f\Vert _{L_\infty}\varrho (\zeta )^{1/2}.
\end{equation}
From \eqref{8.18b} and \eqref{XIV} it follows that
\begin{equation}
\label{XVIII}
\vert \zeta +1\vert ^{1/2}\Vert K(\varepsilon ;\zeta )\Vert _{L_2\rightarrow L_2}\leqslant
( \alpha _1^{1/2}M_1 + \widetilde{M}_1) \mathfrak{C}_{9}\varrho(\zeta)^{1/2}.
\end{equation}
Relations \eqref{I2}--\eqref{XVIII} imply that
\begin{equation}
\label{I22}
\|{\mathcal I}_2(\varepsilon;\zeta)\|_{L_2 \to L_2} \leqslant {\mathfrak C}_{12} \rho(\zeta) \varepsilon,
\end{equation}
where ${\mathfrak C}_{12} = (c_\flat +2)^{1/2}(c_\flat +1)^{1/2}\Vert f\Vert _{L_\infty}\Vert f^{-1}\Vert^2 _{L_\infty}
(\alpha _1^{1/2}M_1 +\widetilde{M}_1) \mathfrak{C}_{9}$.

For the third term in the right-hand side of \eqref{tozd corrector_S9}, we have
\begin{equation}
\label{I3}
\begin{split}
\|{\mathcal I}_3(\varepsilon;\zeta)\|_{L_2 \to L_2}
\leqslant&
\vert \zeta +1\vert \Vert B_\varepsilon ^{1/2}(B_\varepsilon -\zeta Q_0^\varepsilon )^{-1}\Vert _{H^{-1}\rightarrow L_2}
\\
&\times \Vert [Q_0^\varepsilon -\overline{Q_0}]\Vert _{H^1\rightarrow H^{-1}}
 \Vert (B^0-\zeta \overline{Q_0})^{-1}\Vert _{L_2\rightarrow H^1}.
\end{split}
\end{equation}
By the duality argument (cf. \eqref{rim 5}),
\begin{equation}
\label{XIX}
\Vert B_\varepsilon ^{1/2}(B_\varepsilon -\zeta Q_0^\varepsilon )^{-1}\Vert _{H^{-1}\rightarrow L_2}
=\Vert f^\varepsilon \widetilde{B}_\varepsilon ^{1/2}(\widetilde{B}_\varepsilon -\zeta ^* I)^{-1}\Vert _{L_2\rightarrow H^1}.
\end{equation}
Similarly to \eqref{rim 6}, using \eqref{II}, we obtain
\begin{equation}
\label{XX}
\begin{split}
\Vert &\mathbf{D}[f^\varepsilon ]\widetilde{B}_\varepsilon ^{1/2}(\widetilde{B}_\varepsilon -\zeta ^* I)^{-1}\Vert _{L_2\rightarrow L_2}\\
&\leqslant
c_*^{-1/2}\Vert \widetilde{B}_\varepsilon (\widetilde{B}_\varepsilon-\zeta ^* I)^{-1}\Vert _{L_2\rightarrow L_2}
\leqslant c_*^{-1/2}(c_\flat +2)\varrho (\zeta )^{1/2}.
\end{split}
\end{equation}
Next, taking \eqref{VIIa} into account, we have
\begin{equation}
\label{XXI}
\Vert [f^\varepsilon ]\widetilde{B}_\varepsilon ^{1/2}(\widetilde{B}_\varepsilon -\zeta ^* I)^{-1}\Vert _{L_2\rightarrow L_2}
\leqslant \Vert f \Vert _{L_\infty}(c_\flat +1)^{1/2}\varrho (\zeta )^{1/2}.
\end{equation}
From \eqref{XIX}--\eqref{XXI} it follows that
\begin{equation}
\label{XXII}
\Vert B_\varepsilon ^{1/2}(B_\varepsilon -\zeta Q_0^\varepsilon )^{-1}\Vert _{H^{-1}\rightarrow L_2}
\leqslant \mathfrak{C}_{13}\varrho (\zeta )^{1/2},
\end{equation}
where $\mathfrak{C}_{13}=\Vert f\Vert _{L_\infty}(c_\flat+1)^{1/2} +c_*^{-1/2}(c_\flat +2)$.
Relations \eqref{lemma Q 0 eps -Q}, \eqref{8.18b}, \eqref{I3}, and \eqref{XXII} yield
\begin{equation}
\label{I32}
\| {\mathcal I}_3(\varepsilon;\zeta) \|_{L_2 \to L_2} \leqslant C_{Q_0}{\mathfrak C}_{9}{\mathfrak C}_{13} |\zeta+1|^{1/2} \rho(\zeta) \varepsilon.
\end{equation}

Combining \eqref{tozd corrector_S9}, \eqref{9.20_star}, \eqref{I22}, \eqref{I32}, and the lower estimate \eqref{3/10a b_eps(theta)>=}
(with $\vartheta =1$), we arrive at \eqref{Th 9.1 3} with
$C_{13}= c_*^{-1/2} ({\mathfrak C}_{11} +{\mathfrak C}_{12})$,
$C_{14}=c_*^{-1/2} C_{Q_0}\mathfrak{C}_{9}\mathfrak{C}_{13}$.

It remains to check \eqref{Th 9.1 4}. From \eqref{Th 9.1 3} and \eqref{<b^*b<} it follows that
\begin{equation}
\label{9.28}
\begin{split}
\Vert &g^\varepsilon b(\mathbf{D})(B_\varepsilon -\zeta Q_0^\varepsilon )^{-1}\\
&-g^\varepsilon b(\mathbf{D})
\bigl(
I+\varepsilon [\Lambda ^\varepsilon ] S_\varepsilon b(\mathbf{D})+\varepsilon [\widetilde{\Lambda}^\varepsilon ]S_\varepsilon
\bigr)
(B^0-\zeta\overline{Q_0})^{-1}\Vert _{L_2\rightarrow L_2}\\
&\leqslant \alpha _1^{1/2}\Vert g\Vert _{L_\infty}(C_{13}+\vert \zeta +1\vert ^{1/2}C_{14})\varrho (\zeta )\varepsilon .
\end{split}
\end{equation}
By analogy with \eqref{d-vu th 4/4 1}, \eqref{7.32}, and \eqref{rim 11}, we obtain
\begin{equation}
\label{8.38}
\begin{split}
\Vert &g^\varepsilon b(\mathbf{D})
\bigl(
I+\varepsilon [\Lambda ^\varepsilon ]S_\varepsilon b(\mathbf{D})+\varepsilon [\widetilde{\Lambda}^\varepsilon ]S_\varepsilon
\bigr)
(B^0-\zeta\overline{Q_0})^{-1}
-G(\varepsilon ;\zeta )\Vert _{L_2\rightarrow L_2}\\
& \leqslant \varepsilon \|g\|_{L_\infty} \alpha_1^{1/2}((\alpha_1 d)^{1/2} M_1+r_1)
\Vert {\mathbf D}^2 (B^0-\zeta \overline{Q_0})^{-1}\Vert _{L_2\rightarrow L_2}
\\
&+ \varepsilon \|g\|_{L_\infty} (\alpha_1 d)^{1/2} \widetilde{M}_1
\Vert {\mathbf D} (B^0-\zeta \overline{Q_0})^{-1}\Vert _{L_2\rightarrow L_2}.
\end{split}
\end{equation}
Similarly to \eqref{rim 8} and \eqref{7.39_star}, taking \eqref{II} into account, we have
\begin{equation}
\label{8.38a}
\begin{split}
\Vert {\mathbf D}^2 (B^0-\zeta \overline{Q_0})^{-1}\Vert _{L_2\rightarrow L_2}
&\leqslant c_*^{-1} \|f\|_{L_\infty}  \|f^{-1}\|_{L_\infty} \sup \limits _{\nu \geqslant c_\flat} \frac{\nu}{\vert \nu -\zeta \vert }
\\
&\leqslant c_*^{-1} \|f\|_{L_\infty}  \|f^{-1}\|_{L_\infty} ( c_\flat +2) \rho(\zeta)^{1/2}.
\end{split}
\end{equation}
Together with \eqref{8.22a} and \eqref{8.38} this implies
\begin{equation}
\label{8.38b}
\begin{split}
\Vert &g^\varepsilon b(\mathbf{D})
\bigl(
I+\varepsilon [\Lambda ^\varepsilon ]S_\varepsilon b(\mathbf{D})+\varepsilon [\widetilde{\Lambda}^\varepsilon ]S_\varepsilon
\bigr)
(B^0-\zeta\overline{Q_0})^{-1}
-G(\varepsilon ;\zeta )\Vert _{L_2\rightarrow L_2}\\
& \leqslant {\mathfrak C}_{14}\varepsilon \rho(\zeta)^{1/2},
\end{split}
\end{equation}
where ${\mathfrak C}_{14}=\Vert g\Vert _{L_\infty}\Vert f\Vert _{L_\infty} \alpha_1^{1/2}
\bigl( ( (\alpha _1 d)^{1/2}M_1+r_1) c_*^{-1}  \Vert f^{-1}\Vert _{L_\infty}(c_\flat +2)
+ d^{1/2} \widetilde{M}_1   c_*^{-1/2}  (c_\flat +1)^{1/2} \bigr)$.
Now, relations \eqref{9.28} and \eqref{8.38b} imply estimate \eqref{Th 9.1 4}
with
$C_{15}=\alpha _1^{1/2}\Vert g\Vert _{L_\infty}C_{13} + {\mathfrak C}_{14} $ and
$C_{16}=\alpha _1^{1/2}\Vert g\Vert _{L_\infty}C_{14}$.
\end{proof}

Note that, if $Q_0$ is a constant matrix, then $Q_0^\varepsilon = \overline{Q_0}$
and the third summand in the right-hand side of \eqref{tozd corrector_S9} is equal to zero:
$\mathcal{I}_3(\varepsilon ;\zeta )=0$. Therefore, from the proof of Theorem~\ref{Theorem another approximation}
we derive the following.

\begin{remark}
If the assumptions of Theorem \textnormal{\ref{Theorem another approximation}} are satisfied and
$Q_0$ is a constant matrix, then estimates~\eqref{Th 9.1 3}--\eqref{8.5a} are true with~$C_{14}=C_{16}=0$.
It means that in estimates \eqref{Th 9.1 3}--\eqref{8.5a} there are no terms
containing $\vert \zeta +1\vert ^{1/2}$.
\end{remark}

\subsection{Removal of $S_\varepsilon$}
Now we distinguish the cases where the smoothing operator $S_\varepsilon$ in the corrector
can be removed.

\begin{theorem}
\label{Theorem no S_eps another approximation}
Suppose that the assumptions of Theorem~\textnormal{\ref{Theorem another approximation}} are satisfied.

\noindent
$1^\circ $. Suppose that Condition~\textnormal{\ref{Condition Lambda in L_infty}} is satisfied.
Let $G_1(\varepsilon ;\zeta )$ be the operator defined by \eqref{G_1(eps,zeta)}.
Then for $0<\varepsilon \leqslant 1$ we have
\begin{align*}
\begin{split}
\Vert &(B_\varepsilon -\zeta Q_0^\varepsilon )^{-1}-
\bigl(
I+\varepsilon [\Lambda ^\varepsilon ]b(\mathbf{D})+\varepsilon [\widetilde{\Lambda}^\varepsilon ]S_\varepsilon
\bigr)
(B^0-\zeta \overline{Q_0})^{-1}\Vert _{L_2(\mathbb{R}^d)\rightarrow L_2(\mathbb{R}^d)}\\
&\leqslant C_{17}'\varrho (\zeta )\varepsilon ,
\end{split}\\
\begin{split}
\Vert & \mathbf{D}\left((B_\varepsilon -\zeta Q_0^\varepsilon )^{-1}-
\bigl(
I+\varepsilon [\Lambda ^\varepsilon ]b(\mathbf{D})+\varepsilon [\widetilde{\Lambda}^\varepsilon ]S_\varepsilon
\bigr)
(B^0-\zeta \overline{Q_0})^{-1}\right)\Vert _{L_2(\mathbb{R}^d)\rightarrow L_2(\mathbb{R}^d)}\\
&\leqslant (C_{18}'+\vert \zeta +1\vert ^{1/2}C_{19}')\varrho (\zeta )\varepsilon ,
\end{split}\\
\Vert & g^\varepsilon b(\mathbf{D})(B_\varepsilon -\zeta Q_0^\varepsilon )^{-1}-G_1(\varepsilon;\zeta )\Vert _{L_2(\mathbb{R}^d)\rightarrow L_2(\mathbb{R}^d)}
\leqslant (C_{20}'+\vert \zeta +1\vert ^{1/2}C_{21}')\varrho (\zeta )\varepsilon .
\end{align*}
The constants $C_{17}'$, $C_{18}'$, $C_{19}'$, $C_{20}'$, and $C_{21}'$ are controlled in terms of the problem data
\eqref{problem data}, $c_\flat$, and $\Vert \Lambda \Vert _{L_\infty}$.

\noindent
$2^\circ $. Suppose that Condition~\textnormal{\ref{Condition on Lambda-tilda}} is satisfied.
Let $G_2(\varepsilon ;\zeta )$ be the operator given by~\eqref{G_2(eps,zeta)}.
Then for $0<\varepsilon \leqslant 1$ we have
\begin{align*}
\begin{split}
\Vert &(B_\varepsilon -\zeta Q_0^\varepsilon )^{-1}-
\bigl(
I+\varepsilon [\Lambda ^\varepsilon ] b(\mathbf{D}) S_\varepsilon +\varepsilon [\widetilde{\Lambda}^\varepsilon ]
\bigr)
(B^0-\zeta \overline{Q_0})^{-1}\Vert _{L_2(\mathbb{R}^d)\rightarrow L_2(\mathbb{R}^d)}\\
&\leqslant C_{17}''\varrho (\zeta )\varepsilon ,
\end{split}\\
\begin{split}
\Vert & \mathbf{D}\left((B_\varepsilon -\zeta Q_0^\varepsilon )^{-1}-
\bigl(
I+\varepsilon [\Lambda ^\varepsilon ] b(\mathbf{D}) S_\varepsilon +\varepsilon [\widetilde{\Lambda}^\varepsilon ]
\bigr)
(B^0-\zeta \overline{Q_0})^{-1}\right)\Vert _{L_2(\mathbb{R}^d)\rightarrow L_2(\mathbb{R}^d)}\\
&\leqslant (C_{18}''+\vert \zeta +1\vert ^{1/2}C_{19}'')\varrho (\zeta )\varepsilon ,
\end{split}\\
\Vert & g^\varepsilon b(\mathbf{D})(B_\varepsilon -\zeta Q_0^\varepsilon )^{-1}-G_2(\varepsilon;\zeta )\Vert _{L_2(\mathbb{R}^d)\rightarrow L_2(\mathbb{R}^d)}
\leqslant (C_{20}''+\vert \zeta +1\vert ^{1/2}C_{21}'')\varrho (\zeta )\varepsilon .
\end{align*}
The constants $C_{17}''$, $C_{18}''$, $C_{19}''$, $C_{20}''$, and $C_{21}''$ depend only on the initial data
\eqref{problem data}, $c_\flat$, $p$, and $\Vert \widetilde{ \Lambda }\Vert _{L_p(\Omega)}$.

\noindent
$3^\circ $. Suppose that Conditions \textnormal{\ref{Condition Lambda in L_infty}} and \textnormal{\ref{Condition on Lambda-tilda}}
are satisfied. Let $G_3(\varepsilon ;\zeta )$ be the operator given by \eqref{G_3(eps,zeta)}.
Then for $0<\varepsilon \leqslant 1$ we have
\begin{align*}
\begin{split}
\Vert &(B_\varepsilon -\zeta Q_0^\varepsilon )^{-1}-
\bigl(
I+\varepsilon [\Lambda ^\varepsilon ]b(\mathbf{D})+\varepsilon [\widetilde{\Lambda}^\varepsilon ]
\bigr)
(B^0-\zeta \overline{Q_0})^{-1}\Vert _{L_2(\mathbb{R}^d)\rightarrow L_2(\mathbb{R}^d)}\\
&\leqslant C_{17}\varrho (\zeta )\varepsilon ,
\end{split}\\
\begin{split}
\Vert & \mathbf{D}\left((B_\varepsilon -\zeta Q_0^\varepsilon )^{-1}-
\bigl(
I+\varepsilon [\Lambda ^\varepsilon ]b(\mathbf{D})+\varepsilon [\widetilde{\Lambda}^\varepsilon ]
\bigr)
(B^0-\zeta \overline{Q_0})^{-1}\right)\Vert _{L_2(\mathbb{R}^d)\rightarrow L_2(\mathbb{R}^d)}\\
&\leqslant (C_{18}+\vert \zeta +1\vert ^{1/2}C_{19})\varrho (\zeta )\varepsilon ,
\end{split}\\
\Vert & g^\varepsilon b(\mathbf{D})(B_\varepsilon -\zeta Q_0^\varepsilon )^{-1}-G_3(\varepsilon;\zeta )\Vert _{L_2(\mathbb{R}^d)\rightarrow L_2(\mathbb{R}^d)}
\leqslant (C_{20}+\vert \zeta +1\vert ^{1/2}C_{21})\varrho (\zeta )\varepsilon .
\end{align*}
The constants $C_{17}$, $C_{18}$, $C_{19}$, $C_{20}$, and $C_{21}$ are controlled in terms of the initial data~\eqref{problem data},
$c_\flat$, $p$, $\Vert \Lambda \Vert _{L_\infty}$, and $\Vert \widetilde{\Lambda}\Vert _{L_p(\Omega)}$.
\end{theorem}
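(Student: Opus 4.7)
The strategy is to deduce Theorem~\ref{Theorem no S_eps another approximation} from Theorem~\ref{Theorem another approximation} by absorbing the Steklov-smoothing correction into the error. In each of statements $1^\circ$, $2^\circ$, $3^\circ$, the corrector appearing under the norm differs from $\varepsilon K(\varepsilon;\zeta)$ of~\eqref{K(eps;zeta)} by one or both of the operators
\begin{equation*}
\varepsilon[\Lambda^\varepsilon]b(\mathbf{D})(I-S_\varepsilon)(B^0-\zeta\overline{Q_0})^{-1},\quad
\varepsilon[\widetilde{\Lambda}^\varepsilon](I-S_\varepsilon)(B^0-\zeta\overline{Q_0})^{-1}.
\end{equation*}
Thus it suffices to estimate these correction operators in the $(L_2\to L_2)$-norm and after one $\mathbf{D}$, in the wider parameter regime $\zeta\in\mathbb{C}\setminus[c_\flat,\infty)$, and then combine with~\eqref{Th 9.1 2} and~\eqref{Th 9.1 3} via the triangle inequality.

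My plan is to first establish $\zeta$-dependent analogs of Lemmas~\ref{lemma_Lambda} and~\ref{lemma_Lambda_tilde} in this wider regime. The proofs of those lemmas rest on Corollary~\ref{corollary_Lambda}, Corollary~\ref{Lemma_Lambda_tilda4}, and Proposition~\ref{Proposition S_eps - I}, and the only $\zeta$-dependent ingredients are $\Vert\mathbf{D}(B^0-\zeta\overline{Q_0})^{-1}\Vert_{L_2\to L_2}$ and $\Vert\mathbf{D}^2(B^0-\zeta\overline{Q_0})^{-1}\Vert_{L_2\to L_2}$, both of which were already bounded during the proof of Theorem~\ref{Theorem another approximation}: see~\eqref{8.22a} and~\eqref{8.38a}, each giving a factor $\varrho(\zeta)^{1/2}$. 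Repeating the calculations of Lemmas~\ref{lemma_Lambda} and~\ref{lemma_Lambda_tilde} verbatim with these substitutions produces bounds of the form $(\text{const})\,\varrho(\zeta)^{1/2}\varepsilon$ for each of the four correction-norms considered. A direct check from the definition~\eqref{rho (zeta)} shows $\varrho(\zeta)\geqslant 1$ on the admissible set, so $\varrho(\zeta)^{1/2}\leqslant\varrho(\zeta)$, and each correction is bounded by $(\text{const})\,\varrho(\zeta)\varepsilon$. Crucially, the smoothing-removal correction never produces a $\vert\zeta+1\vert^{1/2}$ factor, so the $\vert\zeta+1\vert^{1/2}\varrho(\zeta)\varepsilon$ term inherited from~\eqref{Th 9.1 3} passes through to the derivative estimates of $1^\circ$, $2^\circ$, $3^\circ$ with an unchanged constant, while the Steklov-removal corrections merge into the purely $\varrho(\zeta)\varepsilon$ contribution.

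For the flux approximations, I would follow the template of the proof of Theorem~\ref{Theorem main result fluxes no S_eps}: starting from the just-proved $\mathbf{D}\cdot$-estimate, multiplying by $g^\varepsilon b(\mathbf{D})$, and expanding $g^\varepsilon b(\mathbf{D})[\Lambda^\varepsilon]b(\mathbf{D})$ and $g^\varepsilon b(\mathbf{D})[\widetilde{\Lambda}^\varepsilon]$ via the Leibniz rule as in~\eqref{710b} to reveal the principal part $G_j(\varepsilon;\zeta)$. The remaining terms all carry an extra factor $\varepsilon$ and are controlled, exactly as in the proof of Theorem~\ref{Theorem main result fluxes no S_eps}, by~\eqref{8.22a} and~\eqref{8.38a}, giving $(\text{const})\,\varrho(\zeta)^{1/2}\varepsilon\leqslant(\text{const})\,\varrho(\zeta)\varepsilon$.

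The main obstacle is the bookkeeping: the three cases require different hypotheses, and one must verify that in case~$2^\circ$ the constants depend only on $\Vert\widetilde{\Lambda}\Vert_{L_p(\Omega)}$ (and not on $\Vert\Lambda\Vert_{L_\infty}$), and symmetrically for case~$1^\circ$. This is resolved by running the analogs of Lemmas~\ref{lemma_Lambda} and~\ref{lemma_Lambda_tilde} independently for the $\Lambda$- and $\widetilde{\Lambda}$-terms, precisely as was done in the proof of Theorem~\ref{Theorem main result H1 no S_eps}, and by tracking separately where the $\vert\zeta+1\vert^{1/2}$ factor appears (only in the derivative-norm and flux estimates, never in the $(L_2\to L_2)$-norm estimate).
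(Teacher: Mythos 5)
Your proposal is correct and matches the paper's own proof, which reduces to Theorem~\ref{Theorem another approximation} and bounds the Steklov-removal corrections by rerunning the arguments of Lemmas~\ref{lemma_Lambda} and~\ref{lemma_Lambda_tilde} with the resolvent estimates~\eqref{rim 8a},~\eqref{rim 9},~\eqref{Lm 7/2 e} replaced by their wider-regime analogs~\eqref{8.38a},~\eqref{8.22a},~\eqref{8.18}, and likewise for the flux estimates by the template of Theorem~\ref{Theorem main result fluxes no S_eps}. One small imprecision: your list of $\zeta$-dependent ingredients omits $\Vert(B^0-\zeta\overline{Q_0})^{-1}\Vert_{L_2\to L_2}$, which also enters through the $L_2\to H^1$ norm in~\eqref{Lm 7/2 d}, but its replacement~\eqref{8.18} likewise yields a factor $\varrho(\zeta)^{1/2}\leq\varrho(\zeta)$, so the argument goes through unchanged.
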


\begin{remark}
If the assumptions of Theorem~\textnormal{\ref{Theorem no S_eps another approximation}} are satisfied and $Q_0$ is a~constant matrix,
then estimates of Theorem~\textnormal{\ref{Theorem no S_eps another approximation}} are valid with  $C_{19}'=C_{21}'=C_{19}''=C_{21}''=C_{19}=C_{21}=0$.
\end{remark}

\begin{proof}
Approximations with corrector for the operator 
\hbox{$(B_\varepsilon -\zeta Q_0^\varepsilon )^{-1}$} are deduced from Theorem~\ref{Theorem another approximation},
by analogy with the proof of Theorem~\ref{Theorem main result H1 no S_eps}.
The difference is that, instead of \eqref{rim 8a}, \eqref{rim 9}, and \eqref{Lm 7/2 e}
we use \eqref{8.38a}, \eqref{8.22a}, and \eqref{8.18}, respectively.

The statements concerning the flux are derived from approximations for the operator
$(B_\varepsilon -\zeta Q_0^\varepsilon )^{-1}$, by analogy with the proof of
Theorem~\ref{Theorem main result fluxes no S_eps}. The difference is that, instead of
\eqref{rim 8a} and \eqref{rim 9} we apply \eqref{8.38a} and \eqref{8.22a}, respectively.
\end{proof}

\subsection{Special cases} Similarly to Proposition~\ref{Proposition K=0},
with the help of Theorem~\ref{Theorem another approximation} we distinguish the case where the corrector is equal to zero.

\begin{proposition}
Suppose that the assumptions of Theorem \textnormal{\ref{Theorem another approximation}} are satisfied.
Suppose also that relations \eqref{overline-g} and \eqref{Djaj=0} are satisfied.
Then for $0<\varepsilon \leqslant 1$ we have
\begin{equation*}
\begin{split}
\Vert &\mathbf{D}
\left(
(B_\varepsilon -\zeta Q_0^\varepsilon )^{-1}-(B^0-\zeta \overline{Q_0})^{-1}
\right)\Vert _{L_2(\mathbb{R}^d)\rightarrow L_2(\mathbb{R}^d)}\\
&\leqslant (C_{13}+\vert \zeta +1\vert ^{1/2}C_{14})\varrho (\zeta )\varepsilon .
\end{split}
\end{equation*}
\end{proposition}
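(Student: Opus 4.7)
The plan is to derive this proposition as a direct corollary of Theorem~\ref{Theorem another approximation}, observing that the corrector $K(\varepsilon;\zeta)$ vanishes identically under the two extra assumptions. This is the exact same reduction used in Proposition~\ref{Proposition K=0}, only now at the level of estimate~\eqref{Th 9.1 3} instead of \eqref{Th 3.2 D L_2->L_2}, so that the enlarged range $\zeta\in\mathbb{C}\setminus[c_\flat,\infty)$ and the factor $\varrho(\zeta)$ are inherited.

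First I would verify the vanishing of the two periodic correctors. Condition \eqref{overline-g} says $b(\mathbf{D})^*\mathbf{g}_k=0$ for each column of $g$, which is precisely the statement that $\Lambda\equiv 0$ is a weak solution of \eqref{Lambda problem} (and by uniqueness in $\widetilde{H}^1(\Omega)/\mathrm{const}$ it is \emph{the} solution). Analogously, condition \eqref{Djaj=0} reduces \eqref{tildeLambda_problem} to $b(\mathbf{D})^*g(\mathbf{x})b(\mathbf{D})\widetilde{\Lambda}=0$ with zero mean, forcing $\widetilde{\Lambda}\equiv 0$ by the same uniqueness argument. Substituting $\Lambda=0$ and $\widetilde{\Lambda}=0$ into the definition
\begin{equation*}
K(\varepsilon;\zeta)=\bigl([\Lambda^\varepsilon]b(\mathbf{D})+[\widetilde{\Lambda}^\varepsilon]\bigr)S_\varepsilon(B^0-\zeta\overline{Q_0})^{-1}
\end{equation*}
from \eqref{K(eps;zeta)} shows that $K(\varepsilon;\zeta)=0$ as an operator on $L_2(\mathbb{R}^d;\mathbb{C}^n)$.

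Next I would apply estimate~\eqref{Th 9.1 3} of Theorem~\ref{Theorem another approximation}, whose hypotheses coincide with those assumed here. With $\varepsilon K(\varepsilon;\zeta)\equiv 0$, the left-hand side of \eqref{Th 9.1 3} reduces to $\Vert\mathbf{D}\bigl((B_\varepsilon-\zeta Q_0^\varepsilon)^{-1}-(B^0-\zeta\overline{Q_0})^{-1}\bigr)\Vert_{L_2\to L_2}$, and the right-hand side is already in the required form $(C_{13}+|\zeta+1|^{1/2}C_{14})\varrho(\zeta)\varepsilon$ with the very same constants $C_{13},C_{14}$ produced by Theorem~\ref{Theorem another approximation}. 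This yields the claim for all $0<\varepsilon\leq 1$ and all admissible $\zeta$.

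There is essentially no obstacle beyond this bookkeeping: the only conceptual point is the uniqueness argument showing $\Lambda,\widetilde{\Lambda}\equiv 0$ under \eqref{overline-g} and \eqref{Djaj=0}, which is standard and already implicit in the derivation of Proposition~\ref{Proposition K=0}. Everything else is a direct substitution into an estimate already established. Note in particular that no smoothing operator needs to be removed and no additional boundedness hypothesis on $\Lambda$ or $\widetilde{\Lambda}$ is required, since both functions are simply zero.
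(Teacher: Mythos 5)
Your proof is correct and follows exactly the same route as the paper: under \eqref{overline-g} and \eqref{Djaj=0} the periodic solutions $\Lambda$ and $\widetilde{\Lambda}$ vanish, hence $K(\varepsilon;\zeta)=0$, and the claim is then an immediate specialization of estimate \eqref{Th 9.1 3} in Theorem~\ref{Theorem another approximation}. The paper even flags this as ``similarly to Proposition~\ref{Proposition K=0},'' which is precisely the parallel you draw.
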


Similarly to Proposition \ref{Proposition non-general case}, we deduce the following statement from
Theorem~\ref{Theorem no S_eps another approximation}($3^\circ$).

\begin{proposition}
Suppose that the assumptions of Theorem \textnormal{\ref{Theorem another approximation}} are satisfied.
Suppose also that relations \eqref{underline-g} and \eqref{Djaj=0} are satisfied. Then for
$0<\varepsilon \leqslant 1$ we have
\begin{equation*}
\begin{split}
\Vert & g^\varepsilon b(\mathbf{D})(B_\varepsilon -\zeta Q_0^\varepsilon )^{-1}-g^0b(\mathbf{D})(B^0-\zeta \overline{Q_0})^{-1}\Vert _{L_2(\mathbb{R}^d)\rightarrow L_2(\mathbb{R}^d)}\\
&\leqslant (C_{20}+\vert \zeta +1\vert ^{1/2}C_{21})\varrho (\zeta )\varepsilon .
\end{split}
\end{equation*}
\end{proposition}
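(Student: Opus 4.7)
The plan is to derive this proposition as a direct specialization of statement $3^\circ$ of Theorem \ref{Theorem no S_eps another approximation} applied to the flux, once the two simplifying hypotheses $g^0=\underline{g}$ (equivalent to \eqref{underline-g}) and \eqref{Djaj=0} are exploited to collapse the auxiliary object $G_3(\varepsilon;\zeta)$ onto the simple form $g^0 b(\mathbf{D})(B^0-\zeta\overline{Q_0})^{-1}$.

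First I would verify that the hypotheses of Theorem \ref{Theorem no S_eps another approximation}$(3^\circ)$ are in force. By Proposition \ref{Proposition Condition on Lambda holds}$(3^\circ)$, the identity $g^0=\underline{g}$ forces $\Lambda\in L_\infty$, so Condition \ref{Condition Lambda in L_infty} holds. On the other hand, assumption \eqref{Djaj=0} says $\sum_{j=1}^d D_j a_j(\mathbf{x})^*=0$ in $\Omega$, which means that $\widetilde{\Lambda}\equiv 0$ is the unique $\Gamma$-periodic weak solution of problem \eqref{tildeLambda_problem} (by uniqueness of the zero-mean solution). In particular $\widetilde{\Lambda}\in L_p(\Omega)$ trivially, so Condition \ref{Condition on Lambda-tilda} is satisfied as well.

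Next I would simplify $G_3(\varepsilon;\zeta)$ as defined in \eqref{G_3(eps,zeta)}. Since $\widetilde{\Lambda}=0$, the second summand $g^\varepsilon(b(\mathbf{D})\widetilde{\Lambda})^\varepsilon(B^0-\zeta\overline{Q_0})^{-1}$ vanishes. For the first summand we use the observation (recorded just before Proposition \ref{Proposition non-general case}, and originating in \cite[Remark~3.5]{BSu05}) that when $g^0=\underline{g}$ the matrix-valued function $\widetilde{g}(\mathbf{x})=g(\mathbf{x})(b(\mathbf{D})\Lambda(\mathbf{x})+\mathbf{1}_m)$ is constant and coincides with $g^0$. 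Hence $\widetilde{g}^\varepsilon\equiv g^0$, and
\begin{equation*}
G_3(\varepsilon;\zeta)=g^0 b(\mathbf{D})(B^0-\zeta\overline{Q_0})^{-1}.
\end{equation*}
Substituting this identity into the flux estimate \eqref{7100} (with $C_{10}$ renamed $C_{20},C_{21}$ in the more general two-parametric version of Theorem \ref{Theorem no S_eps another approximation}$(3^\circ)$) yields the required bound.

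There is essentially no obstacle: the whole argument is a one-line reduction, provided one correctly invokes the structural facts that $\widetilde{\Lambda}=0$ under \eqref{Djaj=0} and that $\widetilde{g}=g^0$ under $g^0=\underline{g}$. The only mild point worth double-checking is the second identity, since it is not proved in the present paper but only referenced to \cite{BSu05}; one can verify it directly by noting that, under \eqref{underline-g}, the columns $\mathbf{l}_k$ of $g^{-1}$ satisfy $\mathbf{l}_k=\mathbf{l}_k^0+b(\mathbf{D})\mathbf{w}_k$, and then the defining equation $b(\mathbf{D})^*(g(b(\mathbf{D})\Lambda+\mathbf{1}_m))=0$ together with the mean-value normalization forces $g(b(\mathbf{D})\Lambda+\mathbf{1}_m)$ to be the constant matrix $\underline{g}=g^0$.
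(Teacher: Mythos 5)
Your proposal is correct and follows exactly the route the paper intends: the result is deduced by specializing Theorem \ref{Theorem no S_eps another approximation}($3^\circ$), using $\widetilde{\Lambda}=0$ (from \eqref{Djaj=0}) and $\widetilde{g}\equiv g^0$ (from $g^0=\underline{g}$) to collapse $G_3(\varepsilon;\zeta)$ to $g^0 b(\mathbf{D})(B^0-\zeta\overline{Q_0})^{-1}$, exactly as was done for Proposition \ref{Proposition non-general case} in Section 7.
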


\section{Application of the general results}
\label{Section Applications}

In this section, we consider examples that were studied before in \cite{SuAA} and \cite{SuAA14}.

\subsection{The scalar elliptic operator}
\label{Subsection applications 1 scalar}
We consider the case where $n=1$, $m=d$, $b(\mathbf{D})=\mathbf{D}$, and $g(\mathbf{x})$ is a $\Gamma$-periodic symmetric
$(d\times d)$-matrix-valued function with {\it real entries} such that $g(\mathbf{x})>0$ and $g, g^{-1}\in L_\infty$.
Then, obviously, $\alpha _0=\alpha _1=1$ (see \eqref{<b^*b<}). The operator $\mathcal{A}_\varepsilon$ takes the form
$\mathcal{A}_\varepsilon =-\mathrm{div}\,g^\varepsilon (\mathbf{x})\nabla $.

Next, let $\mathbf{A}(\mathbf{x})=\mathrm{col}\,\lbrace A_1(\mathbf{x}),\dots ,A_d(\mathbf{x})\rbrace$,
where $A_j(\mathbf{x})$, $j=1,\dots,d$, are $\Gamma$-periodic real-valued functions such that
\begin{equation}
\label{A_j cond}
\begin{split}
A_j\in L_\rho (\Omega),\quad\rho=2\ \text{for}\ d=1,\quad \rho >d\ \text{for}\ d\geqslant 2;\quad j=1,\dots,d.
\end{split}
\end{equation}
Let $ v(\mathbf{x})$ and $\mathcal{V} (\mathbf{x})$ be real-valued $\Gamma$-periodic functions such that
\begin{equation}
\label{v, V cond}
\begin{split}
v,\mathcal{V}\in L_s(\Omega),\quad \int _{\Omega}v(\mathbf{x})\,d\mathbf{x}=0,\quad
s=1\ \text{for}\  d=1,\quad s>d/2\ \text{for}\ d\geqslant 2.
\end{split}
\end{equation}

In $L_2(\mathbb{R}^d)$, we consider the operator $\mathcal{B}_\varepsilon$ given formally  by
the differential expression
\begin{equation}
\label{B_eps from sec. 9.1}
\mathcal{B}_\varepsilon =(\mathbf{D}-\mathbf{A}^\varepsilon (\mathbf{x}))^*g^\varepsilon (\mathbf{x})(\mathbf{D}-\mathbf{A}^\varepsilon (\mathbf{x}))+\varepsilon ^{-1}v^\varepsilon (\mathbf{x})+\mathcal{V}^\varepsilon (\mathbf{x}).
\end{equation}
Precisely, $\mathcal{B}_\varepsilon$ is the operator generated by the quadratic form
\begin{equation*}
\begin{split}
\mathfrak{b}_\varepsilon [u,u]=\int _{\mathbb{R}^d}\left( \langle g^\varepsilon (\mathbf{D}-\mathbf{A}^\varepsilon )u, (\mathbf{D}-\mathbf{A}^\varepsilon )u\rangle
+(\varepsilon ^{-1}v^\varepsilon +\mathcal{V}^\varepsilon)\vert u\vert ^2\right)\,d\mathbf{x},\\
u\in H^1(\mathbb{R}^d).
\end{split}
\end{equation*}
The operator \eqref{B_eps from sec. 9.1} can be treated as the periodic Schr\"odinger operator with the metric $g^\varepsilon$, the
magnetic potential~$\mathbf{A}^\varepsilon$, and the electric potential \hbox{$\varepsilon ^{-1}v^\varepsilon +\mathcal{V}^\varepsilon$}
containing the singular summand~$\varepsilon ^{-1}v^\varepsilon$.

It is easy to check (see \cite[Subsection~13.1]{SuAA}) that the operator \eqref{B_eps from sec. 9.1}
can be written in the required form~\eqref{B_eps}:
\begin{equation*}
\mathcal{B}_\varepsilon = \mathbf{D}^*g^\varepsilon (\mathbf{x})\mathbf{D}+\sum _{j=1}^d\left( a_j^\varepsilon (\mathbf{x})D_j+D_j(a_j^\varepsilon (\mathbf{x}))^*\right)
+Q^\varepsilon (\mathbf{x}).
\end{equation*}
Here the real-valued function $Q(\mathbf{x})$ is given by
\begin{equation}
\label{Q from sec. 9.1}
Q(\mathbf{x})=\mathcal{V}(\mathbf{x})+\langle g(\mathbf{x})\mathbf{A}(\mathbf{x}),\mathbf{A}(\mathbf{x})\rangle .
\end{equation}
The complex-valued functions $a_j(\mathbf{x})$ are given by
\begin{equation}
\label{a_j from sec. 9.1}
a_j(\mathbf{x})=-\eta _j(\mathbf{x})+i\gamma _j(\mathbf{x}),\quad j=1,\dots ,d,
\end{equation}
where $\eta _j(\mathbf{x})$ are the components of the vector-valued function
$\boldsymbol{\eta}(\mathbf{x})=g(\mathbf{x})\mathbf{A}(\mathbf{x})$, and
$\gamma _j(\mathbf{x})=-\partial _j\Phi (\mathbf{x})$. Here $\Phi(\mathbf{x})$ is the $\Gamma$-periodic solution of the problem
$\bigtriangleup \Phi (\mathbf{x})=v(\mathbf{x})$, $\int _\Omega\Phi (\mathbf{x})\,d\mathbf{x}=0$. Then 
\begin{equation}
\label{v= sec. 9.1}
v(\mathbf{x})=-\sum _{j=1}^d\partial _j \gamma _j(\mathbf{x}).
\end{equation}
It is easy to check that the functions \eqref{a_j from sec. 9.1} satisfy condition \eqref{a_j}
with sui\-tab\-le $\rho '$ depending on $\rho$ and $s$; the norms $\Vert a_j\Vert _{L_{\rho '}(\Omega)}$
are controlled in terms of $\Vert g\Vert _{L_\infty}$, $\Vert \mathbf{A}\Vert _{L_\rho (\Omega )}$, $\Vert v\Vert _{L_s(\Omega )}$, and
 the parameters of the lattice~$\Gamma$. (See \cite[Subsection~13.1]{SuAA} for details).
 The function \eqref{Q from sec. 9.1} satisfies condition \eqref{Q in L_s example} with suitable
 $s'=\min\lbrace s;\rho/2\rbrace$. Thus, now the form $q$ is as in Example \ref{Example where Q in L_s}.

Suppose that $Q_0(\mathbf{x})$ is a positive definite and bounded $\Gamma$-periodic function.
According to \eqref{Beps}, we consider the nonnegative operator
$B_\varepsilon :=\mathcal{B}_\varepsilon +c_5Q_0^\varepsilon$.
Here $c_5=(c_0+c_4)\Vert Q_0^{-1}\Vert _{L_\infty}$,
and the constants $c_0$ and $c_4$ correspond to the operator \eqref{B_eps from sec. 9.1}.
We are interested in the behavior of the operator \hbox{$(B_\varepsilon -\zeta Q_0^\varepsilon )^{-1}$}, where $\zeta \in \mathbb{C}\setminus\mathbb{R}_+$. In the case under consideration, the initial data (see~\eqref{problem data}) reduces to the following set
\begin{equation}
\label{problem data from sec. 9.1}
\begin{split}
&d,\rho ,s ; \Vert g\Vert _{L_\infty}, \Vert g^{-1}\Vert _{L_\infty}, \Vert \mathbf{A}\Vert _{L_\rho (\Omega )},\Vert v\Vert _{L_s(\Omega)},
\Vert \mathcal{V}\Vert _{L_s(\Omega )},\\
&\Vert Q_0\Vert _{L_\infty}, \Vert Q_0^{-1}\Vert _{L_\infty};\  \text{the parameters of the lattice}\, \Gamma .
\end{split}
\end{equation}

Let us find the effective operator. In the case under consideration, the $\Gamma$-periodic solution of problem \eqref{Lambda problem}
is the row
\begin{equation*}
\Lambda(\mathbf{x})=i\Psi (\mathbf{x}),\quad \Psi (\mathbf{x})=(\psi _1 (\mathbf{x}),\dots ,\psi _d (\mathbf{x})),
\end{equation*}
where $\psi _j\in \widetilde{H}^1(\Omega)$ is the solution of the problem
\begin{equation*}
\mathrm{div}\,g(\mathbf{x})\left( \nabla \psi _j(\mathbf{x})+\mathbf{e}_j\right)=0,\quad \int _\Omega \psi _j(\mathbf{x})\,d\mathbf{x}=0.
\end{equation*}
Here $\mathbf{e}_j$, $j=1,\dots,d$, is the standard orthonormal basis in $\mathbb{R}^d$.
Clearly, the functions $\psi _j(\mathbf{x})$ are real-valued, and $\Lambda (\mathbf{x})$ has purely imaginary entries.
By \eqref{tilde g}, $\widetilde{g}({\mathbf x})$ is the $(d\times d)$-matrix-valued function with the columns
   $g(\mathbf{x})(\nabla \psi _j(\mathbf{x})+\mathbf{e}_j)$, $j=1,\dots ,d$.
  According to \eqref{g^0}, the effective matrix is given by $g^0=\vert \Omega \vert ^{-1}\int _\Omega \widetilde{g}(\mathbf{x})\,d\mathbf{x}$. Clearly, the matrices $\widetilde{g}(\mathbf{x})$ and $g^0$ have real entries.

By \eqref{a_j from sec. 9.1} and \eqref{v= sec. 9.1}, the periodic solution of problem \eqref{tildeLambda_problem}
can be represented as $\widetilde{\Lambda}(\mathbf{x})=\widetilde{\Lambda}_1(\mathbf{x})+i\widetilde{\Lambda}_2(\mathbf{x})$,
where the real-valued $\Gamma$-periodic functions $\widetilde{\Lambda}_1(\mathbf{x})$ and $\widetilde{\Lambda}_2(\mathbf{x})$
are the solutions of the following problems:
\begin{align*}
&-\mathrm{div}\,g(\mathbf{x})\nabla \widetilde{\Lambda}_1(\mathbf{x})+v(\mathbf{x})=0,\quad \int _\Omega \widetilde{\Lambda}_1(\mathbf{x})\,d\mathbf{x}=0,\\
&-\mathrm{div}\,g(\mathbf{x})\nabla \widetilde{\Lambda}_2(\mathbf{x})+\mathrm{div}\,g(\mathbf{x})\mathbf{A}(\mathbf{x})=0,\quad \int _\Omega\widetilde{\Lambda}_2(\mathbf{x})\,d\mathbf{x}=0.
\end{align*}
The column  $V$ (see~\eqref{V matr}) can be written as~$V=V_1+iV_2$, where the columns $V_1$ and $V_2$ 
with real entries are given by
\begin{equation}
\label{V1 and V2 for scalar Schrodinger}
\begin{split}
&V_1=\vert \Omega\vert ^{-1}\int  _\Omega (\nabla \Psi (\mathbf{x}))^t g(\mathbf{x})\nabla \widetilde{\Lambda}_2(\mathbf{x})\,d\mathbf{x},\\
&V_2=-\vert \Omega\vert ^{-1}\int _\Omega (\nabla \Psi (\mathbf{x}))^t g(\mathbf{x})\nabla \widetilde{\Lambda}_1(\mathbf{x})\,d\mathbf{x}.
\end{split}
\end{equation}
According to \eqref{W}, the constant $W$ is given by
\begin{equation}
\label{W for scalar Schodinger}
W=\vert \Omega \vert ^{-1}\int_\Omega \left(
 \langle g(\mathbf{x})\nabla \widetilde{\Lambda }_1(\mathbf{x}),\nabla \widetilde{\Lambda }_1(\mathbf{x})\rangle  + \langle g(\mathbf{x})\nabla \widetilde{\Lambda}_2(\mathbf{x}),\nabla \widetilde{\Lambda }_2(\mathbf{x})\rangle
\right) \,d\mathbf{x}.
\end{equation}
The effective operator for $B_\varepsilon$ takes the form
\begin{equation*}
B^0u=-\mathrm{div}\,g^0\nabla u +2i\langle \nabla u, V_1+\overline{\boldsymbol{\eta}}\rangle
+(-W+\overline{Q}+c_5\overline{Q_0})u,\quad u \in H^2(\mathbb{R}^d).
\end{equation*}
In other words,
\begin{equation}
\label{B^0 from sec. 9.1}
B^0=(\mathbf{D}-\mathbf{A}^0)^*g^0(\mathbf{D}-\mathbf{A}^0)+\mathcal{V}^0+c_5\overline{Q_0},
\end{equation}
where
\begin{equation}
\label{A0 and V0 for scalar Scrodinger}
\mathbf{A}^0=(g^0)^{-1}(V_1+\overline{g\mathbf{A}}),\quad
\mathcal{V}^0=\overline{\mathcal{V}}+\overline{\langle g\mathbf{A},\mathbf{A}\rangle}-\langle g^0\mathbf{A}^0,\mathbf{A}^0\rangle -W .
\end{equation}

According to Remark \ref{Remark A with d=1 and real g},
 in the case under consideration  Conditions~\ref{Condition Lambda in L_infty} and \ref{Condition on Lambda-tilda}
 are satisfied, and the norms
$\Vert \Lambda \Vert _{L_\infty}$ and $\Vert \widetilde{\Lambda}\Vert _{L_\infty}$ are controlled
in terms of the initial data~\eqref{problem data from sec. 9.1}. Therefore, it is possible to use
the simpler corrector given by
\begin{equation}
\label{K^0(eps;zeta)}
K^0(\varepsilon ;\zeta):=([\Lambda ^\varepsilon ] \mathbf{D}+[\widetilde{\Lambda}^\varepsilon ])(B^0-\zeta \overline{Q_0})^{-1}
=([\Psi ^\varepsilon ]\nabla +[\widetilde{\Lambda}^\varepsilon ])(B^0-\zeta \overline{Q_0})^{-1}.
\end{equation}
The operator \eqref{G_3(eps,zeta)} takes the form
\begin{equation*}
G_3(\varepsilon;\zeta) =\widetilde{g}^\varepsilon \mathbf{D}(B^0-\zeta \overline{Q_0})^{-1}
+g^\varepsilon(\mathbf{D}\widetilde{\Lambda})^\varepsilon (B^0-\zeta \overline{Q_0})^{-1}.
\end{equation*}
Applying Theorems \ref{Theorem main result L2}, \ref{Theorem main result H1 no S_eps}($3^\circ$), and \ref{Theorem main result fluxes no S_eps}($3^\circ$), we deduce the following result.

\begin{proposition}
\label{Proposition scalar Schr?dinger}
Let $\mathcal{B}_\varepsilon$ be the operator \eqref{B_eps from sec. 9.1}
whose coefficients satisfy the assumptions of Subsection~\textnormal{\ref{Subsection applications 1 scalar}}.
Let $Q_0(\mathbf{x})$ be a $\Gamma$-periodic positive definite and bounded function, and let
$c_5=(c_0+c_4)\Vert Q_0^{-1}\Vert _{L_\infty}$, where the constants $c_0$ and $c_4$ correspond to the operator
\eqref{B_eps from sec. 9.1}. Let $B_\varepsilon =\mathcal{B}_\varepsilon +c_5Q_0^\varepsilon$, and let $B^0$ be the effective operator \eqref{B^0 from sec. 9.1}, whose coefficients are defined by
\eqref{V1 and V2 for scalar Schrodinger}, \eqref{W for scalar Schodinger}, and \eqref{A0 and V0 for scalar Scrodinger}.
Let $\zeta\in\mathbb{C}\setminus \mathbb{R}_+$, $\zeta =\vert \zeta \vert e^{i\phi}$, $0<\phi <2\pi$, and $\vert \zeta \vert \geqslant 1$.
Suppose that $K^0(\varepsilon ;\zeta)$ is defined by \eqref{K^0(eps;zeta)}.
Then for $0<\varepsilon\leqslant 1$ we have
\begin{align}
\label{10.13a}
\Vert &(B_\varepsilon -\zeta Q_0^\varepsilon )^{-1}-(B^0-\zeta \overline{Q_0})^{-1}\Vert _{L_2(\mathbb{R}^d)\rightarrow L_2(\mathbb{R}^d)}\leqslant C_4\varepsilon c(\phi)^2\vert \zeta \vert ^{-1/2},\\
\label{10.13b}
\begin{split}
\Vert &(B_\varepsilon -\zeta Q_0^\varepsilon )^{-1}-(B^0-\zeta \overline{Q_0})^{-1}-\varepsilon K^0(\varepsilon ;\zeta )
\Vert _{L_2(\mathbb{R}^d)\rightarrow H^1(\mathbb{R}^d)}\\
&\leqslant (C_{8}+C_{9}) c(\phi)^2\varepsilon ,
\end{split}
\\
\label{10.13c}
\begin{split}
\Vert & g^\varepsilon \nabla(B_\varepsilon -\zeta Q_0^\varepsilon)^{-1} -\left(\widetilde{g}^\varepsilon\nabla +g^\varepsilon (\nabla \widetilde{\Lambda})^\varepsilon\right) (B^0-\zeta \overline{Q_0})^{-1}\Vert _{L_2(\mathbb{R}^d)\rightarrow L_2(\mathbb{R}^d)}\\
&\leqslant C_{10}c(\phi)^2\varepsilon .
\end{split}
\end{align}
Here $c(\phi)$ is given by~\eqref{c(phi)}.
The constants $C_4$, $C_{8}$, $C_{9}$, and $C_{10}$ depend only on the initial data~\eqref{problem data from sec. 9.1}.
\end{proposition}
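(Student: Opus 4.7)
The plan is to derive the proposition by reducing it to the general theorems (Theorems \ref{Theorem main result L2}, \ref{Theorem main result H1 no S_eps}($3^\circ$), and \ref{Theorem main result fluxes no S_eps}($3^\circ$)) applied to the scalar Schrödinger operator written in the canonical form \eqref{B_eps}. The first step is to verify that $\mathcal{B}_\varepsilon$ fits the framework of Subsections \ref{Subsection Operator A}--\ref{Subsection form q} with $n=1$, $m=d$, and $b(\mathbf{D})=\mathbf{D}$. Expanding the magnetic Laplacian produces cross terms of the form $(g^\varepsilon \mathbf{A}^\varepsilon) \cdot \mathbf{D}$ with coefficients~$-\boldsymbol{\eta}^\varepsilon$, and the singular term $\varepsilon^{-1}v^\varepsilon$ is absorbed into first order terms via the representation \eqref{v= sec. 9.1}, leading to the coefficients \eqref{a_j from sec. 9.1} and the potential \eqref{Q from sec. 9.1}. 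One needs to check that \eqref{a_j from sec. 9.1} satisfies the integrability condition \eqref{a_j} with some $\rho'$ depending on $\rho$ and $s$, and that \eqref{Q from sec. 9.1} gives rise to a form $q$ as in Example \ref{Example where Q in L_s}. This is a standard application of Hölder's inequality together with the $W_p^2$-regularity of~$\Phi$, with norms bounded by the data \eqref{problem data from sec. 9.1}; it has already been verified in detail in \cite[Subsection~13.1]{SuAA}.

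The second step is to identify the effective operator. Since $g$ is symmetric with real entries, the solution $\Lambda$ of \eqref{Lambda problem} is purely imaginary, $\Lambda=i\Psi$ with $\Psi$ the standard cell solutions. Splitting the inhomogeneity in~\eqref{tildeLambda_problem} according to \eqref{a_j from sec. 9.1} and \eqref{v= sec. 9.1} yields $\widetilde{\Lambda}=\widetilde{\Lambda}_1+i\widetilde{\Lambda}_2$, with real-valued components solving the two auxiliary scalar problems stated in the excerpt. Plugging these into \eqref{V matr}, \eqref{W}, and \eqref{B^0}, a direct calculation (completing the square in $\mathbf{A}^0$) rewrites the general effective operator in the magnetic Schrödinger form \eqref{B^0 from sec. 9.1} with coefficients \eqref{V1 and V2 for scalar Schrodinger}, \eqref{W for scalar Schodinger}, \eqref{A0 and V0 for scalar Scrodinger}.

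The third step is to justify the use of the \emph{smoothing-free} corrector \eqref{K^0(eps;zeta)} rather than \eqref{K(eps;zeta)}. Here I would invoke Remark \ref{Remark A with d=1 and real g}, which by a De Giorgi--Nash--Moser type estimate (Theorem 13.1 in \cite[Ch.~III]{LaU}) gives $\Lambda \in L_\infty$ and $\widetilde{\Lambda}\in L_\infty$ with bounds controlled by the initial data~\eqref{problem data from sec. 9.1}. Hence both Condition \ref{Condition Lambda in L_infty} and Condition \ref{Condition on Lambda-tilda} hold, and Theorems \ref{Theorem main result H1 no S_eps}($3^\circ$) and \ref{Theorem main result fluxes no S_eps}($3^\circ$) apply. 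Estimate \eqref{10.13a} is a direct instance of Theorem \ref{Theorem main result L2}; estimate \eqref{10.13b} is obtained by combining the $L_2\to L_2$ and derivative bounds of Theorem \ref{Theorem main result H1 no S_eps}($3^\circ$), noting that $[\Lambda^\varepsilon]b(\mathbf{D})=[\Psi^\varepsilon]\nabla$ up to the factor $i$; and estimate \eqref{10.13c} follows from Theorem \ref{Theorem main result fluxes no S_eps}($3^\circ$) since $g^\varepsilon b(\mathbf{D})=g^\varepsilon \nabla$ and the columns of $\widetilde{g}$ are $g(\nabla\psi_j+\mathbf{e}_j)$.

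The only nontrivial obstacle is the $L_\infty$-boundedness of $\Lambda$ and $\widetilde{\Lambda}$, which is what allows removal of the Steklov smoothing from the corrector; fortunately this is precisely the content of Remark \ref{Remark A with d=1 and real g}, so in the present setting the proof is essentially a matter of matching the scalar Schrödinger structure to the abstract framework and reading off the resulting estimates.
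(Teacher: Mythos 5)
Your proposal is correct and follows exactly the paper's own argument: reduce to the canonical form \eqref{B_eps} via \cite[Subsection~13.1]{SuAA}, observe that Remark~\ref{Remark A with d=1 and real g} (via the De Giorgi--Nash--Moser bound from \cite[Ch.~III, Thm.~13.1]{LaU}) yields Conditions~\ref{Condition Lambda in L_infty} and~\ref{Condition on Lambda-tilda} with norms controlled by the data~\eqref{problem data from sec. 9.1}, and then read off~\eqref{10.13a}, \eqref{10.13b}, \eqref{10.13c} from Theorem~\ref{Theorem main result L2}, Theorem~\ref{Theorem main result H1 no S_eps}($3^\circ$), and Theorem~\ref{Theorem main result fluxes no S_eps}($3^\circ$), respectively. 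The identification $(C_8+C_9)$ in~\eqref{10.13b} as the sum of the $(L_2\to L_2)$ and gradient bounds of Theorem~\ref{Theorem main result H1 no S_eps}($3^\circ$) is also exactly the paper's accounting.
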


In order to obtain ,,another'' approximation of the operator \hbox{$(B_\varepsilon -\zeta Q_0^\varepsilon )^{-1}$},
we use Theorem~\ref{Theorem another approximation} (for the principal term of approximation)
and Theorem~\ref{Theorem no S_eps another approximation}($3^\circ$).

\begin{proposition}
\label{scalar}
 Suppose that the assumptions of Proposition \textnormal{\ref{Proposition scalar Schr?dinger}} are sa\-tis\-fied.
Denote $f(\mathbf{x}) = Q_0(\mathbf{x})^{-1/2}$ and $f_0 = (\overline{Q_0})^{-1/2}$.
Let $\zeta \in \mathbb{C}\setminus [c_\flat ,\infty)$, where $c_\flat \geqslant 0$ is the common lower bound
 of the operators $\widetilde{B}_\varepsilon :=f^\varepsilon B_\varepsilon f^\varepsilon $ and $\widetilde{B}^0:=f_0B^0f_0$.
 Let $\varrho (\zeta)$ be given by \eqref{rho (zeta)}. Then for $0<\varepsilon\leqslant 1$
we have
\begin{align}
\Vert &(B_\varepsilon -\zeta Q_0^\varepsilon )^{-1}-(B^0-\zeta \overline{Q_0})^{-1}\Vert _{L_2(\mathbb{R}^d)\rightarrow L_2(\mathbb{R}^d)}\leqslant C_{11}\varrho (\zeta)\varepsilon ,\nonumber
\\
\label{9.16a}
\begin{split}
\Vert &
(B_\varepsilon -\zeta Q_0^\varepsilon )^{-1}-(B^0-\zeta \overline{Q_0})^{-1}-\varepsilon K^0(\varepsilon ;\zeta )
\Vert _{L_2(\mathbb{R}^d)\rightarrow H^1(\mathbb{R}^d)}\\
&\leqslant (C_{17}+ C_{18}+\vert \zeta +1\vert ^{1/2}C_{19})\varrho (\zeta )\varepsilon ,
\end{split}\\
\label{9.16b}
\begin{split}
\Vert & g^\varepsilon \nabla (B_\varepsilon -\zeta Q_0^\varepsilon )^{-1}-\left(\widetilde{g}^\varepsilon\nabla +g^\varepsilon (\nabla \widetilde{\Lambda})^\varepsilon\right) (B^0-\zeta \overline{Q_0})^{-1}\Vert _{L_2(\mathbb{R}^d)\rightarrow L_2(\mathbb{R}^d)}\\
&\leqslant (C_{20} +\vert \zeta +1\vert ^{1/2}C_{21})\varrho (\zeta )\varepsilon .
\end{split}
\end{align}
The constants $C_{11}$, $C_{17}$, $C_{18}$, $C_{19}$,  $C_{20}$, and $C_{21}$ depend only on the initial data
\eqref{problem data from sec. 9.1} and $c_\flat$. In the case where $Q_0$ is constant,
estimates {\rm \eqref{9.16a} and \eqref{9.16b}} are true with $C_{19}=C_{21}=0$.
\end{proposition}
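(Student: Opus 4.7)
The plan is to verify that the scalar Schr\"odinger setting of Subsection~\ref{Subsection applications 1 scalar} fits all hypotheses of Theorem~\ref{Theorem another approximation} and its refinement Theorem~\ref{Theorem no S_eps another approximation}$(3^\circ)$, and then to read off the three estimates of Proposition~\ref{scalar} by matching the general correctors with the scalar ones. Since the proposition is stated as an application of previously proved general results, the work will consist almost entirely of checking hypotheses and translating notation.

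First, I would verify the structural assumptions. In the present setting $n=1$, $m=d$, $b(\mathbf{D})=\mathbf{D}$, so $\alpha_0=\alpha_1=1$ and $g(\mathbf{x})$ is real symmetric. The coefficients $a_j(\mathbf{x})$ defined in~\eqref{a_j from sec. 9.1} satisfy~\eqref{a_j} with an exponent $\rho'$ depending on $\rho$ and $s$, and the potential $Q(\mathbf{x})$ of~\eqref{Q from sec. 9.1} fits Example~\ref{Example where Q in L_s}, so the hypotheses of Subsections~\ref{Subsection Operator A}--\ref{Subsection generalized resolvent} hold with all relevant constants controlled by~\eqref{problem data from sec. 9.1}. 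By Remark~\ref{Remark A with d=1 and real g}, both Condition~\ref{Condition Lambda in L_infty} and Condition~\ref{Condition on Lambda-tilda} are automatically satisfied, and the norms $\|\Lambda\|_{L_\infty}$ and $\|\widetilde{\Lambda}\|_{L_\infty}$ are majorized in terms of the initial data~\eqref{problem data from sec. 9.1}. Consequently, the constants $C_{11}$ and $C_{17}$--$C_{21}$ produced by Theorems~\ref{Theorem another approximation} and~\ref{Theorem no S_eps another approximation}$(3^\circ)$ depend only on~\eqref{problem data from sec. 9.1} and $c_\flat$.

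Next, I would identify the correctors. Since $\Lambda=i\Psi$ and $\mathbf{D}=-i\nabla$, one has $[\Lambda^\varepsilon]\mathbf{D}=[\Psi^\varepsilon]\nabla$, so the operator~\eqref{K^0(eps;zeta)} coincides with $\bigl([\Lambda^\varepsilon]b(\mathbf{D})+[\widetilde{\Lambda}^\varepsilon]\bigr)(B^0-\zeta\overline{Q_0})^{-1}$, which is precisely the corrector appearing in Theorem~\ref{Theorem no S_eps another approximation}$(3^\circ)$. Likewise, $g^\varepsilon b(\mathbf{D})=-ig^\varepsilon\nabla$ and $g^\varepsilon(b(\mathbf{D})\widetilde{\Lambda})^\varepsilon=-ig^\varepsilon(\nabla\widetilde{\Lambda})^\varepsilon$, so the operator $G_3(\varepsilon;\zeta)$ of~\eqref{G_3(eps,zeta)} equals $-i\bigl(\widetilde{g}^\varepsilon\nabla+g^\varepsilon(\nabla\widetilde{\Lambda})^\varepsilon\bigr)(B^0-\zeta\overline{Q_0})^{-1}$. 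Factoring out the unimodular scalar $-i$, the flux estimate of Theorem~\ref{Theorem no S_eps another approximation}$(3^\circ)$ translates verbatim into~\eqref{9.16b}.

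With these identifications in place, the three conclusions follow directly: the first estimate is the bound~\eqref{Th 9.1 1} from Theorem~\ref{Theorem another approximation}; the second~\eqref{9.16a} combines via the triangle inequality in the $H^1$-norm the two estimates of Theorem~\ref{Theorem no S_eps another approximation}$(3^\circ)$ for the resolvent minus corrector and for its gradient; the third~\eqref{9.16b} is the flux approximation from the same theorem, transferred through the factor $-i$. The final assertion, that $C_{19}=C_{21}=0$ when $Q_0$ is constant, follows from the remark after Theorem~\ref{Theorem no S_eps another approximation}, because in that case $Q_0^\varepsilon=\overline{Q_0}$ annihilates the term $\mathcal{I}_3(\varepsilon;\zeta)$ in~\eqref{tozd corrector_S9} that is responsible for the $|\zeta+1|^{1/2}$-factor. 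There is no genuine analytic obstacle here; the only care required is the routine bookkeeping of the factor $-i$ relating $\mathbf{D}$ and $\nabla$, which preserves all operator norms.
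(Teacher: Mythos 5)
Your proof is correct and follows exactly the route the paper itself takes: the proposition is obtained by specializing Theorem~\ref{Theorem another approximation} (for the principal term) and Theorem~\ref{Theorem no S_eps another approximation}($3^\circ$) (for the corrector and flux) to the scalar setting, noting that Remark~\ref{Remark A with d=1 and real g} guarantees Conditions~\ref{Condition Lambda in L_infty} and~\ref{Condition on Lambda-tilda} with norms controlled by the data~\eqref{problem data from sec. 9.1}, and that passing from $\mathbf{D}$ to $\nabla$ costs only a unimodular factor. The tracing of the vanishing of $C_{19}$ and $C_{21}$ (when $Q_0$ is constant) back to $\mathcal{I}_3=0$ in~\eqref{tozd corrector_S9} is also the intended argument.
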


\subsection{The periodic Schr\"odinger operator}
\label{Subsection periodic Schr?dinger operator}
 In $L_2(\mathbb{R}^d)$, we consider the operator $\check{\mathcal{A}}=\mathbf{D}^*\check{g}(\mathbf{x})\mathbf{D}+\check{v}(\mathbf{x})$,
 where $\check{g}(\mathbf{x})$ is a $\Gamma$-periodic symmetric \hbox{$(d\times d)$}-matrix-valued function with real entries such that
 $\check{g}(\mathbf{x})>0$, $\check{g}$, $\check{g}^{-1}\in L_\infty (\mathbb{R}^d)$; and $\check{v}(\mathbf{x})$ is a
 real-valued  $\Gamma $-periodic function such that
\begin{equation*}
\check{v}\in L_s(\Omega ),\quad s=1\ \text{for}\ d=1,\quad s>d/2\ \text{for}\ d\geqslant 2.
\end{equation*}
As usual, the precise definition of the operator $\check{\mathcal{A}}$ is given in terms of the quadratic form
\begin{equation}
\label{check a}
\check{\mathfrak{a}}[u,u]=\int _{\mathbb{R}^d}\left(
\langle \check{g}(\mathbf{x})\mathbf{D}u,\mathbf{D}u\rangle + \check{v}(\mathbf{x})\vert u\vert ^2
\right)
\,d\mathbf{x},\quad u\in  H^1(\mathbb{R}^d).
\end{equation}
Adding an appropriate constant to $\check{v}(\mathbf{x})$, {\it we may assume that the bottom of the spectrum
 of the operator $\check{\mathcal{A}}$ is the point zero.}
 Under this condition, the operator $\check{\mathcal{A}}$ admits a convenient factorization
 (see, e.~g., \cite[Chapter~6, Subsection~1.1]{BSu}).
 To describe this factorization, we consider the equation
 \begin{equation}
\label{omega equation}
\mathbf{D}^*\check{g}(\mathbf{x})\mathbf{D}\omega (\mathbf{x})+\check{v}(\mathbf{x})\omega (\mathbf{x})=0.
\end{equation}
This equation has a $\Gamma$-periodic solution $\omega\in\widetilde{H}^1(\Omega)$
defined up to a constant factor. This factor can be fixed so that $\omega (\mathbf{x})>0$ and
\begin{equation}
\label{omega condition}
\int _\Omega \omega ^2(\mathbf{x})\,d\mathbf{x}=\vert \Omega \vert .\end{equation}
Moreover, this solution is positive definite and bounded:
$0<\omega _0\leqslant \omega (\mathbf{x})\leqslant \omega _1 <\infty$,
and the norms $\Vert \omega \Vert _{L_\infty}$, $\Vert \omega ^{-1}\Vert _{L_\infty}$ are controlled in terms of
$\Vert \check{g}\Vert _{L_\infty}$, $\Vert \check{g}^{-1}\Vert _{L_\infty}$, and $\Vert \check{v}\Vert _{L_s(\Omega)}$.
The function $\omega$ is a multiplier in $H^1(\mathbb{R}^d)$, as well as in $\widetilde{H}^1(\Omega)$.
 After the substitution $u = \omega z$, the form \eqref{check a}  turns into
\begin{equation*}
\check{\mathfrak{a}}[u,u]=\int _{\mathbb{R}^d}\omega ^2(\mathbf{x})\langle \check{g}(\mathbf{x})\mathbf{D}z ,\mathbf{D}z \rangle \,d\mathbf{x},\quad u=\omega z ,\quad z \in H^1(\mathbb{R}^d).
\end{equation*}
Thus, the operator $\check{\mathcal{A}}$ admits the following factorization:
\begin{equation}
\label{check A =}
\check{\mathcal{A}}=\omega ^{-1}\mathbf{D}^*g\mathbf{D}\omega ^{-1},\quad g =\omega ^2 \check{g}.
\end{equation}

Now we consider the operator
\begin{equation}
\label{check A eps}
\check{\mathcal{A}}_\varepsilon =(\omega ^\varepsilon )^{-1}\mathbf{D}^* g^\varepsilon \mathbf{D}(\omega ^\varepsilon )^{-1}.
\end{equation}
In the initial terms, \eqref{check A eps} can be written as
\begin{equation}
\label{check Aeps=}
\check{\mathcal{A}}_\varepsilon =\mathbf{D}^*\check{g}^\varepsilon \mathbf{D}+\varepsilon ^{-2}\check{v}^\varepsilon.
\end{equation}
We emphasize that the expression \eqref{check Aeps=} involves the large factor $\varepsilon ^{-2}$ 
standing at the rapidly oscillating potential
$\check{v}^\varepsilon$. The operator $\check{\mathcal{A}}_\varepsilon $ can be viewed as the Schr\"odinger operator with
 the rapidly oscillating metric $\check{g}^\varepsilon$ and the strongly singular potential $\varepsilon ^{-2}\check{v}^\varepsilon$.

Next, as above, we assume that $\mathbf{A}=\mathrm{col}\,\lbrace A_1(\mathbf{x}),\dots ,A_d(\mathbf{x})\rbrace $, where
$A_j(\mathbf{x})$, $j=1,\dots,d$, are $\Gamma$-periodic real-valued functions satisfying \eqref{A_j cond}. Let $\widehat{v}(\mathbf{x})$ and $\check{\mathcal{V}}(\mathbf{x})$ be $\Gamma$-periodic real-valued functions such that
\begin{equation}
\label{hat v, check V}
\begin{split}
&\widehat{v}, \check{\mathcal{V}}\in L_s(\Omega),\quad s=1\ \text{for}\ d=1,\quad s>d/2\ \text{for}\ d\geqslant 2,\\
&\int _\Omega \widehat{v}(\mathbf{x})\omega ^2(\mathbf{x})\,d\mathbf{x}=0.
\end{split}
\end{equation}
Consider the operator $\check{\mathcal{B}}_\varepsilon$ given formally by the expression 
\begin{equation}
\label{check mathcal B_eps}
\check{\mathcal{B}}_\varepsilon =(\mathbf{D}-\mathbf{A}^\varepsilon )^*\check{g}^\varepsilon (\mathbf{D}-\mathbf{A}^\varepsilon )+\varepsilon ^{-2}\check{v}^\varepsilon +\varepsilon ^{-1}\widehat{v}^\varepsilon +\check{\mathcal{V}}^\varepsilon .
\end{equation}
(The precise definition is given in terms of the corresponding quadratic form.)
The operator $\check{\mathcal{B}}_\varepsilon$ can be treated as the Schr\"odinger operator with the metric $\check{g}^\varepsilon$,
the magnetic potential $\mathbf{A}^\varepsilon$, and the electric potential
$\varepsilon ^{-2}\check{v}^\varepsilon +\varepsilon ^{-1}\widehat{v}^\varepsilon +\check{\mathcal{V}}^\varepsilon $
containing the singular summands $\varepsilon ^{-2}\check{v}^\varepsilon$ and $\varepsilon ^{-1}\widehat{v}^\varepsilon $.

We put
\begin{equation}
\label{v =, V =}
v(\mathbf{x}):=\widehat{v}(\mathbf{x})\omega ^2(\mathbf{x}),\quad \mathcal{V}(\mathbf{x}):=\check{\mathcal{V}}(\mathbf{x})\omega ^2(\mathbf{x}).
\end{equation}
Using \eqref{check A eps} and \eqref{check Aeps=}, we see that
$\check{\mathcal{B}}_\varepsilon =(\omega ^\varepsilon )^{-1}\mathcal{B}_\varepsilon (\omega ^\varepsilon )^{-1}$,
where the operator $\mathcal{B}_\varepsilon$ is given by \eqref{B_eps from sec. 9.1}
with $g$ as in \eqref{check A =}, and $v$, $\mathcal{V}$ as in \eqref{v =, V =}.
From \eqref{hat v, check V} and the properties of $\omega$ it follows that the coefficients $v$ and $\mathcal{V}$
satisfy \eqref{v, V cond}.

Let $\check{Q}_0(\mathbf{x})$ be a $\Gamma$-periodic real-valued function;
we assume that $\check{Q}_0(\mathbf{x})$ is positive definite and bounded. We put
$Q_0(\mathbf{x}):=\check{Q}_0(\mathbf{x})\omega ^2(\mathbf{x})$. Denote $c_5=(c_0+c_4)\Vert Q_0^{-1}\Vert _{L_\infty}$,
where the constants $c_0$ and $c_4$ correspond to the operator $\mathcal{B}_\varepsilon$ described above.
The operators $\check{B}_\varepsilon :=\check{\mathcal{B}}_\varepsilon +c_5\check{Q}_0^\varepsilon$ and
$B_\varepsilon =\mathcal{B}_\varepsilon +c_5Q_0^\varepsilon$ satisfy the following relation:
$
\check{B}_\varepsilon =(\omega ^\varepsilon )^{-1}B_\varepsilon (\omega ^\varepsilon )^{-1}.
$
Obviously,
\begin{equation}
\label{10.22a}
(\check{B}_\varepsilon - \zeta \check{Q}_0^\varepsilon)^{-1}=\omega ^\varepsilon
(B_\varepsilon - \zeta Q_0^\varepsilon)^{-1} \omega ^\varepsilon.
\end{equation}
Now the initial data reduces to the following set
\begin{equation}
\label{problem data for periodic Schr?dinger}
\begin{split}
&d,\rho ,s ;\Vert \check{g}\Vert _{L_\infty}, \Vert \check{g}^{-1}\Vert _{L_\infty}, \Vert \mathbf{A}\Vert _{L_\rho (\Omega )}, \Vert \check{v}\Vert _{L_s(\Omega)}, \Vert \widehat{v}\Vert _{L_s(\Omega )}, \Vert \check{\mathcal{V}}\Vert _{L_s(\Omega )},\\
&\Vert \check{Q}_0\Vert _{L_\infty}, \Vert \check{Q}_0^{-1}\Vert _{L_\infty}; \text{the parameters of the lattice}\,\Gamma .
\end{split}
\end{equation}

Using \eqref{10.22a} and Proposition \ref{Proposition scalar Schr?dinger}, we obtain the following result.

\begin{proposition}
\label{Proposition periodic Schr?dinger}
Let $\check{\mathcal{B}}_\varepsilon$ be the operator \eqref{check mathcal B_eps} whose coefficients $\check{g}^\varepsilon$, $\mathbf{A}^\varepsilon$, $\widehat{v}^\varepsilon $, and $\check{\mathcal{V}}^\varepsilon $ satisfy
  the assumptions of Subsection~\textnormal{\ref{Subsection periodic Schr?dinger operator}}.
Let $\omega (\mathbf{x})$ be the $\Gamma$-periodic positive solution of equation \eqref{omega equation}
satisfying condition \eqref{omega condition}. Let $\mathcal{B}_\varepsilon$ be the operator \eqref{B_eps from sec. 9.1}
with the coefficients $g^\varepsilon =\check{g}^\varepsilon (\omega ^\varepsilon )^2$, $\mathbf{A}^\varepsilon$, $v^\varepsilon =\widehat{v}^\varepsilon (\omega ^\varepsilon )^2$, and $\mathcal{V}^\varepsilon =\check{\mathcal{V}}^\varepsilon (\omega ^\varepsilon )^2$.
Let $\check{Q}_0$ be a $\Gamma$-periodic bounded and positive definite function, and let
$Q_0:=\check{Q}_0\omega ^2$. Denote $c_5=(c_0+c_4)\Vert Q_0^{-1}\Vert _{L_\infty}$, where the constants $c_0$ and $c_4$
correspond to the operator $\mathcal{B}_\varepsilon$. Let $B_\varepsilon =\mathcal{B}_\varepsilon +c_5Q_0^\varepsilon$
 and $\check{B}_\varepsilon =\check{\mathcal{B}}_\varepsilon +c_5\check{Q}_0^\varepsilon$. Let $B^0$ be the effective operator for $B_\varepsilon$
defined by \eqref{B^0 from sec. 9.1}.
Let $\zeta \in \mathbb{C}\setminus \mathbb{R}_+$, $\zeta =\vert \zeta \vert e^{i\phi}$, $\phi \in (0,2\pi)$, and $\vert \zeta \vert \geqslant 1$.
Let $K^0(\varepsilon ;\zeta)$ be the corrector \eqref{K^0(eps;zeta)} for the operator~$B_\varepsilon$.
Then for $0<\varepsilon \leqslant 1$ we have
\begin{align}
\label{10.23a}
\Vert &(\check{B}_\varepsilon -\zeta \check{Q}_0^\varepsilon )^{-1}-\omega ^\varepsilon (B^0-\zeta\overline{Q_0})^{-1}\omega ^\varepsilon \Vert _{L_2(\mathbb{R}^d)\rightarrow L_2(\mathbb{R}^d)}
\leqslant C_4 \Vert \omega \Vert ^2_{L_\infty}c(\phi)^2\varepsilon \vert \zeta \vert ^{-1/2},\\
\label{10.23b}
\begin{split}
\Vert &(\omega ^\varepsilon )^{-1}(\check{B}_\varepsilon -\zeta \check{Q}_0^\varepsilon )^{-1}-(B^0-\zeta \overline{Q_0})^{-1}
\omega ^\varepsilon  -\varepsilon K^0(\varepsilon ;\zeta ) \omega ^\varepsilon
\Vert _{L_2(\mathbb{R}^d)\rightarrow H^1(\mathbb{R}^d)}\\
&\leqslant (C_{8}+ C_{9})\Vert \omega \Vert _{L_\infty}c(\phi)^2 \varepsilon ,
\end{split}
\\
\label{10.23c}
\begin{split}
\Vert &g^\varepsilon \nabla (\omega ^\varepsilon )^{-1}(\check{B}_\varepsilon -\zeta \check{Q}_0^\varepsilon )^{-1}-\left(
\widetilde{g}^\varepsilon \nabla +g^\varepsilon (\nabla \widetilde{\Lambda})^\varepsilon
\right)
(B^0-\zeta \overline{Q_0})^{-1}\omega ^\varepsilon \Vert _{L_2(\mathbb{R}^d)\rightarrow L_2(\mathbb{R}^d)}\\
&\leqslant C_{10}\Vert \omega \Vert _{L_\infty}c(\phi )^2\varepsilon .
\end{split}
\end{align}
Here $c(\phi)$ is given by \eqref{c(phi)}. The constants $C_4\Vert\omega \Vert ^2_{L_\infty}$, $(C_{8}+ C_{9})\Vert \omega \Vert _{L_\infty}$, and $C_{10}\Vert \omega \Vert _{L_\infty}$ depend only on the initial data \eqref{problem data for periodic Schr?dinger}.
\end{proposition}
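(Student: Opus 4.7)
The key mechanism is the conjugation identity \eqref{10.22a}, which reduces every estimate about $\check{B}_\varepsilon - \zeta \check{Q}_0^\varepsilon$ to the corresponding one for $B_\varepsilon - \zeta Q_0^\varepsilon$ that was already established in Proposition \ref{Proposition scalar Schr?dinger}. My plan is to sandwich each inequality of Proposition \ref{Proposition scalar Schr?dinger} with the multiplication operators $\omega^\varepsilon$ and $(\omega^\varepsilon)^{-1}$, using only that $\omega, \omega^{-1} \in L_\infty$ with norms controlled in terms of the data \eqref{problem data for periodic Schr?dinger}.

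First I would verify \eqref{10.23a}. Applying \eqref{10.22a}, the operator under the norm sign equals $\omega^\varepsilon\bigl((B_\varepsilon - \zeta Q_0^\varepsilon)^{-1} - (B^0 - \zeta \overline{Q_0})^{-1}\bigr)\omega^\varepsilon$. Since $\|\omega^\varepsilon\|_{L_2 \to L_2} \leq \|\omega\|_{L_\infty}$, estimate \eqref{10.13a} applied to the middle factor yields the required bound with constant $C_4 \|\omega\|_{L_\infty}^2$. Similarly, for \eqref{10.23b} I rewrite $(\omega^\varepsilon)^{-1}(\check{B}_\varepsilon - \zeta \check{Q}_0^\varepsilon)^{-1} = (B_\varepsilon - \zeta Q_0^\varepsilon)^{-1}\omega^\varepsilon$, so the operator under the norm becomes
\begin{equation*}
\bigl((B_\varepsilon - \zeta Q_0^\varepsilon)^{-1} - (B^0 - \zeta \overline{Q_0})^{-1} - \varepsilon K^0(\varepsilon;\zeta)\bigr)\omega^\varepsilon .
\end{equation*}
Here $\omega^\varepsilon$ acts first as a bounded $L_2 \to L_2$ multiplier, and then \eqref{10.13b} produces the $(L_2\to H^1)$-bound $(C_8+C_9)c(\phi)^2\varepsilon \cdot \|\omega\|_{L_\infty}$.

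For the flux estimate \eqref{10.23c}, the same rewriting gives $g^\varepsilon\nabla(\omega^\varepsilon)^{-1}(\check{B}_\varepsilon - \zeta \check{Q}_0^\varepsilon)^{-1} = g^\varepsilon\nabla (B_\varepsilon - \zeta Q_0^\varepsilon)^{-1}\omega^\varepsilon$. Subtracting $\bigl(\widetilde{g}^\varepsilon \nabla + g^\varepsilon(\nabla\widetilde{\Lambda})^\varepsilon\bigr)(B^0 - \zeta\overline{Q_0})^{-1}\omega^\varepsilon$ and applying \eqref{10.13c} with the right multiplier $\omega^\varepsilon$ treated as above delivers \eqref{10.23c} with constant $C_{10}\|\omega\|_{L_\infty}$.

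The main (and essentially only) subtlety is the bookkeeping on the left factor $(\omega^\varepsilon)^{-1}$ on the left-hand sides of \eqref{10.23b} and \eqref{10.23c}: we must make sure it is absorbed by \eqref{10.22a} before taking norms, because $(\omega^\varepsilon)^{-1}$ is \emph{not} a multiplier in $H^1$ uniformly in $\varepsilon$ (its gradient contains a $\varepsilon^{-1}$). Using \eqref{10.22a} algebraically first, so that only $\omega^\varepsilon$ on the right survives, sidesteps this issue entirely. All remaining constants depend on the set \eqref{problem data for periodic Schr?dinger} through the initial data \eqref{problem data from sec. 9.1} of Proposition \ref{Proposition scalar Schr?dinger} together with $\|\omega\|_{L_\infty}, \|\omega^{-1}\|_{L_\infty}$, which themselves are controlled by \eqref{problem data for periodic Schr?dinger} as noted after \eqref{omega condition}.
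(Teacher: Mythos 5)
Your argument is correct and is exactly the argument the paper uses: algebraically absorb $(\omega^\varepsilon)^{-1}$ via the conjugation identity \eqref{10.22a} so that only the right factor $\omega^\varepsilon$ remains, then compose the bounds of Proposition \ref{Proposition scalar Schr?dinger} with the $L_2\to L_2$ multiplier $\omega^\varepsilon$. Your remark that $(\omega^\varepsilon)^{-1}$ must be removed by the identity first, rather than estimated as an $H^1$-multiplier (where its gradient would produce an $\varepsilon^{-1}$), is a useful clarification of why the estimates are stated with the extra factor $(\omega^\varepsilon)^{-1}$ on the left-hand side.
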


\begin{proof}
Multiplying the operators under the norm sign in \eqref{10.13a} by $\omega^\varepsilon$ from both sides
and using \eqref{10.22a}, we arrive at \eqref{10.23a}.

By \eqref{10.22a}, $(\omega^\varepsilon)^{-1}(\check{B}_\varepsilon - \zeta \check{Q}_0^\varepsilon)^{-1}=
(B_\varepsilon - \zeta Q_0^\varepsilon)^{-1} \omega ^\varepsilon.$ Multiplying the operators under the norm sign in
\eqref{10.13b} by $\omega^\varepsilon$ from the right, we obtain \eqref{10.23b}.
Similarly, \eqref{10.13c} implies \eqref{10.23c}.
\end{proof}

In order to obtain approximation of the operator $(\check{B}_\varepsilon -\zeta \check{Q}_0^\varepsilon )^{-1}$
in a wider domain of the parameter $\zeta$, we use Proposition~\ref{scalar}.
By analogy with the proof of Proposition~\ref{Proposition periodic Schr?dinger}, it is easy to
check the following statement.

\begin{proposition}
\label{Proposition periodic Schr?dinger2}
Suppose that the assumptions of Proposition~\textnormal{\ref{Proposition periodic Schr?dinger}}
are satisfied.
Denote $f(\mathbf{x}) = Q_0(\mathbf{x})^{-1/2}$ and $f_0 = (\overline{Q_0})^{-1/2}$. Let
$\zeta \in \mathbb{C}\setminus [c_\flat ,\infty)$, where $c_\flat \geqslant 0$ is a common lower bound of the operators
$\widetilde{B}_\varepsilon :=f^\varepsilon B_\varepsilon f^\varepsilon $ and $\widetilde{B}^0:=f_0B^0f_0$.
Then for $0<\varepsilon \leqslant 1$ we have
\begin{align}
\Vert &(\check{B}_\varepsilon -\zeta \check{Q}_0^\varepsilon )^{-1}-\omega ^\varepsilon (B^0-\zeta\overline{Q_0})^{-1}\omega ^\varepsilon \Vert _{L_2(\mathbb{R}^d)\rightarrow L_2(\mathbb{R}^d)}
\leqslant C_{11} \Vert \omega \Vert ^2_{L_\infty}\varrho (\zeta )\varepsilon,
\nonumber
\\
\label{9.30a}
\begin{split}
\Vert &
(\omega ^\varepsilon )^{-1}(\check{B}_\varepsilon -\zeta \check{Q}_0^\varepsilon )^{-1}-(B^0-\zeta \overline{Q_0})^{-1} \omega ^\varepsilon  -\varepsilon K^0(\varepsilon ;\zeta )\omega ^\varepsilon
\Vert _{L_2(\mathbb{R}^d)\rightarrow H^1(\mathbb{R}^d)}\\
&\leqslant (C_{17}+ C_{18} + \vert \zeta +1\vert ^{1/2}C_{19})\Vert \omega \Vert _{L_\infty}\varrho (\zeta )\varepsilon,
\end{split}
\\
\label{9.30b}
\begin{split}
\Vert &g^\varepsilon \nabla (\omega ^\varepsilon )^{-1}(\check{B}_\varepsilon -\zeta \check{Q}_0^\varepsilon )^{-1}-\left(
\widetilde{g}^\varepsilon \nabla +g^\varepsilon (\nabla \widetilde{\Lambda})^\varepsilon
\right)
(B^0-\zeta \overline{Q_0})^{-1} \omega ^\varepsilon \Vert _{L_2(\mathbb{R}^d)\rightarrow L_2(\mathbb{R}^d)}\\
&\leqslant (C_{20}+\vert \zeta +1\vert ^{1/2}C_{21})\Vert \omega \Vert _{L_\infty}\varrho (\zeta)\varepsilon .
\end{split}
\end{align}
Here $\varrho (\zeta )$ is given by \eqref{rho (zeta)}. The constants $C_{11}\Vert \omega \Vert ^2_{L_\infty}$,
$(C_{17}+ C_{18})\Vert \omega \Vert _{L_\infty}$,  $C_{19}\Vert \omega \Vert _{L_\infty}$, $C_{20}\Vert \omega \Vert _{L_\infty}$,
 and $C_{21}\Vert \omega \Vert _{L_\infty}$ are controlled in terms of the initial data \eqref{problem data for periodic Schr?dinger} and $c_\flat$.
In the case where $Q_0$ is constant, estimates \eqref{9.30a} and \eqref{9.30b} are valid with $C_{19}= C_{21}=0$.
\end{proposition}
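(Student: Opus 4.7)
The plan is to deduce Proposition \ref{Proposition periodic Schr?dinger2} from Proposition \ref{scalar} applied to the ``scalar'' operator $B_\varepsilon = {\mathcal B}_\varepsilon + c_5 Q_0^\varepsilon$, by the very same device that was used in the proof of Proposition \ref{Proposition periodic Schr?dinger} but starting from the estimates of Proposition \ref{scalar} rather than those of Proposition \ref{Proposition scalar Schr?dinger}. The underlying bridge is the factorization identity
\begin{equation*}
(\check{B}_\varepsilon -\zeta \check{Q}_0^\varepsilon )^{-1}=\omega ^\varepsilon (B_\varepsilon -\zeta Q_0^\varepsilon )^{-1}\omega ^\varepsilon, \quad \zeta \in {\mathbb C}\setminus [c_\flat,\infty),
\end{equation*}
which follows from $\check{B}_\varepsilon = (\omega^\varepsilon)^{-1} B_\varepsilon (\omega^\varepsilon)^{-1}$ and the relation $\check{Q}_0^\varepsilon = (\omega^\varepsilon)^{-2} Q_0^\varepsilon$. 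Since $\omega \in L_\infty$ and $\omega^{-1}\in L_\infty$ with norms controlled by the data \eqref{problem data for periodic Schr?dinger}, multiplication by $\omega^\varepsilon$ is a bounded operator in both $L_2({\mathbb R}^d)$ and $H^1({\mathbb R}^d)$ whose norm is majorated by $\|\omega\|_{L_\infty}$ (the $H^1$-boundedness uses that $\omega$ is a multiplier in $H^1$ with norm controlled by the same data, via Lemma \ref{Lemma_Lambda_tilda3} applied to $\nabla\omega$ or directly from the elliptic regularity for \eqref{omega equation}).

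First, I would multiply the principal $L_2$-estimate of Proposition \ref{scalar} by $\omega^\varepsilon$ from both sides; the above identity turns the left-hand side into $(\check{B}_\varepsilon -\zeta \check{Q}_0^\varepsilon )^{-1}- \omega^\varepsilon (B^0-\zeta\overline{Q_0})^{-1}\omega^\varepsilon$, and the right-hand side acquires the factor $\|\omega\|_{L_\infty}^2$. This yields the first estimate of Proposition \ref{Proposition periodic Schr?dinger2}. For the second estimate I use the rewritten identity
\begin{equation*}
(\omega^\varepsilon)^{-1}(\check{B}_\varepsilon -\zeta \check{Q}_0^\varepsilon )^{-1}= (B_\varepsilon -\zeta Q_0^\varepsilon )^{-1}\omega^\varepsilon,
\end{equation*}
and multiply estimate \eqref{9.16a} of Proposition \ref{scalar} by $\omega^\varepsilon$ from the right only; the operator norm $L_2 \to H^1$ of the right multiplication by $\omega^\varepsilon$ is bounded by $\|\omega\|_{L_\infty}$, so the factor $\rho(\zeta)\varepsilon$ is preserved, with the constants scaled by $\|\omega\|_{L_\infty}$. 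For the flux estimate \eqref{9.30b}, multiplying \eqref{9.16b} of Proposition \ref{scalar} by $\omega^\varepsilon$ from the right (note that $g^\varepsilon \nabla (\omega^\varepsilon)^{-1}(\check{B}_\varepsilon -\zeta \check{Q}_0^\varepsilon)^{-1} = g^\varepsilon\nabla(B_\varepsilon-\zeta Q_0^\varepsilon)^{-1}\omega^\varepsilon$) gives the desired inequality with the extra factor $\|\omega\|_{L_\infty}$.

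The final observation is that in the case of a constant $Q_0$ (equivalently, constant $\check{Q}_0/\omega^2$, but the statement concerns the $Q_0$ associated with the scalar operator $B_\varepsilon$), the vanishing of $C_{19}$ and $C_{21}$ is inherited directly from Proposition \ref{scalar}, since the multiplication by $\omega^\varepsilon$ on the right does not affect the structure of the error terms. There is no real obstacle here: the argument is a purely mechanical transfer from the scalar case, and all the analytical work is already encapsulated in Proposition \ref{scalar} and in the boundedness properties of $\omega^\varepsilon$. The only point requiring mild care is verifying that $\omega^\varepsilon$ acts boundedly from $H^1({\mathbb R}^d)$ to itself with norm uniform in $\varepsilon$, which follows from $\omega, \omega^{-1} \in L_\infty$ together with the bound on $\nabla\omega$ in $L_2(\Omega)$ (and Lemma \ref{Lemma_Lambda_tilda3} if needed), all expressible in terms of the initial data \eqref{problem data for periodic Schr?dinger}.
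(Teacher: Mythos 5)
Your approach matches the paper's own proof: apply Proposition~\ref{scalar} to the scalar operator $B_\varepsilon$ and transfer to $\check{B}_\varepsilon$ via the factorization identity $(\check{B}_\varepsilon -\zeta \check{Q}_0^\varepsilon )^{-1}=\omega^\varepsilon (B_\varepsilon -\zeta Q_0^\varepsilon )^{-1}\omega^\varepsilon$, multiplying by $\omega^\varepsilon$ from both sides in $L_2$ for the first estimate and from the right only (as an $L_2\to L_2$ factor) for the second and third --- exactly as in the proof of Proposition~\ref{Proposition periodic Schr?dinger}. One caveat: your parenthetical remark that $[\omega^\varepsilon]$ is a bounded multiplier in $H^1(\mathbb{R}^d)$ with norm controlled by $\|\omega\|_{L_\infty}$, and the closing claim that verifying $\varepsilon$-uniform boundedness of $[\omega^\varepsilon]\colon H^1\to H^1$ is the one delicate point, are both false: since $\nabla(\omega^\varepsilon)=\varepsilon^{-1}(\nabla\omega)^\varepsilon$, the $H^1$-multiplier norm of $\omega^\varepsilon$ blows up like $\varepsilon^{-1}$. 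Fortunately, this is also immaterial --- in every step $\omega^\varepsilon$ is applied either on the right, where only $\|[\omega^\varepsilon]\|_{L_2\to L_2}\leqslant\|\omega\|_{L_\infty}$ enters (the target space $H^1$ is untouched), or on the left in $L_2$. With that spurious claim deleted, your proof coincides with the paper's.
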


\begin{remark}
Propositions {\rm \ref{Proposition periodic Schr?dinger}} and {\rm \ref{Proposition periodic Schr?dinger2}} demonstrate that for the operator
 \eqref{check mathcal B_eps} the nature of the results is different from the results for the operator \eqref{B_eps from sec. 9.1}.
Because of the presence of the strongly singular potential $\varepsilon^{-2} \check{v}^\varepsilon$,
the generalized resolvent  \hbox{$(\check{B}_\varepsilon -\zeta \check{Q}_0^\varepsilon )^{-1}$}
has no limit in the $L_2({\mathbb R}^d)$-operator norm{\rm ;} it is approximated by the
generalized resolvent $(B^0 - \zeta \overline{Q_0})^{-1}$ sandwiched between the rapidly oscillating factors
$\omega^\varepsilon$.
\end{remark}

\subsection{The two-dimensional Pauli operator} Note that it is also possible to apply
the general results to the two-dimensional periodic Pauli operator with a singular magnetic potential perturbed by
a singular electric potential. This operator was considered in \cite[\S 14]{SuAA} by using a convenient
factorization for the Pauli operator.
The nature of the results for this operator is similar to Propositions {\rm \ref{Proposition periodic Schr?dinger}} and
{\rm \ref{Proposition periodic Schr?dinger2}}. We will not dwell on this.

\end{document}